\def\wt#1{\widetilde{#1}}
\newcommand{\N}{\mathbb{N}}
\newcommand{\R}{\mathbb{R}}
\newcommand{\Z}{\mathbb{Z}}
\newcommand{\supp}{\operatorname{supp}}
\newcommand{\dist}{\operatorname{dist}}
\newtheorem{thm}{Theorem}[section]
\newtheorem{defn}[thm]{Definition}
\newtheorem{prop}[thm]{Proposition}
\newtheorem{cor}[thm]{Corollary}
\newtheorem{lem}[thm]{Lemma}
\newtheorem{conj}[thm]{Conjecture}
\numberwithin{equation}{section}
\newtheorem{rem}{Remark}
\newcommand{\inp}[2]{\langle #1, #2\rangle}
\newcommand{\sxyl}{2^{-l}s+\sxy}
\newcommand{\vepc}{\varepsilon_{\!\circ}}
\newcommand{\fP}{\mathfrak P}
\renewcommand{\sxyl}{S_c^l}
\newcommand{\cP}{\phi_1}
\newcommand{\dpq}{\delta(p,q)}
\newcommand{\che}{\chi_{E_\lambda}}
\newcommand{\ppqpair}{$(1/p, 1/q)$}
\newcommand{\ppq}{(1/p,1/q)}
\newcommand{\Be}{\begin{equation}}
\newcommand{\Ee}{\end{equation}}
\newcommand{\cQ}{\mathcal Q}
\newcommand{\cD}{\mathcal D}
\newcommand{\sxyls}{\sxyl(x,y,s)} 
\newcommand{\eval}{2\mathbb N_0+d}
\newcommand{\rp}{1/p}
\renewcommand{\rq}{1/q}
\newcommand{\sxylt}{\sxyl(x,y, t)}
\newcommand{\chplp}{\chi_{\lambda, \mu}}
\newcommand{\diam}{{\rm diam }}
\newcommand{\proj}{{\mathlarger \wp}_k}
\newcommand{\wproj}{\widetilde{\mathlarger \wp}_k}
\newcommand{\B}{\mathbb B}
\newcommand{\tx}{\textstyle}
\begin{document}

\author[Jeong]{Eunhee Jeong}
\address[Jeong]{Department of Mathematics Education, and  Institute of Pure and Applied Mathematics, Jeonbuk National University, Jeonju 54896, Republic of Korea}
\email{eunhee@jbnu.ac.kr}

\author[Lee]{Sanghyuk Lee}
\address[Lee]{Department of Mathematical Sciences and RIM, Seoul National University, Seoul 08826, Republic of  Korea}
\email{shklee@snu.ac.kr}

\author[Ryu]{Jaehyeon Ryu}
\address[Ryu]{School of Mathematics, Korea Institute for Advanced Study, Seoul
02455, Republic of Korea} 
\email{jhryu@kias.re.kr}

\keywords{Hermite functions, Spectral projection}
\subjclass[2010]{42B99  (primary);  42C10 (secondary)}

\title[Bounds on the Hermite spectral projection operator]{Bounds on the Hermite spectral \\ projection operator}

\begin{abstract} 
We study  $L^p$--$L^q$ bounds on  the spectral projection operator $\Pi_\lambda$  associated to the Hermite operator $H=|x|^2-\Delta$ in $\mathbb R^d$.   We are mainly  concerned with a localized operator $\chi_E\Pi_\lambda\chi_E$ for a  subset $E\subset\mathbb R^d$  and undertake the task  of  characterizing the sharp $L^p$--$L^q$ bounds. We obtain  sharp bounds in extended ranges of $p,q$. First, we  provide a complete characterization of  the sharp $L^p$--$L^q$ bounds when $E$ is away from $\sqrt{\lambda}\mathbb S^{d-1}$. Secondly, we obtain the sharp bounds as the set $E$ gets close to $\sqrt\lambda\mathbb S^{d-1}$. 
Thirdly, we extend the range of $p,q$ for which the operator $\Pi_\lambda$ is uniformly bounded from $L^p(\mathbb R^d)$ to $L^q(\mathbb R^d)$.
\end{abstract}

\maketitle

\section{Introduction}  

Let $H$ denote the Hermite operator $-\Delta+|x|^2$ in $\R^d$, $d\ge 2$. The  operator $H$ has a discrete spectrum $\lambda\in 2\mathbb N_0+d$, where $\mathbb N_0=\mathbb N\cup \{0\}$.  For $\alpha\in \mathbb N_0^d$, let $\Phi_\alpha$ be the $L^2$--normalized Hermite function which is  
an  eigenfunction of $H$ with eigenvalue $2(\alpha_1+\dots+\alpha_d)+d$. The set  $\{\Phi_\alpha: \alpha\in \mathbb N_0^d\}$  forms an orthonormal basis in $L^2$.  We consider the spectral  projection operator  
\begin{align*}
    \Pi_\lambda f = \underset{\alpha:d+2|\alpha|=\lambda}{\sum}\langle f,\Phi_\alpha \rangle \Phi_\alpha,
\end{align*}
which is the orthogonal projection  to the vector space  spanned by eigenfunctions with  the eigenvalue $\lambda$. Then,  $f=\sum_{\lambda\in 2\mathbb N_0+d}\Pi_\lambda f$ for  $f\in L^2$.

$L^p$--$L^q$ bounds on the spectral projection operators associated to differential operators  have been studied by various authors (see, for example, \cite{sogge-1986, sogge-1988, SZ, KR, KL18, JLR_twist}). 
Let $\|T\|_{p\to q}$ denote the operator norm  of  an operator $T$ from $L^p$ to $L^q$.   Concerning the Hermite  operator, the bounds  
\begin{equation}\label{rstest}
\|\chi_E \Pi_\lambda \chi_E \|_{p\to q} \le \mathrm B(\lambda, p,q)
\end{equation}
with suitable subsets $E\subset \mathbb R^d$  has been  of interest and studied by some authors. 
    The  estimates are related to Bochner-Riesz summability of the Hermite expansion \cite{K94,Th98} and the  unique continuation properties for the parabolic operators \cite{e00, ev01}. 
   When $E=\R^d$, we call \eqref{rstest} a global estimate. In such a case, the bound \eqref{rstest} with 
$p=2$ and $2\le q\le \infty$ was studied  by Thangavelu \cite{Th93}, Karadzhov \cite{K94}, and Koch--Tataru \cite{T05}. Especially, Koch and Tataru obtained the optimal $L^2$--$L^q$ bound  for $2\le q\le \infty$ except $q=2(d+3)/(d+1)$. Recently, the missing endpoint estimate was proved by the authors \cite{JLR_endpoint} for $d\ge 3$.

In this paper, we are mainly concerned with  local estimates for the projection  $\Pi_\lambda$, i.e.,  the estimate \eqref{rstest} with  bounded sets $E$.   As shown in the earlier works \cite{Th93, K94,  Th98,  T05},  $\Pi_\lambda$ exhibits different behaviors across the sphere $\sqrt\lambda\mathbb  S^{d-1}=\{x: |x|=\sqrt\lambda\}$.

\subsubsection*{Estimate over $\sqrt\lambda \B$} We first consider the case $E$ is distanced away from $\sqrt\lambda\mathbb  S^{d-1}$.  
Let $\mathbb B=\{x: |x|<1/2\}$.    In view of the transplantation result  due to Kenig, Stanton, and Tomas \cite{KST}, it seems to be plausible to expect 
that the $L^p$--$L^q$ bounds on $\chi_{\B}\Pi_\lambda\chi_{\B}$  have similar behaviors as those on 
\begin{equation}
\label{def-pk}
    \proj f =\frac{1}{(2\pi)^d} \int_{k-1\le|\xi|^2\le k} e^{i x\cdot \xi} \widehat f(\xi) d\xi.
\end{equation} 
The sharp bounds on $\proj$  in terms of $k$ (Proposition \ref{laplacian} below) can be deduced by  a rescaling argument and the boundedness of the restriction-extension operator  $f\to ( \widehat f\,|_{\mathbb S^{d-1}})^\vee $ (see Theorem \ref{rr*}), which is closely related to the Bochner-Riesz operator of negative orders (see Section \ref{transplantation} for further details).   Our first result (Theorem \ref{thm-locest} below) demonstrates validity of  the aforementioned heuristics, and consequently  
provides a complete characterization of $L^p$--$L^q$ bounds on  $\chi_{\sqrt{\lambda}\B}\Pi_\lambda\chi_{\sqrt{\lambda} \B}$. 

\newcommand{\sq}{{\mathlarger{\square}}}

To state our result, we need some notations. For $X=(a, b)\in  \sq:= [1/2,1]\times[0,1/2]$, we denote $X'=(1-b, 1-a)$.  Likewise, we define
$\mathfrak Z'=\{ X': X\in \mathfrak Z\}$ for a set $\mathfrak Z\subset \sq$. If $X, Y\in \sq$ and $X\neq  Y,$   $[X,Y]$ and $(X,Y)$  denote the closed and open line segments connecting $X$ and $Y,$ respectively. Similarly, the half open  line segments  $(X, Y]$, $[X, Y)$ are  defined.  
Finally, if $X_1, \dots, X_k\in \sq$, by $[X_1, \dots, X_k]$ we denote the convex hull of  $X_1, \dots, X_k$. 

\begin{defn}
Let $\mathfrak A=\mathfrak A(d),$ $ \mathfrak C=\mathfrak C(d)$, and $\mathfrak D=\mathfrak D(d)\in \sq$ be  given    by
\begin{align*}
    &\mathfrak A=\left( \dfrac{d+3}{2(d+1)}, \  \dfrac12\right), 
        \  \mathfrak C=\left(\dfrac{d^2+4d-1}{2d(d+1)},\dfrac{d-1}{2d}\right),
   \ \mathfrak D=\left(1,\frac{d-1}{2d}\right).
\end{align*}
Let $\mathcal R_1=[(\frac12, \frac12),  \mathfrak A,  \mathfrak C,  \mathfrak C', \mathfrak A']\setminus \{\mathfrak C, \mathfrak C'\}$,   $\mathcal R_2=[\mathfrak A , (1,1/2), \mathfrak D, \mathfrak C]\setminus [\mathfrak C, \mathfrak D]$, and  $\mathcal R_3=[\mathfrak C, \mathfrak D, (1,0), \mathfrak D', \mathfrak C']\setminus ([\mathfrak C, \mathfrak D]\cup [\mathfrak C', \mathfrak D'])$. $($See Figure \ref{local-type}$)$.
\end{defn}

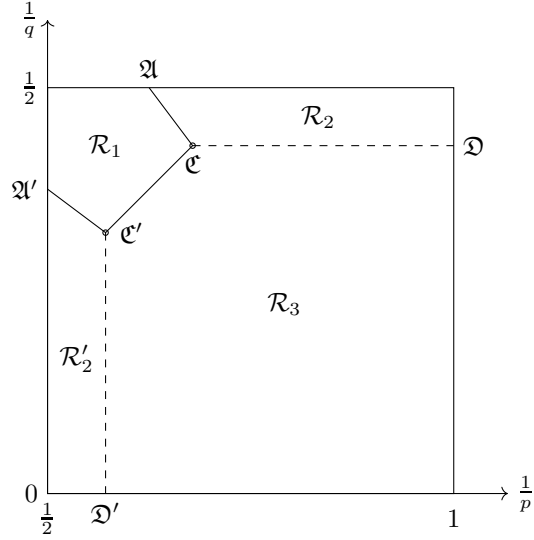
\begin{figure}
\begin{tikzpicture}[scale=0.9]
\draw[<->] (0,7) node[left]{$\frac1q$}--(0,0)--(6.8,0)node[right]{$\frac1p$};
\draw (0,0) rectangle (6,6);
\draw (0,6)--(0,4.5)--(6/7,27/7) --(15/7,36/7)--(1.5,6);
\draw[dashed] (6/7,27/7)--(6/7,0); \draw[dashed] (15/7,36/7)--(6,36/7); 
\node[left] at (0,4.5) {$\mathfrak A'$}; \node[above] at (1.5,6) {$ \mathfrak A$}; 
\node[below] at (6/7,0) {$\mathfrak D'$}; \node[right] at (6,36/7) {$\mathfrak D$}; 
\node[] at (6/7,36/7) {$\mathcal R_1$};
\node[] at (3/7,14/7) {$\mathcal R_2'$}; \node[] at (28/7,39/7) {$\mathcal R_2$};
\node[above] at (3.5,2.5) {$\mathcal R_3$};
\draw (6/7,27/7) circle [radius=0.04]; \node[right] at (6.5/7,27/7) {$\mathfrak C'$};
\draw (15/7,36/7) circle [radius=0.04]; \node[below] at (15/7,36/7) {$\mathfrak C$}; 
\node[left] at (0,0) {$0$};\node[below] at (0,0) {$\frac12$};
\node[left] at (0,6) {$\frac12$};\node[below] at (6,-0.1) {$1$};
\end{tikzpicture}
\caption{The points $\mathfrak A,$ $\mathfrak C $, $\mathfrak D$, and the regions $\mathcal R_1$, $\mathcal R_2,$ $\mathcal R_3$. }
\label{local-type}
\end{figure}

For $(1/p,1/q)\in \sq$,  we define the exponent  $\beta(p,q)$ by setting\footnote{Note $ \beta(p,q)=\max \big(-\tfrac12\delta(p,q),  -1+\tfrac{d}{2}\delta(p,q), -\tfrac{d+1}{2}+\tfrac{d}{2}\big(\frac1p+\frac1q\big), \tfrac{d-1}{2}-\tfrac{d}{2}\big(\frac1p+\frac1q\big)\big). $}
\[
\beta(p,q)=
         \begin{cases} 
                 \  -\frac12\delta(p,q),    & \ \big(\frac1p,\frac1q\big)\in\mathcal R_1,  
                                              \\  
                 \ \frac{d}{2}\big(\frac1p+\frac1q\big) -\frac{d+1}{2}, & \ \big(\frac1p,\frac1q\big)\in\mathcal R_2, 
                    \\
               \     \frac{d-1}{2}-\frac{d}{2}\big(\frac1p+\frac1q\big),& \ \big(\frac1p,\frac1q\big)\in\mathcal R_2',
                   \\
               \    \frac{d}{2}\delta(p,q) -1,           & \ \big(\frac1p,\frac1q\big)\in {\mathcal R}_3\cup [\mathfrak C, \mathfrak D]\cup [\mathfrak C', \mathfrak D'] .
\end{cases} 
\]
Here $\delta(p,q)=1/p-1/q.$ For a given set $E\subset \mathbb R^d$ we denote $E_\lambda=\{ \sqrt \lambda x: x\in E\}$.

\begin{thm}\label{thm-locest}  Let $d\ge 2$ and $(1/p,1/q)\in \sq$.   Then,  we have 
\begin{align}\label{est-loc0}    \|\chi_{ \B_\lambda}\Pi_\lambda \chi_{\B_\lambda}\|_{p\to q}\lesssim    \lambda^{\beta(p,q)}
\end{align}
if and only if  $(1/p,1/q)\not\in [\mathfrak C,\mathfrak  D]\cup[\mathfrak C', \mathfrak  D']$. 
Moreover, we have
\begin{enumerate}
\item[$\!(i)$] $  \|\chi_{ \B_\lambda}\Pi_\lambda \chi_{\B_\lambda}\|_{L^p\to L^{q,\infty}}\lesssim \lambda^{\beta(p,q)}$   if $(1/p,1/q)\in (\mathfrak C, \mathfrak D]\cup (\mathfrak C', \mathfrak D']$,
\item[$(ii)$] $\|\chi_{ \B_\lambda}\Pi_\lambda \chi_{\B_\lambda}\|_{ L^{p,1}\to L^{q,\infty}} \lesssim   \lambda^{\beta(p,q)}$   if $(1/p,1/q)=\mathfrak C$ or $\mathfrak C'$.
\end{enumerate}
\end{thm}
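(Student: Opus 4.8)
The plan is to exploit the transplantation heuristic explained in the introduction: the operator $\chi_{\B_\lambda}\Pi_\lambda\chi_{\B_\lambda}$, after the natural parabolic rescaling $x\mapsto\sqrt\lambda x$, should behave like the Euclidean spectral cluster $\proj$ with $k\sim\lambda$, whose sharp $L^p$--$L^q$ mapping properties are recorded in Proposition~\ref{laplacian}. So the first step is to set up a rigorous comparison. Using the Mehler-type formula for the kernel of $\Pi_\lambda$ and the fact that $\B_\lambda$ sits well inside the classically allowed region $\{|x|<\sqrt\lambda\}$ (in fact at distance $\gtrsim\sqrt\lambda$ from $\sqrt\lambda\mathbb S^{d-1}$), one writes the kernel of $\chi_{\B_\lambda}\Pi_\lambda\chi_{\B_\lambda}$ as an oscillatory integral whose phase, after rescaling, is a small smooth perturbation of the Euclidean cluster phase; the Kenig--Stanton--Tomas transplantation then transfers the Euclidean bounds to the Hermite side, and conversely the Euclidean lower bounds transfer back, yielding necessity. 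This reduces everything to the sharp bounds for $\proj$.

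The second step is to prove the sharp bounds for $\proj$ stated in Proposition~\ref{laplacian}. By a rescaling $\xi\mapsto\sqrt k\,\xi$ the operator $\proj$ becomes, up to acceptable errors, $k^{(d-1)/2}$ times the restriction--extension operator $R^*R\colon f\mapsto(\widehat f\,d\sigma)^\vee$ on the unit sphere, plus lower-order contributions from the annular thickening of width $1/k$. The mapping properties of $R^*R$ — equivalently, the $L^p$--$L^q$ theory of the Bochner--Riesz operator of negative order $-1$, or of the cone multiplier — are exactly what dictate the four linear pieces of $\beta(p,q)$ on the regions $\mathcal R_1,\mathcal R_2,\mathcal R_2',\mathcal R_3$: the $-\tfrac12\delta(p,q)$ exponent on $\mathcal R_1$ is the $L^2$-based Stein--Tomas range, the exponents on $\mathcal R_2,\mathcal R_2'$ come from the Knapp example localized to a cap, and the $\tfrac d2\delta(p,q)-1$ exponent on $\mathcal R_3$ is the one forced by testing on characteristic functions of thin annuli / balls. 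One assembles these by interpolation between known endpoint estimates (Stein--Tomas, the Bochner--Riesz negative-order estimates of Börjeson/Sogge type, and trivial $L^1$ and $L^\infty$ bounds) together with the scaling. The restricted weak-type statements $(i)$ and $(ii)$ at the corner points $\mathfrak C,\mathfrak D$ and on the open segments $(\mathfrak C,\mathfrak D]$ are obtained the same way: at $\mathfrak C$ one has a genuine restricted weak-type $(p,1)\to(q,\infty)$ endpoint for the negative-order Bochner--Riesz operator (this is where the Lorentz refinement is unavoidable, since the strong bound fails there), and the segment $(\mathfrak C,\mathfrak D]$ is handled by real interpolation between the $\mathfrak C$ endpoint and the strong bound valid past $\mathfrak D$ — actually $L^p\to L^{q,\infty}$ there because $\delta(p,q)$ is constant along $[\mathfrak C,\mathfrak D]$ so one is on a line of fixed scaling and only the weak-type survives.

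For the necessity direction (the "only if"), I would produce the standard extremizers transplanted to the Hermite setting: (a) the rescaled Knapp cap on $\sqrt\lambda\mathbb S^{d-1}$ intersected with $\B_\lambda$ gives the lower bounds matching the $\mathcal R_2,\mathcal R_2'$ exponents; (b) an approximate-identity / focusing example built from a single rescaled Hermite spherical harmonic gives the $L^2$-line bound on $\mathcal R_1$; and (c) testing $\chi_{\B_\lambda}\Pi_\lambda\chi_{\B_\lambda}$ against $f=\chi_{B}$ for a small ball $B$, using the on-diagonal size $\sim\lambda^{(d-2)/2}$ of the kernel, forces the exponent $\tfrac d2\delta(p,q)-1$ on $\mathcal R_3$ and shows the strong bound must fail on $[\mathfrak C,\mathfrak D]\cup[\mathfrak C',\mathfrak D']$, which is precisely the excluded set.

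The main obstacle I expect is making the transplantation quantitatively sharp \emph{including the endpoint Lorentz estimates}: the Kenig--Stanton--Tomas argument is naturally stated for strong $L^p$--$L^q$ bounds in the interior of the range, and pushing it to respect the restricted-weak-type structure at $\mathfrak C$ and along $(\mathfrak C,\mathfrak D]$ — while simultaneously controlling the error terms coming from the $O(1)$-width annular thickening and from the deviation of the Hermite phase from the Euclidean one uniformly down to these endpoints — is the delicate point. A secondary technical nuisance is the behavior near $\mathfrak D=(1,\tfrac{d-1}{2d})$, where $p=1$: here one must handle the $L^1$ endpoint of the negative-order Bochner--Riesz operator by hand rather than by interpolation, and the transplantation of an $L^1$ bound requires care with the kernel estimates rather than $TT^*$ duality.
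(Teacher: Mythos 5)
There is a genuine gap in the sufficiency part: your plan is to \emph{import} the Euclidean bounds for $\proj$ (equivalently for $f\mapsto(\widehat f\,|_{\mathbb S^{d-1}})^\vee$) into the Hermite setting by a Kenig--Stanton--Tomas transplantation, but that transplantation only runs in the opposite direction. The KST-type argument (and the paper's Lemma \ref{implication}) compares the spectral function $e(x,y,\lambda)$ of the operator with the Euclidean one via H\"ormander's asymptotics, whose error is $O(\lambda^{(d-1)/2})$; this is far larger than the size of the kernel of a \emph{single} projector $\Pi_\lambda$ (which on $\B_\lambda$ is only $O(\lambda^{(d-2)/2})$ on the diagonal), so the comparison becomes meaningful only after summing over $\sim\nu$ consecutive eigenvalues and letting $\nu\to\infty$. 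That limiting procedure converts a bound for $\chi_{\B_\lambda}\Pi_\lambda\chi_{\B_\lambda}$ into a bound for $\proj$ — it cannot be reversed to deduce the Hermite estimate from the Euclidean one, and no "small smooth perturbation of the Euclidean cluster phase" statement for a single $\Pi_\lambda$ is available at the level of precision your argument needs (certainly not uniformly down to the Lorentz endpoints at $\mathfrak C$ and along $(\mathfrak C,\mathfrak D]$, which you yourself flag as the delicate point). The paper instead proves the positive estimates directly: it uses the Mehler integral representation (Lemma \ref{kernel}), the dyadic-in-time decomposition $\Pi_\lambda=\sum_j\Pi_\lambda[\psi_j]$, the kernel bound \eqref{che} coming from the non-degeneracy $|\cD(x,y)|\gtrsim1$ on $\B\times\B$, the $TT^*$-type Lemma \ref{tt-st} to pass from the $L^1\to L^\infty$ and $L^1\to L^2$ bounds to off-diagonal $L^p\to L^q$ bounds for each $j$, and then Bourgain-type summation (Lemma \ref{s-trick}) to capture the weak-type and restricted weak-type endpoints. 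If you want to follow your route you must supply a substitute for all of this, i.e.\ an actual proof that the rescaled Hermite kernel equals the Euclidean cluster kernel up to errors acceptable in $L^p\to L^{q,\infty}$ at the critical exponents; no such estimate is currently known.

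Your necessity argument also has a flaw at the crucial set. Testing $\chi_{\B_\lambda}\Pi_\lambda\chi_{\B_\lambda}$ against $\chi_B$ and using the on-diagonal kernel size only produces the \emph{matching lower bound} $\gtrsim\lambda^{\frac d2\delta(p,q)-1}$; it cannot show that the estimate with that same exponent \emph{fails} on $[\mathfrak C,\mathfrak D]\cup[\mathfrak C',\mathfrak D']$. The failure there is a strictly finer phenomenon: the paper obtains it by showing (Lemma \ref{implication}, via H\"ormander's Theorem \ref{thm:hor} and the averaging/limiting argument described above) that the bound \eqref{lplq-weaker} would imply the restriction--extension estimate \eqref{est:euclid}, which is known to fail on those segments by B\"orjeson's counterexamples (Theorem \ref{rr*}). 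So the transplantation you invoke is exactly the tool needed — but only for this "only if" half, not for the sufficiency half where you propose to use it.
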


 Here $\| \cdot \|_{L^{p,r}\to L^{q,s}}$ denotes the operator norm from the Lorentz space $L^{p,r}$ to $L^{q,s}$ (e.g., see \cite{St71}). 
 Remarkably,  the estimates in Theorem \ref{thm-locest} are  sharp. More precisely, 
 by Theorem \ref{thm-locest} and Proposition \ref{lower-mu} below  we have 
\[ \| \chi_{ \B_\lambda}\Pi_\lambda \chi_{\B_\lambda}\|_{p\to q}\sim \lambda^{\beta(p,q)} \]
for $(1/p,1/q)\in\sq\setminus([\mathfrak C,\mathfrak  D]\cup[\mathfrak C', \mathfrak  D'])$.
 When $p=2$ (equivalently, $q=p'$, or $q=2$)  the  sharp $L^p$--$L^q$ (local) bounds \eqref{est-loc0} were previously obtained  (\cite{K94, Th98, T05}).  However, we emphasize that 
the sharp bounds for other $p,q$ are not generally accessible  by mere interpolation between the  previously known bounds   due to change of the regimes  (see Figure \ref{local-type}).  
As alluded above, there is a strong resemblance between the local estimate for $\Pi_\lambda$ (Theorem \ref{thm-locest}) and the global estimate for ${\mathlarger \wp}_k$ (Corollary \ref{laplacian}).  For some special cases   the local estimates in Theorem \ref{thm-locest}  imply  those  in Corollary \ref{laplacian} (see Lemma \ref{implication}). 

The implication in Lemma \ref{implication} remains valid while $L^{q}$ is replaced  by Lorentz spaces $L^{q,\infty}$ as long as $q>1$. So, it is not possible to strengthen the weak type estimates  in Theorem \ref{thm-locest}  ($(i)$)  by replacing  $L^{q,\infty}$ with the smaller space $L^{q,r}$, $r<\infty$, because the same  is true for  $f\to ( \widehat f\, |_{ \mathbb S^{d-1}})^\vee$ (see Theorem \ref{rr*}).

\subsubsection*{Estimate near the sphere $\sqrt{\lambda}\mathbb S^{d-1}$}
As shown in \cite{Th87, KST},  $\Pi_\lambda$ exhibits different behaviors when the input functions are supported near (equivalently,  $L^q$ integration is taken over) the set $\sqrt{\lambda}\mathbb S^{d-1}$.   This naturally  leads  to considering  a localization getting close to  the sphere $\sqrt{\lambda}\mathbb S^{d-1}$. 
To do this, for   $ \mu \in \{ 2^k:k\in \mathbb Z\}$, set
\[
A_{\mu}=\big\{x:   (1- |x|)\in [2^{-1}\mu, \mu]\big\}, \quad   A_{\lambda, \mu}=\big\{x:   \lambda^{-\frac12} x\in A_{\mu} \big\}.
\]
We also denote
\[ \chi_\mu = \chi_{A_\mu}, \quad \chi_{\lambda, \mu}=\chi_{A_{\lambda,\mu}}.\]

\begin{figure}
\centering
\begin{minipage}{.4\textwidth}
  \centering
 \begin{tikzpicture}[scale=0.6]
\draw[<->] (0,7) node[left]{$\frac1q$}--(0,0)--(6.8,0)node[right]{$\frac1p$};
\filldraw[fill=gray!30] (0,6)--(0,2)--(4,2)--(4,6);
\filldraw[fill=gray!30] (0,2)--(0,0)--(3,0);
\filldraw[fill=gray!30] (4,6)--(6,6)--(6,3);
\filldraw[fill=gray!30] (4,2)--(3,0)--(6,0)--(6,3);
\draw (0,0) rectangle (6,6);
\draw (0,6)--(0,2)--(4,2)--(4,6); 
\draw (0,2)--(3,0); 
\draw (4,6)--(6,3); 
\draw (4,2)--(3,0); \draw (4,2)--(6,3); 
\node[left] at (0,2) {$\mathfrak A'$}; \node[above] at (4,6) {$ \mathfrak A$}; 
\draw (3,0) circle [radius=0.07]; \node[below] at (3,0) {$\mathfrak D'$};
\draw (6,3) circle [radius=0.07]; \node[right] at (6,3) {$\mathfrak D$}; 
\node[] at (2,4) {$\mathfrak L_1$};
\node[] at (16/3,5) {$\mathfrak L_2$}; \node[] at (1,2/3) {$\mathfrak L_2'$};
\node[] at (5,1) {$\mathfrak L_3$};
\node[anchor=north west] at (3.9,2.1) {$\mathfrak G$}; 
\node[left] at (0,0) {$0$};\node[below] at (0,0) {$\frac12$};
\node[left] at (0,6) {$\frac12$};\node[below] at (6,0) {$1$};
\end{tikzpicture}
\caption{ $d=2$. }
\label{xyx}
\end{minipage}
\hspace{30pt}
\begin{minipage}{.4\textwidth}
  \centering
 \begin{tikzpicture}[scale=0.6]
 
\draw[<->] (0,7) node[left]{$\frac1q$}--(0,0)--(6.8,0)node[right]{$\frac1p$};
\filldraw[fill=gray!30] (0,6)--(3,6)--(18/5,18/5)--(12/5,12/5)--(0,3);
\filldraw[fill=gray!30] (0,3)--(0,0)--(2,0);
\filldraw[fill=gray!30] (3,6)--(6,6)--(6,4);
\filldraw[fill=gray!30] (12/5,12/5)--(18/5,18/5)--(6,4)--(6,0)--(2,0);
\draw (0,0) rectangle (6,6);
\draw (0,6)--(0,3)--(12/5,12/5) --(18/5,18/5)--(3,6); 
\draw (0,3)--(2,0); 
\draw (3,6)--(6,4); 
\draw (12/5,12/5)--(2,0); \draw(18/5,18/5)--(6,4); 
\node[left] at (0,3) {$\mathfrak A'$}; \node[above] at (3,6) {$ \mathfrak A$}; 
\draw (2,0) circle [radius=0.07]; \node[below] at (2,0) {$\mathfrak D'$};
\draw (6,4) circle [radius=0.07]; \node[right] at (6,4) {$\mathfrak D$}; 
\node[] at (1.5,4.5) {$\mathfrak L_1$};
\node[] at (5,16/3) {$\mathfrak L_2$}; \node[] at (2/3,1) {$\mathfrak L_2'$};
\node[] at (4.2,1.8) {$\mathfrak L_3$};
\draw (12/5,12/5) circle [radius=0.07]; \node[right] at (12/5,12/5) {$\mathfrak G'$};
\draw (18/5,18/5) circle [radius=0.07]; \node[below] at (18/5,18/5) {$\mathfrak G$}; 
\node[left] at (0,0) {$0$};\node[below] at (0,0) {$\frac12$};
\node[left] at (0,6) {$\frac12$};\node[below] at (6,0) {$1$};
\end{tikzpicture}
\caption{$d\ge 3$. }
\label{xyxz}
\end{minipage}
\end{figure}

 To obtain the  sharp (global) $L^2$--$L^q$ estimate with $q\ge2$, Koch and Tataru \cite{T05} considered the localized operator $\chi_{\lambda,\mu}\Pi_\lambda$. They showed  
\begin{equation}  
\label{kt-nu}
 \|\chplp\Pi_\lambda \|_{2\to q}\sim 
                                      \begin{cases}
\lambda^{-\frac12\delta(2,q)}\mu^{\frac14-\frac{d+3}{4}\delta(2,q)}, &   \ \, \, 2\le q\le \frac{2(d+1)}{d-1},
\\
(\lambda\mu)^{-\frac12+\frac d2\delta(2,q)}, &   \   \frac{2(d+1)}{d-1} \le q\le \infty 
                                      \end{cases}
\end{equation}
for $\lambda^{-\frac23}\le\mu\le 1/4$ (see \cite[Theorem 3]{T05}).  In fact, a slightly different form of  weighted $L^2$ estimate  was shown but 
 the result  is essentially equivalent to \eqref{kt-nu}.  Since the Hermite functions decay exponentially outside the ball $B(0, \sqrt \lambda)$, the contribution from  $(1-\chi_{B(0, \sqrt \lambda)} )\Pi_\lambda$ 
is less significant. In fact, if   $\chplp$ in  \eqref{kt-nu} is replaced by the characteristic function of $A_{\lambda,\mu}^-:=\{\sqrt \lambda x:  |x|-1\in[2^{-1}\mu,\mu] \}$, 
similar but stronger estimates  can be shown.  By duality, the estimate \eqref{kt-nu}  is equivalent to 
\begin{equation}  
\label{ktdia}
 \|\chplp\Pi_\lambda\chplp \|_{q'\to q}\sim 
                                      \begin{cases}
\lambda^{-\frac12\delta(q',q)}\mu^{\frac14-\frac{d+3}{4}\delta(q',q)}, &   \  \, \, 2\le q\le \frac{2(d+1)}{d-1},
\\
(\lambda\mu)^{-1+\frac d2\delta(q',q)}, &   \   \frac{2(d+1)}{d-1} \le q\le \infty.
                                      \end{cases}
\end{equation}

 Our second result extends the estimate \eqref{ktdia} to $(p,q)$ other than $(q',q)$. 
We set
\begin{align*}
    \gamma(p,q)=\begin{cases} 
                 \  \frac12-\frac{d+3}{4}\delta(p,q),    & \  \  \big(\frac1p,\frac1q\big)\in\mathcal R_1,  
                                              \\  
                 \ d\big(\frac{1}{2p}+\frac1q\big)-\frac{3d+1}{4}, & \ \  \big(\frac1p,\frac1q\big)\in\mathcal R_2, 
                    \\
               \     \frac{3d-1}{4}-d\big(\frac1p+\frac{1}{2q}\big),& \ \  \big(\frac1p,\frac1q\big)\in\mathcal R_2',
                   \\
               \    \frac{d}{2}\delta(p,q) -1,           &  \ \ \big(\frac1p,\frac1q\big)\in  {\mathcal R}_3\cup [\mathfrak C, \mathfrak D]\cup [\mathfrak C', \mathfrak D'],
\end{cases}
\end{align*}
 for $(1/p,1/q)\in \sq$. We consider the estimate 
 \begin{align}\label{est-ann}
    \|\chi_{\lambda,\mu} \Pi_\lambda\chi_{\lambda,\mu}\|_{p\to q}\le C\lambda^{\beta(p,q)}\mu^{\gamma(p,q)},
\end{align}
which coincides with \eqref{ktdia} when $p=q'$.   It is not difficult to show that the exponent in \eqref{est-ann} can not be improved to any better one (see Proposition \ref{lower-mu}) up to a constant.   It seems to be plausible to expect that the next holds true.

 \begin{conj}\label{conjecture} For  $(1/p,1/q)\in \sq$, the estimate \eqref{est-ann} holds.  \end{conj}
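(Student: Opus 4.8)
The plan is to verify \eqref{est-ann} at a finite list of extreme exponents and fill in the rest by interpolation, using that $\chi_{\lambda,\mu}\Pi_\lambda\chi_{\lambda,\mu}$ is self-adjoint (so the reflection $X\mapsto X'$ reduces $\mathcal R_2'$ to $\mathcal R_2$) and that $\beta,\gamma$ are each affine on $\mathcal R_1$, on $\mathcal R_2$, and on $\mathcal R_3$. Two families of vertices are already available: on the antidiagonal $\{1/p+1/q=1\}\cap\sq$ the estimate is exactly the Koch--Tataru bound \eqref{ktdia}, which covers $(\tfrac12,\tfrac12)$ and the antidiagonal portions of $\mathcal R_1$ and $\mathcal R_3$; and when $\mu\sim1$, i.e. $A_{\lambda,\mu}$ sits at a fixed distance from $\sqrt\lambda\mathbb S^{d-1}$, every case of \eqref{est-ann} follows from Theorem \ref{thm-locest}. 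The genuinely new content is thus the behaviour as $\mu\to0$ at exponents off the antidiagonal; the target exponents are already known to be optimal (Proposition \ref{lower-mu}).

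\emph{Reduction to a model operator.} Rescaling $x\mapsto\sqrt\lambda x$ gives $\|\chi_{\lambda,\mu}\Pi_\lambda\chi_{\lambda,\mu}\|_{p\to q}=\lambda^{\frac d2(1-\delta(p,q))}\,\|\chi_{A_\mu}\mathcal K_\lambda\chi_{A_\mu}\|_{p\to q}$, with $\mathcal K_\lambda$ the rescaled projection kernel. Via the oscillatory-integral representation of $\Pi_\lambda$ (Mehler's formula, equivalently the half-wave parametrix for $\sqrt H$), $\mathcal K_\lambda$ decomposes away from the diagonal into a bounded sum of pieces $e^{\pm i\lambda\Phi(x,y)}a(x,y)$ modulo rapidly decaying errors. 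One then partitions the shell $A_\mu=\{1-|x|\in[\mu/2,\mu]\}$ into $O(\mu^{-(d-1)/2})$ slabs of radial thickness $\mu$ and angular width $\sqrt\mu$ in each transverse direction --- the scale at which the curvature of $\mathbb S^{d-1}$ first matches the radial thickness --- and on a pair of slabs $\Theta,\Theta'$, affinely dilating each to the unit cube turns $\chi_\Theta\mathcal K_\lambda\chi_{\Theta'}$ into an oscillatory integral operator
\begin{equation*}
T_\Lambda f(x)=\int e^{i\Lambda\phi(x,y)}\,b(x,y)\,f(y)\,dy
\end{equation*}
at an effective frequency $\Lambda$ that is a fixed power of $\lambda$ times a fixed power of $\mu$, whose phase satisfies H\"ormander's rotational-curvature condition uniformly when $\Theta,\Theta'$ are conjugate (antipodal caps) and is essentially flat otherwise --- the non-conjugate terms being lower order and summed directly. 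The problem reduces to the sharp $L^p\to L^q$ bound for a single $T_\Lambda$, the almost-orthogonal summation over the $O(\mu^{-(d-1)/2})$ slabs (a square-function/overlap estimate), and the bookkeeping of the $\mu$-powers from the dilations; together with the explicit $\lambda$-power above, this reproduces $\lambda^{\beta(p,q)}\mu^{\gamma(p,q)}$.

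\emph{The three regimes.} On $\mathcal R_1$ the argument is of $TT^*$/Stein--Tomas type: Plancherel and orthogonality collapse the sum over slabs, the single-slab bound is of $L^2$ type, and one gets all of $\mathcal R_1$ --- including the closure points $\mathfrak C,\mathfrak C'$ with the weak-type loss, exactly as in Theorem \ref{thm-locest} --- by interpolating with \eqref{ktdia}; this regime is unconditional. On $\mathcal R_3\cup[\mathfrak C,\mathfrak D]\cup[\mathfrak C',\mathfrak D']$ one uses pointwise kernel bounds --- sharpest at $(1,0)$, where $\sup_{x,y\in A_{\lambda,\mu}}|\Pi_\lambda(x,y)|\sim(\lambda\mu)^{\frac d2-1}$ by the Weyl-type estimate for the spectral function --- interpolated with the corner bounds at $\mathfrak D,\mathfrak D',\mathfrak C,\mathfrak C'$; the common value $\tfrac d2\delta(p,q)-1$ of $\beta$ and $\gamma$ and the weak-type statement at $\mathfrak C,\mathfrak C'$ come out of this Young-type interpolation. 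The delicate part is $\mathcal R_2$ (hence $\mathcal R_2'$): its quadrilateral $[\mathfrak A,(1,\tfrac12),\mathfrak D,\mathfrak C]$ lies entirely above the antidiagonal, so \eqref{ktdia} is useless there, and one must feed in the sharp $L^p\to L^q$ behaviour of the conjugate-cap $T_\Lambda$ --- at $\mathfrak A$ (a Stein--Tomas-type endpoint, accessible), at $(1,\tfrac12)$ and $\mathfrak D$ ($L^1\to L^q$ concentration estimates for a single cluster kernel $\Pi_\lambda(\cdot,y)$ near the sphere, governed by how the column's $L^q$-mass splits between shell and bulk), and throughout the interior --- and then sum over slabs.

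\emph{Main obstacle.} The bottleneck is precisely these $L^p\to L^q$ estimates for $T_\Lambda$ on $\mathcal R_2$: in sharp form they are Carleson--Sj\"olin/H\"ormander oscillatory integral inequalities for a phase with rotational curvature, and for $d\ge3$ their full range is equivalent to --- or no more tractable than --- open cases of the Bochner--Riesz (equivalently, Fourier restriction) problem. I therefore expect this route to yield \eqref{est-ann} in full only on the part of $\mathcal R_2\cup\mathcal R_2'$ reachable from what is presently known (the endpoint at $\mathfrak A$, Bourgain--Guth/polynomial-method refinements, interpolation with the trivial and $L^2$-based bounds), the remainder staying conditional on restriction theory; for $d=2$ the classical Carleson--Sj\"olin theorem supplies all the model estimates and the obstruction essentially disappears. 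The remaining ingredients --- reducing $\mathcal K_\lambda$ to its oscillatory pieces, the slab decomposition and $\mu$-bookkeeping, and the almost-orthogonal summation --- are standard, if lengthy.
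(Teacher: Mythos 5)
You have not proved the statement, and you should be aware that the paper does not either: it is stated as Conjecture \ref{conjecture} precisely because it is open. The paper only partially verifies it, in Theorem \ref{thm-annest}, establishing \eqref{est-ann} for $(1/p,1/q)\in\mathfrak L_1\cup\mathfrak L_2\cup\mathfrak L_2'\cup\mathfrak L_3$ and explicitly leaving the bound unknown on $\operatorname{int}([\mathfrak A,\mathfrak D,\mathfrak G]\cup[\mathfrak A',\mathfrak D',\mathfrak G'])$ for every $d\ge 2$. Your proposal, by your own admission, has the same hole in the same place: the decisive ingredient is the sharp $L^p$--$L^q$ bound in the regime $\mathcal R_2$ for the conjugate-cap model operator $T_\Lambda$, and you leave exactly that ``conditional on restriction theory.'' So, judged as a proof of the conjecture, there is a genuine gap, and it is not a technical one that interpolation, duality, or the $\mu\sim 1$ case (Theorem \ref{thm-locest}) can fill, since $\mathcal R_2$ lies strictly above the antidiagonal where \eqref{ktdia} gives nothing.

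Two further cautions. First, your claim that for $d=2$ the Carleson--Sj\"olin theorem ``supplies all the model estimates and the obstruction essentially disappears'' is unsubstantiated and stronger than what the paper achieves: the uncovered region is nonempty for $d=2$ as well (there $\mathfrak G=\mathfrak G'=(5/6,1/6)$), and the obstacle is not merely a curvature hypothesis for a single clean model operator but the degeneration of the phase as $\cD(x,y)\to 0$ at scale $\mu$, together with the two-parameter $(\lambda,\mu)$ bookkeeping; no transplantation to a Euclidean restriction--extension model is available at that scale. Second, on the range that \emph{is} provable, the paper's mechanism differs from your slab decomposition at angular width $\sqrt\mu$: it runs a Whitney-type angular decomposition governed by the size of the discriminant $\cD(x,y)$ (scales $2^{-\nu}$ down to $\mu$, then cubes of side $c\vepc\mu$), uses the stationary-phase kernel bounds of Lemma \ref{oscillatory}, the $TT^*$ Lemma \ref{tt-st} and the summation trick of Lemma \ref{s-trick}, and, near the degenerate time $S_c$, a second-derivative analysis with the nondegenerate mixed Hessian (Lemma \ref{detPhi}); even with all of this it reaches only Theorem \ref{thm-annest}. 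If you intend your write-up as a partial result, you must state exactly which subregion of $\mathcal R_2\cup\mathcal R_2'$ your ``accessible'' estimates cover and actually carry out the single-slab bounds and the almost-orthogonal summation with the $\mu$-powers; none of these steps is executed in the proposal.
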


We partially verify  Conjecture \ref{conjecture}. In order to state our result we need additional notations. 
\begin{defn}
Let 
\begin{align*}
    \mathfrak G=\left(\frac{2d^2+7d-7}{2(2d-1)(d+1)},\frac{2d-3}{2(2d-1)}\right).
\end{align*}
For $d\ge 2$,  we set  $\mathfrak L_1=[(1/2, 1/2),  \mathfrak A,  \mathfrak G,  \mathfrak G', \mathfrak A']\setminus\{\mathfrak G, \mathfrak G'\}$,   $\mathfrak L_2=[\mathfrak A , (1,1/2),  \mathfrak D]\setminus \{\mathfrak D\}$,    and $\mathfrak L_3=[(1,0),  \mathfrak D,  \mathfrak G,  \mathfrak G', \mathfrak D']\setminus \{ \mathfrak G, \mathfrak D, \mathfrak G', \mathfrak D'\}$ $($Figure \ref{xyxz}$\,)$.
  \end{defn}
  
When $d=2$, $\mathfrak G=\mathfrak G'=\left( 5/6,1/6\right)$ (Figure \ref{xyx}).  When $d\ge 3$,  the line segment $[\mathfrak A, (5/6,1/6)]$ and the line $x-y=2/{(d+1)}$ meet each other at $\mathfrak G$.  See Figure \ref{xyx} and \ref{xyxz}.

\begin{thm}\label{thm-annest}  Let $d\ge 2$ and $\lambda^{-\frac23}\le\mu\le 1/4$. 
If $(1/p,1/q)\in(\mathfrak L_1\cup\mathfrak L_2\cup\mathfrak L_2'\cup\mathfrak L_3)$, \eqref{est-ann} holds.  Moreover,  we have the following estimates: 
\begin{align}
\label{rweak}
&\|\chi_{\lambda,\mu}\Pi_\lambda\chi_{\lambda,\mu} \|_{L^{\frac{2d}{d+1},1}\to L^\infty}\lesssim (\lambda\mu)^{\frac{d-3}{4}}, 
\\ 
\label{weakann}
    &\| \chi_{\lambda,\mu} \Pi_\lambda \chi_{\lambda,\mu} \|_{L^{p,1}\to L^{q,\infty}} \lesssim (\lambda \mu)^{-\frac{1}{d+1}}, \quad ({1}/{p},{1}/{q})=\mathfrak G, \mathfrak G', \quad d\ge 3. 
\end{align}
\end{thm}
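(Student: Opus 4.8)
The plan is to prove Theorem~\ref{thm-annest} by reducing the annular estimate \eqref{est-ann} to a small number of ``vertex'' estimates and then filling in the polytope $\mathfrak L_1\cup\mathfrak L_2\cup\mathfrak L_2'\cup\mathfrak L_3$ by interpolation. First I would record that the exponents $\beta(p,q)$ and $\gamma(p,q)$ are each \emph{piecewise affine} in $(1/p,1/q)$ on the four regions, so it suffices to establish \eqref{est-ann} (or its Lorentz-space refinement) at the extreme points of these regions: the ``Stein--Tomas type'' point $\mathfrak A=(\tfrac{d+3}{2(d+1)},\tfrac12)$ and its dual $\mathfrak A'$, the corner $(1,1/2)$ and $(1/2,1/2)$, the points $\mathfrak D,\mathfrak D'$, $(1,0)$, and the critical points $\mathfrak G,\mathfrak G'$. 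The estimates at $(1/2,1/2)$, $(1,1/2)$, and $\mathfrak A$ already follow from the known diagonal bound \eqref{ktdia} (equivalently \eqref{kt-nu}) of Koch--Tataru together with trivial $L^1$-- or $L^\infty$--bounds obtained from the pointwise kernel size, so the real content is the \emph{off-diagonal} behavior near $\mathfrak D$, near $(1,0)$, and at $\mathfrak G$.

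The main analytic input should come from a dyadic decomposition of the annulus $A_{\lambda,\mu}$ adapted to the oscillation of the Hermite kernel. Following the transplantation philosophy described around \eqref{def-pk}, on the annulus $A_{\lambda,\mu}$ the projection $\Pi_\lambda$ behaves, after rescaling $x\mapsto \sqrt\lambda x$, like a Fourier integral operator whose kernel is concentrated near $|x-y|\lesssim 1$ with a phase $\sim \sqrt{\lambda\mu}\,|x-y|$ and Airy-type transition near the caustic. Thus I would split $\chi_{\lambda,\mu}\Pi_\lambda\chi_{\lambda,\mu}=\sum_{\ell} T_\ell$ according to dyadic scales $2^{-\ell}$ of $|x-y|$ (and further according to angular caps of size $\sim (2^{-\ell}/\sqrt{\lambda\mu})^{1/2}$), estimate each $T_\ell$ by an oscillatory-integral/$TT^*$ argument giving $L^2\to L^2$ and $L^1\to L^\infty$ bounds with explicit powers of $\lambda$, $\mu$, and $2^\ell$, interpolate to get $L^p\to L^{p'}$ for each $\ell$, and finally sum in $\ell$. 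The endpoint cases — the Lorentz bounds \eqref{rweak} and \eqref{weakann} — arise exactly where the geometric series in $\ell$ fails to converge (a borderline logarithmic divergence at $\mathfrak G$, $\mathfrak G'$, and at the point $(\tfrac{2d}{d+1},\ldots)$ of \eqref{rweak}); there I would replace the crude summation by a real-interpolation (Bourgain-style) argument, summing the $L^{p,1}\to L^{q,\infty}$ pieces, which is standard once the individual $T_\ell$ bounds are in hand.

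For the genuinely off-diagonal estimates — reaching $\mathfrak D=(1,\tfrac{d-1}{2d})$ and the segment toward $(1,0)$ — the input cannot be pure $L^2$ theory; here I would invoke the sharp bounds on $\wp_k$ from Corollary~\ref{laplacian} and the restriction--extension estimate of Theorem~\ref{rr*} (which control $f\mapsto(\widehat f|_{\mathbb S^{d-1}})^\vee$ in the relevant Lorentz spaces), transplanted to the Hermite setting on each dyadic piece via the Kenig--Stanton--Tomas comparison. Concretely, on the region $|x-y|\sim 1$ the rescaled kernel is a symbol of the Bochner--Riesz operator of negative order, so the $\mathcal R_3$--type exponent $\tfrac d2\delta(p,q)-1$ with the gain $\mu^{\tfrac d2\delta(p,q)-1}$ comes directly from that operator's mapping properties, while the intermediate pieces interpolate against the Stein--Tomas bound at $\mathfrak A$. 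The hardest step, I expect, is obtaining the correct exponent of $\mu$ uniformly on the dyadic pieces near the caustic scale $2^{-\ell}\sim\sqrt{\lambda\mu}$: there the kernel transitions from oscillatory to non-oscillatory, the Airy scaling must be tracked carefully, and a naive stationary-phase bound loses the sharp $\mu$-power. Resolving this requires the precise asymptotics of the Hermite kernel near the turning point (as in \cite{T05}), and making the resulting bounds add up to exactly $\gamma(p,q)$ in every region — rather than off by a harmless but non-sharp power — is where the bulk of the technical work lies.
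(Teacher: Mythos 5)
Your high-level architecture (dyadic decomposition, $L^2$ and $L^1\to L^\infty$ bounds on the pieces, a $TT^*$-type step, Bourgain-style summation at the endpoints, then interpolation/duality) does match the skeleton of the paper's argument, but the proposal has a genuine gap at exactly the point where the theorem's content lies. First, your mechanism for the off-diagonal input — transplanting the restriction--extension/negative-order Bochner--Riesz bounds of Theorem \ref{rr*} and Corollary \ref{laplacian} to the Hermite setting ``via the Kenig--Stanton--Tomas comparison'' — does not work on the annulus $A_{\lambda,\mu}$. The transplantation argument in the paper (Lemma \ref{implication}, following \cite{KST}) requires localization to a fixed ball away from $\sqrt\lambda\,\mathbb S^{d-1}$, and even there it transfers bounds \emph{from} the Hermite projector \emph{to} the Euclidean model, not conversely; near the sphere the kernel is governed by a degenerate (Airy-type) stationary point and there is no Euclidean surrogate that produces the $\mu^{\gamma(p,q)}$ gain in \eqref{est-ann}. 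Second, the estimates that actually carry the theorem, \eqref{rweak} and \eqref{weakann}, hinge on uniform bounds for the pieces where the discriminant $\cD(x,y)$ is $\ll\mu^2$ and the time scale is $\sim\mu^{1/2}$, i.e.\ where the two stationary points of the phase merge. The paper resolves this by a further dyadic decomposition around the zero $S_c(x,y)$ of $\partial_s^2\phi_1$, proving for each piece an $L^1\to L^\infty$ bound via van der Corput with $|\partial_s^2\phi_1|\sim 2^{-l}$ (\eqref{goal3_1}), an $L^2\to L^\infty$ bound by integration by parts in $y$ using a lower bound on $\partial_y\Psi$ (\eqref{goal3_2}), and an $L^2\to L^2$ bound via nondegeneracy of the mixed Hessian $\det\partial_x\partial_y^\intercal\Phi\sim\mu^{-d/2}$ (Lemma \ref{detPhi}, yielding \eqref{goal3_3}); these three bounds, fed into Lemma \ref{s-trick} and Lemma \ref{tt-st}, give Proposition \ref{hoh} and hence \eqref{rweak}, \eqref{weakann}. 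Your proposal explicitly defers this step (``where the bulk of the technical work lies''), so the endpoint estimates — and with them the extension beyond the Koch--Tataru range — are not actually established.

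Two smaller points. Interpolating only $L^2\to L^2$ and $L^1\to L^\infty$ bounds on your pieces $T_\ell$ reaches only the duality line $q=p'$; to get to $\mathfrak D$, $(1,0)$, and $\mathfrak G$ you need a third vertex (the paper uses $L^2\to L^\infty$ bounds such as \eqref{20} and \eqref{goal3_2}, plus the off-diagonal $TT^*$ lemma, Lemma \ref{tt-st}, applied to the scaled $L^{6/5,1}\to L^{6,\infty}$ estimate \eqref{scaled5/6}). Also, the paper's decomposition is in the time variable of the Mehler representation ($t\sim 2^{-j}$) combined with a Whitney-type angular decomposition controlling $\cD(x,y)$, rather than in $|x-y|$ directly; your spatial decomposition is not obviously wrong, but since the crucial degeneracy is in the $t$-integral it would still have to be supplemented by the analysis around $S_c$ described above.
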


Compared with the earlier results,  the range where  \eqref{est-ann} holds is considerably extended. Among others, worth mentioning is  the weak type $(1,\frac{2d}{d-1})$ estimate  which is equivalent to \eqref{rweak} and corresponding to the point $\mathfrak D$ in Figure \ref{local-type}. The estimate makes possible to obtain the sharp estimates  for $\ppq\in \mathcal L_2$.  However,   the optimal bound remains unknown for $(1/p,1/q)\in \operatorname{int} ([\mathfrak A,\mathfrak D,\mathfrak G]\cup [\mathfrak A',\mathfrak D',\mathfrak G'])$. 
  
The proof of the estimate \eqref{est-ann} is more involved. We follow the strategy developed in  \cite{JLR_endpoint}, which makes use of  an explicit integral representation  for the projection operator $\Pi_\lambda$. 

\subsubsection*{Global uniform estimate}
We finally consider the (global) uniform estimates for  $\Pi_\lambda$, that is to say, \eqref{rstest} with $E=\mathbb R^d$ and $\mathrm B$ independent of $\lambda$. Karadzhov \cite{K94} showed  
\begin{align}\label{karad-global}
    \|\Pi_\lambda \|_{2\to \frac{2d}{d-2}}\le C 
    \end{align}
for a constant $C$.   The bound was used to show the sharp $L^p$--Bochner-Riesz summability of the Hermite expansion for $p\ge 2d/(d-2)$ and $p\le 2d/(d+2)$. Besides, the estimate \eqref{karad-global} has applications to the strong unique continuation property for the parabolic operator.  We refer the reader to \cite{e00, ev01,ef03,Fe03,T09,CLWY,CLSY} for related developments. 
   
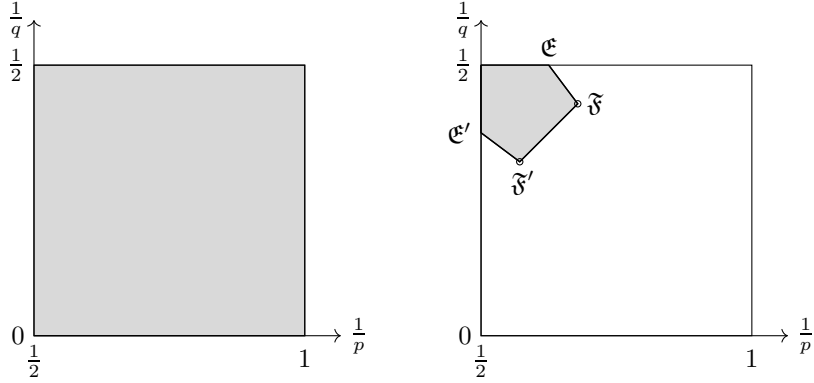
\begin{figure}[t]
\centering
\begin{tikzpicture}[scale=0.6]
\filldraw[fill=gray!30] (0,0)--(6,0)--(6,6)--(0,6);
\draw[<->] (0,7) node[left]{$\frac1q$}--(0,0)--(6.8,0)node[right]{$\frac1p$};
\draw (0,0) rectangle (6,6);
\node[left] at (0,0) {$0$};\node[below] at (0,0) {$\frac12$};
\node[left] at (0,6) {$\frac12$};\node[below] at (6,-0.1) {$1$};
\end{tikzpicture}
\quad \quad 
\begin{tikzpicture}[scale=0.6]
\draw[<->] (0,7) node[left]{$\frac1q$}--(0,0)--(6.8,0)node[right]{$\frac1p$};
\draw (0,0) rectangle (6,6);
\filldraw[fill=gray!30](0,6)--(0,4.5)--(6/7,27/7) --(15/7,36/7)--(1.5,6);
\draw [line width=0.2mm]  (0,6)--(0,4.5)--(6/7,27/7) --(15/7,36/7)--(1.5,6)--(0,6); 
\node[left] at (0,4.5) {$\mathfrak E'$}; \node[above] at (1.5,6) {$ \mathfrak E$}; 
\draw (6/7,27/7) circle [radius=0.07]; \node[below] at (6.5/7,27/7) {$\mathfrak F'$};
\draw (15/7,36/7) circle [radius=0.07]; \node[right] at (15/7,36/7) {$\mathfrak F$}; 
\node[left] at (0,0) {$0$};\node[below] at (0,0) {$\frac12$};
\node[left] at (0,6) {$\frac12$};\node[below] at (6,-0.1) {$1$};
\end{tikzpicture}
\caption{The range of $p,q$ for which \eqref{unif} holds:  $d=2$ (left)  and $d\ge 3$ (right). }
\label{fig:uniform}
\end{figure}

We   obtain  the uniform estimate on an extended range of $p,q$.
 Let 
\begin{align*}
\mathfrak E=\left(\frac{d+2}{2d},\frac 12\right),\; 
\quad 
\mathfrak F=\left(\frac{d^2+2d-4}{2d(d-1)},\frac{d-2}{2(d-1)}\right).
\end{align*}

\begin{thm}\label{glo-est}  Let  $d\ge 3$ and $\mathscr P= [ \mathfrak E, \mathfrak E', \mathfrak F, \mathfrak F', (1/2,1/2)]\setminus\{ \mathfrak F, \mathfrak F'\}.$
Then, 
\Be
\label{unif}
    \|\Pi_\lambda \|_{p\to q}\le C
\Ee
holds for a constant $C$ if  $(1/p,1/q)\in \mathscr P$. Moreover,  $\|\Pi_\lambda \|_{L^{p,1}\to L^{q,\infty}}\le C$ holds if $(1/p,1/q)= \mathfrak F$, $\mathfrak F'$.  When $d=2$,   \eqref{unif} holds for $(1/p,1/q)\in\sq$.
\end{thm}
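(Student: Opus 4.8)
The plan is to prove, by real interpolation and duality, the bound \eqref{unif} on all of $\mathscr P$ together with the restricted weak type bounds at $\mathfrak F,\mathfrak F'$, starting from estimates at the five vertices $(\tfrac12,\tfrac12),\mathfrak E,\mathfrak E',\mathfrak F,\mathfrak F'$ of the pentagon whose convex hull, minus $\{\mathfrak F,\mathfrak F'\}$, is $\mathscr P$. Since $\Pi_\lambda$ is self-adjoint, $\|\Pi_\lambda\|_{p\to q}=\|\Pi_\lambda\|_{q'\to p'}$, so the problem is symmetric under the reflection $(\tfrac1p,\tfrac1q)\mapsto(1-\tfrac1q,1-\tfrac1p)$, which fixes $\mathscr P$ and swaps $\mathfrak E\leftrightarrow\mathfrak E'$, $\mathfrak F\leftrightarrow\mathfrak F'$. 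At $(\tfrac12,\tfrac12)$ the bound $\|\Pi_\lambda\|_{2\to2}\le1$ is trivial since $\Pi_\lambda$ is an orthogonal projection. At $\mathfrak E'=(\tfrac12,\tfrac{d-2}{2d})$ the estimate $\|\Pi_\lambda\|_{2\to 2d/(d-2)}\le C$ is exactly Karadzhov's bound \eqref{karad-global}, and $\mathfrak E$ is its dual. So the only genuinely new input is the restricted weak type estimate $\|\Pi_\lambda\|_{L^{p,1}\to L^{q,\infty}}\lesssim1$ at $\mathfrak F$ (then $\mathfrak F'$ by duality). Granting the vertex estimates, all of $\mathscr P$ and the two restricted-weak-type points follow by real interpolation (Stein--Weiss/Marcinkiewicz): a convex combination of strong-type estimates, or of a strong-type and a restricted-weak-type one, or of two restricted-weak-type ones, yields a strong-type estimate at any intermediate exponent with $p\le q$ --- which holds throughout $\sq$ --- while the extreme points $\mathfrak F,\mathfrak F'$ retain only the restricted weak type bound. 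I emphasize that the elementary bound $\|\Pi_\lambda\|_{p\to q}\le\|\Pi_\lambda\|_{p\to2}\|\Pi_\lambda\|_{2\to q}$ (from $\Pi_\lambda=\Pi_\lambda^\ast\Pi_\lambda$), together with \eqref{karad-global}, only recovers the rectangle $[(\tfrac12,\tfrac12),\mathfrak E,(\tfrac{d+2}{2d},\tfrac{d-2}{2d}),\mathfrak E']$, which for $d\ge3$ is strictly smaller than $\mathscr P$ and does not contain $\mathfrak F$; reaching $\mathfrak F$ genuinely requires the sharp local and near-sphere estimates of Theorems \ref{thm-locest} and \ref{thm-annest}.

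For the estimate at $\mathfrak F$ with $d\ge3$, first observe that $\mathfrak F$ lies on the line $\delta(p,q)=\tfrac2d$, so $\beta(\mathfrak F)=\gamma(\mathfrak F)=0$, and $\mathfrak F$ lies in the interior of $\mathcal R_3$ and of $\mathfrak L_3$. Decompose $\R^d$ into the inner region $B_\ast=\{|x|\le\tfrac34\sqrt\lambda\}$ (handled as in Theorem \ref{thm-locest}, whose proof applies equally to any ball at fixed distance from $\sqrt\lambda\,\mathbb S^{d-1}$), the inner dyadic annuli $A_{\lambda,\mu}$ with $\lambda^{-2/3}\le\mu\le\tfrac14$, their outer counterparts $A_{\lambda,\mu}^-=\{\sqrt\lambda\,x:\ |x|-1\in[2^{-1}\mu,\mu]\}$, the transition layer $N_\lambda=\{\,\big||x|-\sqrt\lambda\big|\lesssim\lambda^{-1/6}\}$, and the far exterior $\{|x|\ge2\sqrt\lambda\}$. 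On the diagonal blocks the two main theorems already suffice: Theorem \ref{thm-locest} gives $\|\chi_{B_\ast}\Pi_\lambda\chi_{B_\ast}\|_{p\to q}\lesssim\lambda^{\beta(\mathfrak F)}=1$, and Theorem \ref{thm-annest} (with $\mathfrak F\in\mathfrak L_3$) gives $\|\chi_{\lambda,\mu}\Pi_\lambda\chi_{\lambda,\mu}\|_{p\to q}\lesssim\lambda^{\beta(\mathfrak F)}\mu^{\gamma(\mathfrak F)}=1$ uniformly for $\lambda^{-2/3}\le\mu\le\tfrac14$, with the same (in fact stronger) bounds on the outer annuli thanks to the exponential decay of the Hermite functions just outside $\sqrt\lambda\,\mathbb S^{d-1}$. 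On $N_\lambda$ a separate argument gives a uniform bound --- essentially the rescaled Airy-type model operator on the $\lambda^{-1/6}$-layer about the sphere, equivalently the borderline case $\mu\sim\lambda^{-2/3}$ of the annular analysis. On the far exterior the Gaussian decay $|\Phi_\alpha(x)|\lesssim e^{-c|x|^2}$ for $|x|\ge2\sqrt\lambda$ makes the contribution $O(e^{-c'\lambda})$, hence negligible.

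The remaining and main difficulty is to add these $O(\log\lambda)$ diagonal blocks, together with the off-diagonal blocks $\chi_\Omega\Pi_\lambda\chi_{\Omega'}$ ($\Omega\ne\Omega'$), into a bound of size $O(1)$ rather than one losing a positive power of $\log\lambda$ --- which is exactly why the endpoint at $\mathfrak F$ must be phrased with Lorentz spaces. One reduces to proving $\|\Pi_\lambda\chi_E\|_{L^{q,\infty}}\lesssim|E|^{1/p}$ for every set $E$ of finite measure; decomposing the output through the mutually disjoint pieces $\Omega$ and writing $\chi_E=\sum_{\Omega'}\chi_{E\cap\Omega'}$, disjointness yields $\|\Pi_\lambda\chi_E\|_{L^{q,\infty}}^q\lesssim\sum_\Omega\big(\sum_{\Omega'}\|\chi_\Omega\Pi_\lambda\chi_{E\cap\Omega'}\|_{L^{q,\infty}}\big)^q$, and since $q/p>1$ at $\mathfrak F$ it is enough to have (i) the diagonal blocks $\|\chi_\Omega\Pi_\lambda\chi_\Omega\|_{p\to q}=O(1)$, which we have, and (ii) enough decay of $\|\chi_\Omega\Pi_\lambda\chi_{\Omega'}\|_{p\to q}$ as the scales $\mu,\mu'$ of $\Omega,\Omega'$ separate; given (ii), Young's inequality reduces the double sum to $\sum_{\Omega'}|E\cap\Omega'|^{q/p}\le|E|^{q/p}$ and the estimate closes. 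Establishing (ii) --- the interaction estimate between distinct annuli, and between the ball, the annuli, and the transition layer --- is the heart of the matter and the expected main obstacle. It should come out of non-stationary phase applied to the explicit integral representation of $\Pi_\lambda$ underlying the proofs of Theorems \ref{thm-locest} and \ref{thm-annest} (for $x$ and $y$ at very different distances from $\sqrt\lambda\,\mathbb S^{d-1}$ the relevant phase has no critical point); one may also try to dominate $\chi_\Omega\Pi_\lambda\chi_{\Omega'}=(\chi_\Omega\Pi_\lambda)(\Pi_\lambda\chi_{\Omega'})$ by a geometric mean of diagonal blocks via the identity $\|\chi_\Omega\Pi_\lambda\|_{2\to q}^2=\|\chi_\Omega\Pi_\lambda\chi_\Omega\|_{q'\to q}$. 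Together with the transition-layer bound on $N_\lambda$ at the exponent $\mathfrak F$, this finishes the argument.

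When $d=2$ the situation is simpler: there $\mathfrak E=(1,\tfrac12)$ and $\mathfrak F=\mathfrak F'=(1,0)$, so $\mathscr P$ degenerates to the whole square $\sq$, and the only estimate to add beyond $\|\Pi_\lambda\|_{2\to2}\le1$ is the strong bound at $(1,0)$, i.e.\ $\|\Pi_\lambda\|_{1\to\infty}=\sup_{x,y}|K_\lambda(x,y)|\lesssim1$ where $K_\lambda(x,y)=\sum_{d+2|\alpha|=\lambda}\Phi_\alpha(x)\Phi_\alpha(y)$. This is checked directly from the Hille--Hardy (Mehler) formula for the projection kernel: in two dimensions the resulting Laguerre sum is uniformly bounded, unlike for $d\ge3$ where $K_\lambda(0,0)\sim\lambda^{(d-2)/2}$. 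Interpolating with the $L^2\to L^2$ bound (and using duality) then covers all of $\sq$, with no logarithmically many pieces to sum.
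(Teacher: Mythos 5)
Your top-level skeleton agrees with the paper: everything in $\mathscr P$ follows by duality and real interpolation once one has the restricted weak type bound at $\mathfrak F$ (and its dual at $\mathfrak F'$), and the $d=2$ case follows from the uniform $L^1$--$L^\infty$ kernel bound. But the core step --- the restricted weak type estimate at $\mathfrak F$ for $d\ge 3$ --- is not actually proved in your proposal. You decompose space into the ball, the dyadic annuli $A_{\lambda,\mu}$, the Airy-type transition layer and the exterior, use Theorems \ref{thm-locest} and \ref{thm-annest} for the diagonal blocks, and then explicitly defer what you yourself call the heart of the matter: the off-diagonal bounds $\|\chi_\Omega\Pi_\lambda\chi_{\Omega'}\|_{p\to q}$ with geometric decay in the scale separation, together with a uniform bound on the transition layer at the exponent $\mathfrak F$. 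These are genuine obstacles, not routine non-stationary-phase facts: at $\mathfrak F$ one has $\beta(p,q)=\gamma(p,q)=0$, so the sharp diagonal bounds carry no decay in $\mu$ that could be recycled, and for $x\in A_{\lambda,\mu}$, $y\in A_{\lambda,\mu'}$ with $\mu\neq\mu'$ the phase in \eqref{repPi} still has (possibly degenerate) critical points when the angle between $x$ and $y$ is small, so one would need bilinear $(\mu,\mu')$ estimates with a summable gain in $\min(\mu,\mu')/\max(\mu,\mu')$ which are established nowhere; the crude factorization $\|\chi_\Omega\Pi_\lambda\|_{2\to q}\,\|\Pi_\lambda\chi_{\Omega'}\|_{p\to 2}$ does not supply such a gain at the off-duality point $\mathfrak F$. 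As written, the proposal is an honest reduction to unproven statements, not a proof.

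Moreover, your premise that reaching $\mathfrak F$ ``genuinely requires the sharp local and near-sphere estimates of Theorems \ref{thm-locest} and \ref{thm-annest}'' is not correct: the paper proves Theorem \ref{glo-est} in Section \ref{sec:tt-st}, before and independently of those theorems, with no spatial decomposition at all. The decomposition is purely in time: $\Pi_\lambda=\sum_j\Pi_\lambda[\psi_j]$ (after Lemma \ref{sym}), the trivial kernel bound from Lemma \ref{kernel} gives \eqref{1-infty} with $E=\mathbb R^d$, $\beta=1$, $r=1$, $b=(d-2)/2$, and the $TT^*$ Lemma \ref{tt-st} upgrades this to the global estimates \eqref{eq:strong}, $\|\Pi_\lambda[\psi_j]\|_{p\to q}\lesssim 2^{j(\frac d2\delta(p,q)-1)}$, on the quadrangle $\mathfrak Q((d-2)/2,1)$. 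Since this exponent changes sign across the line $\delta(p,q)=2/d$, Bourgain's summation trick (Lemma \ref{s-trick}$(c)$) yields the restricted weak type bound precisely on $[\mathfrak F,\mathfrak F']=\mathfrak Q((d-2)/2,1)\cap\{\delta(p,q)=2/d\}$; duality, the identity $\|\Pi_\lambda\|_{2\to p'}^2=\|\Pi_\lambda\|_{p\to p'}$ (which re-derives Karadzhov's bound \eqref{karad-global} rather than taking it as input, as you do) and interpolation then give all of $\mathscr P$. To salvage your route you would have to prove the inter-annulus and transition-layer estimates yourself; the paper's time-dyadic argument bypasses them entirely.
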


Related estimates were used to show the strong unique continuation problem for the heat operator \cite{JLR_heat}. 
When $d=2$, the estimate  \eqref{unif}  is easy to show by  duality and the $L^1$--$L^\infty$ estimate.   
In higher dimensions $d\ge 3$,  uniform boundedness of $\|\Pi_\lambda\|_{p\to q}$  remains open for  $(1/p, 1/q)\in   \widetilde {\mathscr P}\setminus \mathscr P$, where $ \widetilde {\mathscr P}:=\{(a,b)\in  \sq:  a-b\le 2/d,   (d-1)/d\le a+b\le (d+1)/d\}$.  Indeed, \eqref{unif} holds  true only  if  $(1/p, 1/q)\in \widetilde {\mathscr P}$ as  can be seen easily by 
 duality and  the lower bounds \eqref{ann1} and \eqref{ann2}  in Section \ref{sub_counter}.  The  current situation seem similar to that of the inhomogeneous Strichartz estimate for the Schr\"odinger equation whose optimal range of boundedness remains open  for $d\ge 3$ (see, for example,  
 \cite{fos, vil}).

\subsection*{Organization} In Section \ref{sec:tt-st}, we formalize a form of $TT^*$ argument for $\Pi_\lambda$, by which we show the uniform estimates for $\Pi_\lambda$. Section \ref{sec:local} is devoted to proving the  local estimates away from $\sqrt \lambda \mathbb S^{d-1}$.  In Section \ref{sec:amu} we prove Theorem \ref{thm-annest}. Finally, we show lower bounds on $\|\chi_{\lambda,\mu}\Pi_\lambda \chi_{\lambda,\mu}\|_{p\to q}$ in Section \ref{sub_counter}.

\subsection*{Notation}  
 For  nonnegative quantities   $A$ and $B$,    $B \lesssim A$ means that there is a constant $C$, depending only on dimensions such that 
$B\le CA $.  Likewise,   $A\sim B$ if and only if   $B \lesssim A$ and $A \lesssim B$. 
By $B=O(A)$ we  means   
$|B|\lesssim A$. Additionally, we denote $A\gg B$ if $A\ge CB$ for a large constant $C>0$.

\section{$\Pi_\lambda$ and $TT^*$ argument}
\label{sec:tt-st}

We make use of an observation in \cite[Section 2.1]{JLR_endpoint}. 
 The  Hermite-Schr\"odinger propagator $e^{-itH}$ is given by 
\Be 
\label{schro-op}
 e^{-itH} f=\sum_{\lambda\in \eval} e^{-it\lambda} \Pi_\lambda f, \quad f\in \mathcal S(\mathbb R^d).
  \Ee
 Clearly,  $e^{it(\lambda - H)}$ is periodic in $t$ with period $\pi$ if $\lambda\in 2\N_0+d$.
If $\lambda$ and  $\lambda'$ are eigenvalues of $H$,   $\lambda-\lambda'\in 2\mathbb Z$, so $\frac1{2\pi}\int_I e^{i\frac t2(\lambda-\lambda')}  dt $ $=\delta(\lambda-\lambda')$ whenever $I$ is an interval of length $2\pi$. It follows from \eqref{schro-op}  that 
  \[
 \Pi_\lambda f = \frac{1}{2\pi}\int_{-\pi/2}^{3\pi/2} e^{i\frac t2(\lambda-H)}f\,dt, \quad \forall f\in \mathcal S(\mathbb R^d).
\]
More details can be found in \cite[Section 2.1]{JLR_endpoint}.

\subsection{Decomposition of $\Pi_\lambda$} 
Let $\eta_\circ$ be an even function in $C_c^\infty((-\pi/2-2^{-7},\pi/2+2^{-7}))$ such that $\sum_{j\in\Z}\eta_\circ(t- j\pi)= 1$ for any $t\in\R$. 
Then, it follows that $\eta_0(t):=\eta_\circ(t+\pi) + \eta_\circ(t) + \eta_\circ(t-\pi) + \eta_\circ(t-2\pi) = 1$ on $[-\pi/2,3\pi/2]$.
So, we can write $\Pi_\lambda f = \frac{1}{2\pi}\int_{-{\pi}/{2}}^{{3\pi}/{2}} \eta_0(t) e^{i\frac{t}{2}(\lambda- H)} f dt$. 
Since $e^{i(t+\pi)(\lambda - H)} = e^{it(\lambda - H)}$, changing  variables we see
\begin{align}
\begin{aligned}\label{proj-op2}
    \Pi_\lambda f = \frac{1}{2\pi}\int_{\R} \big(\eta_\circ(t) + \eta_\circ(t-\pi)\big)e^{i\frac{t}{2}(\lambda- H)} f dt.
\end{aligned}
\end{align}
 The operator $e^{-itH}$ also has an explicit kernel representation based on Mehler's formula 
 (e.g., see  \cite{SoTo} and  \cite[p.11]{Th87}).  Combining the formula and \eqref{proj-op2},
we obtain an integral representation of  $\Pi_\lambda$.

\begin{lem}[{\cite[Lemma 2.1]{JLR_endpoint}}]
\label{kernel}
Let $ \mathfrak a(t) =(2\pi i\:\sin t)^{-\frac d2}{e^{i \pi d/4}}  (\eta_\circ(t) + \eta_\circ(t-\pi)) .$
For $\lambda\in 2\mathbb N_0+d$, set 
\begin{align*}
  \phi_\lambda (x,y,t)=\frac{\lambda t}{2}+\frac{|x|^2+|y|^2}{2}\cot t-  \inp xy \csc t.
\end{align*} Then, for all $f\in \mathcal S(\mathbb R^d)$,  we have 
\Be\label{repPi}
    \Pi_\lambda f =   \frac{1}{2\pi}  \int \mathfrak a(t) \int   e^{i\phi_\lambda(x,y,t)}   f(y)\, dy\,dt.
   \Ee
\end{lem}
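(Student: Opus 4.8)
The plan is to obtain \eqref{repPi} by substituting the explicit Mehler kernel of the Hermite--Schr\"odinger propagator into the decomposition \eqref{proj-op2}. Since \eqref{proj-op2} already reduces $\Pi_\lambda$ to an integral of $e^{i\frac t2(\lambda-H)}$ against a smooth compactly supported weight, the remaining tasks are (a) to write the kernel of $e^{-i\frac t2 H}$ in closed form, thereby identifying the amplitude $\mathfrak a$ and the phase $\phi_\lambda$, and (b) to verify that the resulting iterated integral in $(y,t)$ converges absolutely, so that Fubini's theorem applies and \eqref{repPi} holds in the stated pointwise sense for every $f\in\mathcal S(\mathbb R^d)$.

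For (a) I would start from Mehler's formula for the heat semigroup: for $\tau>0$,
\[
e^{-\tau H}f(x)=(2\pi\sinh 2\tau)^{-d/2}\!\int \exp\!\Big({-\tfrac12(|x|^2+|y|^2)\coth 2\tau+\inp xy\,(\sinh 2\tau)^{-1}}\Big)f(y)\,dy,
\]
the $d$-dimensional kernel being the $d$-fold tensor product of the one-dimensional one; the normalization is checked by letting $\tau\to0^+$, where the kernel tends to the Euclidean heat kernel and the right-hand side to $f(x)$. For $f\in\mathcal S(\mathbb R^d)$ the spectral sum $\sum_{\lambda\in\eval}e^{-\tau\lambda}\Pi_\lambda f$ is holomorphic in $\re\tau>0$, so, fixing $t\notin\pi\mathbb Z$ and letting $\tau=\varepsilon+i\tfrac t2$ with $\varepsilon\to0^+$, the identities $\sinh(it)=i\sin t$, $\coth(it)=-i\cot t$, $(\sinh it)^{-1}=-i\csc t$ yield the kernel of $e^{-i\frac t2 H}$ as
\[
(2\pi i\sin t)^{-d/2}\,e^{i\pi d/4}\exp\!\Big(i\big(\tfrac12(|x|^2+|y|^2)\cot t-\inp xy\csc t\big)\Big),
\]
where the branch of $(2\pi i\sin t)^{-d/2}$ together with the constant $e^{i\pi d/4}$ is fixed by continuity from the positive branch of $(2\pi\sinh 2\tau)^{-d/2}$ as $\tau$ rotates from the positive real axis to $i\tfrac t2$; pinning this down is the one genuinely delicate bookkeeping step. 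Inserting this into \eqref{proj-op2} and using $e^{i\frac t2(\lambda-H)}=e^{i\lambda t/2}e^{-i\frac t2 H}$, the factor $e^{i\lambda t/2}$ supplies the summand $\tfrac{\lambda t}{2}$ of $\phi_\lambda$, while multiplying the Mehler prefactor by the weight $\eta_\circ(t)+\eta_\circ(t-\pi)$ produces exactly $\mathfrak a(t)$, and \eqref{repPi} follows formally.

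For (b) the point requiring care is that $\eta_\circ(t)+\eta_\circ(t-\pi)$ is supported in a neighbourhood of $\{0,\pi\}\subset\pi\mathbb Z$, precisely where $\sin t$ vanishes and the Mehler kernel degenerates; there the inner integral $\mathfrak a(t)\int e^{i\phi_\lambda(x,y,t)}f(y)\,dy$ must be read as $(\eta_\circ(t)+\eta_\circ(t-\pi))\,e^{i\lambda t/2}\,e^{-i\frac t2 H}f(x)$. For fixed $t\notin\pi\mathbb Z$ and $f\in\mathcal S$, completing the square in $y$ exhibits the $y$-integral as an explicit function of $x$ that is rapidly decaying, uniformly for $t$ in compact subsets of $\mathbb R\setminus\pi\mathbb Z$; and as $t\to t_0\in\pi\mathbb Z$, stationary phase---equivalently, strong continuity of $s\mapsto e^{-isH}$ on $\mathcal S$ near $s=0$ together with $\pi$-periodicity---shows that $e^{-i\frac t2 H}f(x)$ extends continuously across $t_0$ and remains rapidly decaying in $x$, uniformly in $t$. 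Hence, after performing the $y$-integration, the $t$-integrand is continuous, compactly supported in $t$, and rapidly decaying in $x$; the integral converges absolutely, Fubini applies, and \eqref{repPi} holds for all $f\in\mathcal S(\mathbb R^d)$.

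The main obstacle is exactly this coordinate singularity of the Mehler kernel at $t\in\pi\mathbb Z$, which is unavoidable since the cutoff in \eqref{proj-op2} must live near $\pi\mathbb Z$: one has to argue that, although the kernel blows up there, the operator $e^{-i\frac t2 H}$---and hence the inner $y$-integral---extends continuously, so that \eqref{repPi} is a bona fide convergent integral and not merely a formal manipulation. The only other point needing attention is the branch-of-$(\sin t)^{-d/2}$ and phase-constant computation indicated above.
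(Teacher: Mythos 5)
Your proposal is correct and takes essentially the same route as the paper, which simply combines Mehler's formula for the propagator $e^{-itH}$ with the reduction \eqref{proj-op2} and refers to \cite{JLR_endpoint} for details; your analytic continuation from the heat semigroup (with the branch/phase bookkeeping flagged) and your treatment of the degeneracy at $t\in\pi\mathbb Z$ are exactly the details being delegated there. One small caveat: since $(\sin t)^{-d/2}$ is not locally integrable near $\pi\mathbb Z$ for $d\ge 2$, the double $(y,t)$-integral is not absolutely convergent and Fubini is not literally available — but none is needed, because \eqref{repPi} is an iterated integral (inner $dy$, outer $dt$), which your argument already justifies by identifying the inner integral with $(\eta_\circ(t)+\eta_\circ(t-\pi))e^{i\lambda t/2}e^{-i\frac t2 H}f(x)$ and using its continuity in $t$.
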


The function $\mathfrak a(t)$ has the singularities at $t=0$ and $t=\pi$. So, we make decomposition away from them. 
Let $\psi\in C^\infty_c([2^{-1},2])$ be a nonnegative function  such that $\sum_{j\ge 0} \psi(2^j t) =1$ for $t\in (0, \pi/2+ 2^{-7}]$.  
We set
\[
 \psi_{j}(t) =
    \psi( 2^j t)\eta_\circ(t),
\]
and
\[ \psi_{j}^-(t)=\psi_{j}(- t),\quad \psi_{j}^{\pm\pi}(t)=\psi_{j}(\pm(t-\pi)).  \] 
For a bounded   function $\eta$  and $\lambda\in 2\N_0+d$, we consider the operator 
   \Be \label{proj-pi}\Pi_\lambda[\eta]= \frac{1}{2\pi}\int \eta(t)e^{i\frac t2(\lambda-H)}dt
      .\Ee
Clearly, the definition  makes sense for any real number $\lambda$. By \eqref{proj-pi} and the isometry $
\|e^{-itH} f\|_2=\|f\|_2$ it follows that 
  \Be
  \label{easy-l2} 
  \|\Pi_\lambda[\eta]\|_{2\to 2} \le 2 \|\eta\|_1.
  \Ee

 Since $\sum_j ( \psi_j +\psi^{-} _j)=\eta_\circ$, using \eqref{proj-op2}, we now have
\Be
\label{k-decom}
\textstyle 
\Pi_\lambda=  \sum_{j\ge 0}\Pi_\lambda[\psi_{j}]+    \sum_{\kappa=-,\pm\pi}\sum_{j\ge 0}\Pi_\lambda[\psi_{j}^{\kappa}].
\Ee
The decomposition is clearly valid since the right hand side converges to $\Pi_\lambda$ as a  bounded operator on $L^2$ since \eqref{easy-l2} gives $\| \Pi_\lambda [\psi_j ]\|_{2\to 2}\lesssim 2^{-j}$ and $\| \Pi_\lambda [\psi_j^\kappa ]\|_{2\to 2}\lesssim 2^{-j}$, $\kappa= -, \pm \pi.$   We  recall a symmetric property observed in \cite{JLR_endpoint}. 
 Note $ \phi_\lambda(x,y,-t)=-\phi_\lambda(x,y,t)$ and  $\phi_\lambda(x,y,\pi\pm t)=(\lambda \pi /2) \pm\phi_\lambda(x,-y,t)$.  
 Considering the kernels of the operators (see \cite[(2.8) \& (2,9) in p. 5]{JLR_endpoint}), by a simple change of variables  one can easily show 
\[ \textstyle \| \chi_E \sum_{j\ge 0}\Pi_\lambda[\psi_{j}]  \chi_E\|_{p\to q}= \| \chi_E \sum_{j\ge 0}\Pi_\lambda[\psi_{j}^{\kappa}]  \chi_E\|_{p\to q}, 
    \quad  \kappa=-, \pm \pi \]
    whenever $E$ is a measurable subset such that $E=-E$. 
(See \cite[Section 2]{JLR_endpoint}.) Thus, we get the following which reduces  the desired estimates for $\Pi_\lambda$ to those for  $\sum_j \Pi_\lambda[\psi_j]$. 

\begin{lem}
\label{sym} Suppose $E$ is a measurable subset such that $E=-E$. Then, 
\[ \textstyle \| \chi_E \Pi_\lambda  \chi_E\|_{p\to q}\le 4 \| \chi_E \sum_{j\ge 0}\Pi_\lambda[\psi_{j} ]  \chi_E\|_{p\to q} .\]
The inequality  continues to hold when the spaces $L^p$ and $L^q$ are replaced by the Lorentz spaces $L^{p,r}$ and $L^{q,s}$, respectively.  
\end{lem}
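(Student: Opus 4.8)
The plan is to read the bound off directly from the dyadic decomposition \eqref{k-decom} together with the symmetry identity recorded just above the statement. Inserting $\chi_E$ on both sides of \eqref{k-decom} gives
\[
\chi_E\Pi_\lambda\chi_E \;=\; \chi_E\Big(\sum_{j\ge0}\Pi_\lambda[\psi_j]\Big)\chi_E \;+\;\sum_{\kappa\in\{-,\,+\pi,\,-\pi\}}\chi_E\Big(\sum_{j\ge0}\Pi_\lambda[\psi_j^\kappa]\Big)\chi_E,
\]
the series converging in the $L^2$ operator norm by \eqref{easy-l2}. Applying the triangle inequality for the $L^p\to L^q$ operator norm to these four pieces, and then invoking
\[
\Big\|\chi_E\sum_{j\ge0}\Pi_\lambda[\psi_j]\chi_E\Big\|_{p\to q}=\Big\|\chi_E\sum_{j\ge0}\Pi_\lambda[\psi_j^\kappa]\chi_E\Big\|_{p\to q},\qquad \kappa=-,\pm\pi,
\]
for each of the three values of $\kappa$ (valid since $E=-E$), one arrives at exactly the factor $4$.

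The only points needing care are minor. First, \eqref{k-decom} a priori converges only as operators on $L^2$, so to split $\chi_E\Pi_\lambda\chi_E$ inside the $L^p$--$L^q$ norm I would work on the dense class $\mathcal S(\mathbb R^d)$, noting also that the asserted inequality is vacuous unless its right-hand side is finite, in which case each of the four sandwiched pieces is a bounded operator $L^p\to L^q$ and the partial sums converge suitably when tested against Schwartz functions. Second, for the Lorentz version I would re-run the change of variables underlying the symmetry identity — substituting $t\mapsto-t$ or $t\mapsto\pi\pm t$ in the time integral of Lemma~\ref{kernel}, together with $y\mapsto-y$ in the inner integral and $x\mapsto-x$ on the output, and using $\phi_\lambda(x,y,-t)=-\phi_\lambda(x,y,t)$ and $\phi_\lambda(x,y,\pi\pm t)=(\lambda\pi/2)\pm\phi_\lambda(x,-y,t)$. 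Since $x\mapsto-x$ preserves the decreasing rearrangement (hence is isometric on every $L^{p,r}$) and $\chi_{-E}=\chi_E$, the displayed identity persists with $\|\cdot\|_{p\to q}$ replaced by $\|\cdot\|_{L^{p,r}\to L^{q,s}}$; combined with the triangle inequality for the Lorentz operator norm — available for the relevant indices, since those Lorentz spaces are normable up to an equivalent norm (with $L^1,L^\infty$ as the endpoint cases) — this yields the Lorentz version of the inequality.

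I do not expect a genuine obstacle: the lemma is a short reduction, and the substantive step — establishing the three symmetry identities from the kernel formula — has already been carried out in \cite{JLR_endpoint} and is quoted in the excerpt. If anything is delicate it is only the bookkeeping of legitimizing the term-by-term decomposition inside the $L^p$--$L^q$ (or Lorentz) operator norm, which is handled as indicated above.
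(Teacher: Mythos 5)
Your proposal is correct and follows essentially the same route as the paper: split $\Pi_\lambda$ via the decomposition \eqref{k-decom} into the four pieces $\sum_j\Pi_\lambda[\psi_j]$ and $\sum_j\Pi_\lambda[\psi_j^\kappa]$, $\kappa=-,\pm\pi$, use the change-of-variables symmetry of the kernels (valid since $E=-E$) to equate the four operator norms, and conclude by the triangle inequality, with the Lorentz case handled identically since reflection is an isometry on $L^{p,r}$.
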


\subsection{$TT^\ast $ argument: Proof of Theorem \ref{glo-est}} 
Lemma \ref{tt-st} below  allows us to deduce off-diagonal estimates from an $L^{r,1}$--$L^{r',\infty}$  bound.    
The following can be regarded as a variant of the usual $TT^\ast$ argument  (see \cite{keel-tao}). 

For $j\ge 0$, we say $\eta\in \mathcal C^j$ if $\eta\in C^\infty_c\big((-2^{1-j},  2^{1-j})\big)$  and $|\eta^{(l)}|\le C_l 2^{j l}$ for some constant $C_l$ and any $l\in \mathbb N_0$. 
In particular, note that $\psi_j\in  \mathcal C^j$. 

\begin{lem}
\label{tt-st}  Let $b> 0$ and $1\le r<r_b:={2(b+1)}/(b+2)$. Set
\[\mathfrak Q(b,r)=[ (1/2, 1/2), (1/2, 1/r_b'), (1/r_b, 1/2), (1/r,1/{r'})]\setminus \{ (1/r,1/{r'})\}.\]  
Let $E\subset \mathbb R^d$ be a measurable set and  suppose 
\begin{equation}
\label{1-infty}
\|\chi_E\Pi_\lambda[\tilde \eta]\chi_E\|_{L^{r,1}\to L^{r',\infty}}\lesssim  \beta^{\delta(r,r')}2^{(-1+(b+1)\delta(r,r'))j},  \quad j\ge0
\end{equation}
holds whenever $\tilde\eta\in \mathcal C^j$ and $\supp  \tilde\eta\subset (2^{-1-j}, 2^{1-j}) $.  
Then, for $j\ge0$ and $(1/p,1/q)\in \mathfrak Q(b,r)$  we have 
\begin{equation} 
\label{bpjb} 
\| \chi_{E} \Pi_\lambda [ \eta] \chi_{E}\|_{p\to q}\lesssim     \beta^{\dpq} 2^{(-1+(b+1)\dpq)j}  
\end{equation}
if $\eta\in \mathcal C^j$ and $\supp  \eta\subset (2^{-1-j}, 2^{1-j}) $. 
\end{lem}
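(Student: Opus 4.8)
The plan is to run a $TT^\ast$-type bootstrap. The hypothesis \eqref{1-infty} is an $L^{r,1}\to L^{r',\infty}$ bound at the symmetric endpoint; I want to upgrade it to the full off-diagonal range $\mathfrak Q(b,r)$ by composing the operator with itself and interpolating. First I would fix $j\ge 0$ and $\eta\in\mathcal C^j$ supported in $(2^{-1-j},2^{1-j})$, and write $T=\chi_E\Pi_\lambda[\eta]\chi_E$. The key algebraic point is that $e^{i\frac t2(\lambda-H)}$ is a one-parameter group, so the composition $\Pi_\lambda[\eta_1]\,\Pi_\lambda[\eta_2]$ has the form $\Pi_\lambda[\check\eta]$ (up to the $\frac1{2\pi}$ normalization) where $\check\eta$ is, roughly, a convolution $\eta_1\ast\eta_2$ — more precisely $\check\eta(t)=\frac1{2\pi}\int \eta_1(s)\eta_2(t-s)\,ds$. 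If $\eta_1,\eta_2\in\mathcal C^j$ with the stated support, then $\check\eta\in\mathcal C^{j}$ as well (its support lies in $(2^{-j},2^{2-j})$, which we can split into $O(1)$ pieces each of the required form, each piece still in $\mathcal C^{j-O(1)}$), so the hypothesis \eqref{1-infty} applies to $\check\eta$ too. This self-reproducing property under composition is what makes the argument close.

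Next I would set up the standard chain. Because $\chi_E$ is a (self-adjoint) projection-type multiplier, $T^\ast=\chi_E\Pi_\lambda[\bar\eta]\chi_E$, and $TT^\ast=\chi_E\Pi_\lambda[\eta\ast\bar\eta(-\cdot)]\chi_E$ up to constants; since $\eta\ast\bar\eta(-\cdot)\in\mathcal C^j$ with support in $(-2^{1-j},2^{1-j})$, after an $O(1)$ decomposition into admissible bumps the hypothesis \eqref{1-infty} gives
\begin{equation}
\label{tt*bound}
\|TT^\ast\|_{L^{r,1}\to L^{r',\infty}}\lesssim \beta^{\delta(r,r')}2^{(-1+(b+1)\delta(r,r'))j}.
\end{equation}
Here it is convenient to record the trivial $L^2\to L^2$ bound: $\eqref{easy-l2}$ gives $\|T\|_{2\to2}\lesssim\|\eta\|_1\lesssim 2^{-j}$, hence $\|TT^\ast\|_{2\to2}\lesssim 2^{-2j}$. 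Then I would interpolate: by real interpolation (Riesz--Thorin-type for the Lorentz-space endpoints, using $\|TT^\ast\|_{2\to2}\lesssim 2^{-2j}$ and \eqref{tt*bound}) one obtains, for $2\le q<r'$ with $1/q=(1-\theta)/2+\theta/r'$,
\begin{equation}
\label{ttq}
\|TT^\ast\|_{L^{q',1}\to L^{q,\infty}}\lesssim \beta^{\delta(q',q)}2^{(-1+(b+1)\delta(q',q))j}.
\end{equation}
Now factor: $\|T\|_{L^{q',1}\to L^2}^2=\|TT^\ast\|_{L^{q',1}\to L^{q,\infty}}$ in the usual $TT^\ast$ sense — more carefully, one uses that $\|T\|_{q'\to2}^2=\|T^\ast T\|_{q'\to q}$, or rather the Lorentz-refined version, to extract $\|T\|_{L^{q',1}\to L^2}\lesssim \beta^{\delta(q',q)/2}2^{(-1+(b+1)\delta(q',q))j/2}$. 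By duality the same bound holds for $\|T\|_{L^2\to L^{q,\infty}}$ (note $T^\ast$ is of the same type since $\mathcal C^j$ is closed under conjugation and $E=E$ here). Composing $L^{p'?}\to L^2\to L^{q,\infty}$ style estimates, or more directly interpolating the family of bounds $\{L^{q',1}\to L^2\}$ against $\{L^2\to L^{q,\infty}\}$ across the admissible $q$'s, yields \eqref{bpjb} for all $(1/p,1/q)$ in the claimed quadrilateral $\mathfrak Q(b,r)$, with the endpoint $(1/r,1/r')$ excluded because that is where one would need a strong-type bound that interpolation from a weak-type input cannot deliver.

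The main obstacle I expect is bookkeeping the Lorentz-space endpoints through all the compositions and interpolations: the hypothesis is $L^{r,1}\to L^{r',\infty}$, which is genuinely weaker than strong type, so at each composition and interpolation step one must use the correct off-diagonal real-interpolation statement (e.g. the fact that $(L^{p_0,1},L^{p_1,1})_{\theta,1}=L^{p,1}$ and $(L^{q_0,\infty},L^{q_1,\infty})_{\theta,\infty}\supset L^{q,\infty}$ with the right normalization of constants) and verify that the gain factor $2^{(-1+(b+1)\delta)j}$ transforms affinely in $\delta=\delta(p,q)$ under interpolation — which it does, since $-1+(b+1)\delta$ is affine in $\delta$ and $\delta(p,q)$ interpolates affinely. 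A secondary technical point is checking that the $O(1)$ decomposition of a convolved bump $\check\eta$ into pieces supported in dyadic-type intervals $(2^{-1-j},2^{1-j})$ does not lose the $\mathcal C^j$ derivative bounds; this is routine since convolution of two $\mathcal C^j$ functions keeps the $|\eta^{(l)}|\lesssim 2^{jl}$ scaling and the support only doubles. Everything else — the group law $e^{i\frac t2(\lambda-H)}e^{i\frac s2(\lambda-H)}=e^{i\frac{t+s}2(\lambda-H)}$, the $L^2$ bound \eqref{easy-l2}, and the elementary $TT^\ast$ identity — is available directly from the material already developed in Section \ref{sec:tt-st}.
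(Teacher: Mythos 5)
There is a genuine gap, and it sits exactly at the step you label as routine. If $\eta$ is supported in $(2^{-1-j},2^{1-j})$, the correlation $\eta\ast\bar\eta(-\cdot)$ arising from the composition is supported in a full neighborhood of the origin, roughly $(-3\cdot 2^{-j},3\cdot 2^{-j})$ (supports subtract, they do not add; your parenthetical claim that the support lies in $(2^{-j},2^{2-j})$ applies to $\eta_1\ast\eta_2$, not to the $TT^\ast$ correlation). A bump containing $t=0$ in its support cannot be split into $O(1)$ pieces each supported in an interval $(2^{-1-k},2^{1-k})$ with $k=j+O(1)$: one is forced to decompose over all dyadic scales $k\ge j-2$, and the piece at scale $2^{-k}$ belongs to $\mathcal C^{k}$, not $\mathcal C^{j}$. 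Applying the hypothesis \eqref{1-infty} scale by scale then produces the factor $2^{(-1+(b+1)\delta(r,r'))k}$, whose exponent is \emph{positive} because $r<r_b$ forces $\delta(r,r')>1/(b+1)$; so the sum over $k\ge j$ diverges at the hypothesis exponents. This is precisely why the paper first interpolates each scale-$k$ piece with the trivial $L^2$ bound \eqref{easy-l2} to obtain \eqref{bpjb2} for $r<p\le 2$, sums the resulting series only for $p>r_b$ (where the $k$-exponent is negative), and then reaches the borderline vertex $p=r_b$, $q=2$ — where the $k$-exponent vanishes — by the bilinear Keel--Tao interpolation argument applied to \eqref{bibi} (equivalently Bourgain's summation trick, Lemma \ref{s-trick}). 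None of this machinery is present in your proposal, and without it you cannot obtain the off-duality vertices $(1/2,1/r_b')$ and $(1/r_b,1/2)$ of $\mathfrak Q(b,r)$, which are the entire content of the lemma beyond what plain interpolation along the duality line already gives.

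A secondary but real error: with $T=\chi_E\Pi_\lambda[\eta]\chi_E$ the identity $TT^\ast=\chi_E\Pi_\lambda[\eta\ast\bar\eta(-\cdot)]\chi_E$ is false, because $TT^\ast=\chi_E\Pi_\lambda[\eta]\,\chi_E\,\Pi_\lambda[\bar\eta(-\cdot)]\chi_E$ and the interior cutoff $\chi_E$ blocks the group law. The paper avoids this by working with $S=\Pi_\lambda[\eta]\chi_E$ (no outer cutoff, legitimate since the target is $L^2$), so that $S^\ast S$ genuinely composes via $e^{i\frac{-s}2(\lambda-H)}e^{i\frac t2(\lambda-H)}=e^{i\frac{t-s}2(\lambda-H)}$ before the dyadic decomposition in $|t-s|$ is performed; you would need the same repair. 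The Lorentz-space factorization $\|T\|_{L^{q',1}\to L^2}^2=\|TT^\ast\|_{L^{q',1}\to L^{q,\infty}}$ you invoke is likewise not an identity and would need to be replaced by the duality/H\"older route the paper uses ($|\langle \chi_E P_k\chi_E f,g\rangle|$ bounds followed by summation). In short, the overall $TT^\ast$ philosophy is the right one, but the decisive steps — the multi-scale decomposition of the near-diagonal interaction, the interpolation-then-summation scheme, and the bilinear interpolation at the critical exponent — are missing, and the shortcut offered in their place does not work.
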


\begin{proof} 
 By \eqref{easy-l2}  we have $\| \chi_{E} \Pi_\lambda [ \tilde \eta ] \chi_{E}\|_{2\to 2}\lesssim    2^{-j}$.  The estimate \eqref{1-infty} and interpolation  give   
\begin{equation}
\label{bpjb2} 
\| \chi_{E} \Pi_\lambda [ \tilde\eta ] \chi_{E}\|_{p\to p'}\lesssim    \beta^{\delta(p, p')}  2^{j(-1+(b+1)(\frac2p-1))}, \quad r< p\le 2 
\end{equation}
whenever $\tilde\eta\in \mathcal C^j$ and $\supp  \tilde\eta\subset (2^{-1-j}, 2^{1-j}) $.
 Thus,  it is sufficient to show  \eqref{bpjb} with $q=2$ and $p=r_b$ because the other estimates follow by duality and interpolation.  
We claim that the estimate 
\begin{equation}
\label{2p}
  \| \Pi_\lambda [ \eta ] \chi_{E}  f\|_{2}\lesssim 2^{-\frac j2} \beta^{\frac1{r_b}-\frac12}
    \|f\|_{r_b}
  \end{equation} 
  holds.  
 The inequality  clearly implies \eqref{bpjb} with $q=2$ and $p=r_b$. 

By \eqref{proj-pi}  we note 
$
\| \Pi_\lambda [ \eta ] \chi_{E} f\|_{2}^2 =\langle   \chi_{E}  \iint \eta(t)\eta(s)e^{i\frac{t-s}2(\lambda-H)}   \chi_{E}  f dsdt, f\rangle.
$ 
Thus, we decompose  
 \Be
 \label{ttsum}
   \| \Pi_\lambda [ \eta ] \chi_{E} f\|_{2}^2 =  \sum_{k\ge j-2}  \left\langle   \chi_{E} P_k  \chi_{E} f, f\right\rangle, 
\Ee
where 
\[   P_k=\iint \psi(2^k|t-s|)\eta(t)\eta(s)e^{i\frac{t-s}2(\lambda-H)}dsdt.
\]
After a simple change of variables we observe 
\Be
\label{bk}
 \chi_{E}P_k  \chi_{E} f= \int\eta(s) \chi_{E}  \Pi_\lambda[\psi(2^k|\cdot|)\eta(\cdot+s)]\chi_{E} f ds.
\Ee
Note $\psi(|\cdot|)=\psi+\psi(-\,\cdot)$.   Since $k\ge j-2$, $\psi(2^k\cdot)\eta(\cdot+s)\in \mathcal C^k$. Thus,  we have  \eqref{bpjb2} with $\tilde \eta=
 \psi(2^k\cdot)\eta(\cdot+s)$ and $j=k$. By the aforementioned symmetric property  of the kernels (\cite[p. 5]{JLR_endpoint}) the same estimate holds for 
 $\Pi_\lambda [\psi(-2^k\cdot)\eta(\cdot+s)]$. Therefore, taking integration in $s$,  we have 
\[ \|\chi_{E}P_k  \chi_{E}\|_{p\to p'} \lesssim \beta^{\delta(p, p')}   2^{-j} 2^{k(-1+(b+1)(\frac2p-1))}\]
 for  $r< p\le 2$. 
 This gives  $ |  \left\langle   \chi_{E} P_k  \chi_{E} f, g\right\rangle|\lesssim    \beta^{(\frac2p-1)}  2^{-j} 2^{k(-1+(b+1)(\frac2p-1))} \|f\|_p\|g\|_p$ by H\"older's inequality.
 If we combine this and \eqref{ttsum}, summation over $k$ yields
\[  \| \Pi_\lambda [ \eta ] \chi_{E} f\|_{2}\lesssim    \beta^{(\frac1p-\frac12)}  2^{j(-1+(b+1)(\frac1p-\frac12))} \|f\|_p\] 
for $r_b<p\le 2$. (Note $r< r_b$.)  Hence, we have \eqref{bpjb} when $q=2$ and $r_b<p\le 2$. Duality gives  \eqref{bpjb} for $p=2$ and $2\le q<r_b'$. Thus, interpolation between those estimates and 
\eqref{bpjb2} gives  \eqref{bpjb} if  $\ppq$ is contained in $\mathfrak Q(b,r)$ but  not on the line segments 
$[(1/2,1/r_b'), (1/r,1/{r'})]$, $[(1/r_b,1/2), (1/r,1/{r'})]$.  

However, using  the estimates above, we can obtain \eqref{2p}. Indeed, using \eqref{bk} and  \eqref{bpjb} which now holds for $\ppq$ contained in the interior of $\mathfrak Q(b,r)$, by 
H\"older's inequality   we have
\Be \label{bibi} 
|  \left\langle   \chi_{E} P_k  \chi_{E} f, g\right\rangle|\lesssim    \beta^{\dpq}  2^{-j} 2^{k(-1+(b+1)\dpq)} \|f\|_p\|g\|_{q'}
\Ee
for  $(1/p, 1/q)\in \operatorname{int}\mathfrak Q(b,r)$. This allows us to apply the bilinear interpolation argument 
(e.g.,  Keel and Tao \cite{keel-tao}). Therefore, we obtain
\[ \textstyle \sum_{k\ge j-2}  | \left\langle   \chi_{E} P_k  \chi_{E} f, g\right\rangle|\lesssim    \beta^{\dpq}  2^{-j} \|f\|_p\|g\|_{q'}\] 
provided that $(1/p, 1/q)\in \operatorname{int}\mathfrak Q(b,r)$ and $(b+1)(1/p-1/q)=1$. In particular, taking $q'=p$,  by  \eqref{ttsum}  we obtain \eqref{2p}.
 \end{proof}

The following lemma  is useful for obtaining some endpoint estimates. 

\begin{lem}\label{s-trick}  Let $1\le p_0, p_1,  q_0, q_1\le \infty$ and $\epsilon_0, \epsilon_1>0$.  
Let $T_j$, $j\in \mathbb Z $, be sublinear operators satisfying $\|T_j\|_{p_k\to q_k}\le B_k 2^{ j (-1)^k \epsilon_k }$ for $k=0,1$.  Let $\theta= {\epsilon_0}/(\epsilon_0+\epsilon_1)$, 
$1/p_\ast=\theta/{p_1}+ (1-\theta)/{p_0},$ and $1/q_\ast=\theta/{q_1}+(1-\theta)/{q_0}. $  
Then, the following hold\,$:$
\vspace{3pt}

\begin{enumerate} 
[leftmargin=1.2cm, labelsep=0.15 cm, topsep=0pt]
\item[$(a)$] If $p_0=p_1=p$ and $ q_0\neq q_1$, then   $\|\sum_j T_j f  \|_{{q_\ast,\infty}}\lesssim B_0^{1-\theta}B_1^\theta \|f\|_{p}$, 
\item[$(b)$] If $q_0=q_1=q$ and $ p_0\neq p_1$, then $\|\sum_j T_j f  \|_{{q}}\lesssim B_0^{1-\theta}B_1^\theta \|f\|_{p_\ast,1}$,
\item[$(c)$] If $p_0\neq p_1$ and $ q_0\neq q_1$, then  $\|\sum_j T_j f  \|_{{q_\ast,\infty}}\lesssim B_0^{1-\theta}B_1^\theta \|f\|_{p_\ast, 1}$.
\end{enumerate}
\end{lem}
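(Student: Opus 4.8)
The statement to prove is Lemma~\ref{s-trick}, which packages three real/Lorentz-interpolation facts for a sum of operators with dyadic gain/loss in two directions. The plan is to reduce everything to the sharp off-diagonal real interpolation inequalities for $\ell^\infty$-valued families, in the spirit of the Bourgain summation trick, and then dualize. First I would dispose of part $(a)$: for each scalar $\lambda_0>0$ split the index set $\mathbb Z=\{j:2^{j\epsilon_0}\le \lambda_0\}\cup\{j:2^{j\epsilon_0}>\lambda_0\}$ (equivalently $2^{-j\epsilon_1}$ small on the complementary piece), estimate $\sum T_jf$ on the first piece in $L^{q_0}$ using $\|T_j\|_{p\to q_0}\le B_02^{j\epsilon_0}$, on the second in $L^{q_1}$ using $\|T_j\|_{p\to q_1}\le B_12^{-j\epsilon_1}$, sum the two geometric series, and optimize the threshold $\lambda_0$; this yields the distributional bound $|\{|\sum_jT_jf|>\alpha\}|\lesssim (B_0^{1-\theta}B_1^\theta\|f\|_p/\alpha)^{q_\ast}$, i.e.\ the weak-type $L^p\to L^{q_\ast,\infty}$ estimate. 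The only point needing care is that $T_j$ is merely sublinear, so I would apply this pointwise-in-$x$ after passing to $|T_jf|$ and use the triangle/sublinearity inequality; the $q_0\ne q_1$ hypothesis is exactly what makes the two geometric sums converge.

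Part $(b)$ is the dual mechanism. Here the gain/loss is in the domain exponent, so the clean way is to test against the building blocks of $L^{p_\ast,1}$: by the atomic (or, more simply, the real-interpolation) description it suffices to prove $\|\sum_j T_j f\|_q\lesssim B_0^{1-\theta}B_1^\theta\,|F|^{1/p_\ast}$ for $f=\chi_F$ with $F$ of finite measure, and then invoke that such normalized-characteristic-function bounds upgrade to an $L^{p_\ast,1}\to L^q$ bound (this is the standard Stein--Weiss/Calder\'on argument; it is where the Lorentz space $L^{p_\ast,1}$ rather than $L^{p_\ast}$ is forced). For $f=\chi_F$ one has simultaneously $\|T_jf\|_q\le B_0 2^{j\epsilon_0}|F|^{1/p_0}$ and $\|T_jf\|_q\le B_12^{-j\epsilon_1}|F|^{1/p_1}$; taking the minimum termwise, splitting $\mathbb Z$ at the threshold where the two bounds coincide, summing the two geometric series in $q$-norm via the triangle inequality, and simplifying the exponents with $\theta=\epsilon_0/(\epsilon_0+\epsilon_1)$ gives exactly $B_0^{1-\theta}B_1^\theta|F|^{1/p_\ast}$.

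Part $(c)$ follows by composing the two ideas, or — cleaner — by interpolation between $(a)$ and a dual of $(b)$: apply $(a)$ with the pair of exponents $(p_0,q_0)$, $(p_0,q_1)$ (only the $q$-direction varying) to get a bound $L^{p_0}\to L^{q_\ast,\infty}$, apply it again with $(p_1,q_0),(p_1,q_1)$ to get $L^{p_1}\to L^{q_\ast,\infty}$ with the correspondingly adjusted constants, and then run the $(b)$-type characteristic-function argument in the $p$-direction with the fixed target $L^{q_\ast,\infty}$ — since $q_\ast$ is now the \emph{same} on both ends, $\sum_jT_j$ maps $L^{p_\ast,1}\to L^{q_\ast,\infty}$, which is the claim. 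Concretely: for $f=\chi_F$, $\|\sum_jT_jf\|_{q_\ast,\infty}\le B_0'2^{?}|F|^{1/p_0}$ and $\le B_1'2^{?}|F|^{1/p_1}$ termwise fails directly because $\sum_j$ and $\|\cdot\|_{q_\ast,\infty}$ do not commute, so instead I would do the double splitting of $\mathbb Z$ into four regions according to whether $2^{j\epsilon_0}$ and $|F|^{?}$ dominate, bounding each region's contribution in the appropriate $L^{q_k}(F)$-norm-times-$|F|$-power, then pass from $L^{q_0}+L^{q_1}$ to $L^{q_\ast,\infty}$ by the elementary inclusion. The main obstacle — and the only place a reader might stumble — is the passage from ``bounds on $\chi_F$'' to a genuine $L^{p_\ast,1}$ estimate in $(b)$ and $(c)$: this requires the observation that restricted-weak-type bounds at the single exponent $p_\ast$, once available for every $F$ with the uniform constant $B_0^{1-\theta}B_1^\theta$, give the strong $L^{p_\ast,1}\to L^q$ (resp.\ $\to L^{q_\ast,\infty}$) bound by summing over the level sets of $f$, and that this step genuinely needs $L^{p_\ast,1}$ on the domain (one loses a logarithm otherwise). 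Everything else is two or four geometric series and bookkeeping of the exponent identities $1/p_\ast=\theta/p_1+(1-\theta)/p_0$, $1/q_\ast=\theta/q_1+(1-\theta)/q_0$.
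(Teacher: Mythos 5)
The paper gives no proof of this lemma at all---it simply cites Bourgain's summation trick (\cite{Car99}) and \cite{bak}, \cite{LS}---so the only question is whether your argument is sound. Parts $(a)$ and $(b)$ are. In $(a)$, splitting $\mathbb Z$ at a threshold chosen per level $\alpha$, applying Chebyshev to the low piece in $L^{q_0}$ and to the high piece in $L^{q_1}$, and optimizing the threshold gives exactly the weak bound with constant $B_0^{1-\theta}B_1^\theta$ (with the usual modification when $q_0$ or $q_1=\infty$, where one arranges the $L^\infty$ piece to be $\le\alpha$). In $(b)$, the termwise minimum of the two operator bounds applied to $\chi_F$ and the two geometric series give $B_0^{1-\theta}B_1^\theta|F|^{1/p_*}$; the upgrade to $L^{p_*,1}$ is legitimate, but since the $T_j$ are only sublinear the clean way is not to invoke a restricted-strong-type lemma for a black-box operator, but to decompose $f=\sum_k f_k$ with $|f_k|\le 2^{k+1}\chi_{E_k}$ and apply the same two operator-norm hypotheses (which hold for arbitrary functions, not just characteristic ones) to each $f_k$, then sum in $k$.

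Part $(c)$ as written has a genuine gap. The ``cleaner'' route you propose---applying $(a)$ with the exponent pairs $(p_0,q_0),(p_0,q_1)$ and then $(p_1,q_0),(p_1,q_1)$---is not available: the hypotheses provide only the $p_0\to q_0$ and $p_1\to q_1$ bounds, not the mixed bounds $p_0\to q_1$ or $p_1\to q_0$, so there is nothing to interpolate in the $q$-direction at fixed $p$. The fallback you sketch ends by ``passing from $L^{q_0}+L^{q_1}$ to $L^{q_*,\infty}$ by the elementary inclusion,'' but that inclusion does not exist (it runs the other way), and a fixed decomposition into an $L^{q_0}$ piece plus an $L^{q_1}$ piece carries no weak-$L^{q_*}$ information; the decomposition must depend on the level $\alpha$. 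The correct argument for $(c)$ is simply your proof of $(a)$ run with the two different domain exponents: for $g$ with $|g|\le 2^{k+1}\chi_{E}$ and each $\alpha>0$, split $\sum_j$ at $N=N(\alpha)$, estimate $\sum_{j\le N}T_jg$ in $L^{q_0}$ by $B_02^{N\epsilon_0}2^{k}|E|^{1/p_0}$ and $\sum_{j>N}T_jg$ in $L^{q_1}$ by $B_12^{-N\epsilon_1}2^{k}|E|^{1/p_1}$, apply Chebyshev to each and optimize $N$; this yields $\|\sum_jT_jg\|_{q_*,\infty}\lesssim B_0^{1-\theta}B_1^\theta2^{k}|E|^{1/p_*}$, and one may then sum over the dyadic pieces of a general $f\in L^{p_*,1}$ because $q_0\neq q_1$ forces $q_*>1$, so $L^{q_*,\infty}$ is normable. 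With this replacement (and the same sublinearity remark as in $(b)$), your proof is complete and agrees with the standard argument the paper refers to.
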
 

The third assertion $(c)$ is known as \emph{`Bourgain's summation trick'} (see \cite[Section 6.2]{Car99} for a formulation in abstract setting).  
The first $(a)$ and the second $(b)$  give   better estimates than the restricted weak type estimate. As far as the authors are aware, this observation first appeared in \cite{bak} (see also  \cite[Lemma 2.3]{LS}).

\begin{rem}  Thanks to Lemma \ref{s-trick} and the estimate  \eqref{bibi}, which is equivalent to $\|\chi_{E} P_k  \chi_{E} \|_{p\to q} \lesssim \beta^{\dpq}  2^{-j} 2^{k(-1+(b+1)\delta(p,q))}$,   an elementary approach to the estimate \eqref{2p}   
 is possible.  By  $(c)$ in Lemma \ref{s-trick}   we have   $\|\sum_{k}\chi_{E} P_k  \chi_{E} f\|_{q,\infty}\lesssim  \beta^{\delta(p,q)}  2^{-j}  \|f\|_{p,1}$ for $\ppq\in \operatorname{int}\mathfrak Q(b,r)$ satisfying $(b+1)\,\delta(p,q)=1$.   Interpolation gives, in particular,   \eqref{bpjb} with  $p=r_b$ and $q=r_b'$, which  is equivalent to \eqref{2p}.
\end{rem}

By using Lemma \ref{tt-st}, we can prove Theorem \ref{glo-est}.

\begin{proof}[Proof of Theorem \ref{glo-est}] 
To show \eqref{unif} for $(1/p,1/q)\in \mathscr P$,  by  interpolation  it suffices to show  the restricted weak type $(p,q)$  estimate for $\ppq=\mathfrak F, \mathfrak F'$. 
By duality  we need only  to show 
\Be\label{restricted-weak}
\|\Pi_\lambda f \|_{q_\circ,\infty}\le C\|f\|_{p_\circ,1},   
\Ee
where $(1/p_\circ, 1/q_\circ)=\mathfrak F'.$ Indeed, once  we have \eqref{restricted-weak}, duality and interpolation give  \eqref{unif} for 
$(\rp,\rq)$ $\in (\mathfrak F,\mathfrak F')$. Note
$\Pi_\lambda^* \Pi_\lambda =\Pi_\lambda$ and $(\tfrac{d+2}{2d}, \tfrac{d-2}{2d})\in (\mathfrak F,\mathfrak F')$. So, we get \eqref{karad-global} since 
$\|\Pi_\lambda\|_{2\to p'}^2=\|\Pi_\lambda\|_{p\to p'}$.  Besides, duality gives  $\|\Pi_\lambda \|_{\frac{2d}{d+2} \to 2}\le C$. 
Interpolation  between those estimates  and $\|\Pi_\lambda \|_{2\to 2}\le 1$ gives \eqref{unif} for $\ppq\in \mathscr P$ (see Figure \ref{fig:uniform}).

By Lemma \ref{sym}  it is sufficient for \eqref{restricted-weak}  to show 
\Be\label{restricted-weak_p}
\textstyle \|\sum_{j\ge 0} \Pi_\lambda [\psi_j]f \|_{q_\circ,\infty}\le C\|f\|_{p_\circ,1}.
\Ee
By Lemma \ref{kernel}  we have \eqref{1-infty} with $E=\mathbb R^d$, $\beta=1$, $r=1$, and $b=(d-2)/{2}$,  provided that 
$\tilde\eta\in \mathcal C^j$ and $\supp  \tilde\eta\subset (2^{-1-j}, 2^{1-j}) $. 
Using this and  Lemma \ref{tt-st}, we have  
\Be
\label{eq:strong}
\|   \Pi_\lambda [\psi_j  ] \|_{p\to q}\lesssim   2^{j(\frac{d}{2}(\frac1p-\frac1q)-1)},  \quad j\ge 0,
\Ee
for $(\rp, \rq)$ contained in $\mathfrak  Q((d-2)/2,1)$ which is a quadrangle with vertices $(1/2, 1/2)$,  $(1/2, (d-2)/(2d))$, 
$( (d+2)/(2d), 1/2)$, and  $(1,0)$. To show \eqref{restricted-weak_p}, we note that   $1/p_\circ-1/q_\circ=2/d$ and make use of   the summation trick ($(c)$ in Lemma \ref{s-trick}). 
Using \eqref{eq:strong}, we get restricted weak type $(p,q)$ estimate for $\sum_{j\ge 0} \Pi_\lambda[\psi_j]$  if 
$(\rp, \rq)\in\mathfrak  Q((d-2)/2,1)$ and $\rp-\rq=2/d$.  We only need to observe that  $\mathfrak  Q((d-2)/2,1)\cap \{ (x,y)\in \sq: x-y=2/d\}=[\mathfrak F, \mathfrak F']$. 
\end{proof}

\subsection{$L^2$--$L^\infty$ estimate}
We now consider $L^2$--$L^\infty$ estimate for  $\Pi_\lambda[\eta]$, which plays a significant role in what follows.  
For a given operator $T$, by $T(x,y)$ we denote the kernel of $T$. A simple duality argument shows  
\Be
\label{2-infty}
\|T\|_{2\to \infty}=\|T\|_{L_x^\infty(L_y^2)}. 
\Ee
We also observe that  
\Be
\label{spectral-proj}
\textstyle \Pi_\lambda [\eta]   f= \frac1{2\pi} \sum_{\lambda'} \widehat {\eta} ( 2^{-1}(\lambda'-\lambda))\Pi_{\lambda'} f
\Ee
for  $\eta\in C_c^\infty$, which follows  from \eqref{schro-op} and \eqref{proj-pi}. 

\begin{lem}
\label{PI-est}  Let $ \lambda^{-\frac 23}\lesssim \mu \le 1/4$ and $ 2^{-j}\gtrsim  (\lambda\mu)^{-1} $. 
If $\eta\in \mathcal C^j$, then there is a constant $C$, independent of   $\lambda$ and $\mu$, such that 
\begin{align}
\label{201}
\|\chi_{\lambda,\mu}\Pi_\lambda[\eta]\|_{2\to\infty}&\lesssim 2^{-\frac j2}(\lambda\mu)^{\frac{d-2}{4}},
\\
\label{202}
\|\Pi_\lambda[\eta]\|_{2\to\infty}&\lesssim 2^{-\frac j2}\lambda^{\frac{d-2}{4}}.
\end{align}
\end{lem}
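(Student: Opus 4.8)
The plan is to prove \eqref{201} directly from the integral representation in Lemma \ref{kernel}, and then obtain \eqref{202} as a corollary by observing that the argument only used lower bounds on $|\sin t|$ coming from the support of $\psi_j$ (not the localization $\chi_{\lambda,\mu}$), so replacing $\chi_{\lambda,\mu}$ with the identity costs a factor $\mu^{-(d-2)/4}$ in the weight. By \eqref{2-infty} it suffices to bound $\|\chi_{\lambda,\mu}(x)\,\Pi_\lambda[\eta](x,\cdot)\|_{L^2_y}$ uniformly in $x$. From Lemma \ref{kernel} the kernel is
\[
\Pi_\lambda[\eta](x,y)=\frac1{2\pi}\int \mathfrak a(t)\,\eta(t)\,e^{i\phi_\lambda(x,y,t)}\,dt,
\]
with $\mathfrak a(t)\sim |\sin t|^{-d/2}\sim 2^{jd/2}$ on $\supp\eta\subset(2^{-1-j},2^{1-j})$. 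First I would compute $\|\Pi_\lambda[\eta](x,\cdot)\|_{L^2_y}^2$ by writing it as a double integral in $t,s$ and performing the $y$-integration. Since $\phi_\lambda(x,y,t)-\phi_\lambda(x,y,s)$ is a quadratic form in $y$ with Hessian $(\cot t-\cot s)\mathbf I$ plus linear terms, the Gaussian/oscillatory $y$-integral (a complex Gaussian, via a contour/limiting-absorption argument as in the derivation of Mehler's formula) evaluates to $C|\cot t-\cot s|^{-d/2}$ times a phase, and this reduces matters to a one-dimensional oscillatory double integral in $(t,s)\in(2^{-1-j},2^{1-j})^2$.

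The heart of the matter is then a stationary-phase / van der Corput estimate for
\[
I(x)=2^{jd}\iint_{|t|,|s|\sim 2^{-j}} \frac{\eta(t)\overline{\eta(s)}}{|\cot t-\cot s|^{d/2}}\, e^{i(\phi_\lambda(x,y_\ast,t)-\phi_\lambda(x,y_\ast,s))}\,dt\,ds,
\]
where $y_\ast=y_\ast(t,s,x)$ is the critical point in $y$ (so the phase is the resulting ``reduced'' phase). Here one uses $|\cot t-\cot s|\sim |t-s|\,2^{2j}$ on the relevant region, and the key input is the constraint $|x|\sim\sqrt\lambda$ (equivalently $1-\lambda^{-1/2}|x|\in[2^{-1}\mu,\mu]$), together with $2^{-j}\gtrsim(\lambda\mu)^{-1}$, which guarantees that the phase derivative in $t$ is $\gtrsim \lambda 2^{-j}$ or the second derivative is $\gtrsim \lambda$ in a suitable region, so that the two-dimensional integral is controlled by $2^{-j}(\lambda\mu)^{(d-2)/2}$. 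I expect this oscillatory-integral bookkeeping — keeping track of the interplay between the singular factor $|t-s|^{-d/2}$, the amplitude size $2^{jd}$, and the gain from oscillation under the hypotheses on $\mu,j$ — to be the main obstacle; it is exactly the type of computation underlying \eqref{kt-nu}, and I would mirror the corresponding estimates in \cite{T05} and \cite{JLR_endpoint}.

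Alternatively, and perhaps more cleanly, I would use \eqref{spectral-proj}: since $\eta\in\mathcal C^j$, $\widehat\eta(2^{-1}(\lambda'-\lambda))$ is essentially supported in $|\lambda'-\lambda|\lesssim 2^j$ with rapidly decaying tails and size $O(2^{-j})$ at the peak, so
\[
\|\chi_{\lambda,\mu}\Pi_\lambda[\eta]\|_{2\to\infty}\lesssim \sum_{\lambda'} |\widehat\eta(2^{-1}(\lambda'-\lambda))|\,\|\chi_{\lambda,\mu}\Pi_{\lambda'}\|_{2\to\infty}.
\]
If one has the single-frequency bound $\|\chi_{\lambda,\mu}\Pi_{\lambda'}\|_{2\to\infty}\lesssim (\lambda\mu)^{(d-2)/4}$ for $|\lambda'-\lambda|\lesssim\lambda$ (which is the $\mu$-localized refinement of Karadzhov's $L^2$--$L^\infty$ bound, itself provable from the same Mehler-kernel estimates), then summing $\sum_{\lambda'}|\widehat\eta(\cdots)|\lesssim 2^{-j}\cdot 2^{j/2}$ — using that there are $O(2^j)$ terms of size $O(2^{-j})$ but Cauchy–Schwarz over the $L^2$-orthogonal pieces gives the square-root gain — would yield \eqref{201}; the honest way to get the $2^{-j/2}$ is to expand $\|\Pi_\lambda[\eta]f\|_2^2$ and exploit orthogonality of the $\Pi_{\lambda'}$, which is precisely the $TT^*$-flavored computation of the first approach. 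Either way, \eqref{202} follows from \eqref{201} by noting $\chi_{\lambda,\mu}$ ranges over a dyadic family of annuli whose union (together with the ball $B(0,\sqrt\lambda/2)$, where a cruder bound suffices, and the exponentially small exterior) covers $\mathbb R^d$, and the worst weight $(\lambda\mu)^{(d-2)/4}$ is maximized at $\mu\sim 1$, giving $\lambda^{(d-2)/4}$; one must also check the innermost region $|x|\lesssim\sqrt\lambda$ separately, where the bound $\|\Pi_\lambda[\eta]\|_{2\to\infty}\lesssim 2^{-j/2}\lambda^{(d-2)/4}$ again follows from $\mathfrak a(t)\sim 2^{jd/2}$ and a direct estimate of the $y$-integral without needing any oscillation.
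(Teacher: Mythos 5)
Your second route is, in outline, the paper's own proof: by \eqref{spectral-proj} and orthogonality one writes $\|\chi_{\lambda,\mu}\Pi_\lambda[\eta](x,\cdot)\|_{2}^2=\pi^{-2}\sum_{\lambda'}|\widehat\eta(\tfrac12(\lambda'-\lambda))|^2\,\Pi_{\lambda'}(x,x)$, inserts localized $L^2\to L^\infty$ bounds for the individual projections $\Pi_{\lambda'}$, and sums. But the input you take for granted --- the bound $\|\chi_{\lambda,\mu}\Pi_{\lambda'}\|_{2\to\infty}\lesssim(\lambda\mu)^{(d-2)/4}$ for all $|\lambda'-\lambda|\lesssim\lambda$ --- is false in that generality: the annulus $A_{\lambda,\mu}$ is adapted to $\lambda$, and for $\lambda'>\lambda$ it sits at relative depth about $(\lambda'-\lambda)/\lambda+\mu$ inside the turning sphere of $\Pi_{\lambda'}$, so the best available bound is $\lesssim\big(\lambda'((\lambda'-\lambda)/\lambda+\mu)\big)^{(d-2)/4}$, which grows with $\lambda'-\lambda$ (for instance $\lambda'=2\lambda$ gives $\sim\lambda^{(d-2)/4}\gg(\lambda\mu)^{(d-2)/4}$); for $\lambda'<\lambda$ one must also enlarge the localization parameter to keep it $\gtrsim(\lambda')^{-2/3}$. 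The substance of the lemma is precisely the summation of these degraded bounds against the tails of $\widehat\eta$, and that is exactly where the hypothesis $2^{-j}\gtrsim(\lambda\mu)^{-1}$ enters; your write-up never invokes it in this approach, so the decisive step is missing rather than routine. In addition, the count $\sum_{\lambda'}|\widehat\eta(\tfrac12(\lambda'-\lambda))|\lesssim 2^{-j}\cdot 2^{j/2}$ is off ($O(2^j)$ eigenvalues with weight $O(2^{-j})$ give $O(1)$); the gain $2^{-j/2}$ only comes from the squared weights, $\sum_{\lambda'}|\widehat\eta|^2\lesssim 2^{-j}$, i.e. from the orthogonality expansion you mention as the ``honest way'' but do not carry out.

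Your first route (Mehler kernel, complex Gaussian $y$-integration, then a two-dimensional oscillatory integral in $(t,s)$) is only a plan: you explicitly defer the decisive stationary-phase bookkeeping to ``mirroring'' \cite{T05} and \cite{JLR_endpoint}, so it cannot be credited as a proof. Finally, your passage from \eqref{201} to \eqref{202} by taking the worst annulus does not go through as stated: \eqref{201} for an annulus $A_{\lambda,\mu'}$ is only available under $2^{j}\lesssim\lambda\mu'$, so the annuli with $\mu'\ll 2^{j}/\lambda$, the Airy region $|1-\lambda^{-1/2}|x||\lesssim\lambda^{-2/3}$, and the exterior all require separate treatment. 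The paper avoids this by simply rerunning the same $\lambda'$-summation with the global bound $\|\Pi_{\lambda'}\|_{2\to\infty}\lesssim(\lambda')^{(d-2)/4}$, with no decomposition of $\mathbb R^d$ at all.
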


\begin{proof}  We first show \eqref{201}. 
The proof follows an argument similar to that of Lemma 2.9 in \cite{JLR_endpoint}.

By orthogonality,  $\|\Pi_\lambda[\eta](x,\cdot)\|_2^2 = \pi^{-2} \sum_{\lambda'} |\widehat \eta(2^{-1}(\lambda'-\lambda))|^2 |\Pi_{\lambda'}(x,x)|$. Also note that $|\widehat \eta(\tau)|\lesssim 2^{-j}(1+ 2^{-j} |\tau|)^{-N}$ for any $N$. Thus, by \eqref{spectral-proj} and \eqref{2-infty},  we  have 
\begin{align*}
\|\chi_{\lambda,\mu}\Pi_\lambda[\eta]\|_{2\to\infty}^2 
\lesssim  \sup_{x\in A_{\lambda, \mu}} \sum_{\lambda'} 2^{-2j}\big(1+2^{-j}|\lambda-\lambda'|\big)^{-N}\Pi_{\lambda'}(x,x).
\end{align*}
Since $\|T\|_{1\to \infty}=\|T\|_{L^\infty_{x,y}}$ for an operator $T$, this reduces the proof of \eqref{201} to showing  
\Be
\label{1010}
\tx  \sum_{\lambda'} 2^{-2j}\big(1+2^{-j}|\lambda-\lambda'|\big)^{-N}\|\chi_{\lambda,\mu}\Pi_{\lambda'}\chi_{\lambda,\mu}\|_{1\to\infty} \lesssim 2^{-j}(\lambda\mu)^{\frac{d-2}{2}}
\Ee
for $ 2^{-j}\gtrsim  (\lambda\mu)^{-1} $.   Using 
$\Pi_{\lambda'}^2=\Pi_{\lambda'}$ and duality, we  note $\|\chi_{\lambda,\mu}\Pi_{\lambda'}\chi_{\lambda,\mu}\|_{1\to\infty} \le \|\chi_{\lambda,\mu}\Pi_{\lambda'}\|_{2\to\infty}^2$. 
Thus, the estimate \eqref{1010}  follows  from a  stronger estimate 
\Be 
\label{pi-sum} 
  \sum_{\lambda'} \big(1+2^{-j}|\lambda-\lambda'|\big)^{-N} \|\chi_{\lambda,\mu}^e\Pi_{\lambda'}\|_{2\to\infty}^2 \le C2^{j}(\lambda\mu)^{\frac{d-2}{2}}, 
\Ee
where $\chi_{\lambda,\mu}^e = \chi_{A_{\lambda,\mu}^e}$ and $A_{\lambda,\mu}^e= \{x:|x|\ge\lambda^{1/2}(1-\mu)\}$. 

 To handle the sum above, we use  the estimates 
\begin{align}\label{kl2infty}
    \|\chi_{\lambda,\mu}\Pi_\lambda\|_{2\to \infty}\lesssim (\lambda\mu)^{\frac{d-2}{4}},
\end{align}
$\|\chi_{A_{\lambda}^\circ}\Pi_\lambda\|_{2\to\infty}\lesssim \lambda^{\frac{d-2}{12}}$, and $\|\chi_{A_{\lambda,\mu}^-}\Pi_\lambda\|_{2\to\infty}\lesssim \lambda^{\frac{d-2}{12}}(\lambda\mu^{\frac32})^{-N}$, where $A_{\lambda}^\circ := \{\sqrt\lambda x: |1-|x||\le\lambda^{-2/3}\}$ and $A_{\lambda,\mu}^-:=\{\sqrt \lambda x: \mu\le|x|-1<2\mu\}$.  The first estimate \eqref{kl2infty} follows from \eqref{kt-nu}.
For the second and third estimates, the reader may find their proofs in \cite{JLR_endpoint} or \cite{T05}.
 Since $ \lambda^{-\frac 23}\lesssim \mu$, summation  over annuli gives
\Be
\label{outside} \|\chi_{\lambda,\mu}^e\Pi_\lambda\|_{2\to\infty}\lesssim (\lambda\mu)^{\frac{d-2}{4}}.
\Ee

We define $\ell(\rho)=\ell(\rho, \lambda, \lambda')$ by setting 
$\ell(\rho)= (\lambda/\lambda')^{\frac23}\rho$ if $\lambda \ge \lambda'$ and 
     $\ell(\rho)= (\lambda'-\lambda)/\lambda+\rho$ otherwise.  Since  $\lambda^{-\frac23}\lesssim\mu$,  we have $\ell(\mu) \gtrsim(\lambda')^{-\frac23}$. Note that  $(\lambda')^\frac12(1-\ell(\mu))\le \lambda^\frac12(1-\mu)$. Thus,  it follows  
 that  $  \|\chi_{\lambda,\mu}^e \Pi_{\lambda'} \|_{2\to\infty}\le \|\chi_{\lambda', \ell(\mu)}^e \Pi_{\lambda'}\|_{2\to\infty}$. 
 By \eqref{outside} the left hand side of \eqref{pi-sum} is bounded above by 
 \begin{align*}
 \textstyle C  \sum_{\lambda'} \big(1+2^{-j}|\lambda-\lambda'|\big)^{-N} (\lambda'\ell(\mu))^{\frac{d-2}{2}}.
 \end{align*}
 To prove \eqref{pi-sum} it is sufficient to show the above sum is bounded by $C2^j(\lambda\mu)^{(d-2)/2}$.  
 Indeed, considering separately the cases $\lambda\ge \lambda'$ and $\lambda<\lambda'$,  we only have to show
 $ \sum_{\lambda'\le \lambda} \big(1+2^{-j}(\lambda-\lambda')\big)^{-N} (\lambda'/\lambda)^{\frac{d-2}{6}}
 \lesssim  2^j $ and 
  \begin{align*}
   \textstyle  \sum_{\lambda'>\lambda} \big(1+2^{-j}(\lambda'-\lambda)\big)^{-N}& (\lambda'/\lambda)^\frac{d-2}2 \big((\lambda'-\lambda)/(\lambda\mu)+1\big)^{\frac{d-2}{2}}
      \lesssim  2^j.
 \end{align*}
Both follow from a simple computation. Particularly, we use $ 2^{-j}\gtrsim  (\lambda\mu)^{-1}$ for the second inequality. 

One can easily show the estimate \eqref{202}    in the same manner using  $\|\Pi_\lambda\|_{2\to\infty}\lesssim \lambda^{\frac{d-2}{4}}$. So, we omit the detail. 
\end{proof}

\subsubsection*{The marginal case  $2^{j}\ge \lambda\mu$}  Making use of the previous estimates, we obtain  estimates for $\sum_{2^{j}\ge \lambda\mu} \chi_{\lambda,\mu}\Pi_\lambda[\psi_j] \chi_{\lambda,\mu}$, whose contribution turns out to be less significant. 

\begin{lem}\label{imply1}
Let $\mu\in \mathbb D^{-1}:=\{2^{-k}:\,-k\in\mathbb N\}$ and $2^{j_\circ-1}\le \lambda\mu< 2^{j_\circ}$. Suppose $\eta\in \mathcal C^{j_\circ}$. Then, for $(1/p,1/q)\in \sq$, we have
\begin{align}
\label{imp22}
\| \chi_{\lambda,\mu}\Pi_\lambda[\eta]\chi_{\lambda,\mu}\|_{p\to q}\lesssim \lambda^{\beta(p,q)}\mu^{\gamma(p,q)}.
\end{align}
\end{lem}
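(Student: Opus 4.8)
The point of Lemma~\ref{imply1} is that when $2^j$ is as large as $\lambda\mu$, the operator $\Pi_\lambda[\eta]$ already lives on the ``marginal'' scale and the $L^2$--$L^\infty$ bound of Lemma~\ref{PI-est} is strong enough to recover the full target $\lambda^{\beta(p,q)}\mu^{\gamma(p,q)}$ by interpolation with the trivial $L^2$--$L^2$ bound. So the plan is: first record the three basic estimates available for a single dyadic piece $\Pi_\lambda[\eta]$ with $\eta\in\mathcal C^{j_\circ}$ and $2^{j_\circ}\sim\lambda\mu$, namely the $L^2\to L^2$ bound $\|\chi_{\lambda,\mu}\Pi_\lambda[\eta]\chi_{\lambda,\mu}\|_{2\to2}\lesssim 2^{-j_\circ}\sim(\lambda\mu)^{-1}$ from \eqref{easy-l2}, the $L^2\to L^\infty$ bound $\|\chi_{\lambda,\mu}\Pi_\lambda[\eta]\|_{2\to\infty}\lesssim 2^{-j_\circ/2}(\lambda\mu)^{(d-2)/4}\sim(\lambda\mu)^{(d-3)/4}$ from \eqref{201}, and (by duality, since $\chi_{\lambda,\mu}$ is real) the $L^1\to L^2$ bound $\|\chi_{\lambda,\mu}\Pi_\lambda[\eta]\chi_{\lambda,\mu}\|_{1\to2}\lesssim(\lambda\mu)^{(d-3)/4}$. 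Composing the latter two gives $\|\chi_{\lambda,\mu}\Pi_\lambda[\eta]\chi_{\lambda,\mu}\|_{1\to\infty}\lesssim(\lambda\mu)^{(d-3)/2}$.

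\textbf{Second step.} With the endpoints at $(1/p,1/q)=(1/2,1/2)$, $(1,1/2)$, $(1/2,0)$ and $(1,0)$ in hand — with bounds $(\lambda\mu)^{-1}$, $(\lambda\mu)^{(d-3)/4}$, $(\lambda\mu)^{(d-3)/4}$, $(\lambda\mu)^{(d-3)/2}$ respectively — I would interpolate. The region $\sq$ is the convex hull of these four corners, so bilinear (Riesz--Thorin / real) interpolation produces, at a general $(1/p,1/q)\in\sq$ written as a convex combination of the corners, a bound of the form $(\lambda\mu)^{e(p,q)}$ where $e$ is the corresponding affine function. One then simply checks that on each of the four regions $\mathcal R_1$, $\mathcal R_2$, $\mathcal R_2'$, $\mathcal R_3\cup[\mathfrak C,\mathfrak D]\cup[\mathfrak C',\mathfrak D']$ the quantity $\beta(p,q)\log\lambda+\gamma(p,q)\log\mu$ is at least $e(p,q)\log(\lambda\mu)$ under the constraint $\lambda^{-2/3}\lesssim\mu\le1/4$; equivalently, writing $\mu=\lambda^{-\theta}$ with $0\le\theta\le2/3$, that the exponent $\beta(p,q)-\theta\gamma(p,q)$ dominates $(1-\theta)e(p,q)$. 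Since the piecewise-affine functions $\beta,\gamma$ are themselves each the maximum of a small list of affine functions (see the footnote after the definition of $\beta$, and similarly for $\gamma$), this is a finite, region-by-region linear-programming check; the key inputs are that at the corners the exponents match ($\beta+\gamma=-1$ at $(1/2,1/2)$, $\beta+\gamma=(d-3)/4$ at $(1,1/2)$, etc.) and that $\gamma\le 0$ on the relevant part of $\sq$, or more precisely that the discrepancy has the right sign.

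\textbf{Main obstacle.} The one genuinely delicate point is the bookkeeping of the exponents: one must verify that the affine function coming out of the four-corner interpolation never exceeds $\beta\log\lambda+\gamma\log\mu$ for \emph{all} admissible $\mu$ in $[\lambda^{-2/3},1/4]$, and this is tightest at $\mu\sim\lambda^{-2/3}$ (where $2^{j_\circ}\sim\lambda^{1/3}$ is smallest) and at $\mu\sim1/4$ (where $2^{j_\circ}\sim\lambda$). I expect the proof to reduce to checking the inequality at these two extreme values of $\theta$ and invoking linearity in between, together with the observation that $\beta$ and $\gamma$ were \emph{defined} precisely so that $\gamma(p,q)=\beta(p,q)+\tfrac{d-3}{4}\delta(p,q)$ on $\mathcal R_1$ and analogous identities hold on the other regions — which is exactly what makes the $(\lambda\mu)$-homogeneous interpolant sit below the target. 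A secondary, purely notational nuisance is that Lemma~\ref{PI-est} is stated with the hypothesis $2^{-j}\gtrsim(\lambda\mu)^{-1}$, which holds with room to spare at $j=j_\circ$ since $2^{-j_\circ}\sim(\lambda\mu)^{-1}$; one should note this (or invoke the estimate at a slightly smaller $j$ and absorb the constant) so that \eqref{201} is legitimately applicable.
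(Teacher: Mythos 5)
Your plan for the $(2,2)$ and $(2,\infty)$ corners matches the paper's proof, but the way you reach the $(1,\infty)$ corner contains a genuine error. You bound $\|\chi_{\lambda,\mu}\Pi_\lambda[\eta]\chi_{\lambda,\mu}\|_{1\to\infty}$ by ``composing'' the $L^2\to L^\infty$ and $L^1\to L^2$ bounds, but this factorization requires writing $\chi\,\Pi_\lambda[\eta]\,\chi=(\chi\,\Pi_\lambda[\eta])(\Pi_\lambda[\eta]\,\chi)$, i.e.\ it uses idempotency, and $\Pi_\lambda[\eta]=\frac1{2\pi}\int\eta(t)e^{i\frac t2(\lambda-H)}dt$ is \emph{not} a projection. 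In fact the bound you claim is false: since $\widehat\eta(\tfrac{\lambda'-\lambda}{2})\approx 2^{-j_\circ}$ for $|\lambda'-\lambda|\lesssim 2^{j_\circ}\sim\lambda\mu$, the diagonal kernel values force $\|\chi_{\lambda,\mu}\Pi_\lambda[\eta]\chi_{\lambda,\mu}\|_{1\to\infty}$ to be of size $(\lambda\mu)^{(d-2)/2}$, not $(\lambda\mu)^{(d-3)/2}$ (nor $(\lambda\mu)^{(d-4)/2}$, which is what your composition would give after correcting the arithmetic below). The paper circumvents this precisely by expanding $\Pi_\lambda[\eta]$ into the genuine spectral projections via \eqref{spectral-proj}, applying $\|\chi\Pi_{\lambda'}\chi\|_{1\to\infty}\le\|\chi\Pi_{\lambda'}\|_{2\to\infty}^2$ to each $\Pi_{\lambda'}$ (where idempotency \emph{is} available), and summing with the weights $|\widehat\eta|$, which is exactly the content of \eqref{1010}/\eqref{pi-sum}; this yields $(\lambda\mu)^{(d-2)/2}$ at $(1,\infty)$, which is what is then compared (as $(\lambda\mu)^{-1+\frac d2\delta(p,q)}$ after interpolation with \eqref{easy-l2} and \eqref{201}) against $\lambda^{\beta}\mu^{\gamma}$.

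There are also arithmetic slips that matter. From \eqref{201} with $2^{j_\circ}\sim\lambda\mu$ one gets $2^{-j_\circ/2}(\lambda\mu)^{(d-2)/4}\sim(\lambda\mu)^{(d-4)/4}$, not $(\lambda\mu)^{(d-3)/4}$; with your stated corner value the final comparison fails near $(1/p,1/q)=(1/2,0)$ when $\lambda^{-2/3}\le\mu<\lambda^{-1/2}$, since $(\lambda\mu)^{(d-3)/4}\le\lambda^{(d-2)/4}\mu^{(d-1)/4}$ requires $\mu\ge\lambda^{-1/2}$. Likewise the ``consistency'' identities you invoke are not correct as stated (e.g.\ on $\mathcal R_1$ one has $\gamma-\beta=\tfrac12-\tfrac{d+1}{4}\delta(p,q)$, and $\beta+\gamma=\tfrac12$ at $(1/2,1/2)$), and the region-by-region comparison with $\lambda^{\beta}\mu^{\gamma}$ is asserted rather than carried out; the paper's reduction to the single exponent $(\lambda\mu)^{-1+\frac d2\delta(p,q)}$ makes that comparison a one-line check using $\mu\gtrsim\lambda^{-2/3}$. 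So the overall architecture is right, but the $(1,\infty)$ corner needs the spectral expansion argument, not composition, and the exponent bookkeeping must be redone.
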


\begin{proof}
Note $(\lambda\mu)^{-1+\frac d2\delta(p,q)}\le \lambda^{\beta(p,q)}\mu^{\gamma(p,q)}$. So, \eqref{imp22} follows once we have
\begin{align}\label{imp2}
   \| \chi_{\lambda,\mu}\Pi_\lambda[\eta]\chi_{\lambda,\mu}\|_{p\to q} \lesssim (\lambda\mu)^{-1+\frac d2\delta(p,q)}.
\end{align}
In view of interpolation and duality  it is sufficient to show  \eqref{imp2} for $(p,q)=(2,2)$, $(2,\infty)$, $(1,\infty)$.  
The estimate   \eqref{imp2}  for $(p,q)=(2,2)$  is clear from \eqref{easy-l2} since $\|\eta\|_1\lesssim 2^{-j_\circ}$.  Since $ 2^{j_\circ}\sim \lambda\mu$,   \eqref{imp2}  for $(p,q)=(2,\infty)$ follows from \eqref{201}.

Note that  $\widehat\eta(\tau) = O(2^{-j_\circ}(1+2^{-j_\circ}|\tau|)^{-N})$ for any $N\in\N$. Using \eqref{spectral-proj}, we obtain  
\[
\textstyle 
\|\chi_{\lambda,\mu}\Pi_\lambda[\eta]\chi_{\lambda,\mu}\|_{1\to \infty}\lesssim  \sum_{\lambda'}2^{-j_\circ}(1+2^{-j_\circ}|\lambda'-\lambda|)^{-N}\|\chi_{\lambda,\mu}\Pi_{\lambda'}\chi_{\lambda,\mu}\|_{1\to \infty}.
\] 
Since $ 2^{j_\circ}\sim \lambda\mu$,  \eqref{imp2} for $(p,q)=(1,\infty)$ follows from \eqref{1010}.  
\end{proof}

\subsection{Estimates for the kernel of $\Pi_\lambda$}
In this subsection we are concerned with estimates for  the kernel of $\Pi_\lambda[\psi_j]$. 
To do so, it is more convenient to consider  a rescaled operator.  For $\eta\in C_c^\infty (\mathbb R)$,  by  $\fP_\lambda[\eta]$  we denote the operator whose kernel is given by  
\[\fP_\lambda[\eta](x,y)= \Pi_\lambda[\eta ](\sqrt \lambda x, \sqrt \lambda y).\]
 As before, by Mehler's formula (cf. \eqref{repPi})  and scaling  one can see 
\Be 
\label{eq:scaled-op}
 \fP_\lambda[\eta](x,y)=\frac1{2\pi}\int \eta(t)\mathfrak a(t)e^{i\lambda \phi_1(x,y,t)} dt. 
\Ee

To obtain estimates for $\fP_\lambda[\psi_j]$ we  examine the phase function of the oscillatory integral. 
A calculation shows
\begin{align}
\label{ph-d}
   \partial_s \phi_1(x,y,s) 
           =-\frac{\mathcal{Q}(x,y,\cos s)}{2\sin^2s}, 
          \end{align}
where  
\begin{align}
\label{q-def} 
  \mathcal{Q}(x,y, \tau)
&:=(\tau- \inp xy)^2-\mathcal D(x,y),
\\
 \cD(x,y)&:= 1+\inp xy^2 -|x|^2-|y|^2.
 \end{align}
 
The stationary point of $\phi_1(x,y,\cdot)$ is  given by the zeros of $\cQ(x,y,\cos \cdot)$. 
 $\cD(x,y)$, which is the discriminant of the quadratic equation $\cQ(x,y,\tau)=0$,  regulates  the nature of stationary point of the phase function $\phi_1(x,y,\cdot)$.   
 In fact, we can obtain bounds on $\fP_\lambda[\psi_j](x,y)$ in terms of $|\cD(x,y)|$, which are to be used later.

\begin{lem}
\label{oscillatory} Let $\mu\in \mathbb D^{-1}$ and let  $x,y\in \R^d$ satisfy that $1-C_1\mu\le |x|, |y|\le 1-c_1\mu$ for some $0<c_1<C_1$. If $1+ \inp xy\ge 10^{-2}$, then for any $N>0$ there exists a constant $\mathrm B = \mathrm B(c_1,C_1,N,d)$ such that the following hold for $j\ge 0$:
\begin{enumerate}[leftmargin=1.3cm, labelsep=0.3 cm, topsep=0pt]
    \item [$(a)$] If $-\cD(x,y)\gtrsim \mu^2$, then
    \begin{align*}
        |\mathfrak P_\lambda[\psi_j](x,y)|\le \mathrm B \begin{cases}
           2^{\frac{d-2}{2}j}(\lambda 2^j |\cD(x,y)|+1)^{-N}, &  \  2^{-j}\lesssim |\cD(x,y)|^{\frac14}, \\[2pt]
           2^{\frac{d-2}{2}j}(\lambda 2^{-3j}+1)^{-N}, &  \ 2^{-j}\gg |\cD(x,y)|^{\frac14}.
        \end{cases}
    \end{align*}
    \item[$(b)$] If $|\cD(x,y)|\ll \mu^2$, then
    \begin{align*}
        |\mathfrak P_\lambda[\psi_j](x,y)|\le \mathrm B \begin{cases}
           2^{\frac{d-2}{2}j}(\lambda 2^j \mu^2+1)^{-N}, &  \  2^{-j}\ll \mu^{\frac12}, \\[2pt]
           2^{\frac{d-2}{2}j}(\lambda 2^{-3j}+1)^{-N}, & \  2^{-j}\gg \mu^{\frac12}.
        \end{cases}
    \end{align*}
    \item[$(c)$] If $\cD(x,y)\sim \mu^2$, then
    \begin{align*}
        |\mathfrak P_\lambda[\psi_j](x,y)|\le \mathrm B \begin{cases}
           2^{\frac{d-1}{2}j}\lambda^{-\frac12}\mu^{-\frac12}, & \  2^{-j}\lesssim \mu^{\frac12}, \\[2pt]
           2^{\frac{d-2}{2}j}(\lambda 2^{-3j}+1)^{-N}, & \  2^{-j}\gg \mu^{\frac12}.
        \end{cases}
    \end{align*}
\end{enumerate}
If $1+\inp xy < 10^{-2}$, then $|\fP_\lambda[\psi_j](x,y)|\lesssim 2^{\frac{d-2}{2}j}(\lambda 2^j)^{-N}$ for $j\ge 0$.
\end{lem}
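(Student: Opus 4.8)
The plan is to regard $\mathfrak P_\lambda[\psi_j](x,y)=\frac1{2\pi}\int\psi_j(t)\,\mathfrak a(t)\,e^{i\lambda\phi_1(x,y,t)}\,dt$ (see \eqref{eq:scaled-op}) as an oscillatory integral in $t$ with large parameter $\lambda$, and to read its size off the phase $\phi_1(x,y,\cdot)$ on $I_j:=\supp\psi_j$. On $I_j$ one has $t\in[2^{-j-1},2^{-j+1}]$ (intersected with $\supp\eta_\circ$ when $j=0$), so $\sin t\sim2^{-j}$, whence the amplitude $g_j:=\psi_j\mathfrak a$ obeys $|g_j^{(k)}|\lesssim2^{j(k+d/2)}$ for all $k\ge0$, and in particular the trivial bound $|\mathfrak P_\lambda[\psi_j](x,y)|\le\|g_j\|_1\lesssim2^{\frac{d-2}2 j}$ holds always. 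From \eqref{ph-d}, $\partial_t\phi_1=-\mathcal Q(x,y,\cos t)/(2\sin^2t)$, and by induction on $k$ one checks that, for $k\ge1$, $\partial_t^k\phi_1(x,y,\cdot)$ is a finite sum of terms $(\cos t-\inp xy)^a\,\mathcal Q(x,y,\cos t)^b\,P(\cos t)/\sin^m t$ with $a+b\le1$ and $m\le2a+4b+k-3$. The upshot is that whenever $|\partial_t\phi_1|\gtrsim A$ on $I_j$, this structural form forces the companion bounds $|\partial_t^k\phi_1|\lesssim2^{(k-1)j}A$ for $k\ge2$, and repeated integration by parts gives $|\mathfrak P_\lambda[\psi_j](x,y)|\lesssim2^{\frac{d-2}2 j}(1+\lambda 2^{-j}A)^{-N}$.

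Thus the crux is to size $\mathcal Q(x,y,\cos t)$ — equivalently $v:=\cos t-\inp xy=(1-\inp xy)-(1-\cos t)$ — on $I_j$. Using $1-C_1\mu\le|x|,|y|\le1-c_1\mu$, the identity $(1-\inp xy)(1+\inp xy)=(1-|x|^2)+(1-|y|^2)-\cD(x,y)$, and $\cD(x,y)\le(1-|x|^2)(1-|y|^2)\lesssim\mu^2$, I would first show that $1-\inp xy\sim\mu+|\cD(x,y)|$ on the set $1+\inp xy\ge10^{-2}$; it is precisely this hypothesis that supplies the upper bound. Since $1-\cos t\sim2^{-2j}$ on $I_j$, this pins down $v$: if $1-\inp xy$ dominates $2^{-2j}$ — i.e.\ $2^{-j}\lesssim|\cD(x,y)|^{1/4}$ in $(a)$ and $2^{-j}\ll\mu^{1/2}$ in $(b)$ — then $|v|\sim1-\inp xy$, so by the hypothesis on $\cD$ one gets $|\mathcal Q|\sim|\cD(x,y)|$ in $(a)$ and $|\mathcal Q|\sim\mu^2$ in $(b)$; hence $A\sim2^{2j}|\mathcal Q|$, $\lambda2^{-j}A\sim\lambda2^j|\cD(x,y)|$ resp.\ $\lambda2^j\mu^2$, and the integration-by-parts bound gives the first lines of $(a)$, $(b)$. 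If instead $2^{-2j}$ dominates — $2^{-j}\gg|\cD(x,y)|^{1/4}$ in $(a)$, $2^{-j}\gg\mu^{1/2}$ in $(b)$, $(c)$ — then $v\sim-2^{-2j}$, $|\mathcal Q|\sim2^{-4j}$, $A\sim2^{-2j}$, $\lambda2^{-j}A\sim\lambda2^{-3j}$, giving the second lines of $(a)$, $(b)$, $(c)$. When $1+\inp xy<10^{-2}$ the points $x,y$ are nearly antipodal, so $|v|\gtrsim1$ on $I_j$ while $|\cD(x,y)|$ is small compared to $v^2$ (again $\cD\le(1-|x|^2)(1-|y|^2)\lesssim\mu^2$ and $-\cD\le|x|^2|y|^2-\inp xy^2\le\sin^2\theta\ll1$); hence $|\mathcal Q|\sim1$, $A\sim2^{2j}$, and integration by parts yields the stated rapid decay $2^{\frac{d-2}2 j}(\lambda2^j)^{-N}$.

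The remaining case — $(c)$ with $2^{-j}\lesssim\mu^{1/2}$ — is the only one in which $\phi_1(x,y,\cdot)$ can genuinely have a stationary point in $I_j$, and it is the main obstacle. There are at most two critical points $t_\pm$, given by $\cos t_\pm=\inp xy\pm\sqrt{\cD(x,y)}$; since $\mathcal Q$ vanishes there, $\partial_t^2\phi_1(t_\pm)=(\cos t_\pm-\inp xy)/\sin t_\pm=\pm\sqrt{\cD(x,y)}/\sin t_\pm$, so a critical point inside $I_j$ is nondegenerate with $|\partial_t^2\phi_1|\sim2^j\mu$, and more precisely $|\partial_t\phi_1(t)|\sim2^j\mu\,|t-t_\pm|$ on $I_j$ — because $|t-t_\pm|\lesssim2^{-j}$ forces $|v(t)-v(t_\pm)|\lesssim2^{-2j}\lesssim\mu$, so $\mathcal Q(x,y,\cos t)\sim2^{-j}\mu\,|t-t_\pm|$ and $|v|\sim\mu$, and the companion derivative bounds hold on each dyadic piece with this weighting. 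Decomposing $I_j$ dyadically in the distance $\rho$ to the nearest critical point, bounding the piece(s) with $\rho\lesssim(\lambda2^j\mu)^{-1/2}$ by the trivial estimate and the rest by integration by parts (which applies since $|\partial_t\phi_1|\gtrsim2^j\mu\rho$ there, giving the gain $(1+\lambda\mu2^j\rho^2)^{-N}$), and summing, I obtain $|\mathfrak P_\lambda[\psi_j](x,y)|\lesssim2^{jd/2}(\lambda2^j\mu)^{-1/2}=2^{\frac{d-1}2 j}\lambda^{-\frac12}\mu^{-\frac12}$ when $\lambda\mu\ge2^j$; when $\lambda\mu\le2^j$ the trivial bound already gives $2^{\frac{d-2}2 j}\le2^{\frac{d-1}2 j}\lambda^{-\frac12}\mu^{-\frac12}$, and if no critical point lies in $I_j$ the same decomposition (now around the nearest critical point, possibly just outside $I_j$) still yields the stated bound. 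I expect the bookkeeping here — verifying the companion derivative bounds on each dyadic piece while keeping all implied constants uniform in $\lambda$ and $\mu$ — to be the main technical point; the other cases reduce to routine integration by parts once $1-\inp xy$, and hence $\mathcal Q$, have been sized on $I_j$.
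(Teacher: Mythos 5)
Your proposal is correct and takes essentially the same route as the paper: sizing $\mathcal Q(x,y,\cos t)$ (hence $\partial_t\phi_1$) on $\supp\psi_j$ via the sign and size of $\cD(x,y)$ together with $1-\inp xy\sim\mu$, running non-stationary phase in every non-degenerate regime (including the antipodal case), and, for $(c)$ with $2^{-j}\lesssim\mu^{1/2}$, decomposing dyadically around the at most two critical points $\cos s_\pm=\inp xy\pm\sqrt{\cD(x,y)}$ (separated by $\gtrsim\mu^{1/2}$) and balancing a trivial bound near them against oscillatory decay away, which is exactly the paper's $\eta_k^{\pm}$/$\eta^{\ast}$ splitting with van der Corput. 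The only deviations are cosmetic --- your inductive structural bound for the higher phase derivatives in place of the paper's direct estimates, and your displayed relation $|\partial_t\phi_1|\sim 2^{j}\mu\,|t-t_\pm|$ should be asserted only as a lower bound (e.g.\ when the nearest critical point lies at distance $\gg 2^{-j}$ from $\supp\psi_j$ the derivative is in fact larger), which is all your argument uses --- so there is no gap.
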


\begin{proof}
We first consider the case $1+\inp xy \ge 10^{-2}$ and show $(a)$--$(c)$.   The estimates  $(a)$ and $(b)$ can be shown in a similar way. 
We begin with observing that $0<1-\inp xy\sim 1-\inp xy^2$ since $1+\inp xy \ge 10^{-2}$. So, we have 
\Be \label{relation}
  1-\inp xy\sim  2-|x|^2-|y|^2-\cD(x,y)\sim \mu 
\Ee
if $-\cD(x,y)\gtrsim \mu^2$ or $|\cD(x,y)| \ll \mu^2$.

Note that $1-\cos s\sim 2^{-2j}$ if $s\in \supp \psi_j$. When 
$|\cD(x,y)|\gtrsim \mu^2$, recalling \eqref{q-def} and using \eqref{relation}  we see  $ \mathcal Q(x,y, \cos s) \gtrsim 2^{-4j} $ for
$2^{-j}\gg |\mathcal D(x,y)|^{1/4}$ and  $ s\in\supp{\psi_j} $.  Thus, from \eqref{ph-d} we have 
\begin{align*}
    |\partial_s\phi_1(x,y,s)|
    \gtrsim 
    \begin{cases}
       2^{2j}|\cD(x,y)|,& 2^{-j}\lesssim |\cD(x,y)|^{\frac14}, 
       \\
       2^{-2j}, & 2^{-j}\gg|\cD(x,y)|^{\frac14}
    \end{cases}
\end{align*}
for $ s\in\supp{\psi_j} $ if  $-\cD(x,y)\gtrsim \mu^2$. 
We also note that  $|(d/ds)^n((\sin s)^{-2})|\lesssim 2^{(2+n)j}$ and $|\partial_s^n(\cos s - \inp xy)|\lesssim 2^{nj}\max(\mu,2^{-2j})$ for any $n\in\N_0$. Being combined with \eqref{ph-d}, these bounds yield  
\begin{align*}
    |\partial_s^n\phi_1(x,y,s)|\lesssim \begin{cases}
       2^{(1+n)j}|\cD(x,y)|,& 2^{-j}\lesssim |\cD(x,y)|^{\frac14} \\
       2^{(n-3)j}, & 2^{-j}\gg|\cD(x,y)|^{\frac14}
    \end{cases} ,\quad s\in\supp{\psi_j}
\end{align*}
if  $\cD(x,y)\lesssim -\mu^2$. 
Also, we have $|(d/ds)^n( 2^{-dj/2}\mathfrak a\psi_j)|\lesssim 2^{nj}$, $n\in\N_0$. Thus, integration by parts for the integral $\mathfrak P_\lambda[\psi_j](x,y)$ (recall \eqref{eq:scaled-op})  gives the estimate in $(a)$ (see, for example, \cite[Lemma 2.5]{JLR_endpoint}). 

We can show $(b)$ in the same manner as above.  Since $1-\cos s\sim 2^{-2j}$ and $|\cD(x,y)|\ll \mu^2$,  we have  $ \mathcal Q(x,y, \cos s) \sim \mu^2 $ if $2^{-j}\ll \mu^{\frac12}$ 
and $ \mathcal Q(x,y, \cos s) \sim 2^{-4j} $ if $2^{-j}\gg \mu^{\frac12}. $  Thus,  using \eqref{ph-d} we have \begin{align*}
    |\partial_s\phi_1(x,y,s)|\gtrsim \begin{cases}
       2^{2j}\mu^2,& 2^{-j}\ll \mu^{\frac12} \\
       2^{-2j}, & 2^{-j}\gg\mu^{\frac12} 
    \end{cases},\quad s\in\supp{\psi_j}.
\end{align*}
Similarly as in the proof of $(a)$,  we get 
\begin{align*}
    |\partial_s^n\phi_1(x,y,s)|\lesssim \begin{cases}
       2^{(1+n)j}\mu^2,& 2^{-j}\ll \mu^{\frac12} \\
       2^{(n-3)j}, & 2^{-j}\gg\mu^{\frac12}
    \end{cases}, \quad  s\in\supp{\psi_j}
\end{align*}
for any $ n\in\N$.  Combining those and $|(d/ds)^n( 2^{-dj/2}\mathfrak a\psi_j)|\lesssim 2^{nj}$, by routine integration by parts we obtain the desired  estimate in $(b)$ (\cite[Lemma 2.5]{JLR_endpoint}).

We now prove $(c)$.  From \eqref{relation} we note  $1-\inp xy\lesssim \mu$ since $\cD(x,y)\sim \mu^2$.  Using this, we have $\mathcal Q(x,y, \cos s) \sim 2^{-4j} $ if $2^{-j}\gg \mu^{\frac12}$.
Thus, the estimate for the case $2^{-j}\gg\mu^{1/2}$ can be obtained in the same manner as in the proof of $(b)$. 
Therefore, we only need to show $(c)$ assuming 
 that 
 \[ 2^{-j}\lesssim \mu^{\frac12}.\]
In this case  $\supp \psi_j$ may contain at least one of stationary points of $\phi_1(x,y,\cdot)$.
The equation $\mathcal Q(x,y,\cdot) = 0$ has two roots $\tau_\pm=\inp xy\pm\sqrt{\cD(x,y)}$.
Since $\cQ(x,y,1)=|x-y|^2$ and $\cQ(x,y,-1)=|x+y|^2$, $\tau_\pm\in [1, -1]$. 
Thus, the function $\partial_s\phi_1(x,y,\cdot)$ has two  zeros $s_-$, $s_+$ on $[0,\pi]$ such that
\[
\cos s_\pm = \inp xy \pm \sqrt{\cD(x,y)}.
\]
Since $\cos s_+ - \cos s_- = 2\sqrt{\cD(x,y)}\sim \mu$, we have $s_- - s_+ \gtrsim \mu^{1/2}$.
We dyadically decompose the integral $\mathfrak P_\lambda[\psi_j](x,y)$ (\eqref{eq:scaled-op})  away from $s_\pm$.
To do this, we let
\[ \eta_k^\kappa(s) = \psi(2^k|s-s_\kappa |), \quad \kappa = \pm, \]
 and $k_0$ be an integer such that $\mu^{1/2}/(2C)\le 2^{-k_0}<\mu^{1/2}/C$ for a large constant $C>0$.
We also set 
\[
\tx \eta^*(s) = 1 - \sum_{k>k_0}\eta_k^+(s) - \sum_{k>k_0}\eta_k^-(s)
\]
so that $\tx \eta^\ast+ \sum_{k>k_0}\eta_k^++ \sum_{k>k_0}\eta_k^-=1$. 

We first estimate the sum $\sum_{k>k_0} \mathfrak P_\lambda[\psi_j \eta_k^+](x,y)$. 
From  \eqref{ph-d},  note that
\[
 \partial_s\phi_1(x,y,s)= -\frac{(\cos s - \cos s_-)}{2\sin^2 s}\int_{s_+}^s \sin u \,du.
\]
Since $\cos s_+ -\cos s_-\sim \mu$, we have  $|\cos s - \cos s_-|\sim \mu$ on  $\supp{(\psi_j\eta_k^+)}$. We also note $\big|\int_{s_+}^s \sin u du\big|\sim 2^{-j}2^{-k}$ for $s\in\supp{(\psi_j\eta_k^+)}$.
Hence,  $|\partial_s\phi_1(x,y,s)|\gtrsim 2^j2^{-k}\mu$ for $s\in \supp{(\psi_j\eta_k^+)}$.
By the van der Corput lemma (for example, \cite[pp.\,332--334]{St93}) we get  $| \mathfrak P_\lambda[\psi_j \eta_k^+](x,y) |\lesssim 2^{\frac{d-2}{2}j}\lambda^{-1}\mu^{-1}2^k$.
Combining this and a trivial bound  $| \mathfrak P_\lambda[\psi_j \eta_k^+](x,y) |\lesssim 2^{\frac d2j}2^{-k}$,  we obtain 
\[
\textstyle \big|\sum_{k>k_0} \mathfrak P_\lambda[\psi_j \eta_k^+](x,y) \big|\lesssim \sum _{k>k_0} 2^{\frac d2j}\min(2^k\lambda^{-1}2^{-j}\mu^{-1},\,2^{-k})\lesssim 2^{\frac{d-1}{2}j}\lambda^{-\frac12}\mu^{-\frac12}.
\]
In the same manner, one can show $|\sum_{k>k_0} \mathfrak P_\lambda[\psi_j \eta_k^-](x,y)|\lesssim 2^{\frac{d-1}{2}j}\lambda^{-\frac12}\mu^{-\frac12}$.

To complete the proof of $(c)$,  it remains to show  $|\mathfrak P_\lambda[\psi_j \eta^\ast](x,y)|\lesssim 2^{\frac{d-1}{2}j}\lambda^{-\frac12}\mu^{-\frac12}$. Note that $\cQ(x,y, \cos s)\sim \mu^2$ for $s\in \supp(\psi_j\eta^*)$. 
This gives  $|\partial_s\phi_1(x,y,s)|\gtrsim 2^{2j}\mu^2$. Thus, by the van der Corput lemma we get  $|\mathfrak P_\lambda[\psi_j \eta^\ast](x,y)|
\lesssim 2^{\frac{d-4}{2}j}\lambda^{-1} \mu^{-2}$. Combining this and a trivial bound $|\mathfrak P_\lambda[\psi_j \eta^\ast](x,y)|
\lesssim 2^{\frac{d}{2}j} \mu^{1/2}$, we obtain the desired estimate  since $2^{-j}\lesssim \mu^{1/2}$.

To prove the last assertion, we observe that  $  |\cos s - \inp xy|\ge 10^{-1}$ for $s\in\supp{\psi_j}$ and  $\cD(x,y)= (1+\inp xy)^2 - |x+y|^2 \le 10^{-4}$ if $1+\inp xy < 10^{-2}$.
Thus, $\mathcal Q(x,y,\cos s)\sim 1$ for $s\in\supp{\psi_j}$. Combining this and \eqref{ph-d}, we have $|\partial_s\phi_1|\gtrsim 2^{2j}$.
Therefore,  repeated integration by parts, as before,  gives the desired estimate.
\end{proof}

\begin{cor} Let  $E\subset \R^d$. Suppose that $|\cD(x,y)|\ge c_0$ for a constant $c_0>0$ whenever $x,y\in E$. Then, there is a constant 
$C=C(c_0)$ such that  
\Be
\label{che}
\|   \che \Pi_\lambda [\psi_j ] \che \|_{1\to \infty}
 \le  C  2^{\frac{d-1}2j}\lambda^{-\frac12}, \quad j\ge 0.
\Ee
\end{cor}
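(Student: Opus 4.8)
The plan is to bound the kernel of $\che\Pi_\lambda[\psi_j]\che$ pointwise and then invoke $\|T\|_{1\to\infty}=\|T\|_{L^\infty_{x,y}}$, so it suffices to show that $|\fP_\lambda[\psi_j](x,y)|\lesssim 2^{\frac{d-1}{2}j}\lambda^{-1/2}$ uniformly for $x,y$ with $|\cD(x,y)|\ge c_0$, after undoing the $\sqrt\lambda$-rescaling. The point is that the hypothesis $|\cD(x,y)|\ge c_0$ removes the delicate small-$\cD$ regime: it forces a definite lower bound on the first derivative of the phase $\phi_1(x,y,\cdot)$ on $\supp\psi_j$, or else it places us in a range of $j$ that is handled by the trivial volume bound.

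First I would split into the cases $1+\inp xy\ge 10^{-2}$ and $1+\inp xy<10^{-2}$. In the latter the last assertion of Lemma \ref{oscillatory} gives $|\fP_\lambda[\psi_j](x,y)|\lesssim 2^{\frac{d-2}{2}j}(\lambda 2^j)^{-N}$, which is stronger than what we need. In the former I would normalize: since $|\cD(x,y)|\ge c_0$, there are essentially two subcases depending on whether $\cD(x,y)$ is positive or negative, but the relevant feature is only its size, so Lemma \ref{oscillatory}$(a)$ applies with $\mu$ chosen of size comparable to $1$ (or more precisely, one checks that the derivative estimates in the proof of $(a)$ only used $|\cD(x,y)|\gtrsim\mu^2$ together with $1+\inp xy\ge 10^{-2}$, and with $|\cD(x,y)|\ge c_0$ and $|\cD(x,y)|\lesssim 1$ this holds with $\mu\sim 1$). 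Thus from $(a)$, in the range $2^{-j}\lesssim |\cD(x,y)|^{1/4}$ — i.e. $2^j\gtrsim 1$, which is all $j\ge 0$ up to the constant — we get $|\fP_\lambda[\psi_j](x,y)|\lesssim 2^{\frac{d-2}{2}j}(\lambda 2^j+1)^{-N}$, while for the few small $j$ not covered we use the trivial bound $|\fP_\lambda[\psi_j](x,y)|\lesssim 2^{\frac{d}{2}j}$ coming from $|\mathfrak a(t)|\lesssim 2^{\frac{d}{2}j}$ on $\supp\psi_j$ and the length $\sim 2^{-j}$ of that support.

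The remaining arithmetic is to check that $2^{\frac{d-2}{2}j}(\lambda 2^j+1)^{-N}\lesssim 2^{\frac{d-1}{2}j}\lambda^{-1/2}$. When $\lambda 2^j\le 1$ one has $2^{\frac{d-2}{2}j}\le 2^{\frac{d-1}{2}j}$ and $\lambda^{-1/2}\gtrsim 1\ge 2^{\frac{d-2}{2}j}(\lambda 2^j)^{-N}\cdot(\lambda 2^j)^{N}$... more directly, in that regime $\lambda\lesssim 2^{-j}\le 1$ so $\lambda^{-1/2}\gtrsim 1$ and the left side is $\lesssim 2^{\frac{d-2}{2}j}\le 2^{\frac{d-1}{2}j}$; when $\lambda 2^j\ge 1$, choosing $N$ large forces $(\lambda 2^j)^{-N}\le(\lambda 2^j)^{-1/2}2^{-j/2}$, i.e. $(\lambda 2^j)^{-N}\lesssim \lambda^{-1/2}2^{-j/2}2^{-j/2}\cdot 2^{j/2}$, which after multiplying by $2^{\frac{d-2}{2}j}$ is $\lesssim 2^{\frac{d-1}{2}j}\lambda^{-1/2}$. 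In either case the claimed bound follows, and summing (or rather, the estimate is already uniform in $j$ after noting the geometric gain) completes the proof. The main obstacle, such as it is, is purely bookkeeping: one must make sure that the constants in Lemma \ref{oscillatory}$(a)$ genuinely only depend on the ratio $\cD/\mu^2$ and on $1+\inp xy$ being bounded below, so that it can be applied with $\mu\sim 1$ rather than $\mu$ small; this is already the content of how part $(a)$ was proved, so no new work is needed, and there is no serious difficulty.
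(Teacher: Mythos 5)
Your reduction to the pointwise bound $|\fP_\lambda[\psi_j](x,y)|\lesssim 2^{\frac{d-1}{2}j}\lambda^{-1/2}$ via rescaling is the right start, but the way you then invoke Lemma \ref{oscillatory} has two genuine gaps. First, every part of that lemma (including the last assertion for $1+\inp xy<10^{-2}$) is stated under the hypothesis $1-C_1\mu\le |x|,|y|\le 1-c_1\mu$, i.e.\ both points lie strictly inside the unit sphere; its proof uses this (e.g.\ $1-\inp xy>0$ and the relation $1-\inp xy\sim 2-|x|^2-|y|^2-\cD$). The corollary's hypothesis $|\cD(x,y)|\ge c_0$ only forces $|x|$ away from the sphere, not inside it: points with $|x|\ge 1+c$ are perfectly admissible, and your argument never treats them. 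The paper handles exactly these configurations separately — for $|x|\le 1-c$, $|y|\ge 1+c$ one has $\cD\le(1-|x|^2)(1-|y|^2)\le -c^2$, hence $\cQ\gtrsim 1$ and $|\partial_s\phi_1|\gtrsim 2^{2j}$; for $1+c\le|x|,|y|$ with $\cD\ge c_0$ one analyzes the roots of $\cQ(x,y,\cdot)=0$ to get $|\partial_s\phi_1|\gtrsim 1$ — and then applies van der Corput. Without some substitute for these cases your proof does not cover the stated corollary.

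Second, your claim that ``the relevant feature is only the size'' of $\cD$, so that part $(a)$ applies with $\mu\sim1$ regardless of sign, is false. Part $(a)$ is proved for $-\cD\gtrsim\mu^2$ precisely because then $\cQ=(\cos s-\inp xy)^2-\cD\ge|\cD|$, so the phase has no stationary point; when $\cD\ge c_0>0$ (e.g.\ $x=y$, $|x|=1/2$) the phase $\phi_1(x,y,\cdot)$ does have stationary points in $\supp\psi_j$, the rapidly decaying bound $2^{\frac{d-2}{2}j}(\lambda 2^j+1)^{-N}$ fails (the actual size of such a piece on the diagonal is $\sim 2^{\frac{d-1}{2}j}\lambda^{-1/2}$, consistent with $\Pi_\lambda(x,x)\sim\lambda^{\frac{d-2}{2}}$ after summing in $j$), and the correct input is part $(c)$, which with $\mu\sim1$ gives exactly the desired bound — this is why the paper cites ``$(a)$ or $(c)$'' for the case $|x|,|y|\le 1-c$. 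Relatedly, your fallback for the ``few small $j$'' is the trivial size bound $2^{\frac d2 j}=O(1)$, which carries no decay in $\lambda$ and therefore cannot be $\lesssim 2^{\frac{d-1}{2}j}\lambda^{-1/2}$; in that regime one must still use oscillation (the lemma's bound $2^{\frac{d-2}{2}j}(\lambda 2^{-3j}+1)^{-N}$, or a van der Corput estimate), not volume counting.
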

\begin{proof}
By rescaling $(x,y)\to (\sqrt \lambda x,  \sqrt \lambda y)$, \eqref{che} is equivalent to  
\begin{equation}\label{e:simple}
|\mathfrak P_\lambda[\psi_j](x,y)|\le C 2^{\frac{d-1}2j}\lambda^{-\frac12}
\end{equation}
for $x,y\in E$ and $j\ge 0$. Since $\cD(x,x)=(1-|x|^2)^2$, we have $|x|\le 1-c$ or $|x|\ge1+c$ for a small constant $c=c(c_0)>0$ if $x\in E$. 

Let $x,y\in E.$ Then,  by symmetry we need only to consider the cases $|x|,|y|\le 1-c$; $|x|\le 1-c, 1+c\le |y|$; and  
$1+c\le |x|, |y|$. For the first case,  by choosing $C_1=2$ and $c_1=2c$ in Lemma \ref{oscillatory}, we may assume that $1-C_1\mu\le |x|,|y|\le 1-c_1\mu$ with $\mu=1/2$. Thus, 
\eqref{e:simple} follows from $(a)$ or $(c)$ in Lemma \ref{oscillatory}. When  $|x|\le 1-c$ and $1+c\le |y|$, we have $\cD(x,y)\le (1-|x|^2)(1-|y|^2)\le -c^2$. 
By \eqref{ph-d} and \eqref{q-def}  this gives  $|\partial_s \cP(x,y,s)|\gtrsim 2^{2j}$ for $s\in \supp \psi_j$. Using  van der Corput's lemma, we get 
 \[ |\mathfrak P_\lambda[\psi_j](x,y)| \lesssim 2^{\frac{d}2j}\min(\lambda^{-1}2^{-2j},\,2^{-j})\lesssim 2^{\frac{d-1}2j}\lambda^{-\frac12}.\]

For the third case $1+c\le |x|, |y|$, we may assume  $\cD(x,y)\ge c_0$ since the estimate \eqref{e:simple} follows by the same argument  as above if $\cD(x,y)\le -c_0$. 
Thus, we have $1+c_0 < \inp xy^2$. If $\inp xy> (1+c_0)^{1/2}$,  the two distinct roots $r_1< r_2$ of the equation $\mathcal Q(x,y,\tau)=0$ are bigger than or equal to $1$ because $\mathcal Q(x,y,1)=|x-y|^2\ge 0$. 
Since $r_2>(1+c_0)^{1/2}$, $|\mathcal Q(x,y,\cos s)|=|(r_1-\cos s)(r_2-\cos s)|\gtrsim (1-\cos s)$. Using \eqref{ph-d}, we see  
 \[ |\partial_s\cP(x,y,s)|\gtrsim 1.\]
 The same lower bound holds if $\inp xy<- (1+c_0)^{1/2}$. In fact,  $r_1, r_2\le -1$ because $\mathcal Q(x,y,-1)=|x+y|^2\ge 0$.  Therefore, van der Corput lemma gives $|\mathfrak P_\lambda[\psi_j](x,y)| \lesssim 2^{\frac{d}2j}\min(\lambda^{-1},\,2^{-j})\lesssim 2^{\frac{d-1}2j}\lambda^{-\frac12}.$ This completes the proof.
\end{proof}

\section{ Estimate  away from $\sqrt{\lambda}\mathbb S^{d-1}$: Proof of Theorem \ref{thm-locest}}\label{sec:local}
In this section we prove Theorem \ref{thm-locest} and show  the failure of the estimate \eqref{est-loc0} for $(1/p,1/q)\in [\mathfrak C,\mathfrak  D]\cup[\mathfrak C', \mathfrak  D']$. This and the lower bounds on   $\|\chi_{\B_\lambda}\Pi_\lambda\chi_{\B_\lambda}\|_{p\to q}$ in Proposition \ref{lower-mu} below show that the bounds in Theorem \ref{thm-locest} are sharp.

\subsection{Proof of Theorem \ref{thm-locest}}
Making use of  Lemma \ref{sym} and \ref{imply1}, we see that  it is sufficient to consider   
\[ \textstyle \tilde \Pi_\lambda:=  \sum_{0\le j\le j_\circ} \Pi_\lambda[\psi_j] \]
in place of $\Pi_\lambda$  where  $2^{j_\circ}\sim \lambda$. That is to say, 
the same estimates  hold for $\tilde \Pi_\lambda$ as  those for $\Pi_\lambda$ in Theorem \ref{thm-locest}.

Since 
$|\cD(x,y)|\ge 1/2$ for $x,y\in \B$, we have the estimate \eqref{che}. Applying Lemma \ref{tt-st},  we get 
\begin{equation} 
\label{basic} 
\begin{aligned} 
   \|   \chi_{\B_\lambda} \Pi_\lambda [\psi_j ] \chi_{\B_\lambda} \|_{p\to q}&\lesssim  \lambda^{-\frac12\dpq} 2^{(\frac{d+1}{2}\dpq-1)j},
\end{aligned} \end{equation}
provided that \ppqpair\,  is contained in the close quadrangle 
with vertices 
$(1/2,1/2)$,  $\mathfrak A$, $\mathfrak A'$ and $(1,0).$  
Thus, summation over $j$ gives
\Be
\label{eq:j-sum}
\textstyle \|    \chi_{\B_\lambda} \tilde  \Pi_\lambda  \chi_{\B_\lambda} \|_{p\to q }\lesssim  \lambda^{\beta(p,q)},
\Ee
for $\ppq\in \mathcal R_1\setminus[\mathfrak C, \mathfrak C']$ (see Figure \ref{local-type}).  It is convenient for our purpose to note  $\beta(1,2)=\frac {d-2}4$, $\beta(p,q)=\frac d2(\frac1p-\frac{d-1}{2d})- 1$ if $(1/p,1/q)\in(\mathfrak C,\mathfrak D]$, and $\beta(p,q)=-\frac{1}{d+1}$ if $(1/p,1/q)=\mathfrak C$. By  $(c)$ in Lemma \ref{s-trick} and \eqref{basic}   we have
\begin{equation}
\label{rest-weak}
   \textstyle  \|  \chi_{\B_\lambda} \tilde  \Pi_\lambda  \chi_{\B_\lambda} \|_{L^{p,1} \to L^{q,\infty} }\lesssim  \lambda^{-\frac12\dpq},      \end{equation} 
     for  $\ppq=\mathfrak C, \mathfrak C'$,  which satisfy $\dpq=2/(d+1)$. Interpolation yields \eqref{eq:j-sum} for $\ppq\in \mathcal R_1$. 
      Using the estimate \eqref{basic} and taking sum over $0\le j\le j_\circ$,  we  get  \eqref{eq:j-sum} for $(p,q)=(1,\infty).$

Now, in view of interpolation, to complete  the proof  we need only to show \eqref{eq:j-sum} with $(p,q)=(1,2)$ and 
the weak type  estimate 
\Be 
\label{weak-pq} \textstyle \|  \chi_{\B_\lambda} \tilde  \Pi_\lambda  \chi_{\B_\lambda} \|_{L^p\rightarrow L^{\frac{2d}{d-1},\infty}} \lesssim   \lambda^{ \frac d2(\frac1p-\frac{d-1}{2d})- 1}, 
\Ee
for $1\le p<{2d(d+1)}/(d^2+4d-1)$, which corresponds to the estimate $(i)$  in Theorem \ref{thm-locest} for $\ppq\in (\mathfrak C, \mathfrak D]$.
Duality and interpolation provide
 all the $L^p$--$L^q$ estimates  asserted  in Theorem \ref{thm-locest}  (Figure \ref{local-type}).

By the estimate \eqref{202} we have 
\begin{align}
   \| \Pi_\lambda [\psi_j] f\|_2 &\lesssim  2^{-\frac{j}{2}}\lambda^{\frac{d-2}{4}}\|f\|_1, \quad  2^j\lesssim \lambda.
   \label{12j}
\end{align}
Summation over $j$ clearly yields  \eqref{eq:j-sum} with $(p,q)=(1,2)$.  It now remains to show \eqref{weak-pq}. 
Interpolation of  the estimates \eqref{12j} and   \eqref{basic} with $\ppq=(1,0),$ $\mathfrak C$ gives 
\Be
\label{inter}
    \| \chi_{\B_\lambda} \Pi_\lambda [\psi_j ] \chi_{\B_\lambda}\|_{p\rightarrow q}\lesssim2^{jd(\frac{d-1}{2d}-\frac1q)}\lambda^{ \frac d2(\frac1p+\frac1q)- \frac{d+1}{2}}
\Ee 
for  $(1/p,1/q)$ contained  in the closed triangle  $\mathscr T$ with vertices $(1, 1/2), \mathfrak C,(1,0)$. 
Now, fixing $p\in [1, {2d(d+1)}/{(d^2+4d-1)})$ and choosing two $q_0, q_1$ such that $ q_0<2d/(d-1)<q_1$ and $(1/p, 1/q_0), (1/p, 1/q_1)\in \mathscr T$,  we have the estimates 
\eqref{inter}  with $p=p_0=p_1$ and $q=q_0, q_1$. Then, we  apply $(a)$ in Lemma \ref{s-trick} to  these two estimates to get the weak type estimate for 
$p,q$  such that  $\ppq\in (\mathfrak C,\mathfrak D]$. (Figure \ref{local-type} is  helpful here.) Hence, for $1\le p<{2d(d+1)}/(d^2+4d-1)$, we obtain   the estimate \eqref{weak-pq}.
This completes the proof of Theorem \ref{thm-locest}.

\begin{rem}
As can be easily seen from the proof,  the same bounds remain to hold on  $\che \Pi_\lambda \che$ $($in place of $\chi_{\B_{\lambda}}\Pi_\lambda\chi_{\B_{\lambda}}$$)$
as long as $E$ is a measurable set $\subset \R^d$ such that $E = -E$ and
    \[
    \mathcal D(x,y) \ge c_0, \quad  \forall\,   x,y\in E
    \]
for some constant $c_0>0$.  The same condition was used in  \cite{LR} to study $L^p$ boundedness of Bochner-Riesz means of the Hermite expansion. 
\end{rem}

\subsection{Boundedness of  the operator  $\wp_k$ and a transplantation result 
}\label{transplantation}
In this section we  consider the estimate  \eqref{est-loc0} 
for $\ppq\in\mathcal R_3$ and discuss how it is related to its counterpart to the Laplacian, that is to say, the estimate for $\proj$. 

To put our discussion in a proper context, recalling \eqref{def-pk}, we  consider the estimate 
\begin{align}\label{scaled-euclid} \big\|\proj   \big\|_{p\to q}\lesssim k^{-1+\frac{d}{2}\dpq} 
\end{align}
for $k\ge 1$.  The following lemma shows that  \eqref{scaled-euclid}  is equivalent to the estimate
\begin{equation}\label{est:euclid}
    \|( \widehat f\, |_{ \mathbb S^{d-1}})^\vee\|_q\lesssim \|f\|_p.
\end{equation}
\begin{lem} 
\label{equivalence}
The  estimate
\eqref{scaled-euclid} holds for all $k\ge 1$  if and only if the estimate  \eqref{est:euclid}  holds for all $f\in \mathcal S(\mathbb R^d)$.  
The equivalence remains valid with  $L^p$, $L^q$ spaces replaced, respectively,  by the Lorentz spaces  $L^{p,r}$, $L^{q,s}$ if $1<p,q\le \infty$ and $1\le r, s\le \infty$.  
\end{lem}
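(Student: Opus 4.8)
The plan is to identify $\proj$, after a parabolic rescaling, with a thin-annulus average of the restriction-extension operator $g\mapsto(\widehat g|_{\mathbb S^{d-1}})^\vee$, and then transfer boundedness in both directions. For $k\ge1$ put $R_k g(y)=(2\pi)^{-d}\int_{1-1/k\le|\zeta|^2\le1}e^{iy\cdot\zeta}\widehat g(\zeta)\,d\zeta$. Substituting $\xi=\sqrt k\,\zeta$ in \eqref{def-pk} gives $\proj g=\big(R_k(g(\cdot/\sqrt k))\big)(\sqrt k\,\cdot)$, so from the behaviour of the $L^p$ quasi-norm (and likewise the Lorentz $L^{p,r}$ quasi-norm) under dilations one gets $\|\proj\|_{p\to q}=k^{\frac d2\dpq}\|R_k\|_{p\to q}$; hence \eqref{scaled-euclid} is equivalent to the uniform bound $\|k R_k\|_{p\to q}\lesssim 1$ for all $k\ge 1$. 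Writing $\zeta=r\omega$ and putting $g_r(x)=r^{-d}g(x/r)$ (so that $\widehat{g_r}(\zeta)=\widehat g(r\zeta)$), polar coordinates yield the identity
\[
k R_k g(y)=k\int_{\sqrt{1-1/k}}^{1}\big(\widehat{g_r}\,\big|_{\mathbb S^{d-1}}\big)^{\vee}(ry)\,r^{d-1}\,dr ,
\]
which exhibits $kR_k$ as an average of dilates of the restriction-extension operator over radii $r$ with $|1-r|\lesssim 1/k$.

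For the ``if'' direction, assume \eqref{est:euclid}. I would apply Minkowski's integral inequality in $L^q$ (which holds up to a constant in $L^{q,s}$ for $1<q\le\infty$, $1\le s\le\infty$, those spaces being normable) to the identity above, then \eqref{est:euclid} to each $g_r$ together with the dilation scaling $\big\|(\widehat{g_r}|_{\mathbb S^{d-1}})^\vee(r\,\cdot)\big\|_q\lesssim r^{-d/q}\|g_r\|_p\sim r^{-d/q}r^{-d/p'}\|g\|_p$. The resulting weight inside the radial integral is $r^{\,d\dpq-1}$, which is bounded for $r\in[\tfrac12,1]\supset[\sqrt{1-1/k},1]$; since that interval has length $1-\sqrt{1-1/k}\sim1/k$, the outer factor $k$ is absorbed and $\|kR_kg\|_q\lesssim\|g\|_p$ uniformly in $k$, i.e.\ \eqref{scaled-euclid} holds.

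For the ``only if'' direction, assume \eqref{scaled-euclid}, equivalently $\|kR_k\|_{p\to q}\lesssim1$ uniformly. For $g\in\mathcal S(\mathbb R^d)$ the function $\zeta\mapsto e^{iy\cdot\zeta}\widehat g(\zeta)$ is continuous, so the polar-coordinate computation above together with $k\big(1-\sqrt{1-1/k}\,\big)=\big(1+\sqrt{1-1/k}\,\big)^{-1}\to\tfrac12$ shows $kR_k g(y)\to\tfrac12\,(\widehat g|_{\mathbb S^{d-1}})^\vee(y)$ for every $y$ as $k\to\infty$. Combining the uniform bound $\|kR_kg\|_q\lesssim\|g\|_p$ with the Fatou property (lower semicontinuity under pointwise a.e.\ convergence) of the $L^q$, resp.\ $L^{q,s}$, quasi-norm then gives $\|(\widehat g|_{\mathbb S^{d-1}})^\vee\|_q\lesssim\|g\|_p$, which is \eqref{est:euclid}. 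The Lorentz-space statement follows verbatim by replacing $L^p,L^q$ with $L^{p,r},L^{q,s}$ and using that dilations, Minkowski's inequality, and Fatou retain the above form for $1<p,q\le\infty$, $1\le r,s\le\infty$.

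The argument is essentially bookkeeping, and I do not expect a serious obstacle; the only points requiring care are (i) passing to the limit $k\to\infty$ in the ``only if'' direction, which is settled by the pointwise convergence of $kR_kg$ and the Fatou property rather than by any quantitative estimate, and (ii) verifying that Minkowski's integral inequality and the dilation scaling keep their $L^p$-form in the Lorentz spaces, which is the case because $L^{q,s}$ is normable in the relevant range.
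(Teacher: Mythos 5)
Your proposal is correct and follows essentially the same route as the paper: rescale $\proj$ to the unit-scale operator $k R_k$ (the paper's $\wproj$), expand in polar coordinates as an average of dilated restriction--extension operators and use Minkowski plus scaling for one implication, and let $k\to\infty$ (which you make precise via pointwise convergence and the Fatou property) for the other, with the Lorentz case handled by normability exactly as in the paper.
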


\begin{proof}
We consider
  \[ \wproj  f=\frac k{(2\pi)^d}\int_{\sqrt{1-1/k} \,\le |\xi| <1}e^{ix\cdot\xi}\,\widehat f(\xi) d\xi.\]  
By scaling we note that  $\| \proj\|_{p\to q} =k^{\frac d2 \dpq-1}\| \wproj \|_{p\to q}$. Thus,  the estimate  \eqref{scaled-euclid}  is  equivalent to 
 \begin{align}\label{est:saturated} 
 \big\| \wproj \big\|_{p\to q} \lesssim   1. 
 \end{align}
  Letting $k\to \infty$ gives  \eqref{est:euclid}. Conversely, making use of the spherical coordinates\footnote{ We write $\int_{\sqrt{1-1/k}\le |\xi|<1}\,e^{ix\cdot\xi}\,\widehat f(\xi) d\xi =C_d\int^{1}_{\sqrt{1-1/k}}\int e^{ix\cdot r\omega}\,\widehat f(r\omega) d\omega\, r^{d-1} dr$.} and Minkowski's inequality, one can easily  see that \eqref{est:euclid} 
 implies \eqref{est:saturated} and then \eqref{scaled-euclid} via scaling.  Extension to the Lorentz spaces is clear since $L^{p,r}$ is a Banach space if $1<p \le \infty$ and $1\le r\le \infty$. 
 We omit the detail. 
 \end{proof}

 The operator $f\to ( \widehat f\, |_{ \mathbb S^{d-1}})^\vee$ is imbedded in a family of operators which are called the Bochner-Riesz operators of negative order: 
\[ S^{\alpha}\!f(x)=(2\pi)^{-d}\int e^{ix\cdot\xi}\,\frac{(1-|\xi|^2)_+^\alpha}{\Gamma(\alpha+1)}\widehat f(\xi) d\xi, \] 
where $\Gamma$ is the gamma function. 
For $\alpha\le -1$, the operator is defined by analytic continuation of  the distribution ${(1-|\xi|^2)_+^\alpha}/{\Gamma(\alpha+1)}$. In fact, $S^{-1} f=( \widehat f\, |_{ \mathbb S^{d-1}})^\vee$. $L^p$--$L^q$ boundedness of $S^\alpha$ was studied by various authors \cite{B86, CS88, sogge-1986, bmo, bak, Gu99, CKLS}.  
The necessary  conditions  on $p,q$ for $L^p$--$L^q$ boundedness was shown by  B\"orjeson \cite{B86}.  In $\mathbb R^2$, the  problem  is settled by  Bak  \cite{bak} but  it remains open  for $d\ge 3$.  For the most recent development, see \cite{KL19}.  However,  we have a complete characterization of $L^p$--$L^q$ boundedness of  the operator $S^{-1}$.

\begin{thm}[\cite{B86, bmo, bak,  Gu99}]  \label{rr*} The operator  $f\to ( \widehat f\, |_{ \mathbb S^{d-1}})^\vee$  is bounded from $L^p$  to $L^q$ if and only if $\ppq\in \mathcal R_3$. 
Furthermore,  we have  $\| ( \widehat f\, |_{ \mathbb S^{d-1}})^\vee\|_{q,\infty}\lesssim \|f\|_p$ if $\ppq\in (\mathfrak C, \mathfrak D]$,\footnote{Duality gives $\| ( \widehat f\, |_{ \mathbb S^{d-1}})^\vee\|_{q}\lesssim \|f\|_{p,1}$ if $\ppq\in (\mathfrak C', \mathfrak D']$.} 
 and 
 $\| ( \widehat f\, |_{ \mathbb S^{d-1}})^\vee\|_{q,\infty}\lesssim \|f\|_{p,1}$  if $\ppq\in \mathfrak C, \mathfrak C'$. 
\end{thm}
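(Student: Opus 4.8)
\emph{Plan.}  The assertion splits into a necessity half---boundedness forces $\ppq\in\mathcal R_3$---and a sufficiency half, including the two endpoint refinements; the two are proved with rather different tools.  Throughout I write $S^{-1}f=f\ast K$ with $K=\widehat{d\sigma}$, $\sigma$ the surface measure on $\mathbb S^{d-1}$, and use the stationary‑phase expansion $K(x)=|x|^{-(d-1)/2}\big(e^{i|x|}a_+(x)+e^{-i|x|}a_-(x)\big)$ for $|x|\ge1$, with $a_\pm$ of symbol type $0$, together with $K\in C^\infty$ near the origin; in particular $|K(x)|\lesssim(1+|x|)^{-(d-1)/2}$ and $K\in L^{2d/(d-1),\infty}(\mathbb R^d)$ exactly.

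\emph{Necessity.}  I would extract three scale‑tested inequalities.  Applying $S^{-1}$ to a fixed Schwartz bump yields $S^{-1}\varphi\approx K$ away from the origin, forcing $K\in L^q$ locally at infinity, i.e. $1/q\le(d-1)/(2d)$; the borderline $1/q=(d-1)/(2d)$ admits at best weak type, since $K\notin L^{2d/(d-1)}$ and $K$ is, up to a constant, $S^{-1}$ of a Gaussian.  As $K$ is real and even, $S^{-1}$ is essentially self‑adjoint, whence the dual condition $1/p\ge(d+1)/(2d)$.  Finally the Knapp example---$\widehat f=\chi_\Theta$ for a slab $\Theta$ of size $\delta\times\dots\times\delta\times\delta^2$ adapted to $\mathbb S^{d-1}$, so that $f$ is a modulated bump of height $\sim\delta^{d+1}$ on the dual $\delta^{-1}\times\dots\times\delta^{-2}$ box while $S^{-1}f$ has height $\sim\delta^{d-1}$ on that box---gives, letting $\delta\to0$, the condition $\delta(p,q)\ge 2/(d+1)$.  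These three conditions together with $\ppq\in\sq$ cut out exactly $\mathcal R_3$; sharpening the examples (superposing many modulated caps, or randomizing signs at the critical exponents) shows strong type fails on $(\mathfrak C,\mathfrak D]\cup(\mathfrak C',\mathfrak D']$ and that $L^{p,1}\to L^{q,\infty}$ is the best possible at $\mathfrak C,\mathfrak C'$.

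\emph{Sufficiency, the part governed by size and Stein--Tomas.}  From $K\in L^\infty$ one gets $L^1\to L^\infty$ at $(1,0)$; from $K\in L^{2d/(d-1),\infty}$ and the triangle inequality in $L^{r,\infty}$, $r>1$, one gets $\|f\ast K\|_{2d/(d-1),\infty}\lesssim\|f\|_1$ at $\mathfrak D$, hence $L^{2d/(d+1),1}\to L^\infty$ at $\mathfrak D'$ by duality; from $K\in L^r$ for $r>2d/(d-1)$ and Young's inequality one gets strong type on the open edges $(\mathfrak D,(1,0))$ and $((1,0),\mathfrak D')$.  Curvature enters through the factorization $S^{-1}=E\circ R$, $Rf=\widehat f|_{\mathbb S^{d-1}}$, $E=R^\ast$: Stein--Tomas, $\|Rf\|_{L^2(d\sigma)}\lesssim\|f\|_{2(d+1)/(d+3)}$ together with $\|Eg\|_{L^{2(d+1)/(d-1)}}\lesssim\|g\|_{L^2(d\sigma)}$, gives strong type at the midpoint $\mathrm{ST}$ of the edge $[\mathfrak C,\mathfrak C']$ (the unique point there on the antidiagonal $1/p+1/q=1$).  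Interpolation---and real interpolation against the weak endpoints $\mathfrak D,\mathfrak D'$---now covers the convex hull of $\{(1,0),\mathfrak D,\mathfrak D',\mathrm{ST}\}$, which is all of $\mathcal R_3$ except the two corner triangles $[\mathfrak C,\mathfrak D,\mathrm{ST}]$ and $[\mathfrak C',\mathfrak D',\mathrm{ST}]$.

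\emph{Sufficiency, the corner triangles, and the main obstacle.}  Here I would decompose $K=K_0+\sum_{j\ge1}K_j$, $K_j(x)=K(x)\zeta(2^{-j}|x|)$ with $\zeta\in C^\infty_c([1/2,2])$, so $K_j$ lives on $|x|\sim2^j$ and $\widehat{K_j}$ is a smooth bump of height $\sim2^j$ over the $2^{-j}$‑neighborhood of $\mathbb S^{d-1}$; thus $T_j\colon f\mapsto f\ast K_j$ behaves like $2^j$ times the Fourier projection to that thin shell.  For a shell of radius $\sim1$ and width $\sim2^{-j}$, the Knapp example and Stein--Tomas are mutually dual and sharp, so a parabolic rescaling together with a $TT^\ast$ argument should yield $\|T_j\|_{p\to q}\lesssim2^{-j\tau(p,q)}$ for all $\ppq$, with $\tau$ affine, $\tau>0$ strictly in the interior of the still‑uncovered region, $\tau=0$ along the relevant boundary arc near $[\mathfrak C,\mathfrak C']$ (matching $\tau(\mathrm{ST})=0$ and $\tau(1,0)=(d-1)/2$), and $\tau$ turning negative just across $\partial\mathcal R_3$ there.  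Summing over $j$ gives strong type wherever $\tau>0$---the interiors of the two triangles and the open segments $(\mathfrak C,\mathrm{ST})$, $(\mathrm{ST},\mathfrak C')$; at the corner points $\mathfrak C,\mathfrak C'$, where $\tau=0$, one sandwiches the $T_j$‑bound at $(1,0)$ (with $\tau=(d-1)/2>0$) against a $T_j$‑bound at a point just outside $\mathcal R_3$ (with a small $\tau<0$) positioned so that $\mathfrak C$ is the interpolation‑balance point, and applies Bourgain's summation trick---part $(c)$ of Lemma \ref{s-trick}---to upgrade the resulting $O(1)$ bound to the restricted weak type $L^{p,1}\to L^{q,\infty}$ estimate; the claim at $\mathfrak C'$ and the weak‑type estimates on $(\mathfrak C,\mathfrak D]$, $(\mathfrak C',\mathfrak D']$ then follow by duality and by interpolating the domain (with target fixed as $L^{q,\infty}$) between $\mathfrak D$ (or $\mathfrak D'$) and $\mathfrak C$ (or $\mathfrak C'$).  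The genuine difficulty lies precisely in this last paragraph: establishing the sharp two‑sided behavior $\|T_j\|_{p\to q}\sim2^{-j\tau(p,q)}$ of the thin‑shell multiplier across its critical line, and organizing the dyadic summation so that it lands exactly on the corner points $\mathfrak C,\mathfrak C'$.  Everything else is interpolation and duality bookkeeping, and the whole scheme reproduces the results of \cite{B86, bmo, bak, Gu99}.
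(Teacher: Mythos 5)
You should note at the outset that the paper does not prove Theorem \ref{rr*} at all: it is imported from \cite{B86, bmo, bak, Gu99} and then used as a black box (via Lemma \ref{equivalence}) to obtain Corollary \ref{laplacian} and the sharpness discussion, so there is no in-paper argument to compare with and your sketch must stand on its own. Its routine parts do: the necessity conditions ($1/q<(d-1)/(2d)$ from the decay of $K=\widehat{d\sigma}$, the dual condition, Knapp giving $\dpq\ge 2/(d+1)$, and failure of strong type on $[\mathfrak C,\mathfrak D]\cup[\mathfrak C',\mathfrak D']$), as well as the bounds at $(1,0)$, $\mathfrak D$, $\mathfrak D'$ by (weak) Young, Stein--Tomas at the dual point of $[\mathfrak C,\mathfrak C']$, and the interpolation bookkeeping, are all correct and standard.

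The genuine gap is that the entire hard content of the theorem --- restricted weak type at $\mathfrak C,\mathfrak C'$, weak type on $(\mathfrak C,\mathfrak D]$, and strong type on the critical segment and in the two corner triangles --- rests on the asserted dyadic bounds $\|T_j\|_{p\to q}\lesssim 2^{-j\tau(p,q)}$ "for all $\ppq$ with $\tau$ affine", which you do not prove and which, as stated, is false: at $\mathfrak D$ one has $\|T_j\|_{1\to 2d/(d-1)}=\|K_j\|_{2d/(d-1)}\sim 1$, whereas the affine exponent matching $\tau=0$ at the Stein--Tomas point and $\tau=(d-1)/2$ at $(1,0)$, namely $\tfrac{d+1}{2}\dpq-1$, equals $(d-1)^2/(4d)>0$ there; the sharp dyadic exponent is only piecewise affine (compare $\beta(p,q)$ and the Hermite analogue \eqref{basic}). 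What your scheme actually needs is the off-duality estimate $\|T_j\|_{p\to q}\lesssim 2^{-j(\frac{d+1}{2}\dpq-1)}$ on the quadrangle with vertices $(1/2,1/2),\mathfrak A,\mathfrak A',(1,0)$, whose edge $[\mathfrak A,(1,0)]$ contains $\mathfrak C$; proving that (the analogue of \eqref{basic}, via Lemma \ref{tt-st}-type $TT^\ast$ arguments) is exactly the unproved core, and you flag it yourself as "the genuine difficulty". There is also an internal inconsistency: you declare $\tau=0$ along $[\mathfrak C,\mathfrak C']$ and then claim summation "where $\tau>0$" gives strong type on the open subsegments of $[\mathfrak C,\mathfrak C']$ --- it cannot; strong type there must come after the endpoints, e.g.\ by real interpolation of the restricted weak type bounds at $\mathfrak C$ and $\mathfrak C'$ (or of one of them with the Stein--Tomas bound), and likewise the interiors of the corner triangles come from interpolating the endpoint bounds with those at $(1,0)$ and $\mathfrak D,\mathfrak D'$. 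With the off-duality dyadic estimate in hand, your route (Bourgain's trick, Lemma \ref{s-trick}$(c)$, balancing exactly at $\mathfrak C$ because $\tfrac{d+1}{2}\dpq-1$ vanishes on $\dpq=2/(d+1)$, then Lemma \ref{s-trick}$(a)$-type arguments toward $\mathfrak D$) is indeed the same machinery the paper uses for the Hermite projection in Section \ref{sec:local} (cf.\ \eqref{rest-weak} and \eqref{weak-pq}); but as written the decisive estimates are conjectured, so the proposal is a plausible reduction to the cited literature rather than a proof of Theorem \ref{rr*}.
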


 Using  Theorem \ref{rr*},  Lemma \ref{equivalence}, and the Stein-Tomas theorem, we  can obtain the sharp  $L^p$--$L^q$ estimate for $\proj$.

 \begin{cor}  
\label{laplacian} 
Let $d\ge 2$ and $(1/p,1/q)\in\sq$. Then, we have 
\begin{align}
\label{st} &\|\proj \|_{p\to q}\sim k^{\beta(p,q)},  && (1/p,1/q)\not\in [\mathfrak C,\mathfrak  D]\cup[\mathfrak C', \mathfrak  D'],
 \\
 \label{st0} &\|\proj\|_{L^p\to L^{q,\infty}}\le C k^{\beta(p,q)},  && (1/p,1/q)\in (\mathfrak C, \mathfrak D], 
 \\
\label{st1} &\|\proj\|_{ L^{p,1}\to L^{q,\infty}} \le C  k^{\beta(p,q)}, && (1/p,1/q)=\mathfrak C, \mathfrak C'. 
\end{align}
   \end{cor}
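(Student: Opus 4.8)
The plan is to combine three ingredients: the equivalence between the operator bound for $\wp_k$ and the restriction--extension estimate for the sphere (Lemma \ref{equivalence}), the sharp $L^p$--$L^q$ mapping properties of $f \mapsto (\widehat f\,|_{\mathbb S^{d-1}})^\vee$ on $\mathcal R_3$ and the relevant boundary segments (Theorem \ref{rr*}), and the classical Stein--Tomas restriction estimate together with its dual. The four exponent regimes $\mathcal R_1, \mathcal R_2, \mathcal R_2', \mathcal R_3$ (together with $[\mathfrak C, \mathfrak D]\cup[\mathfrak C', \mathfrak D']$) each call for a slightly different argument, and then the full range is filled in by real and complex interpolation.

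First, on $\mathcal R_3 \cup [\mathfrak C, \mathfrak D]\cup[\mathfrak C',\mathfrak D']$ the exponent $\beta(p,q) = \frac d2 \delta(p,q) - 1$, so by Lemma \ref{equivalence} the estimate \eqref{st} (resp.\ \eqref{st0}, \eqref{st1}) is literally equivalent to the corresponding statement in Theorem \ref{rr*}: strong $(p,q)$ boundedness on $\mathcal R_3$, weak-type on $(\mathfrak C,\mathfrak D]$ and $(\mathfrak C',\mathfrak D']$, and restricted weak-type at $\mathfrak C, \mathfrak C'$. Thus these cases require no further work beyond citing the lemma and the theorem. Next I would handle the $L^2$ vertical segment: the Stein--Tomas theorem gives $\|(\widehat f\,|_{\mathbb S^{d-1}})^\vee\|_2 \lesssim \|f\|_{p}$ for $1\le p\le \frac{2(d+1)}{d+3}$, which via Lemma \ref{equivalence} is $\|\wp_k\|_{p\to 2}\lesssim k^{-\frac12\delta(p,2)}$; by duality one also gets $\|\wp_k\|_{2\to q}\lesssim k^{-\frac12\delta(2,q)}$ for $q \ge \frac{2(d+1)}{d-1}$. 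These are exactly the values of $\beta$ on the two segments of $\partial\mathcal R_1$ lying on the lines $1/p = 1/2$ and $1/q = 1/2$, i.e.\ along $[\mathfrak A', (\tfrac12,\tfrac12)]$ and $[(\tfrac12,\tfrac12),\mathfrak A]$, and they also yield the vertices $\mathfrak A, \mathfrak A'$. Combined with the trivial bound $\|\wp_k\|_{2\to 2}\lesssim 1$ and the endpoint facts on $\mathcal R_3$, real interpolation (for the strong bounds) together with the Bourgain-type summation/interpolation already encapsulated in Lemma \ref{s-trick} fills in $\mathcal R_1$ including the weak-type statement at $\mathfrak C, \mathfrak C'$.

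It then remains to obtain the bound on $\mathcal R_2$ (and $\mathcal R_2'$ by duality, since $\wp_k$ is essentially self-adjoint up to the obvious symmetry and $\beta(p,q)=\beta(q',p')$). On $\mathcal R_2$ one has $\beta(p,q) = \frac d2(\frac1p+\frac1q) - \frac{d+1}{2}$; the region $\mathcal R_2$ is the closed triangle $[\mathfrak A, (1,\tfrac12), \mathfrak D, \mathfrak C]\setminus[\mathfrak C,\mathfrak D]$, whose vertices are $\mathfrak A$ (handled above), $(1,\tfrac12)$, $\mathfrak D$, and $\mathfrak C$. At $\mathfrak C$ and $\mathfrak D$ the value of $\beta$ agrees with the $\mathcal R_3$-formula $\frac d2\delta(p,q)-1$, so those endpoint (restricted) weak-type bounds come from Theorem \ref{rr*}. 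At $(1,\tfrac12)$, i.e.\ $(p,q) = (1,2)$, we need $\|\wp_k\|_{1\to 2}\lesssim k^{\frac d4 - \frac12} = k^{\beta}$ with $\beta(1,2) = \frac{d-2}{4}$; this follows from the $L^1 \to L^2$ bound on $(\widehat f\,|_{\mathbb S^{d-1}})^\vee$, which in turn is immediate from $\|(\widehat f\,|_{\mathbb S^{d-1}})^\vee\|_2 = \|f\|_{L^2(\mathbb S^{d-1})}\lesssim \|f\|_{\infty}\lesssim$ (dualize) $\|f\|_1$, again via Lemma \ref{equivalence}. Real interpolation among the bound at $(1,2)$, the weak-type bound at $\mathfrak D$, and the bounds at $\mathfrak A$ and $\mathfrak C$ then yields \eqref{st} on the open region $\mathcal R_2$, and a final application of Lemma \ref{s-trick} handles $(\mathfrak C,\mathfrak D]$ and the vertex $\mathfrak C$ to give \eqref{st0}--\eqref{st1} in that regime. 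The lower bounds in \eqref{st} (the $\gtrsim$ direction) follow by testing $\wp_k$ against standard examples: a bump adapted to a $k^{-1/2}$-neighborhood of a point on $\sqrt k\,\mathbb S^{d-1}$ (or its dual), a modulated such bump, and a Knapp-type cap example, each of which is a routine computation and also shows that $[\mathfrak C,\mathfrak D]\cup[\mathfrak C',\mathfrak D']$ must be excluded from the strong-type statement.

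The main obstacle is not any single estimate — all the hard harmonic analysis is already packaged in Theorem \ref{rr*} and Stein--Tomas — but rather the bookkeeping: one must check that $\beta(p,q)$ as defined piecewise genuinely agrees with the interpolated exponents across the common edges $[\mathfrak A,\mathfrak C]$, $[\mathfrak C,\mathfrak D]$, $[\mathfrak C',\mathfrak D']$, etc., so that the interpolation really is between points carrying the asserted powers of $k$, and that the weak-type endpoints are propagated correctly (using $(a)$--$(c)$ of Lemma \ref{s-trick} rather than naive interpolation, which would lose the endpoint). I expect the verification that the four affine functions defining $\beta$ match up along the dividing segments, and the correct placement of the Lorentz-space endpoints, to be the only genuinely delicate part of the write-up.
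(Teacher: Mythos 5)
Your overall architecture (Theorem \ref{rr*} plus Lemma \ref{equivalence} for $\mathcal R_3$ and the Lorentz endpoints, a handful of extra vertex estimates, then interpolation/duality, with lower bounds from Knapp/Bessel-type examples) is the same as the paper's, but the step that supplies the extra vertex estimates is based on false claims. You assert that Stein--Tomas gives $\|(\widehat f\,|_{\mathbb S^{d-1}})^\vee\|_2\lesssim\|f\|_p$ for $p\le \tfrac{2(d+1)}{d+3}$, and later that $\|(\widehat f\,|_{\mathbb S^{d-1}})^\vee\|_2=\|\widehat f\|_{L^2(\mathbb S^{d-1})}$. Neither is true: the Fourier transform of $(\widehat f\,|_{\mathbb S^{d-1}})^\vee$ is a measure supported on the sphere, so this function is never in $L^2(\mathbb R^d)$; Stein--Tomas is $L^2(d\sigma)\to L^q$ only for $q\ge\tfrac{2(d+1)}{d-1}$, and indeed all of $\mathcal R_3$ lies strictly to the right of $1/p=1/2$, so $S^{-1}$ is unbounded from or into $L^2$. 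More structurally, Lemma \ref{equivalence} can only transfer estimates whose exponent is exactly $\tfrac d2\dpq-1$ (i.e.\ uniform bounds for $\wproj$), whereas at the points you need --- $\mathfrak A$, $\mathfrak A'$, $(1/2,0)$, $(1,1/2)$ --- one has $\beta(p,q)>\tfrac d2\dpq-1$, so these bounds cannot be routed through the limiting extension operator at all; they are genuinely statements about the $k$-dependent operator and rely on the $O(k^{-1/2})$ radial width of the annulus $\{k-1\le|\xi|^2\le k\}$.

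The fix is short and is what the paper does: prove $\|\proj\|_{2\to q}\lesssim k^{\beta(2,q)}$ at $q=\tfrac{2(d+1)}{d-1}$ directly, by writing the multiplier in polar coordinates, applying the Stein--Tomas extension bound on each sphere, and using Cauchy--Schwarz/Plancherel in the radial variable over the interval of length $\sim k^{-1/2}$; and prove $\|\proj\|_{2\to\infty}\lesssim k^{\beta(2,\infty)}$ by Cauchy--Schwarz and Plancherel (equivalently, $\|\proj\|_{1\to2}\lesssim|\{k-1\le|\xi|^2\le k\}|^{1/2}\sim k^{(d-2)/4}$ and dualize). Duality then gives $\mathfrak A$ and $(1,1/2)$, and interpolation with $\|\proj\|_{2\to2}\lesssim1$ and the $\mathcal R_3$/endpoint bounds from Theorem \ref{rr*} covers the rest; note that no summation trick is needed here --- there is no dyadic family being summed, so your invocation of Lemma \ref{s-trick} is superfluous, and the restricted weak type at $\mathfrak C,\mathfrak C'$ comes straight from Theorem \ref{rr*} via the Lorentz version of Lemma \ref{equivalence}. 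Your sketch of the lower bounds is consistent with the paper (after rescaling to $\wproj$, a Knapp cap and the Bessel asymptotics give the two nontrivial bounds), but as written the upper-bound portion of your argument rests on estimates for $S^{-1}$ that are false.
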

 
\begin{proof}
 Combining Theorem \ref{rr*}  and Lemma \ref{equivalence}, we get 
 the estimate \eqref{scaled-euclid} for  $\ppq\in \mathcal R_3$, including 
 the weak type estimate \eqref{st0} and the restricted weak type  estimate  \eqref{st1} 
 for $\ppq\in (\mathfrak C, \mathfrak D]$ and $\ppq=\mathfrak C', \mathfrak C$, respectively.  
So, we need only to show \eqref{st}. 
 
 We have $\|\proj\|_{2\to q}\lesssim   k^{\beta(2,q)}$  for $q={2(d+1)}/(d-1)$, which can  be shown  similarly as before,  using  the spherical coordinates, the Stein-Tomas theorem, and Plancherel's theorem.  The estimate 
 $\|\proj\|_{2\to \infty}\lesssim   k^{\beta(2,\infty)}$ follows from the Cauchy-Schwarz  inequality and Plancherel's theorem. These two estimates respectively correspond to the points   
$\mathfrak A'$ and $(1/2,0)$ in  Figure \ref{local-type} and then  duality gives the estimates for $\ppq=\mathfrak A$, and $(1,1/2)$. Since we have \eqref{st} for  $\ppq\in \mathcal R_3$, and \eqref{st0} and \eqref{st1}  together with $\|\proj  \|_{2\to 2}\lesssim  1$, interpolation and duality give 
\[ \|\proj\|_{p\to q} \lesssim  k^{\beta(p,q)}, \quad \ppq\not\in [\mathfrak C,\mathfrak  D]\cup[\mathfrak C', \mathfrak  D'].\]

 The opposite inequality can be easily shown. Since $\| \proj\|_{p\to q} =k^{\frac d2 \dpq-1}\| \wproj \|_{p\to q}$, by duality  we need only to show 
  \[\|\wproj\|_{p\to q} \gtrsim 
  \max( k^{1-\frac{d+1}2 \dpq}, 1, k^{  \frac{d}{q} -\frac{d-1}{2}}).\] 
  The second lower bound is trivial. Since the multiplier of the operator $\wproj$ is radial and supported in $O(k^{-1})$-neighborhood of the sphere $\mathbb S^{d-1}$,  the first and the third lower bounds  can be shown  by using, respectively,  a Knapp type example and the asymptotic expansion of the Bessel function (for example, see \cite{B86}).  This completes the proof of \eqref{st}. 
\end{proof}

 The following shows that the estimate \eqref{est-loc0} implies  \eqref{est:euclid} when $(1/p, 1/q)\in \mathcal R_3$.

\begin{lem} \label{implication} Let $B$ be a ball of small radius centered at the origin.  
 Suppose 
\Be\label{lplq-weaker}
\|\chi_{B}\Pi_{\lambda}\chi_{B}\|_{p\to q}\lesssim\lambda^{\frac d2\delta(p,q)-1}
\Ee 
holds. Then we have the estimate \eqref{est:euclid}.\end{lem}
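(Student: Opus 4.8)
The plan is to transplant the Hermite bound \eqref{lplq-weaker} to its Euclidean counterpart in a neighbourhood of the origin, and then to obtain \eqref{est:euclid} by letting the eigenvalue go to infinity. Let $\varepsilon$ be the (small) radius of $B$. By Lemma \ref{equivalence} it suffices to prove \eqref{scaled-euclid} for every $k\ge1$; for $k$ in a bounded range this follows by interpolating the trivial $L^2\to L^2$ and $L^1\to L^\infty$ bounds on $\wp_k$, so the only point is the behaviour as $k\to\infty$. Write $Tf=(\widehat f|_{\mathbb S^{d-1}})^\vee$, a convolution with $\widehat{d\sigma}$, and for $\rho>0$ let $T_\rho$ be the corresponding operator attached to the sphere of radius $\rho$. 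A direct computation with the kernels gives $\chi_B T_\rho\chi_B=\rho^{-1}D_\rho(\chi_{\rho B}T\chi_{\rho B})D_\rho^{-1}$ with $D_\rho g=g(\rho\,\cdot)$, hence $\|\chi_{\rho B}T\chi_{\rho B}\|_{p\to q}=\rho^{\,1-d\delta(p,q)}\|\chi_B T_\rho\chi_B\|_{p\to q}$. Since $\|\chi_{\rho B}T\chi_{\rho B}\|_{p\to q}$ increases to $\|T\|_{p\to q}$ as $\rho\to\infty$, and since on $B$ one has, for large $\lambda$, the uniform asymptotics $\wp_\lambda(x,y)=c\,\lambda^{-1/2}T_{\sqrt\lambda}(x,y)+(\text{lower order})$ with $c\neq0$ (from $\wp_\lambda(x,y)\approx\tfrac{\lambda^{(d-2)/2}}{2(2\pi)^d}\widehat{d\sigma}(\sqrt\lambda(x-y))$ and $T_{\sqrt\lambda}(x,y)=\tfrac{\lambda^{(d-1)/2}}{(2\pi)^d}\widehat{d\sigma}(\sqrt\lambda(x-y))$), the whole problem reduces to proving the Euclidean analogue of \eqref{lplq-weaker},
\[
\|\chi_B\wp_\lambda\chi_B\|_{p\to q}\lesssim\lambda^{\frac d2\delta(p,q)-1},\qquad \lambda\in2\mathbb N_0+d\ \text{large}.
\]

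To derive this from \eqref{lplq-weaker} I would compare the kernels of $\Pi_\lambda$ and $\wp_\lambda$ on $B$ via the oscillatory-integral representation of Lemma \ref{kernel} — a hands-on form of the transplantation of \cite{KST}. For $x,y\in B$ and $\lambda$ large the $t$-integral concentrates near $t=0$ and near $t=\pi$; off these, $|\partial_t\phi_\lambda|\sim\lambda$, so repeated integration by parts as in Lemma \ref{oscillatory} contributes $O(\lambda^{-N})$. Near $t=0$ the expansions $\cot t=t^{-1}+O(t)$, $\csc t=t^{-1}+O(t)$ give $\phi_\lambda(x,y,t)=\tfrac{\lambda t}{2}+\tfrac{|x-y|^2}{2t}+O(\varepsilon^2 t)$ and $\mathfrak a(t)=(2\pi i t)^{-d/2}e^{i\pi d/4}(1+O(t^2))$, and the leading integral is a fixed nonzero multiple of $\lambda^{(d-2)/2}\widehat{d\sigma}(\sqrt\lambda(x-y))$, i.e.\ of $\wp_\lambda(x,y)$, by the classical identity relating Mehler's kernel near $t=0$ to the Bessel kernel (the $t<0$ half supplies the conjugate, so that the two combine to the Bessel function $J_{(d-2)/2}$ rather than to the singular solution). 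Near $t=\pi$ one uses $\phi_\lambda(x,y,\pi\pm t)=\tfrac{\lambda\pi}{2}\pm\phi_\lambda(x,-y,t)$ together with the change of variables $y\mapsto-y$, obtaining $\sigma$ times the same multiple of $\wp_\lambda(x,-y)$ for a unimodular constant $\sigma=\sigma(\lambda,d)$. Since $B=-B$ and $\wp_\lambda$ has an even convolution kernel, $\chi_B\wp_\lambda(\cdot,-\cdot)\chi_B=\chi_B\wp_\lambda\chi_B\,\mathcal R$ with $\mathcal R g(y)=g(-y)$, and $\mathcal R$ commutes with $\chi_B$ and $\wp_\lambda$; thus on $B$
\[
\chi_B\Pi_\lambda\chi_B=c_d\,\chi_B\wp_\lambda\chi_B\,(\mathrm{Id}+\sigma\mathcal R)+E_\lambda,\qquad c_d\neq0,
\]
where the routine stationary-phase bookkeeping gives $\|E_\lambda\|_{p\to q}\lesssim(\varepsilon+\lambda^{-1/2})\,\|\chi_B\wp_\lambda\chi_B\|_{p\to q}+\lambda^{-N}$.

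Now I conclude. If $\sigma\neq\pm1$ — which, after tracking the constants, is the case when $d$ is odd — then $\mathrm{Id}+\sigma\mathcal R$ is invertible on $L^p$ with $\|(\mathrm{Id}+\sigma\mathcal R)^{-1}\|_{p\to p}\lesssim1$, so after absorbing $E_\lambda$ for $\varepsilon$ small and $\lambda$ large, \eqref{lplq-weaker} gives the displayed $\wp_\lambda$ bound, and the first paragraph yields \eqref{est:euclid}. If $\sigma=\pm1$ — the case $d$ even — then $\mathrm{Id}+\sigma\mathcal R=2P$ with $P$ a bounded projection onto one of the two parity classes; the same reasoning now only gives $\|\chi_B\wp_\lambda\chi_B\,Pf\|_q\lesssim\lambda^{\frac d2\delta(p,q)-1}\|f\|_p$, hence, after the scaling limit, $\|Tf\|_q\lesssim\|f\|_p$ for all $f\in\mathcal S$ of one fixed parity. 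One removes this restriction using that $T$ commutes with translations: for $g\in\mathcal S$ of the opposite parity and $|a|$ large, $h_a:=\tau_a g\mp\tau_{-a}g$ has the good parity, $\|h_a\|_p\to2^{1/p}\|g\|_p$, and for every $R$, $\|Tg\|_{L^q(B(0,R))}\le\|Th_a\|_q+\|Tg\|_{L^q(B(2a,R))}$, where $\|Tg\|_{L^q(B(2a,R))}\to0$ because $|Tg(x)|\lesssim\langle x\rangle^{-(d-1)/2}$; letting $|a|\to\infty$ and then $R\to\infty$ gives $\|Tg\|_q\lesssim\|g\|_p$, completing \eqref{est:euclid}.

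The main obstacle is the reflection operator $\sigma\mathcal R$: it is intrinsic — the harmonic-oscillator flow has a conjugate point at time $\pi$ that reflects bicharacteristics issuing from the origin, so $\Pi_\lambda$ carries a term absent for $\wp_\lambda$ — and when $d$ is even it makes $\mathrm{Id}+\sigma\mathcal R$ non-invertible, which is what forces the parity-plus-translation step. The only other nontrivial point is the oscillatory-integral estimate showing that $E_\lambda$ is negligible after localization to $B$; this is the same kind of stationary-phase and integration-by-parts reasoning as in Lemma \ref{oscillatory} but requires some care.
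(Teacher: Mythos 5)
Your overall strategy is genuinely different from the paper's, but it has a real gap at its load-bearing step: the asserted bound $\|E_\lambda\|_{p\to q}\lesssim(\varepsilon+\lambda^{-1/2})\,\|\chi_B\wp_\lambda\chi_B\|_{p\to q}+\lambda^{-N}$, which you describe as routine stationary-phase bookkeeping. Stationary phase and integration by parts give \emph{pointwise} kernel bounds, i.e. control of $|E_\lambda(x,y)|$ by a small multiple of the absolute value of the model kernel (plus rapidly decaying terms). That does not yield an operator-norm bound \emph{relative to} $\|\chi_B\wp_\lambda\chi_B\|_{p\to q}$: in the range of exponents at stake, the $L^p$--$L^q$ norm of $\chi_B\wp_\lambda\chi_B$ is much smaller than anything obtainable from size estimates on its kernel (this is exactly the cancellation encoded in Theorem \ref{rr*}), so an error operator whose kernel is pointwise $\varepsilon$ times the main kernel can still have operator norm comparable to, not $\varepsilon$ times, the main operator's norm, and the absorption fails. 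Nor can you fall back on an absolute bound $\|E_\lambda\|_{p\to q}\lesssim\varepsilon\,\lambda^{\frac d2\delta(p,q)-1}$: the lemma has to hold for \emph{all} $(1/p,1/q)$, in particular on $[\mathfrak C,\mathfrak D]\cup[\mathfrak C',\mathfrak D']$ and outside $\mathcal R_3$, where the Euclidean estimate fails (this is precisely how the paper uses the lemma, to prove necessity); there, an absolute restriction-extension–strength bound for an oscillatory kernel with the same structure as $\wp_\lambda$ is not available, and in the good range proving it directly would amount to proving the conclusion rather than deriving it from \eqref{lplq-weaker}. The parity/reflection discussion and the translation trick at the end are fine (one could even avoid them by letting $\lambda$ run through the two residues mod $4$ to capture both parities), but they sit downstream of this unproved error estimate.

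For comparison, the paper sidesteps single-$\lambda$ kernel asymptotics entirely: it sums $\Pi_\lambda$ over a spectral window $(k\nu,(k+1)\nu]$, rescales $(x,y)\to(\nu^{-1/2}x,\nu^{-1/2}y)$, and applies H\"ormander's spectral-function asymptotics (Theorem \ref{thm:hor}), whose error is only an absolute pointwise $O((k\nu)^{(d-1)/2})$; because the resulting inequality is tested against \emph{fixed} compactly supported $f,g$ and one lets $\nu\to\infty$ for fixed $k$, the rescaled error contributes $O(k^{(d-1)/2}\nu^{-1/2})\to0$, so no operator-norm control of a remainder is ever needed. To rescue your kernel-comparison route you would need an analogous mechanism — for instance a factorization $E_\lambda=S_\lambda\circ\chi_B\wp_\lambda\chi_B\circ S_\lambda'+O_{p\to q}(\lambda^{-N})$ with $\|S_\lambda\|_{q\to q}+\|S_\lambda'\|_{p\to p}\lesssim\varepsilon$, or an averaging in $\lambda$ that makes crude pointwise bounds on the error suffice — and that idea is currently missing.
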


By this and Theorem \ref{rr*} it follows that  \eqref{est-loc0} holds if and only if $(1/p,1/q)\in \mathcal R_3$.  

Lemma \ref{implication} may be compared with  the known  fact   \cite{Th87, KST} (\cite{LR})  that a local  $L^p$  bound on the Hermite Bochner-Riesz means implies an $L^p$  bound on the classical  Bochner-Riesz means.  Our proof  below is similar to that  in \cite{KST}, where transplantation of $L^p$ bounds for differential operators was proved. However, unlike $L^p$ bound,  $L^p$--$L^q$ estimate ($p\neq q$) is not scaling invariant. The particular form of the bound \eqref{lplq-weaker} plays a crucial role.   Our argument also extends to general second order elliptic operators without difficulty as long as the associated spectral projection operator satisfies the same form of  bound. 

In order to prove Lemma \ref{implication}, we recall  a special case of 
H\"ormander's result  \cite[Theorem 5.1]{H68}. 

\begin{thm}\label{thm:hor}Let $P$ be a self-adjoint elliptic differential operator of order 2 with $C^{\infty}$-coefficients
on $\mathbb R^d$  and $p$ be its principal part. Then, for $x,y$ in a compact subset and sufficiently close to each other, we have \begin{align*}
 \Big| e(x,y,\lambda)-(2\pi)^{-d}\int_{p(y,\xi)<\lambda}e^{i\psi(x,y,\xi)}d\xi\Big|\le C(1+|\lambda|)^{\frac{d-1}{2}},
 \end{align*}
 with $C$ independent of $\lambda$ 
 where $e(x,y,\lambda)$ is the spectral function of $P$, i.e., the kernel of the spectral projection operator $\Pi_{[0,\lambda]}$,\footnote{Here, the operator $\Pi_{[0,\lambda]}$ is defined by  the typical spectral resolution.}   and $\psi$ is a function  homogeneous in $\xi$ of degree 1 which satisfies $ p(x,\nabla_x\psi)=p(y,\xi)$ and 
 \begin{align}
 \label{spectral-phase} \psi(x,y,\xi)=\left\langle x-y,\xi\right\rangle+O(|x-y|^2|\xi|).
 \end{align}
 \end{thm}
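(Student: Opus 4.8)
The plan is to prove this by the classical wave‑equation method, following Hörmander. Since $P$ is elliptic of order $2$ and self‑adjoint, after subtracting a constant I may assume $P\ge 0$ with principal symbol $p(x,\xi)>0$ for $\xi\ne 0$; set $Q=\sqrt P$, whose principal symbol is $q(x,\xi)=\sqrt{p(x,\xi)}$, homogeneous of degree $1$. Writing $e_Q(x,y,\mu)$ for the kernel of the spectral projection $\mathbf 1_{[0,\mu]}(Q)$, one has $e_Q(x,y,\mu)=e(x,y,\mu^2)$, so with $\mu=\sqrt\lambda$ the asserted bound is equivalent to
\[
 \Big| e_Q(x,y,\mu)-(2\pi)^{-d}\!\!\int_{q(y,\xi)<\mu}\!\! e^{i\psi(x,y,\xi)}\,d\xi\Big|\le C(1+\mu)^{d-1},
\]
uniformly for $x,y$ in a fixed compact set with $|x-y|$ small; I write $m(x,y,\mu)$ for the model term.

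The first step is a Tauberian reduction. I would fix an even $\rho\in\mathcal S(\mathbb R)$ with $\widehat\rho\in C_c^\infty$ supported in a small interval $(-\e_0,\e_0)$ and $\widehat\rho(0)=1$. By Fourier inversion the smoothed spectral measure $\int\rho(\mu-s)\,d_s e_Q(x,y,s)$ equals $\tfrac1{2\pi}\int\widehat\rho(t)\,e^{it\mu}\,(e^{-itQ})(x,y)\,dt$, and there is an analogous explicit expression with $m$ in place of $e_Q$. It then suffices to prove that $\big|\int\rho(\mu-s)\,d_s\big(e_Q-m\big)(x,y,s)\big|\lesssim(1+\mu)^{d-2}$; combined with the explicit form of $m$ this also gives the one‑sided diagonal bound $\int\rho(\mu-s)\,d_s e_Q(x,x,s)\lesssim(1+\mu)^{d-1}$. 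Granting these, the monotonicity of $\mu\mapsto e_Q(x,x,\mu)$ and the standard Tauberian lemma remove the smoothing on the diagonal with error $O(\mu^{d-1})$; the off‑diagonal case follows from the Cauchy–Schwarz inequality $|e_Q(x,y,\mu)-e_Q(x,y,\mu')|^2\le\big(e_Q(x,x,\mu)-e_Q(x,x,\mu')\big)\big(e_Q(y,y,\mu)-e_Q(y,y,\mu')\big)$; and $m-m\!*\!\rho=O(\mu^{d-1})$ is checked directly since $m$ is explicit. The sharp remainder power $\mu^{d-1}=\lambda^{(d-1)/2}$ is produced precisely at this Tauberian step.

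For the smoothed estimate I would use that $\operatorname{supp}\widehat\rho$ small means only $e^{-itQ}$ with $|t|<\e_0$ enters, and construct for such $t$ (before caustics) the Lax parametrix
\[
 (e^{-itQ}f)(x)=(2\pi)^{-d}\!\iint e^{i(\varphi(t,x,\xi)-\langle y,\xi\rangle)}\,a(t,x,\xi)\,f(y)\,dy\,d\xi+(R(t)f)(x),
\]
with $R(t)$ smoothing, $\varphi$ solving the eikonal equation $\partial_t\varphi+q(x,\nabla_x\varphi)=0$, $\varphi(0,x,\xi)=\langle x,\xi\rangle$ — solvable for $|t|<\e_0$ by the method of characteristics along the Hamilton flow of $q$, with $\varphi$ homogeneous of degree $1$ in $\xi$ and $\det\partial^2_{x\xi}\varphi\ne 0$ — and $a\sim\sum_{j\ge0}a_j$ a classical symbol of order $0$ fixed by transport equations with $a_0(0,x,\xi)=1$. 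Substituting into $\tfrac1{2\pi}\int\widehat\rho(t)e^{it\mu}(e^{-itQ})(x,y)\,dt$ and carrying out the $t$‑integration — exploiting the degree‑one homogeneity of $\varphi$, this is a stationary‑phase/change‑of‑variables computation with critical set $\{q(x,\nabla_x\varphi(t,x,\xi))=\mu\}$, which forces $q(y,\xi)\approx\mu$ so the $\xi$‑integral localizes to a unit‑width shell around the cosphere $\{q(y,\xi)=\mu\}$ — yields an integral $(2\pi)^{-d}\int A(x,y,\xi)\,\rho(\mu-\Lambda(x,\xi))\,e^{i\Psi(x,y,\xi)}\,d\xi$, where $\Lambda$ is homogeneous of degree $1$, $A$ is a classical symbol, and $\Psi(x,y,\xi)$ is homogeneous of degree $1$ in $\xi$, equals $\langle x-y,\xi\rangle+O(|x-y|^2|\xi|)$, and satisfies $p(x,\nabla_x\Psi)=p(y,\xi)$; by uniqueness for this non‑characteristic eikonal problem with the stated initial data, $\Psi=\psi$. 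Matching with the corresponding smoothing of $m$, whose $\mu$‑derivative is $(2\pi)^{-d}\int_{q(y,\xi)=\mu}e^{i\psi}\,d\sigma$, the principal ($j=0$) contributions coincide, while the amplitudes $a_j$ ($j\ge1$) and the remainder $R$ contribute $O((1+\mu)^{d-1-j})$ and $O((1+\mu)^{-\infty})$ respectively, which gives the smoothed estimate.

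I expect the main obstacle to be the construction of the half‑wave parametrix for small time together with the algebraic identification of the phase $\Psi$ produced by the $t$‑integration with the function $\psi$ of the statement — i.e.\ verifying that the critical value of the wave phase obeys $p(x,\nabla_x\Psi)=p(y,\xi)$ with the correct second‑order behaviour on the diagonal, and that the stationary‑phase localization does pin the $\xi$‑integral to the cosphere $q(y,\xi)\approx\mu$ rather than $q(x,\xi)\approx\mu$. The Tauberian mechanism and the amplitude/remainder bookkeeping are routine in themselves, but some care is needed there to land on the exact exponent $(d-1)/2$ and to make every bound uniform over the given compact set and over $|x-y|$ small.
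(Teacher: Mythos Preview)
The paper does not prove this statement at all: it is quoted verbatim as a special case of H\"ormander's result \cite[Theorem 5.1]{H68} and used as a black box in the proof of Lemma~\ref{implication}. Your proposal outlines precisely the classical argument from that reference --- reduction to the first-order operator $Q=\sqrt{P}$, the Lax/FIO parametrix for $e^{-itQ}$ at small times, stationary phase in $t$ to identify the phase $\psi$, and the Tauberian passage from the smoothed spectral measure to $e_Q$ --- so there is nothing to compare: you are reproducing H\"ormander's own proof, which the paper simply cites.
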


\begin{proof}[Proof of Lemma \ref{implication}]
 Let $k, \nu$ be large positive integers. Consider an auxiliary  projection operator  
\[ \textstyle \widetilde \Pi=\sum_{k\nu< \lambda \le(k+1)\nu}\Pi_\lambda .\] 
 By the triangle inequality and the assumption \eqref{lplq-weaker} we have 
\[
\textstyle
\| \chi_{ B} \widetilde \Pi \chi_{B}\|_{p\rightarrow q}\le\sum_{k\nu<\lambda\le(k+1)\nu}\|\chi_{ B}\Pi_\lambda\chi_{B}\|_{p\to q}\lesssim k^{-1}(k\nu)^{\frac{d}{2}(\frac1p-\frac1q)}.
\] 
Let $f,g$ be nontrivial functions in $C^{\infty}_c(\mathbb{R}^d)$ such that $\supp f$, $\supp g\subset B$.
Since $\inp{\widetilde \Pi f}{g}=\inp{\chi_{B} \widetilde \Pi\chi_{ B}f}{g}$, we have
\begin{align*}
 \Big|\iint \widetilde \Pi(x,y)f(x)g(y)dxdy\Big|\lesssim k^{-1}(k\nu)^{\frac{d}{2}(\frac1p-\frac1q)}\|f\|_{p}\|g\|_{q'}
 \end{align*}
  Rescaling $(x,y)\to (\nu^{-1/2}x,\nu^{-1/2}y)$ gives the equivalent estimate 
 \begin{align}\label{est:trans} 
 \Big|\nu^{-\frac{d}{2}}\iint \widetilde \Pi(\nu^{-\frac12}x,\nu^{-\frac12}y)F(x) G(y) dxdy\Big|\lesssim k^{-1+\frac{d}{2}(\frac1p-\frac1q)} \|F\|_p \|G\|_{q'}
 \end{align}
 provided that $F$ and $G$ are supported in $\sqrt\nu B$.  Taking the radius of $B$ small enough, we may apply Theorem \ref{thm:hor}.
 Since $\widetilde \Pi(\nu^{-1/2}x,\nu^{-1/2}y) 
       = e(\nu^{-1/2}x,\nu^{-1/2}y,(k+1)\nu)-e(\nu^{-1/2}x,\nu^{-1/2}y, k\nu)$, 
 by Theorem \ref{thm:hor} we have
 \begin{align*} 
 \widetilde \Pi(\nu^{-\frac12}x,\nu^{-\frac12}y) 
       &=(2\pi)^{-d}\int_{k\nu\le |\xi|^2<(k+1)\nu}e^{i\psi(\nu^{-\frac12}x,\nu^{-\frac12}y,\xi)}d\xi+R_{\nu,k}(x,y),
 \end{align*}
 where $R_{\nu,k}(x,y)=O(|k\nu|^{\frac{d-1}{2}})$.
Changing variables $\xi\to \nu^{1/2}\xi$ gives 
\begin{align*} 
 \widetilde \Pi(\nu^{-\frac12}x,\nu^{-\frac12}y) 
      =(2\pi)^{-d}\nu^\frac d2\int_{k\le  |\xi|^2 <(k+1)}e^{i\psi(\nu^{-\frac12}(x,y),\nu^\frac12 \xi)}d\xi+R_{\nu,k}(x,y).
 \end{align*}
  Combining this and \eqref{est:trans}  yields 
 \begin{align*} 
 \Big|\iint\Big(\int_{k\le  |\xi|^2<k+1}\hspace{-25pt} e^{i\psi(\nu^{-\frac12}(x,y),\nu^{\frac12}\xi)}d\xi+\widetilde R_{\nu,k}(x,y)\Big)f(x)g(y)dxdy\Big|\lesssim k^{-1+\frac{d}{2}(\frac1p-\frac1q)}
 \end{align*}
 whenever  $f$ and $g$ are supported in $\sqrt\nu B$ and $\|f\|_p=\|g\|_{q'}=1$. 
 Here $\widetilde R_{\nu,k}=O( \nu^{-\frac{d}{2}}(k\nu)^{\frac{d-1}{2}})$. From \eqref{spectral-phase}, note that the phase function $\psi(\nu^{-1/2}(x,y),\nu^{1/2}\xi) \to  \langle x-y,\xi\rangle$ as $\nu\to \infty$. Thus, taking $\nu\rightarrow\infty$, we obtain
 \begin{align*} 
 \Big|\iint\Big(\int_{k\le  |\xi|^2<k+1}e^{i\left\langle x-y,\xi\right\rangle}d\xi\Big)f(x)g(y)dxdy\Big|\lesssim k^{-1+\frac{d}{2}(\frac1p-\frac1q)}
 \end{align*}
 if $\|f\|_p=\|g\|_{q'}=1$.   This gives  \eqref{scaled-euclid},
 which is equivalent to \eqref{est:euclid} as seen above. 
\end{proof}

 \section{ Estimate  near $\sqrt{\lambda}\mathbb S^{d-1}$: Proof of Theorem \ref{thm-annest}}\label{sec:amu}
In this section we prove Theorem \ref{thm-annest}. To this end, it is more convenient to consider the rescaled operator $\fP_\lambda$ instead of $\Pi_\lambda$. 
We note that 
\begin{equation}
\label{eq:norm-scaled}
  \|  \chi_{E} \mathfrak \fP_\lambda [\eta]  \chi_{E} \|_{L^{p,r}\to L^{q,s}} =  \lambda^{\frac d2(\frac1p-\frac1q-1)}\|  \chi_{\sqrt\lambda E} \Pi_\lambda [\eta] \chi_{\sqrt\lambda E}\|_{L^{p,r}\to L^{q,s}}, 
   \end{equation}
 for any measurable set $E \subset \R^d$ where  $\sqrt\lambda E:=\{x : \lambda^{-1/2}x\in E\}.$
The key part of  the proof is to show the following. 

\begin{prop}
\label{hoh} Let $\lambda^{-2/3}\le\mu\le 1/4$ and $2^j<\lambda\mu$. Then, we have 
\begin{align}
    \label{weak2}
   & \textstyle \|\sum_{2^j<\lambda \mu}  \chi_\mu\fP_\lambda[\psi_j]\chi_\mu\|_{L^{\frac{2d}{d+1},1}\to L^\infty}\lesssim \lambda^{-\frac12}\mu^{\frac{d-3}{4}}, \\
    &\label{scaled5/6}
    \|\chi_{\mu}\fP_\lambda[\psi_j]\chi_{\mu}\|_{L^{6/5,1}\to L^{6,\infty}}\lesssim \lambda^{-\frac{d+2}{6}} \mu^{-\frac13} 2^{\frac{d-2}{3}j}, \quad j\ge 0.
\end{align}
\end{prop}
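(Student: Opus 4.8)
```latex
\textbf{Proof plan for Proposition \ref{hoh}.}
The plan is to use the pointwise kernel bounds of Lemma \ref{oscillatory} together with the $L^2$--$L^\infty$ bound of Lemma \ref{PI-est}, and then to sum over $j$ using the summation devices of Lemma \ref{s-trick}. For the rescaled operator $\fP_\lambda[\psi_j]$ we work with input and output functions supported on $A_\mu=\{x:(1-|x|)\in[2^{-1}\mu,\mu]\}$, so $|x|,|y|\sim 1$ and, crucially, $1+\inp xy\ge 10^{-2}$ except on a negligible piece (which falls into the last, rapidly decaying case of Lemma \ref{oscillatory} and contributes an error that is easily absorbed). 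On $A_\mu\times A_\mu$ the discriminant $\cD(x,y)$ satisfies $|\cD(x,y)|\lesssim \mu$ always, so only cases $(a)$ with $-\cD\gtrsim\mu^2$ and $|\cD|\lesssim\mu^2$, case $(b)$, and case $(c)$ of Lemma \ref{oscillatory} are relevant, and in all of them the kernel is $O(2^{\frac{d-1}{2}j}\lambda^{-\frac12}\mu^{-\frac12})$ once $2^{-j}\lesssim\mu^{1/2}$ (equivalently $2^j\gtrsim\mu^{-1}$), with better decay otherwise.

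First I would prove \eqref{scaled5/6}. The route is to interpolate a fixed-$j$ $L^1$--$L^\infty$ bound against the $L^2$--$L^2$ bound $\|\fP_\lambda[\psi_j]\|_{2\to2}\lesssim 2^{-j}$ from \eqref{easy-l2}. For the $L^1$--$L^\infty$ estimate one wants $\|\chi_\mu\fP_\lambda[\psi_j]\chi_\mu\|_{1\to\infty}\lesssim 2^{\frac{d-1}{2}j}\lambda^{-\frac12}$ in the regime $2^j\gtrsim\mu^{-1}$, which is exactly the sup of the case $(c)$ kernel bound (this is the rescaled form of \eqref{che}-type estimates, valid here because $\cD$ need not be bounded below, but the worst case $(c)$ still yields this bound); when $2^j\ll\mu^{-1}$ the rapidly decaying factor $(\lambda 2^{-3j}+1)^{-N}$ or $(\lambda 2^j\mu^2+1)^{-N}$ makes the contribution even smaller after rescaling. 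Interpolating with the $L^2$ bound and optimizing between the two at the exponent pair $(6/5,6)$ produces the claimed $\lambda^{-\frac{d+2}{6}}\mu^{-\frac13}2^{\frac{d-2}{3}j}$; upgrading $L^6$ to $L^{6,\infty}$ and $L^{6/5}$ to $L^{6/5,1}$ is obtained for free by a restricted-type/real interpolation argument since we are at an interior interpolation point of a line segment of Lebesgue estimates, or directly because the endpoint estimate being interpolated is already of strong type.

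Next I would prove \eqref{weak2}, which is the more delicate statement since it involves summing over $j$ up to $\lambda\mu$. The strategy is: (i) for each $j$ with $2^j<\lambda\mu$, bound $\|\chi_\mu\fP_\lambda[\psi_j]\chi_\mu\|_{2\to\infty}$ using the rescaled form of \eqref{201}, namely $\|\chi_{\mu}\fP_\lambda[\psi_j]\|_{2\to\infty}\lesssim \lambda^{\frac d2(\frac12-1)}\cdot 2^{-j/2}(\lambda\mu)^{\frac{d-2}{4}}$ after accounting for \eqref{eq:norm-scaled} with $p=2,q=\infty$; (ii) combine with a fixed-$j$ $L^{r}$--$L^\infty$ bound for an appropriate $r$ near $\frac{2d}{d+1}$, obtained by interpolating the $L^2$--$L^\infty$ bound against the $L^1$--$L^\infty$ kernel bound above; (iii) at the exponent $r=\frac{2d}{d+1}$ the $j$-sum of the resulting geometric-type series telescopes to $\lambda^{-\frac12}\mu^{\frac{d-3}{4}}$ (the dominant term coming from $2^j\sim\lambda\mu$, consistent with Lemma \ref{imply1} at the marginal scale); (iv) since summing the strong $(r,\infty)$ bounds directly would lose a logarithm, instead apply part $(b)$ of Lemma \ref{s-trick} — with $p_0\neq p_1$, common target $q=\infty$ — to two nearby estimates straddling $r=\frac{2d}{d+1}$, which yields the $L^{\frac{2d}{d+1},1}\to L^\infty$ bound with no loss. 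The main obstacle is organizing step (ii)--(iii) so that the exponent of $2^j$ in the fixed-$j$ estimate is nonpositive for $r\le\frac{2d}{d+1}$ and strictly negative for $r<\frac{2d}{d+1}$ (so that Lemma \ref{s-trick}(b) applies with genuine decay on one side and growth on the other), and checking that the interpolation exponents land the output norm exactly at $\lambda^{-\frac12}\mu^{(d-3)/4}$; this is a bookkeeping computation with the two exponents $\beta,\gamma$ but requires care near the endpoint. The contribution of the region where $1+\inp xy<10^{-2}$ is handled separately and trivially by the last assertion of Lemma \ref{oscillatory}.
```
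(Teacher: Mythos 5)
The decisive claim in your plan is that on $A_\mu\times A_\mu$ (with $1+\inp xy\ge 10^{-2}$) the kernel obeys $|\fP_\lambda[\psi_j](x,y)|\lesssim 2^{\frac{d-1}{2}j}\lambda^{-\frac12}\mu^{-\frac12}$ whenever $2^{-j}\lesssim\mu^{1/2}$, so that a fixed-$j$ $L^1\to L^\infty$ bound of this size can be interpolated against the $L^2$ bounds and summed via Lemma \ref{s-trick}. This is exactly where the argument breaks. Case $(b)$ of Lemma \ref{oscillatory} only covers $2^{-j}\ll\mu^{1/2}$ and $2^{-j}\gg\mu^{1/2}$ and gives nothing at the critical scale $2^{-j}\sim\mu^{1/2}$ when $|\cD(x,y)|\ll\mu^2$, and this omission is not a technicality: when $\cD$ nearly vanishes the stationary point of $\phi_1(x,y,\cdot)$ merges with the zero $S_c(x,y)$ of $\partial_s^2\phi_1$, the phase is of Airy type on the scale $s\sim\mu^{1/2}$, and van der Corput only yields $|\fP_\lambda[\psi_j](x,y)|\lesssim\lambda^{-\frac13}\mu^{-\frac d4}$, which exceeds the bound you need, $\lambda^{-\frac12}\mu^{-\frac{d+1}{4}}$, by the factor $\lambda^{\frac16}\mu^{\frac14}\ge 1$ throughout the range $\mu\ge\lambda^{-2/3}$. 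So the $L^1\to L^\infty$ input is false there, and inserting the true Airy-size bound into your interpolation scheme misses the targets (for instance it overshoots \eqref{goal21} by $\lambda^{\frac19}\mu^{\frac16}\ge1$); no rearrangement of the $j$-bookkeeping or of the straddling exponents in Lemma \ref{s-trick} can recover \eqref{weak2} and \eqref{scaled5/6} from pointwise kernel bounds alone. (Two smaller slips: the rescaled $L^2$ bound is $\|\fP_\lambda[\psi_j]\|_{2\to2}\lesssim\lambda^{-\frac d2}2^{-j}$, not $2^{-j}$, and the case-$(c)$ kernel bound carries the factor $\mu^{-\frac12}$ which your $1\to\infty$ estimate then drops; with both corrected, your fixed-$j$ interpolation does reproduce \eqref{scaled5/6} and the $\mu^{\frac{d-3}{4}}$ numerology for \eqref{weak2} — but only where the kernel bound actually holds.)

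The regime $|\cD|\ll\vepc\mu^2$, $2^{-j}\sim\mu^{1/2}$ is precisely where the paper does the real work, and it cannot be bypassed: after the sectorial (Whitney) decomposition and a covering by cubes $Q,Q'$ of side $\sim c\vepc\mu$ isolating \eqref{small-det}, the paper localizes $t$ near $s_{\mathbf c}$, decomposes dyadically in $2^{-l}$ about $S_c$, and proves for each piece $\fP^*_l$ the three bounds \eqref{goal3_1}--\eqref{goal3_3}; the crucial $L^2\to L^2$ bound \eqref{goal3_3} is not a kernel estimate at all but an oscillatory-integral-operator estimate resting on the nondegeneracy of the mixed Hessian, $\det\partial_x\partial_y^\intercal\Phi\sim\mu^{-d/2}$ (Lemmas \ref{nonzero-det} and \ref{detPhi}), after which Lemma \ref{s-trick} recombines the pieces in $l$. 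Away from this degenerate regime your outline (Lemma \ref{oscillatory} kernel bounds, the $2\to\infty$ bound \eqref{201}, and Lemma \ref{s-trick} in $j$) is essentially the paper's own argument, but as written the proposal has a genuine gap at the heart of the proposition.
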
 

Once we have the above estimates,  the assertions in Theorem \ref{thm-annest} can easily be verified.

\begin{proof}[Proof of Theorem \ref{thm-annest}] To prove Theorem \ref{thm-annest}, it suffices to show \eqref{rweak} and  \eqref{weakann}. 
Indeed, note that $\|\chi_{\lambda,\mu} \Pi_\lambda\chi_{\lambda,\mu}\|_{2\to \infty}\le  C\lambda^{(d-2)/4}\mu^{(d-1)/4}$. This follows  
by \eqref{kt-nu}  since 
$\|\chi_{\lambda,\mu} \Pi_\lambda\chi_{\lambda,\mu}\|_{2\to \infty}\le  \|\chi_{\lambda,\mu}  \Pi_\lambda\|_{2\to \infty} \|\chi_{\lambda,\mu} \Pi_\lambda\|_{2\to 2}.$ 
The desired estimate \eqref{est-ann} for $(1/p,1/q)\in(\mathfrak L_1\cup\mathfrak L_2\cup\mathfrak L_2'\cup\mathfrak L_3)$ follows from those estimates and the previously known estimate \eqref{kt-nu} (equivalently, \eqref{ktdia}) via interpolation and duality (see Figure \ref{xyx} and \ref{xyxz}).

Thanks to \eqref{eq:norm-scaled},    \eqref{rweak} follows from  \eqref{weak2}  and Lemma \ref{imply1}.  Similarly,  for \eqref{weakann}  it is enough to show 
\Be\label{weakann11}
   \textstyle  \| \sum_{2^j<\lambda \mu} \chi_{\lambda,\mu} \Pi_\lambda[\psi_j] \chi_{\lambda,\mu} \|_{L^{p,1}\to L^{q,\infty}} \lesssim (\lambda \mu)^{-\frac{1}{d+1}}, \quad ({1}/{p},{1}/{q})=\mathfrak G, \mathfrak G'. 
\Ee 
When $d=2$,  there is nothing to prove since $L^p$--$L^q$ estimate holds by \eqref{ktdia}.  Thus,  we may assume $d\ge 3$.  To do this, we use Lemma \ref{tt-st} and  Lemma \ref{s-trick}.  By scaling, i.e., \eqref{eq:norm-scaled}, the estimate \eqref{scaled5/6} is equivalent to 
\[ 
    \|\chi_{\lambda,\mu}\Pi_\lambda[\psi_j]\chi_{\lambda,\mu}\|_{L^{6/5,1}\to L^{6,\infty}}\lesssim (\lambda\mu)^{-\frac13} 2^{\frac{d-2}{3}j}, \quad j\ge 0.
\]
Since $d\ge 3$, the exponent of  $2^j$ is positive. So, we can apply Lemma \ref{tt-st} with $r=6/5$, $\beta = (\lambda\mu)^{-1/2}$, and $b = {(d-1)/}2$ to get
\[\|\chi_{\lambda,\mu}\Pi_\lambda[\psi_j]\chi_{\lambda,\mu}\|_{p\to q}\lesssim (\lambda\mu)^{-\frac32\delta(p,q)} 2^{j(-1+\frac{d+1}{2}\delta(p,q))}  \] 
for $(1/p,1/q)\in \mathfrak Q(\frac{d-1}2,\frac65)$. Applying $(c)$ in Lemma \ref{s-trick} yields \eqref{weakann11} for $(1/p,1/q)=\mathfrak G, \mathfrak G'$.
\end{proof}

\subsection{Reduction via sectorial decomposition} 
We prove the estimates  \eqref{weak2} and \eqref{scaled5/6} while assuming  under the assumption that 
\[ \mu\ll 1.\]
The case $\mu\sim 1$ can be handled in a similar way but much easier  (see Remark \ref{frmk}). 

 We make use of a decomposition of $A_\mu\times A_{\mu}$, which was used in  \cite{JLR_endpoint}.  
Note that 
\begin{align}
 \label{angle} 
\mathcal D(x,y)=-|x|^2|y|^2 \sin^2\theta(x,y) +(1-|x|^2)(1-|y|^2), 
 \end{align}
 where   $\theta(x,y)$ denotes  the angle  between $x$ and $y$. 
Since $|(1-|x|^2)(1-|y|^2)|\sim \mu^2$ for $(x,y)\in A_\mu\times A_{\mu}$, relative size of  $\theta(x,y)$  against $\mu$ is efficient to control  $\cD$.  This can be exploited by  a Whitney type decomposition of $\mathbb S^{d-1}\times \mathbb S^{d-1}$ away from its diagonal (see   \cite[Section 2.4]{JLR_endpoint}). 

Adopting the typical dyadic decomposition process, for each integer $\nu\ge 0$ we partition
$\mathbb S^{d-1}$ into spherical caps $\Theta_k^\nu$  such that  $\Theta_k^\nu \subset \Theta_{k'}^{\nu'}$ for some $k'$ whenever $\nu\ge \nu'$,  
and $c_d2^{-\nu}\le 
\diam (\Theta_k^\nu)  \le C_d2^{-\nu}$ for some constants $c_d$, $C_d>0$. Let  
$\nu_\circ:=\nu_\circ(\mu)$ denote the integer  $\nu_\circ$ such that 
\[  
\mu/2<   C2^{-\nu_\circ} \le \mu
\]
for a large positive constant $C$.
By a Whitney type decomposition of $\mathbb S^{d-1}\times \mathbb S^{d-1}$ away from its diagonal, we may write
\[\textstyle   \mathbb S^{d-1}\times \mathbb S^{d-1}=\bigcup_{ \nu\in \mathbb N_0:  2^{-\nu_\circ} \le    2^{-\nu} \lesssim 1 }\,\, \bigcup_{k\sim_\nu k'} \Theta_k^\nu\times \Theta_{k'}^{\nu},\] 
where  $k\sim_\nu k'$ implies $\dist (\Theta_k^{\nu}, \Theta_{k'}^{\nu})\sim 2^{-\nu}$ if  $\nu> \nu_\circ$ and  $\dist (\Theta_k^{\nu}, \Theta_{k'}^{\nu})\lesssim 2^{-\nu}$ if  $\nu= \nu_\circ $ (for example, see \cite[p.971]{TVV}). It should be noted that  the sets $\Theta_k^{\nu_\circ}$ and $\Theta_{k'}^{\nu_\circ}$  are not necessarily  separated at $\nu=\nu_\circ$.  For a fixed $\mu$  we define 
\[  A_{k}^{\nu}=\big\{ x\in A_{\mu}:  |x|^{-1}x\,\in    \Theta_{k}^\nu\big\}\] 
and set $\chi_{k}^{\nu}=\chi_{A_{k}^{\nu}}$. 
Thus we can write  
\Be
\label{s-decom}
\chi_\mu \fP_\lambda[\psi_j]\chi_\mu = \sum_{2^{-\nu_\circ}\le     2^{-\nu} \lesssim 1}\,\,\sum_{k\sim_\nu k'}\chi_{k}^{\nu}\fP_\lambda[\psi_j]\chi_{k'}^{\nu}.
\Ee

The following simple lemma basically reduces the estimate for $\sum_{k\sim_\nu k'}\chi_{k}^{\nu}\fP_\lambda[\psi_j]\chi_{k'}^{\nu}$ to a uniform estimate for $\chi_{k}^{\nu} \fP_\lambda [\psi_j] \chi_{k'}^{\nu} $ with $k\sim_\nu k'$.

\begin{lem}\label{kkpsum}  Let $1\le p\le q\le \infty$ and let $T$ be an operator from $L^{p,1}$ to $L^{q,\infty}$. 
 Suppose we have the estimate 
$\|\chi_{k}^{\nu}  \,  T  \chi_{k'}^{\nu} \|_{L^{p,1} \to L^{q,\infty}}\le B$ whenever $k\sim_\nu k'$. Then, with $C$ only depending on $d$,   we have 
$
\|\sum_{k\sim_\nu k'} \chi_{k}^{\nu} \, T  \chi_{k'}^{\nu} \|_{ L^{p,1} \to L^{q,\infty}}\le C B. 
$
The same also holds when $L^{p,1}$ and $L^{q,\infty}$ are replaced by $L^{p}$ and $L^{q}$. 
\end{lem}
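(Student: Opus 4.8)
The plan is to exploit two structural features of the decomposition \eqref{s-decom}. Geometrically: at a fixed level $\nu$ the caps $\Theta_k^\nu$, and hence the sets $A_k^\nu$, are pairwise disjoint, there are only finitely many of them, and the relation $\ksim$ has bounded degree — there is $N=N(d)$ with $\#\{k':\ksim\}\le N$ for every $k$ and, symmetrically, $\#\{k:\ksim\}\le N$ for every $k'$; this follows from $\diam\Theta_k^\nu\sim 2^{-\nu}$ together with the fact that $\ksim$ forces $\dist(\Theta_k^\nu,\Theta_{k'}^\nu)\lesssim 2^{-\nu}$. Analytically: if $\{g_k\}$ have pairwise disjoint supports then $|\{|\sum_k g_k|>\alpha\}|=\sum_k|\{|g_k|>\alpha\}|$, whence
\[
\big\|\textstyle\sum_k g_k\big\|_{L^{q,\infty}}\le\big(\textstyle\sum_k\|g_k\|_{L^{q,\infty}}^q\big)^{1/q},\qquad\big\|\textstyle\sum_k g_k\big\|_{L^{q}}^q=\textstyle\sum_k\|g_k\|_{L^{q}}^q
\]
(read as suprema when $q=\infty$), while for pairwise disjoint sets $\{E_j\}$ and any $f$,
\[
\big(\textstyle\sum_j\|\chi_{E_j}f\|_{L^{p,1}}^p\big)^{1/p}\lesssim\|f\|_{L^{p,1}},\qquad\big(\textstyle\sum_j\|\chi_{E_j}f\|_{L^{p}}^p\big)^{1/p}\le\|f\|_{L^{p}}.
\]
The $L^p$ version of the latter is the identity $\sum_j\int_{E_j}|f|^p\le\int|f|^p$; for the $L^{p,1}$ version I would use the standard expression $\|g\|_{L^{p,1}}\sim\int_0^\infty|\{|g|>\alpha\}|^{1/p}\,d\alpha$ ($1\le p<\infty$) together with Minkowski's inequality in $\ell^p_j$ (legitimate since $p\ge1$), noting $\sum_j|\{|\chi_{E_j}f|>\alpha\}|\le|\{|f|>\alpha\}|$.

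Granting these, I would run the $L^{p,1}\to L^{q,\infty}$ bound as follows (the $L^p\to L^q$ case being word for word the same, with $L^{p,1},L^{q,\infty}$ replaced by $L^p,L^q$). Put $G_k:=\sum_{k':\ksim}\chi_k^\nu T\chi_{k'}^\nu f$, so that $\sum_{\ksim}\chi_k^\nu T\chi_{k'}^\nu f=\sum_k G_k$ with $\supp G_k\subset A_k^\nu$. By disjointness of $\{A_k^\nu\}_k$, $\|\sum_k G_k\|_{L^{q,\infty}}\le(\sum_k\|G_k\|_{L^{q,\infty}}^q)^{1/q}$. Each $G_k$ is a sum of at most $N(d)$ terms, so the quasi-triangle inequality in $L^{q,\infty}$, the hypothesis $\|\chi_k^\nu T\chi_{k'}^\nu\|_{L^{p,1}\to L^{q,\infty}}\le B$, and the embedding $\ell^p\hookrightarrow\ell^q$ (here $p\le q$ enters) give
\[
\|G_k\|_{L^{q,\infty}}\lesssim B\Big(\textstyle\sum_{k':\ksim}\|\chi_{k'}^\nu f\|_{L^{p,1}}^q\Big)^{1/q}\le B\Big(\textstyle\sum_{k':\ksim}\|\chi_{k'}^\nu f\|_{L^{p,1}}^p\Big)^{1/p}.
\]
Raising to the $q$-th power, summing over $k$, using $\ell^1\hookrightarrow\ell^{q/p}$ (again $q\ge p$) and the bounded degree of $\ksim$ once more (each $k'$ occurs for at most $N$ indices $k$),
\[
\textstyle\sum_k\|G_k\|_{L^{q,\infty}}^q\lesssim B^q\Big(\textstyle\sum_k\textstyle\sum_{k':\ksim}\|\chi_{k'}^\nu f\|_{L^{p,1}}^p\Big)^{q/p}\lesssim B^q\Big(\textstyle\sum_{k'}\|\chi_{k'}^\nu f\|_{L^{p,1}}^p\Big)^{q/p};
\]
by the functional inequality for the pairwise disjoint $\{A_{k'}^\nu\}_{k'}$ the inner sum is $\lesssim\|f\|_{L^{p,1}}^p$, and the two displays combine to give $\|\sum_{\ksim}\chi_k^\nu T\chi_{k'}^\nu f\|_{L^{q,\infty}}\lesssim B\|f\|_{L^{p,1}}$ with a constant depending only on $d$. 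When $q=\infty$ every $\ell^q$-norm is read as a supremum and nothing changes.

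Essentially everything here is bookkeeping once the two disjoint-support inequalities are available; the only points that genuinely need care are the $L^{p,1}$ inequality — where the rearrangement identity must be coupled with Minkowski's inequality rather than a crude triangle inequality, which would lose a factor growing with the number of caps — and keeping every $\ell^p$-versus-$\ell^q$ embedding pointed in the direction that $1\le p\le q$ permits, which is exactly the hypothesis of the lemma. (If $T$ is merely sublinear the argument is unchanged, since $T$ is used only through the quasi-triangle inequality and the uniform bound on $\chi_k^\nu T\chi_{k'}^\nu$.)
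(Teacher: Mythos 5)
Your proof is correct, and it rests on the same two pillars as the paper's: disjointness of the sets $A_k^\nu$ at a fixed level $\nu$, which gives the output-side inequality $\|\sum_k g_k\|_{L^{q,\infty}}\le\big(\sum_k\|g_k\|_{L^{q,\infty}}^q\big)^{1/q}$, together with the bounded degree of the relation $k\sim_\nu k'$ and the embedding $\ell^p\hookrightarrow\ell^q$, which is exactly where $p\le q$ enters. The one genuine difference is on the input side: the paper first invokes the Stein--Weiss reduction and tests the $L^{p,1}\to L^{q,\infty}$ bound on characteristic functions $\chi_F$ only, so that the $\ell^p$-summation over the disjoint pieces amounts to $\sum_{k'}|A_{k'}^\nu\cap F|\le |F|$; you instead keep a general $f\in L^{p,1}$ and prove the disjoint-decomposition inequality $\big(\sum_j\|\chi_{E_j}f\|_{L^{p,1}}^p\big)^{1/p}\lesssim\|f\|_{L^{p,1}}$ directly from the distribution-function formula for the $L^{p,1}$ norm and Minkowski's inequality in $\ell^p$ (valid since $p\ge1$), which is indeed the correct way to avoid losing a factor growing with the number of caps. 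Both routes are sound: yours is more self-contained (no appeal to the restricted-type characterization of $L^{p,1}\to L^{q,\infty}$ boundedness, and it applies verbatim to sublinear $T$), while the paper's is shorter because for characteristic functions the $L^{p,1}$ bookkeeping is trivial.
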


\begin{proof} 
The last assertion is clear. We only provide the proof   for $L^{p,1}$ and $L^{q,\infty}$.  It is enough to show $\|\sum_{k\sim_\nu k'} \chi_{k}^{\nu} \, T  \chi_{k'}^{\nu}\chi_F \|_{ L^{q,\infty}}\le C B \|\chi_F\|_p$ for any measurable set $F$ (e.g., see Stein \cite[p.195]{St71}). 
Besides, note that $\|\sum_k f_k\|_{q,\infty}\le ( \sum_{k} \|f_k\|_{q,\infty}^q)^{1/q}$ if $\supp f_k$ are disjoint. By combining those factors, one can easily see 
the desired inequality since $\supp \chi_{k}^{\nu}$ are boundedly overlapping. 
\end{proof}

To obtain the desired estimates, we separately consider the cases $2^{-\nu}\gg\mu$ and  $2^{-\nu}\lesssim  \mu$ for which  we have $|\cD(x,y)|\gtrsim 2^{-2\nu}$ and $|\cD(x,y)|\lesssim \mu^2$, respectively.

\subsubsection*{When $2^{-\nu}\gg\mu$}  
Since $-\cD(x,y)\sim 2^{-2\nu}$ on $A_k^\nu\times A_{k'}^\nu$, substituting $N=1/2$ in $(a)$  (or the last assertion if $2^{-\nu}\sim 1$) of Lemma \ref{oscillatory}, we have
\Be\label{1inf_easy}
\|\chi_{k}^{\nu} \fP_\lambda [\psi_j] \chi_{k'}^{\nu}\|_{1\to \infty}\lesssim \lambda^{-\frac12}2^{\frac{d-1}{2}j}2^{\frac{\nu}{2}}.
\Ee
By  \eqref{201} and \eqref{eq:norm-scaled} it follows that $
\|\chi_k^\nu\fP_\lambda[\psi_j]\chi_{k'}^\nu\|_{2\to\infty}\lesssim 2^{-\frac j2}\lambda^{-\frac12}\mu^{\frac{d-2}{4}}.$ 
Combining this estimate and \eqref{1inf_easy} with  Lemma \ref{s-trick}, we obtain
\[
\tx \|\sum_{j\ge 0}\chi_k^\nu\fP_\lambda[\psi_j]\chi_{k'}^\nu\|_{L^{\frac{2d}{d+1},1}\to L^\infty}\lesssim \lambda^{-\frac12}\mu^{\frac{d^2-3d+2}{4d}}2^{\frac{\nu}{2d}}.
\]
Now,  Lemma \ref{kkpsum} and summation over $\nu : \mu\ll 2^{-\nu}\lesssim 1$ give 
\Be\label{rest_easy}
\tx
\sum_{\mu\ll 2^{-\nu}\lesssim 1} \|\sum_{j\ge 0}\sum_{k\sim_\nu k'}\chi_k^\nu\fP_\lambda[\psi_j]\chi_{k'}^\nu\|_{L^{\frac{2d}{d+1},1}\to L^\infty}\lesssim\lambda^{-\frac12}\mu^{\frac{d-3}{4}}.
\Ee

By \eqref{easy-l2} and \eqref{eq:norm-scaled}, we have $\|\chi_{k}^{\nu}\fP_\lambda[\psi_j]\chi_{k'}^{\nu}\|_{2\to 2}\lesssim \lambda^{-\frac d2}2^{-j}$. 
Interpolation with \eqref{1inf_easy} gives the estimate $\|\chi_{k}^{\nu}\fP_\lambda[\psi_j]\chi_{k'}^{\nu}\|_{6\to 6/5}\lesssim \lambda^{-\frac{d+2}{6}}2^{\frac{d-2}{3}j} 2^{\frac\nu 3}$. 
Using Lemma \ref{kkpsum} and taking sum over $\nu : \mu\ll 2^{-\nu}\lesssim 1$, we obtain 
\Be\label{rweak_easy}
\textstyle \sum_{\mu\ll2^{-\nu}\lesssim 1} \big\|\sum_{k\sim_\nu k'} \chi_k^\nu \fP_\lambda[\psi_j]\chi_{k'}^\nu\big\|_{L^{6/5}\to L^{6}}\lesssim \lambda^{-\frac{d+2}{6}}2^{\frac{d-2}{3}j} \mu^{-\frac13}.
\Ee

\subsubsection*{When $2^{-\nu}\lesssim \mu$} 
We now consider the case $2^{-\nu_0}\le 2^{-\nu}\lesssim \mu$. From \eqref{rweak_easy} and \eqref{rest_easy} we note  that the contributions in this case  $2^{-\nu}\gg\mu$ are acceptable to  
the estimates \eqref{weak2} and \eqref{scaled5/6}.  
Since there are only $O(1)$ $\nu$, to show \eqref{weak2} and \eqref{scaled5/6} it is sufficient to consider a single $\nu$ such that  $2^{-\nu}\lesssim \mu$. By Lemma \ref{kkpsum} we need only have to  show  the estimates
\begin{align*}
&\|\chi_k^\nu \fP_\lambda[\psi_j]\chi_{k'}^\nu\|_{L^{6/5,1}\to L^{6,\infty}}\lesssim \lambda^{-\frac{d+2}{6}}\mu^{-\frac13}2^{\frac{d-2}{3}j}, 
\\
&\tx\|\sum_{j\ge 0} \chi_k^\nu \fP_\lambda[\psi_j]\chi_{k'}^\nu\|_{L^{\frac{2d}{d+1},1}\to L^\infty}\lesssim \lambda^{-\frac12}\mu^{\frac{d-3}{4}}
\end{align*}
for $k\sim_\nu k'$.  Note that $A_{k}^{\nu}$ and $A_{k'}^{\nu}$ are contained in a set of diameter $\sim \mu$ if   $k\sim_\nu k'$.  
Thus, for  $(x,y)\in  A_{k}^{\nu}\times A_{k'}^{\nu}$,   $k\sim_\nu k'$, we have  
\Be 
\label{inp} 1-\inp xy\sim \mu. 
\Ee

Let $\vepc, c\ll 1$ be positive constants which are to be specified later.  For further reduction we cover $A_{k}^{\nu}$ and $A_{k'}^{\nu}$ by collections of  essentially disjoint cubes 
$\{Q\}$ and $\{Q'\}$ of side length  $c\vepc\mu$, respectively, so that 
\[ A_{k}^{\nu}\subset \bigcup Q, \quad  A_{k'}^{\nu}\subset \bigcup Q' .\] 
Note that $\partial_x\cD(x,y) = 2(\inp xy-1) y + 2 (y-x)$ and $\partial_y\cD(x,y) = 2(\inp xy-1) x + 2 (x-y).$ 
Since $1-\inp xy\sim \mu$ and $|x-y|\lesssim \mu$ for $(x,y)\in A_{k}^{\nu} \times A_{k'}^{\nu}$,  we have $\partial_x\cD$ and  $\partial_y\cD$ are $O(\mu)$. 
Thus, taking $c$ small enough,  we  have one of the following hold for each  $Q\times Q'$:
\begin{align}
\label{large-d}
  &|\mathcal D(x,y)|\gtrsim \vepc \mu^2, \  \  \  \forall (x,y)\in   \tilde Q\times \tilde Q',  
   \\ 
   \label{small-det}
     & |\mathcal D(x,y)|\ll \vepc  \mu^2, \  \  \  \forall (x,y)\in   \tilde Q\times \tilde Q',    
  \end{align} 
  where $ \tilde Q$ and $ \tilde Q'$ denote $c^2\vepc \mu$-neighorhoods of $Q$ and $Q'$, respectively. 

Since there are at most $O((c\vepc)^{-d})$ many $Q$ and $Q'$, the matter is reduced to  showing the following estimates for each $Q\times Q'$: 
\begin{align}
\label{goal1}
&\|\chi_Q \fP_\lambda[\psi_j]\chi_{Q'}\|_{L^{6/5,1}\to L^{6,\infty}}\lesssim \lambda^{-\frac{d+2}{6}}\mu^{-\frac13}2^{\frac{d-2}{3}j}, 
\\
\label{goal11}
&\tx\|\sum_{j\ge 0} \chi_Q \fP_\lambda[\psi_j]\chi_{Q'}\|_{L^{\frac{2d}{d+1},1}\to L^\infty}\lesssim \lambda^{-\frac12}\mu^{\frac{d-3}{4}}.
\end{align}
If  \eqref{large-d} holds, one can easily obtain the desired estimates  \eqref{goal1} and \eqref{goal11} by the same  argument as  above. Indeed, $(c)$ with $N=1/2$ in Lemma \ref{oscillatory}, Lemma \ref{PI-est}, and \eqref{easy-l2},   respectively, give 
\begin{align}
\label{10}
\|\chi_Q\fP_\lambda[\psi_j]\chi_{Q'}\|_{1\to\infty}&\lesssim 2^{\frac{d-1}{2}j}\lambda^{-\frac12}\mu^{-\frac12},
\\
\label{20}
 \|\chi_Q\fP_\lambda[\psi_j]\chi_{Q'}\|_{2\to\infty}&\lesssim 2^{-\frac j2}\lambda^{-\frac12}\mu^{\frac{d-2}{4}}, \quad 2^j\le \lambda\mu, 
 \\
 \label{22} 
 \|\chi_Q\fP_\lambda[\psi_j]\chi_{Q'}\|_{2\to 2}&\lesssim \lambda^{-\frac d2}2^{-j}.
\end{align}
Applying Lemma \ref{s-trick} to \eqref{10} and \eqref{20}, we obtain \eqref{goal11}. In the same manner, the estimate \eqref{goal1} follows by \eqref{10} and  \eqref{22}.

We now consider the case \eqref{small-det}. In this case, the estimates \eqref{20} and \eqref{22} remains valid. However, 
\eqref{10} holds only for $j$ such that $2^{-j}\ll\mu^{1/2}$ or $2^{-j}\gg\mu^{1/2}$
as can be seen by taking $N=1/2$ in $(b)$ of Lemma \ref{oscillatory}.  
Thus, repeating the same argument above, we obtain \eqref{goal1} for $2^{-j}\not\sim \mu^{\frac12}$ and 
\begin{align*}
\tx\|\sum_{2^{-j}\not\sim\mu^{1/2}} \chi_Q \fP_\lambda[\psi_j]\chi_{Q'}\|_{L^{\frac{2d}{d+1},1}\to L^\infty}\lesssim \lambda^{-\frac12}\mu^{\frac{d-3}{4}}. 
\end{align*}

Therefore, the proof of \eqref{goal1} and \eqref{goal11} is reduced to showing   
\begin{align}    
\label{goal21}
    &\|\chi_Q \fP_\lambda[\psi_j]\chi_{Q'}\|_{L^{6/5,1}\to L^{6,\infty}}\lesssim \lambda^{-\frac{d+2}{6}}\mu^{-\frac{d}{6}}, 
    \\
\label{goal22}    
    &\|\chi_Q \fP_\lambda[\psi_j]\chi_{Q'}\|_{L^{\frac{2d}{d+1},1}\to L^\infty}\lesssim \lambda^{-\frac12}\mu^{\frac{d-3}{4}}
    \end{align}
for $j$ satisfying $2^{-j}\sim \mu^{1/2}$ while assuming \eqref{small-det}.

\newcommand{\psic}{{\psi_{\mathbf c}}}

Let $\mathbf c_Q$ and $\mathbf c_{Q'}$ denote the centers of the cubes $Q$ and $Q'$, respectively. 
By $s_\mathbf c \in (0, \pi/2)$ we  denote the number such that $\cos s_\mathbf c = \inp{\mathbf c_Q}{\mathbf c_{Q'}}$, and set 
\[ \psi_{\mathbf c}(s)=\eta_\ast \Big( \frac{s-s_\mathbf c}{\sqrt{\vepc \mu}\,} \,\Big)\] 
where $\eta_\ast\in C_c^\infty((-2,2))$ such that $\eta_\ast=1$ on $[-1,1]$. Then, we decompose 
 \[\chi_Q\fP_\lambda[\psi_j]\chi_{Q'} = \chi_Q\fP_\lambda[\psi_{\mathbf c}\psi_j]\chi_{Q'} + \chi_Q\fP_\lambda[(1-\psi_{\mathbf c})\psi_j]\chi_{Q'}.\]
 
It is easy to show that $\chi_Q\fP_\lambda[(1-\psi_{\mathbf c})\psi_j]\chi_{Q'}$ has  acceptable bounds. To this end, we recall \eqref{inp}  and  note  that $\inp xy=\cos s_\mathbf c + O(c\vepc\mu)$ for $(x,y)\in Q\times Q'$. Since  $|\cD(x,y)|\ll\vepc\mu^2$ and $c, \mu\ll 1$, using \eqref{q-def}, we see that 
\[ \mathcal Q(x,y,\cos s)\gtrsim \vepc \mu^2, \quad \forall (x,y)\in Q\times Q' \] 
if $s\in \supp ((1-\psi_{\mathbf c})\psi_j)$.   Via \eqref{ph-d} this lower bound gives $|\partial_s\phi_1(x,y,s)|\gtrsim \mu$ for $(x,y,s)\in Q\times Q'\times \supp ((1-\psi_{\mathbf c})\psi_j)$.  Thus, recalling \eqref{eq:scaled-op} and applying 
 van der Corput's lemma, we get 
 $|\fP_\lambda[(1-\psi_{\mathbf c})\psi_j](x,y)|\le (\lambda\mu)^{-1}\mu^{-\frac d4}$ for  $(x,y)\in Q\times Q'$.  
 Here we also use $|\partial_s^n (\mathfrak a (1-\psi_{\mathbf c})\psi_j)| \lesssim \mu^{-(n+d)/2}$. 
 Since $\mu\ge\lambda^{-\frac23}$, by the above bound we obtain  
  \[
\|\chi_Q\fP_\lambda[(1-\psi_{\mathbf c})\psi_j]\chi_{Q'}\|_{1\to\infty}\lesssim 2^{\frac{d-1}{2}j}(\lambda\mu)^{-\frac12},  \quad 2^{-j}\sim \sqrt \mu. 
\]
Meanwhile, by  \eqref{201} and \eqref{easy-l2}   we have the estimates  \eqref{22} and \eqref{20} with 
$\psi_j$ replaced by $(1-\psi_{\mathbf c})\psi_j$ when $2^{-j}\sim \sqrt \mu$. Interpolation shows that $\chi_Q\fP_\lambda[(1-\psi_{\mathbf c})\psi_j]\chi_{Q'}$ has the acceptable bounds.

Now, the proof of Proposition \ref{hoh} reduces to proving \eqref{goal21} and \eqref{goal22} with $\psi_j$ replaced by   $\psi_{\mathbf c}\psi_j$ assuming \eqref{small-det}. 
Before proceeding further, we replace $\chi_Q$ and $\chi_{Q'}$ with smooth functions  $\tilde \chi_Q$ and $\tilde \chi_{Q'}$, respectively, which are adapted to $Q$ and $Q'$. 
More precisely, $\tilde \chi_Q$ and $\tilde \chi_{Q'}$ satisfy that $\tilde \chi_Q\chi_Q=\chi_Q$, $\tilde \chi_{Q'} \chi_{Q'}=\chi_{Q'}$, $\partial^\alpha  \tilde \chi_Q$,  $\partial^\alpha \tilde \chi_{Q'}=O(\mu^{-|\alpha|})$, and  $\tilde \chi_Q$ and  $\tilde \chi_{Q'}$ are supported in  $\tilde Q$ and $\tilde Q'$ ($c^2\vepc\mu$-neighborhoods of $Q$ and $Q'$), respectively.   Now, the desired estimates follow from the next 
proposition.

\begin{prop}\label{prop-goal2} Let $j$ satisfy $2^{-j}\sim \mu^{1/2}$ and let $Q$ and $Q'$ be the cubes of side length $c\vepc \mu$  given as above. 
Suppose \eqref{small-det} holds. Then, we have 
    \begin{align}
    &\|\tilde \chi_Q \fP_\lambda[\psi_{\mathbf c}\psi_j]\tilde \chi_{Q'}\|_{L^{6/5,1}\to L^{6,\infty}}\lesssim \lambda^{-\frac{d+2}{6}}\mu^{-\frac{d}{6}},
    \label{go1} \\
    &\|\tilde \chi_Q \fP_\lambda[\psi_{\mathbf c}\psi_j]\tilde \chi_{Q'}\|_{L^{\frac{2d}{d+1},1}\to L^\infty}\lesssim \lambda^{-\frac12}\mu^{\frac{d-3}{4}}
    \label{go2}. 
    \end{align}
\end{prop}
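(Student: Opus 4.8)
The starting point is the formula $\mathfrak P_\lambda[\psi_{\mathbf c}\psi_j](x,y)=\tfrac1{2\pi}\int\psi_{\mathbf c}(s)\psi_j(s)\,\mathfrak a(s)\,e^{i\lambda\phi_1(x,y,s)}\,ds$ from \eqref{eq:scaled-op}. Since $\mathcal Q(x,y,\tau)=(\tau-\langle x,y\rangle)^2-\mathcal D(x,y)$ with $|\mathcal D|\ll\vepc\mu^2$ on $\tilde Q\times\tilde Q'$ and $2^{-j}\sim\mu^{1/2}$, the phase $\phi_1(x,y,\cdot)$ carries a cubic (Airy-type) degeneracy on $\supp(\psi_{\mathbf c}\psi_j)$: there $\partial_s^3\phi_1\sim1$, while $\partial_s\phi_1$ and $\partial_s^2\phi_1$ are controlled by $\mathcal D(x,y)$. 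The governing frequency after the natural rescaling $s=s_{\mathbf c}+\mu^{1/2}\sigma$ is $\Lambda:=\lambda\mu^{3/2}$, which is $\ge1$ since $\mu\ge\lambda^{-2/3}$. The plan is to obtain \eqref{go2} from pointwise kernel bounds and \eqref{go1} from a curvature argument following the strategy of \cite{JLR_endpoint}.

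For \eqref{go2}, refining the stationary phase / van der Corput analysis behind Lemma \ref{oscillatory} I would show, for every $N$,
\[
|\mathfrak P_\lambda[\psi_{\mathbf c}\psi_j](x,y)|\lesssim
\begin{cases}
\lambda^{-\frac13}\mu^{-\frac d4}, & |\mathcal D(x,y)|\lesssim\mu^2\Lambda^{-\frac23},\\[2pt]
\lambda^{-\frac12}\mu^{\frac{1-d}4}\,|\mathcal D(x,y)|^{-\frac14}, & \mu^2\Lambda^{-\frac23}\lesssim\mathcal D(x,y)\lesssim\vepc\mu^2,\\[2pt]
\lambda^{-N}, & \mathcal D(x,y)\le-\mu^2\Lambda^{-\frac23},
\end{cases}
\]
the first being the Airy bound, the second coming from $|\partial_s^2\phi_1|\sim\mathcal D^{1/2}\mu^{-1/2}$ at the two non-degenerate critical points, the third from repeated integration by parts using $|\partial_s\phi_1|\gtrsim|\mathcal D|\mu^{-1}$. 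From $\partial_y\mathcal D(x,y)=2(\langle x,y\rangle x-y)$ and $1-\langle x,y\rangle\sim\mu$ on $\tilde Q\times\tilde Q'$ one gets $|\partial_y\mathcal D|\sim\mu$, hence $|\{y\in\tilde Q':|\mathcal D(x,y)|<\alpha\}|\lesssim\mu^{d-2}\alpha$. Inserting both facts into the identity $\|g\|_{L^{r,\infty}}\sim\sup_{\beta>0}\beta\,|\{|g|>\beta\}|^{1/r}$ with $r=2d/(d-1)$ and summing dyadically over the size of $|\mathcal D(x,y)|$ (the extremal contribution coming from $|\mathcal D|\sim\mu^2$) yields $\sup_x\|\mathfrak P_\lambda[\psi_{\mathbf c}\psi_j](x,\cdot)\|_{L^{2d/(d-1),\infty}}\lesssim\lambda^{-\frac12}\mu^{\frac{d-3}4}$, which is \eqref{go2}.

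The estimate \eqref{go1} is the heart of the matter and the step I expect to be hardest, since a size bound does not suffice: interpolating the $L^1\!\to\!L^\infty$ bound above (which is $\lesssim\lambda^{-1/3}\mu^{-d/4}$) with the $L^2\!\to\!L^2$ bound $\lesssim\lambda^{-d/2}\mu^{1/2}$ from \eqref{easy-l2} overshoots \eqref{go1} by a factor $\sim\lambda^{1/9}\mu^{1/6}\ge1$, so the curvature of the operator has to be exploited. The plan is, after a suitable (parabolic) rescaling $x=\mathbf c_Q+\mu X$, $y=\mathbf c_{Q'}+\mu Y$, $s=s_{\mathbf c}+\mu^{1/2}\sigma$, to decompose the $s$-interval into $\sim\Lambda^{1/3}$ subintervals on which the phase is non-degenerate — equivalently, to decompose $Q,Q'$ into the corresponding anisotropic curved plates adapted to the (maximally curved, at this scale) level sets of $\mathcal D$. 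On each piece the operator becomes, after normalization, an oscillatory integral operator at frequency $\sim\Lambda$ whose phase satisfies the Carleson--Sj\"olin/H\"ormander curvature condition in the $2$-plane spanned by the caustic direction and the $s$-direction; equivalently, modulo rescaling, it is a piece of the restriction--extension operator for $\sqrt\lambda\,\mathbb S^{d-1}$ localized at scale $1/\Lambda$, whose $L^{6/5}\!\to\!L^6$ norm is $\sim1$ because $(5/6,1/6)\in\mathcal R_3$ (Theorem \ref{rr*}, Corollary \ref{laplacian}). Summing over the plates (orthogonality on the $L^2$ side, boundedly overlapping supports on the $L^6$ side) and tracking all scaling factors should give $\|\tilde\chi_Q\mathfrak P_\lambda[\psi_{\mathbf c}\psi_j]\tilde\chi_{Q'}\|_{6/5\to6}\lesssim\lambda^{-(d+2)/6}\mu^{-d/6}$; then \eqref{go1} follows since $L^{6/5,1}\hookrightarrow L^{6/5}$ and $L^6\hookrightarrow L^{6,\infty}$. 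The genuinely delicate points are verifying the curvature hypothesis for the rescaled phase uniformly in the (small, possibly vanishing) value of $\mathcal D$ and in the plate, and arranging the plate decomposition so that the summation is loss-free; the remaining cases — $|\mathcal D|$ not tiny, the non-degenerate critical-point regime, and $\mu\sim1$ — reduce to the van der Corput and interpolation arguments already used above.
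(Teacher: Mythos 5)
Your argument for \eqref{go2} is a genuinely different route from the paper's and, after checking the numerology, it does close: you bound $\|\tilde\chi_Q \fP_\lambda[\psic\psi_j]\tilde\chi_{Q'}\|_{L^{2d/(d+1),1}\to L^\infty}$ by $\sup_x\|\fP_\lambda[\psic\psi_j](x,\cdot)\|_{L^{2d/(d-1),\infty}(\tilde Q')}$ and feed in pointwise fold/Airy kernel bounds in terms of $|\cD(x,y)|$ together with the sublevel-set estimate coming from $|\partial_y\cD|\sim\mu$; the extremal $\beta$ in the weak-norm computation gives exactly $\lambda^{-1/2}\mu^{(d-3)/4}$. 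Two caveats: the claimed $\lambda^{-N}$ decay for $\cD\le-\mu^2\Lambda^{-2/3}$ is false at the threshold (the correct statement is a bound like $\mu^{-d/4}\min(\lambda^{-1/3},\lambda^{-1/2}(|\cD|/\mu)^{-1/4})(1+\lambda(|\cD|/\mu)^{3/2})^{-N}$, which still suffices), and the uniform two-coalescing-critical-point kernel bounds are not contained in Lemma \ref{oscillatory} as stated, so they must be proved (e.g.\ by the very dyadic decomposition in $|s-S_c(x,y)|$ that the paper uses). The paper instead gets \eqref{go2} from the piecewise bounds \eqref{goal3_1} and \eqref{goal3_2}, the latter requiring integration by parts in $y$; your version avoids that and is arguably more elementary.

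For \eqref{go1}, however, there is a genuine gap, and it sits exactly where you flag ``the genuinely delicate points.'' First, the curvature hypothesis you would need to verify is not a detail: in the paper it is Lemma \ref{detPhi}, the computation showing $\det\partial_x\partial_y^\intercal\Phi\sim\mu^{-d/2}$ after the $(x,y)$-dependent change of variables $s\mapsto 2^{-l}s+S_c(x,y)$, where the naive main term is corrected by terms involving $\partial_xS_c$, $\partial_yS_c$ and the nondegeneracy is rescued only through the special structure of $v=y\cos S_c-x$, $w=x\cos S_c-y$ and $v^\intercal w=0$; nothing in your outline substitutes for this. Second, the reduction ``each plate is, modulo rescaling, a piece of the restriction--extension operator at scale $1/\Lambda$ with $L^{6/5}\to L^6$ norm $\sim1$'' is not a citable black box: Theorem \ref{rr*} concerns the constant-coefficient operator, and off-duality $L^{6/5}\to L^6$ bounds for variable-coefficient Carleson--Sj\"olin phases are precisely the kind of negative-order Bochner--Riesz-type estimates that do not follow from the standard $L^2$-based theory; the paper deliberately reduces everything to the nondegenerate-Hessian $L^2$ lemma (Lemma \ref{nonzero-det}), giving \eqref{goal3_3}, plus van der Corput for \eqref{goal3_1}. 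Third, the loss-free summation over $\sim\Lambda^{1/3}$ plates is doubtful as described: the plates are neighborhoods of level sets of $\cD(x,y)$, hence not product sets, so you cannot invoke disjointness of inputs and outputs separately to sum $L^{6/5}\to L^6$ norms without loss; and at the exponent pair $(5/6,1/6)$ the dyadic bounds balance exactly ($2^{l/2}$ growth against $2^{-l}$ decay), which is precisely why the paper only obtains the restricted weak type \eqref{go1} via Bourgain's summation trick (Lemma \ref{s-trick}$(c)$) rather than a strong $L^{6/5}\to L^6$ bound. Your plan asserts a strictly stronger strong-type estimate with a loss-free summation, and as it stands neither the key nondegeneracy input nor the summation mechanism is supplied.
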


We make some observations, which are to be useful in what follows: 
\begin{align}
\label{angle2} |\sin \theta (x,y)| \sim \mu,  \quad \forall (x,y)\in \tilde Q\times \tilde Q', 
\\
\label{x-y}
|x-y|\sim \mu, 
\quad \forall (x,y)\in \tilde Q\times \tilde Q'.
\end{align}
The first \eqref{angle2} follows by \eqref{angle} and  \eqref{small-det}  since $(1-|x|^2)(1-|y|^2)\sim \mu^2$ if $x,y\in A_\mu$.  To see the second \eqref{x-y}, 
note that $
|x-y|^2 = (1-\inp xy)^2 - \cD(x,y)$. Thus,  by \eqref{small-det} and \eqref{inp} we have  \eqref{x-y}. 

\subsection{2nd-order derivative of $\phi_1$}
To prove Proposition \ref{prop-goal2}, we can no longer rely only on  the first order derivative of $\phi_1$.
When the discriminant $\cD(x,y)$ vanishes, the equation $\mathcal Q(x,y,\cdot) = 0$ has a zero of order 2. 
Furthermore, the stationary point of $\phi_1$ and the zero of $\partial_s^2\phi_1$ converge to each orther  as $\cD$ approaches to zero.
Thus, van der corput's lemma gives a decay estimate of  $O(\lambda^{-1/3})$ when $\cD(x,y) = 0$. 
However, such a bound is not sufficient for us to obtain the sharp bound since  we need $L^1$--$L^\infty$ bound of $O(\lambda^{-1/2})$  to make our argument work.
To overcome this problem, we break the integral dyadically away from the zero of $\partial_s^2\phi_1$. Before doing so, we need to take a close look at $\partial_s^2\phi_1$. 

A computation shows 
\begin{equation}
\label{ph-dd}
 \partial_s^2 \phi_1(x,y,s) = - \frac{\mathcal R(x,y,\cos s)}{ \sin^3 s},
 \end{equation}
where 
\[ \mathcal R(x,y,\tau) =  \inp xy \tau^2-(|x|^2+|y|^2)\tau+ \inp xy.\]
From \eqref{angle2} we note that  $x\neq y$ and $x\neq -y$. Thus, $\mathcal R(x,y, \cdot) $ has two distinct roots 
\Be
\label{roots}  \tau^\pm(x,y) =\frac{|x|^2+|y|^2\pm |x+y||x-y|}{2\inp xy}.
\Ee
It is easy to see  $\tau^+(x,y)> 1>\tau^-(x,y)$, and hence the role of $\tau^-$ is more important.  $\partial_s^2\phi_1(x,y,\cdot)$ has a unique zero on $(0,\pi/2)$ for $(x,y)\in \tilde Q\times \tilde Q'$, which we denote by $S_c(x,y)$. 
That is to say,  
\[\cos S_c(x,y) = \tau^-(x,y).\]
As clear from \eqref{inp} and \eqref{roots}, $S_c$ is smooth on $\tilde Q\times \tilde Q'$.
Using $|x+y|^2-|x-y|^2=4\inp xy$, we also have
\begin{equation}
\label{sc}
1-\cos S_c(x,y)
= {2|x- y|}/{(|x+y|+|x-y|)}.
\end{equation} 
Since $|x+y|\sim 1$ for $(x,y)\in \tilde Q\times \tilde Q'$,  $1-\cos S_c(x,y) \sim |x-y|\sim \mu$ and $S_c(x,y)\sim \mu^{1/2}$.

Let $s_*(x,y)\in (0, \pi/2)$ denote the point such that  $ \cos s_*(x,y) = \inp xy.$  
As mentioned before, $S_c(x,y)$ converges to $s_*(x,y)$ as $\cD(x,y)\to 0$.
Indeed, note that $\mathcal R(x,y,\tau) = (\tau - \inp xy)(\inp xy \tau - 1) + \cD(x,y)\tau$. Since 
  $ \mathcal R(x,y,\cos S_c)=0$,  this gives 
\begin{align*}
    \cos S_c - \inp xy =  \mathcal D(x,y)\cos S_c\, (1-\inp xy \cos S_c )^{-1}.
\end{align*}
Hereafter, we occasionally drop the variables $x,y$ to simplify the notation as long as no ambiguity arises. 
Since $1-\cos S_c\sim \mu$  for $(x,y)\in \tilde Q\times \tilde Q'$, by \eqref{inp} we have $1-\inp xy \cos S_c \gtrsim \mu$.
Hence,  it follows that  
\Be\label{distscxy}
|\cos S_c - \inp xy|\lesssim |\cD(x,y)|\mu^{-1} \ll \vepc \mu
\Ee
for $(x,y)\in \tilde Q\times \tilde Q'$.  From \eqref{inp}  we see  $s_\ast(x,y) \sim \mu^{1/2}$. Thus,  it follows that $|S_c(x,y) - s_*(x,y)|\lesssim |\cD(x,y)|\mu^{-3/2}$ for 
$(x,y)\in \tilde Q\times \tilde Q'$.

\subsubsection*{Decomposition away from $S_c$} 
We now break  
\[
  \tilde\chi_Q\fP_\lambda[\psic\psi_j]\tilde\chi_{Q'}= \sum_l\fP^*_l := \sum_l \tilde\chi_Q\fP_\lambda[\psi_j\psic{\psi}(2^l|\cdot-S_c|)]\tilde\chi_{Q'}.
\]
Note that $\fP^*_l \neq 0$ only if $2^{-l}\lesssim (\vepc\mu)^{1/2}$. 
To handle $\fP^*_l$, changing variables 
\[s\to S_c^l(x,y,s) := 2^{-l}s+S_c(x,y),\] 
we write
\Be\label{fpl}
\fP^*_l(x,y) = \mu^{-\frac d4}2^{-l}\int e^{i\lambda\Phi(x,y,s)}A(x,y,s) ds , 
\Ee
where
\begin{align*}
    &\Phi(x,y,s) = \phi_1(x,y,S_c^l(x,y,s)),\\
    &A(x,y,s) = \tilde\chi_Q(x)\tilde\chi_{Q'}(y)(\mu^{\frac d4}\mathfrak a\psic\psi_j)(S_c^l(x,y,s)){\psi}(|s|).
\end{align*}

Note that $\partial_s^2\Phi = 2^{-2l}(\partial_s^2\phi_1)(x,y,S_c^l)$. Since $\inp xy\sim 1$ and ${\sin S_c^l}\sim \mu^{1/2}$ on $\supp A$,
  \eqref{ph-dd} and  $\mathcal R(x,y,\cos s)=\inp xy (\cos S_c - \cos s) (\tau^+-\cos s)$ give
\[
|\partial_s^2\phi_1(x,y,S_c^l)|\sim   \mu^{-\frac32} {|\cos S_c - \cos S_c^l||\tau^+ - \cos S_c^l|}. 
\]
Note that $1- \cos S_c^l\sim \mu$ and $|\cos S_c^l - \cos S_c|\sim 2^{-l}\mu^{1/2}$ on $\supp  A$. 
Since 
$
\tau^+(x,y) - 1 = {(|x+y| + |x-y|)|x-y|}/{2\inp xy}\sim \mu
$
(see \eqref{x-y}), 
we also have  $ \tau^+-\cos S_c^l\sim \mu$ on  $\supp  A$.  Thus, we have
\Be\label{2ndbd}
|\partial_s^2\phi_1(x,y,S_c^l)|\sim  2^{-l} 
\Ee
on $\supp A$. Note $\partial_s^n A(x,y,s) = O(1)$ for $n\in \N_0$. By van der Corput's lemma  we get 
\Be \|\fP^*_l\|_{1\to \infty}\lesssim \lambda^{-\frac12}\mu^{-\frac d4}2^{\frac l2}. \label{goal3_1}\Ee

We also have the following estimates:

\begin{prop} Let $2^{-l}\lesssim (\vepc\mu)^{1/2}$. Then, the following estimates hold: 
\begin{align}
     &\|\fP^*_l\|_{2\to \infty}\lesssim \lambda^{-\frac12}\mu^{\frac{d-2}{4}}2^{-\frac l2}, \label{goal3_2}\\
    &\|\fP^*_l\|_{2\to 2}\lesssim \lambda^{-\frac d2}2^{-l}.\label{goal3_3}
\end{align}
\end{prop}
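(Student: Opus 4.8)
The two estimates \eqref{goal3_2} and \eqref{goal3_3} should be provable in parallel with the $L^1\to L^\infty$ bound \eqref{goal3_1}, using the kernel representation \eqref{fpl} together with the key second-derivative lower bound \eqref{2ndbd} and the fact that the amplitude $A$ and all its $s$-derivatives are $O(1)$. The $L^2\to L^2$ bound \eqref{goal3_3} is essentially trivial: since $\fP_l^*$ is a single dyadic piece of $\tilde\chi_Q\fP_\lambda[\psi_j]\tilde\chi_{Q'}$, and $\psi_j\psic\psi(2^l|\cdot-S_c|)$ is supported in an interval of length $O(2^{-l})$, we have by \eqref{easy-l2} and \eqref{eq:norm-scaled} (after accounting for the rescaling factor $\lambda^{-d/2}$ implicit in $\fP_\lambda$) that the $L^2\to L^2$ norm is $\lesssim \lambda^{-d/2}\|\psi_j\psic\psi(2^l|\cdot-S_c|)\|_1\lesssim \lambda^{-d/2}2^{-l}$, which is \eqref{goal3_3}. (Strictly, one uses that $\psi_j\psic\psi(2^l|\cdot-S_c(x,y)|)$ as a function of $s$ — with $x,y$ frozen — has $L^1$ norm $O(2^{-l})$; since the operator norm on $L^2$ of $\fP_\lambda[\eta]$ depends on $\eta$ only through its $L^1$ norm via \eqref{easy-l2}, but here $\eta$ depends on $x,y$, so one should instead run the $TT^*$ computation directly or note that $\fP_l^*$ is a dyadic Littlewood–Paley-type piece and invoke almost-orthogonality in $l$ against \eqref{22}.)

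\textbf{The $L^2\to L^\infty$ bound.} For \eqref{goal3_2} the natural route is the duality identity \eqref{2-infty}: $\|\fP_l^*\|_{2\to\infty}=\sup_x\|\fP_l^*(x,\cdot)\|_{L^2_y}$. Fix $x\in\tilde Q$. From \eqref{fpl}, $\fP_l^*(x,y)=\mu^{-d/4}2^{-l}\int e^{i\lambda\Phi(x,y,s)}A(x,y,s)\,ds$, and by \eqref{2ndbd} together with van der Corput (applied in $s$ with $x,y$ frozen) we already have the pointwise bound $|\fP_l^*(x,y)|\lesssim \lambda^{-1/2}\mu^{-d/4}2^{l/2}$. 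Integrating the square of this over $y\in\tilde Q'$, which has measure $O(\mu^d)$, gives $\|\fP_l^*(x,\cdot)\|_{L^2_y}^2\lesssim \lambda^{-1}\mu^{-d/2}2^{l}\cdot\mu^d=\lambda^{-1}\mu^{d/2}2^l$ — but this is worse than claimed by a factor $2^l$. So the crude pointwise bound is not enough; one must instead exploit oscillation in $y$. The efficient way is to write $\|\fP_l^*(x,\cdot)\|_{L^2_y}^2 = \int\fP_l^*(x,y)\overline{\fP_l^*(x,y)}\,dy$, expand into a double integral in $(s,s')$, and perform the $y$-integration first: this produces an oscillatory integral in $y$ with phase $\lambda(\Phi(x,y,s)-\Phi(x,y,s'))$. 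One then needs a lower bound on $|\nabla_y(\Phi(x,y,s)-\Phi(x,y,s'))|$ of size $\gtrsim \lambda 2^{-l}\mu^{1/2}|s-s'|$ (this is where the geometry of $S_c$ and \eqref{angle2}, \eqref{x-y} enter), so that non-stationary phase in $y$ gains a factor $(\lambda 2^{-l}\mu^{1/2}|s-s'|)^{-N}$; combined with $|\tilde Q'|\sim\mu^d$ and the $s,s'$ integrations over intervals of length $1$, this yields the claimed $\lambda^{-1/2}\mu^{(d-2)/4}2^{-l/2}$. Alternatively — and this is probably cleaner — one interpolates: \eqref{goal3_2} follows from \eqref{goal3_1} and \eqref{goal3_3} only in a limited range, so instead one should derive \eqref{goal3_2} directly from \eqref{201} (equivalently \eqref{kl2infty}) restricted to the thin annulus, applied after observing that $\fP_l^*$ is a piece of $\chi_\mu\fP_\lambda[\psi_j]$ with $2^{-j}\sim\mu^{1/2}$, so that $\|\fP_l^*\|_{2\to\infty}\le\|\chi_Q\fP_\lambda[\psi_j\psic\psi(2^l|\cdot-S_c|)]\|_{2\to\infty}$ and one runs the spectral-sum argument of Lemma \ref{PI-est} with the sharper localization giving the extra $2^{-l/2}$.

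\textbf{Main obstacle.} The delicate point is exactly the extra gain $2^{-l/2}$ in \eqref{goal3_2} beyond what the pointwise kernel bound supplies: one is claiming that $L^2\to L^\infty$ sees the \emph{width} $2^{-l}$ of the $s$-support, not just its location. I expect the cleanest proof to go through the orthogonality-in-$\lambda'$ machinery of Lemma \ref{PI-est}: write $\fP_l^*$ via \eqref{spectral-proj} as a weighted sum of $\chi_Q\Pi_{\lambda'}$'s (rescaled), where the weight now comes from $\widehat{\psi_j\psic\psi(2^l|\cdot-S_c|)}$ — but this Fourier transform depends on $(x,y)$ through $S_c(x,y)$, which complicates the orthogonality argument, since $\|\fP_l^*(x,\cdot)\|_2^2$ is no longer a clean sum $\sum_{\lambda'}|\widehat\eta(\cdots)|^2\Pi_{\lambda'}(x,x)$. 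Resolving this — either by absorbing the $x$-dependence into a harmless modulation, or by bounding the $l$-piece by a fixed-width multiplier piece and summing — is the one genuinely non-routine step; once past it, \eqref{goal3_2} and \eqref{goal3_3} assemble from \eqref{2ndbd}, van der Corput, and interpolation exactly as in the $2^{-\nu}\gg\mu$ case treated above.
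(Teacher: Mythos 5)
Both halves of your proposal stop short of the actual content of this proposition, which in each case is precisely the $(x,y)$-dependence of $S_c$. For \eqref{goal3_3}, the ``essentially trivial'' argument via \eqref{easy-l2} does not apply, as you yourself concede: the cutoff $\psi(2^l|\cdot-S_c(x,y)|)$ is not a fixed function of the time variable, so $\fP^*_l$ is not of the form $\fP_\lambda[\eta]$ for a single $\eta$, and neither \eqref{easy-l2} nor a naive Minkowski-in-$s$ reduction is available. The fallbacks you list (``run the $TT^*$ computation directly'', ``almost-orthogonality in $l$ against \eqref{22}'') are not developed and are not routine; in particular \eqref{22} bounds the sum over $l$, and the single-piece bound $\lambda^{-d/2}2^{-l}$ with $2^{-l}\ll 2^{-j}\sim\mu^{1/2}$ is strictly stronger, so it cannot be extracted from \eqref{22} without a genuine oscillation input. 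The paper's proof uses the rescaled form \eqref{fpl}: Minkowski in the new variable $s$ (the factor $2^{-l}$ comes from the change of variables $s\mapsto S_c^l$), and then a fixed-$s$ bound $\|I_s^\lambda(\Phi,A)\|_{2\to2}\lesssim\lambda^{-d/2}\mu^{d/4}$ via Lemma \ref{nonzero-det} after rescaling $(x,y)\mapsto(\mu x,\mu y)+(\mathbf c_Q,\mathbf c_{Q'})$. The heart of that step is Lemma \ref{detPhi}: one must show $\det\partial_x\partial_y^\intercal\Phi\sim\mu^{-d/2}$ even though $\Phi(x,y,s)=\phi_1(x,y,2^{-l}s+S_c(x,y))$ produces terms involving $\partial_xS_c,\ \partial_y S_c,\ \partial_x\partial_y^\intercal S_c$ of size up to $\mu^{-3/2}$; this is handled by the explicit computation with $v=y\cos S_c-x$, $w=x\cos S_c-y$, using $v^\intercal w=0$ and $|v|,|w|\sim\mu$. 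Nothing in your sketch addresses this, and it is the genuinely nontrivial point.

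For \eqref{goal3_2}, your first route is indeed the paper's: dualize via \eqref{2-infty}, expand the square, and integrate in $y$ first. But the entire proof is the lower bound on the $y$-gradient of the difference phase, which you only assert. The paper proves $|\partial_y\Psi(x,y,s,t)|\gtrsim2^{-l}|s-t|$ (this is \eqref{lower-deriv}) by splitting $\partial_y\Psi=E+F$, showing $E=O(\vepc^{1/2}2^{-l}|s-t|)$ via \eqref{2ndbd} and \eqref{partialest} (this is where the smallness of $\vepc$ is used), and obtaining $|F|\gtrsim 2^{-l}|s-t|$ from the explicit form $F=\mathbf a y-\mathbf b(x-y)$ with $|\mathbf a|\sim2^{-l}|s-t|$, $|\mathbf b|\sim2^{-l}\mu^{-1}|s-t|$, together with the transversality \eqref{angleab}. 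Your stated size ``$\gtrsim\lambda2^{-l}\mu^{1/2}|s-s'|$'' is both dimensionally confused (the $\lambda$ belongs to the full phase, not to $\Phi$) and, more importantly, too weak: with the correct integration-by-parts bookkeeping (each step in $y$ costs $\mu^{-1}$ from the amplitude and phase derivatives, so the decay parameter is $\lambda\mu$ times the gradient lower bound), a bound of size $2^{-l}\mu^{1/2}|s-t|$ only yields $\|\fP^*_l(x,\cdot)\|_{L^2_y}^2\lesssim2^{-l}\lambda^{-1}\mu^{(d-3)/2}$, which misses \eqref{Pi4} by $\mu^{-1/2}$; one genuinely needs the stronger bound without the $\mu^{1/2}$. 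Finally, your ``cleaner'' alternative through the spectral-sum machinery of Lemma \ref{PI-est} founders on exactly the obstacle you name — the cutoff's dependence on $S_c(x,y)$ — and you do not resolve it; the modulation $e^{-i\tau 2^l S_c(x,y)}$ such an expansion introduces is not harmless, since it perturbs the $y$-gradient and the mixed Hessian by terms involving derivatives of $S_c$, which is the very issue the paper's Lemmas handle.
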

Using these estimates, one can easily verify the desired estimates \eqref{go1} and \eqref{go2}. Indeed, 
applying Lemma \ref{s-trick} to the estimates \eqref{goal3_1} and \eqref{goal3_3}, we get the restricted weak type estimate \eqref{go1}.
The restricted type estimate in \eqref{go2}  can be obtained similarly using  \eqref{goal3_1} and \eqref{goal3_2} when $d=2$. If $d\ge3$, interpolating \eqref{goal3_1} and \eqref{goal3_2} and then taking sum over $l$ give the desired estimate, a strong type estimate for $(p,q)=(\frac{2d}{d+1},\infty)$.

To complete the proof, it remains to prove the estimates \eqref{goal3_2} and \eqref{goal3_3}. 

\subsection{Proof of \eqref{goal3_2}} We begin with recalling the following bounds on derivatives of $\Phi$ and $A$, which was proved in \cite{JLR_endpoint}.
In fact,  the estimate \eqref{Aest} below was shown only  for $\beta=0$ in \cite[Lemma 4.4]{JLR_endpoint}. However, one can easily show \eqref{Aest}  using \eqref{partialest} 
and following the argument there.

\begin{lem}\cite[Lemma 4.4, 4.5]{JLR_endpoint}
Let $2^{-l}\le (\vepc\mu)^{\frac12}$ and $s\in \supp \psi(|\cdot|)$. If $(x,y)\in \tilde Q\times \tilde Q'$, then for any $\alpha,\beta\in \N_0^d$
\begin{align}\label{partialest}
    |\partial_x^\alpha\partial_y^\beta S_c(x,y,s)|&\lesssim \mu^{\frac12-|\alpha|-|\beta|}, 
    \\
    \label{Aest} |  \partial_x^\alpha\partial_y^\beta  A(x,y,s)| &\lesssim \mu^{-|\alpha|-|\beta|},
     \\
    \label{phiest} |\partial_x^\alpha\partial_y^\beta \Phi(x,y,s)|&\lesssim \mu^{\frac32-|\alpha|-|\beta|}.
\end{align}
\end{lem}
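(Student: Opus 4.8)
These three bounds are, up to notation, Lemmas 4.4 and 4.5 of \cite{JLR_endpoint}; the only genuinely new point is that \eqref{Aest} is stated there for $\beta=0$ only, and it will be clear that the mixed case comes out of the same scheme. The engine is \eqref{partialest}: on $\tilde Q\times \tilde Q'$ the function $S_c$ is smooth in $(x,y)$ and each derivative costs only a controlled power of $\mu$. To prove \eqref{partialest} I would differentiate the closed form \eqref{sc}, $1-\cos S_c=2|x-y|/(|x+y|+|x-y|)$, rather than the implicit equation $\mathcal R(x,y,\cos S_c)=0$, since \eqref{sc} already exhibits the needed cancellation. On $\tilde Q\times\tilde Q'$ one has $|x-y|\sim\mu$ and $|x+y|\sim1$ by \eqref{x-y}, so $\partial_{x,y}^\gamma|x-y|=O(\mu^{1-|\gamma|})$ and $\partial_{x,y}^\gamma|x+y|=O(1)$ for $|\gamma|\ge1$, whence $\partial_{x,y}^\gamma(1-\cos S_c)=O(\mu^{1-|\gamma|})$ by the quotient rule (the dominant term has all derivatives on the $|x-y|$ in the numerator). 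Since $\cos S_c=1-O(\mu)$, the map $\tau\mapsto\arccos\tau$ has $k$-th derivative of size $O(\mu^{-(2k-1)/2})$ at $\tau=\cos S_c$ (because $1-\cos^2S_c=\sin^2S_c\sim\mu$), and Fa\`a di Bruno assembles these into $\partial_{x,y}^\gamma S_c=\sum_k O\big(\mu^{-(2k-1)/2}\big)O\big(\mu^{\,k-|\gamma|}\big)=O(\mu^{1/2-|\gamma|})$, the exponent being independent of the number $k$ of blocks; an induction on $|\gamma|$ makes this rigorous, and treating $(x,y)$ as one variable handles the mixed derivatives.

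For \eqref{Aest}, write $A=\tilde\chi_Q(x)\tilde\chi_{Q'}(y)\,G\big(S_c^l(x,y,s)\big)\,\psi(|s|)$ with $G=\mu^{d/4}\mathfrak a\,\psi_{\mathbf c}\,\psi_j$. By construction $\partial_{x,y}^\gamma\tilde\chi_Q,\ \partial_{x,y}^\gamma\tilde\chi_{Q'}=O(\mu^{-|\gamma|})$. On $\supp A$ the argument $t=S_c^l$ satisfies $t\sim\mu^{1/2}$, and there $\mu^{d/4}\mathfrak a(t)=O(1)$ with $\tfrac{d^k}{dt^k}\big(\mu^{d/4}\mathfrak a(t)\big)=O((\sin t)^{-k})=O(\mu^{-k/2})$; likewise $\psi_{\mathbf c}(t)=\eta_\ast\big((t-s_{\mathbf c})/\sqrt{\vepc\mu}\big)$ has $\tfrac{d^k}{dt^k}=O((\vepc\mu)^{-k/2})=O(\mu^{-k/2})$, and $\psi_j(t)=\psi(2^jt)\eta_\circ(t)$ with $2^j\sim\mu^{-1/2}$ has $\tfrac{d^k}{dt^k}=O(\mu^{-k/2})$; hence $|G^{(k)}(t)|\lesssim\mu^{-k/2}$ on the relevant range. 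Since $S_c^l(x,y,s)=2^{-l}s+S_c(x,y)$ depends on $(x,y)$ only through $S_c$, Fa\`a di Bruno gives $\partial_x^\alpha\partial_y^\beta\big[G(S_c^l)\big]=\sum_k G^{(k)}(S_c^l)\prod_i\partial_{x,y}^{\gamma_i}S_c$ with $\sum_i|\gamma_i|=|\alpha|+|\beta|$, and by \eqref{partialest} each term is $O(\mu^{-k/2})\prod_iO(\mu^{1/2-|\gamma_i|})=O(\mu^{-|\alpha|-|\beta|})$; Leibniz over the three factors yields \eqref{Aest}. The argument is insensitive to whether a derivative falls on $x$ or $y$, which is why the case $\beta\neq0$ is no harder than $\beta=0$.

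For \eqref{phiest}, expand $\phi_1(x,y,\cdot)$ in a Taylor polynomial about $S_c$ and use $\partial_t^2\phi_1(x,y,S_c)=0$ to write $\Phi(x,y,s)=\phi_1(x,y,S_c)+2^{-l}s\,\partial_t\phi_1(x,y,S_c)+R(x,y,s)$ with $R=\int_{S_c}^{S_c^l}(S_c^l-t)\,\partial_t^2\phi_1(x,y,t)\,dt$. The first two terms are only $O(\mu^{1/2})$ at face value, so the point is to display the extra factor $\mu$. For the linear term, \eqref{distscxy} gives $|\cos S_c-\langle x,y\rangle|\lesssim|\mathcal D|\mu^{-1}$, hence $\mathcal Q(x,y,\cos S_c)=(\cos S_c-\langle x,y\rangle)^2-\mathcal D=-\mathcal D+O(|\mathcal D|^2\mu^{-2})=O(\vepc\mu^2)$ by \eqref{small-det}, so $\partial_t\phi_1(x,y,S_c)=-\mathcal Q(x,y,\cos S_c)/(2\sin^2S_c)=O(\vepc\mu)$, and since $2^{-l}\lesssim(\vepc\mu)^{1/2}$ the linear term is $O(\mu^{3/2})$; one checks $\partial_{x,y}^\gamma[\partial_t\phi_1(x,y,S_c)]=O(\mu^{1-|\gamma|})$ from \eqref{ph-d}, \eqref{distscxy}, \eqref{partialest} and $|\mathcal D|\ll\vepc\mu^2$, which gives the corresponding bound for all derivatives of the linear term. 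For the constant term use the identity $\phi_1(x,y,S_c)=\tfrac{S_c}{2}-\tfrac{|x|^2+|y|^2}{2}\tan\tfrac{S_c}{2}+\tfrac{|x-y|^2}{2\sin S_c}$ (from $\langle x,y\rangle=\tfrac{|x|^2+|y|^2}{2}-\tfrac{|x-y|^2}{2}$ and $\cot\theta-\csc\theta=-\tan\tfrac{\theta}{2}$), and $\tan\tfrac{S_c}{2}=\tfrac{S_c}{2}+g_1(S_c)$ with $g_1$ smooth and $g_1(\theta)=O(\theta^3)$, to obtain
\[
\phi_1(x,y,S_c)=\tfrac{S_c}{2}\Big(1-\tfrac{|x|^2+|y|^2}{2}\Big)-\tfrac{|x|^2+|y|^2}{2}\,g_1(S_c)+\tfrac{|x-y|^2}{2\sin S_c}.
\]
Each summand, with all its $(x,y)$-derivatives, is $O(\mu^{3/2-|\gamma|})$, using $1-\tfrac{|x|^2+|y|^2}{2}\sim\mu$, $|x-y|^2\sim\mu^2$, $g_1(S_c)\sim\mu^{3/2}$, $\sin S_c\sim\mu^{1/2}$, \eqref{partialest}, and $\mu\le1$ to absorb lower-order gains. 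Finally $R$ and its $(x,y)$-derivatives are handled by $|\partial_t^2\phi_1(x,y,t)|\lesssim2^{-l}$ for $|t-S_c|\lesssim2^{-l}$ (this is \eqref{2ndbd}) and its differentiated versions, giving $|\partial_{x,y}^\gamma R|\lesssim2^{-3l}\mu^{-|\gamma|}\lesssim\mu^{3/2-|\gamma|}$.

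The delicate step is \eqref{phiest}: the bound $O(\mu^{3/2-|\alpha|-|\beta|})$, rather than the naive $O(\mu^{1/2-|\alpha|-|\beta|})$, hinges on the vanishing of $\partial_t^2\phi_1$ at $S_c$ (which removes what would otherwise be an $O(2^{-2l})=O(\mu)$ quadratic term), on the two structural facts above---the closed form for $\phi_1(x,y,S_c)$ and the relation $\mathcal Q(x,y,\cos S_c)=-\mathcal D(x,y)+O(|\mathcal D|^2\mu^{-2})$---and on carrying both through $(x,y)$-differentiation while tracking the competing powers of $\mu$; the bookkeeping for the remainder $R$ is the other point requiring care. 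Everything else is routine Leibniz/Fa\`a di Bruno estimation.
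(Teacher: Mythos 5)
The paper itself does not reprove this lemma: it cites \cite{JLR_endpoint} (Lemmas 4.4 and 4.5) and only remarks that the mixed-derivative case of \eqref{Aest} follows from \eqref{partialest} by the argument given there. Your proposal is instead a self-contained derivation, and its architecture is sound. Differentiating the closed form \eqref{sc} (using $|x-y|\sim\mu$, $|x+y|\sim 1$ from \eqref{x-y}) and composing with $\arccos$ via Fa\`a di Bruno does give \eqref{partialest}, with the exponent $\tfrac12-|\alpha|-|\beta|$ independent of the number of blocks, exactly as you compute. For \eqref{Aest}, the observation that each $t$-derivative of $\mu^{d/4}\mathfrak a\,\psi_{\mathbf c}\,\psi_j$ costs $O(\mu^{-1/2})$ on the range $t\sim\mu^{1/2}$ (since $2^{-j}\sim\mu^{1/2}$ and $\psi_{\mathbf c}$ lives at scale $(\vepc\mu)^{1/2}$), combined with \eqref{partialest} and Leibniz, is precisely why the case $\beta\neq0$ is no harder than $\beta=0$ — which is the point the paper glosses over. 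For \eqref{phiest}, the second-order Taylor expansion about $S_c$, the identity $\phi_1(x,y,t)=\tfrac t2-\tfrac{|x|^2+|y|^2}{2}\tan\tfrac t2+\tfrac{|x-y|^2}{2\sin t}$, and the relation $\mathcal Q(x,y,\cos S_c)=-\mathcal D(x,y)+O(|\mathcal D|^2\mu^{-2})$ from \eqref{distscxy} correctly account for the upgrade from the naive $\mu^{1/2-|\alpha|-|\beta|}$ to $\mu^{3/2-|\alpha|-|\beta|}$; I checked the derivative bookkeeping for all three summands of the constant term and for the linear term, and it closes.

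One step needs a small repair: the bound $|\partial_{x,y}^\gamma R|\lesssim 2^{-3l}\mu^{-|\gamma|}$ does not follow from ``differentiated versions of \eqref{2ndbd}'' alone. Differentiating the composition $\partial_t^2\phi_1\big(x,y,S_c(x,y)+2^{-l}\sigma\big)$ in $(x,y)$ produces the chain-rule term $\partial_t^3\phi_1\cdot\partial_{x,y}S_c=O(1)\cdot O(\mu^{-1/2})$, which exceeds $2^{-l}\mu^{-1}$ whenever $2^{-l}\ll\mu^{1/2}$; the naive estimate only gives $|\partial_{x,y}^\gamma[\partial_t^2\phi_1(\cdot)]|\lesssim\mu^{1/2-|\gamma|}$, hence $|\partial_{x,y}^\gamma R|\lesssim 2^{-2l}\mu^{1/2-|\gamma|}$. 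This weaker bound still suffices, since $2^{-2l}\le\vepc\mu$ yields $|\partial_{x,y}^\gamma R|\lesssim\vepc\,\mu^{3/2-|\gamma|}$; alternatively, to obtain the bound as you stated it, use that $\partial_t^2\phi_1\big(x,y,S_c(x,y)\big)\equiv0$ identically in $(x,y)$, so the composition and all its $(x,y)$-derivatives vanish at $\sigma=0$, and write it as $2^{-l}\int_0^\sigma\partial_t^3\phi_1\big(x,y,S_c+2^{-l}\rho\big)\,d\rho$ before differentiating, with $\partial_{x,y}^\gamma[\partial_t^3\phi_1(\cdot)]=O(\mu^{-|\gamma|})$. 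Either repair is one line; the rest of your argument stands.
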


By \eqref{2-infty}, to prove \eqref{goal3_2} it is sufficient to show 
\Be  
\label{Pi4} \int  |\fP^\ast_l(x,y)|^2 dy\lesssim 2^{-l}\lambda^{-1} \mu^{\frac{d-2}{2}}.
\Ee
We write   
\Be
\label{Pi3}
\int  |\fP^\ast_l(x,y)|^2 dy= 2^{-2l} \mu^{-\frac d2}\iint\bigg(\int  \bar A(x,y,s,t) e^{i\lambda\Psi(x,y,s,t)}dy\bigg) dtds,
\Ee
where 
\begin{align}
 \Psi(x,y,s,t)&=\cP(x,y,\sxyls )-\cP(x,y,\sxylt)\\
\bar A(x,y,s,t)&=  A(x,y,s) \overline{A(x,y,t)}
\end{align} 

We now claim that 
\Be 
\label{lower-deriv}
|\partial_y \Psi(x,y,s,t)|\gtrsim 2^{-l} |s-t|, \quad \forall (x,y)\in  \tilde Q\times \tilde Q'
\Ee
holds if we take  $\vepc$ sufficiently small.  
From \eqref{Aest} and \eqref{phiest}, it follows that $\partial_y^\alpha \bar A=O(\mu^{-|\alpha|})$ and $
\partial_y^\alpha \Psi=O(\mu^{-|\alpha|+1}2^{-l} |s-t| )
$
for $(x,y)\in \tilde Q\times \tilde Q'$.  
Thus, by using \eqref{lower-deriv},  routine integration by parts  gives \footnote{One may rescale, that is to say, $y\to \mu y+ \mathbf c_{Q'}$ where $\mathbf c_{Q'}$ is the center of $Q'$. } 
\[
\Big| \int  \bar A(x,y,s,t) e^{i\lambda\Psi(x,y,s,t)}dy \Big| \lesssim  \mu^d (1+\lambda \mu 2^{-l}|s-t|)^{-N} . 
\]
Combining this with \eqref{Pi3} and integrating in $s$,  we get \eqref{Pi4}.

It remains to show \eqref{lower-deriv}.  We write  
\[\partial_y \Psi(x,y,s,t)=E+F,\] where 
\begin{align*}
E&=
\big(\partial_s \cP( x,y,\sxyls)-\partial_s \cP(x,y,\sxylt)\big)\partial_y S_c(x,y),
\\
F&=
\partial_y \cP(x,y,\sxyls)-\partial_y \cP(x,y,\sxylt).
\end{align*}
The mean value theorem gives
$E=\partial_s^2 \cP( x,y, S_c^l(x,y,s^*))2^{-l}(s-t)\partial_y S_c $
for some $s^*\in (s,t)$.  Since $2^{-l}\lesssim (\vepc\mu)^{1/2}$, using \eqref{2ndbd} and  \eqref{partialest}, we see $E = O(2^{-2l}\mu^{-1/2}|s-t|) = O(\vepc^{1/2}2^{-l}|s-t|)$.
Therefore, to show \eqref{lower-deriv}  it is enough to verify 
\Be\label{eq:F} |F|\gtrsim 2^{-l} |s-t| \Ee taking $\vepc>0$ small enough. 
To show \eqref{eq:F}, we exploit the  form of $\cP$.   
For simplicity,  fixing $x,y$,  we denote  $S^l(s)=2^{-l}s+S_c(x,y)$.  
By a direct computation we get
\[
F  
=\mathbf a y - \mathbf b (x-y).
\] 
where 
\[\mathbf a= \Big(\frac{\cos S^l(s) -1 }{\sin S^l(s) } - \frac{\cos S^l(t)-1}{\sin S^l(t) }\Big), \qquad \mathbf b= \Big(\frac{1 }{\sin S^l(s) } -\frac1{\sin S^l(t) }\Big). \] 
By the mean value theorem  it is clear that $|\mathbf a|\sim  2^{-l} |s-t| $ and, similarly, $|\mathbf b|\sim   2^{-l} \mu^{-1} |s-t|$.
Since $|x-y|\sim \mu$,  \eqref{eq:F} follows once we show 
\Be
 \label{angleab}
\sin^2 \theta(y,x-y)\sim 1, \quad \forall (x,y)\in \tilde Q\times \tilde Q'.  
\Ee
Now, we recall \eqref{angle2}, so  \eqref{angleab} follows from \eqref{x-y} since  $
|y|^2|x-y|^2\sin^2 \theta(y,x-y) = |x|^2|y|^2\sin^2 \theta(x,y).$  
\qed

\subsection{Proof of \eqref{goal3_3}}
Let us define an oscillatory integral operator $I_s^\lambda(\Phi,A)$ by
\[
I_s^\lambda(\Phi,A) f(x) = \int e^{i\lambda\Phi(x,y,s)} A(x,y,s) f(y) dy.
\]
We observe that  $\fP^*_l f = \mu^{-\frac d4}2^{-l}\int I_s^\lambda(\Phi,A) f ds$. By Minkowski's inequality we see
 $\|\fP_l^*\|_{2\to 2}\le \mu^{-\frac d4}2^{-l}\sup_s \|I_s^\lambda(\Phi,A)\|_{2\to 2}$.
Thus,  \eqref{goal3_3} follows from
\Be\label{goal4}
\|I_s^\lambda(\Phi,A) f\|_2\lesssim \lambda^{-\frac d2}\mu^{\frac d4}\|f\|_2,\quad \forall s.
\Ee
Note that $I_s^\lambda(\Phi,A)\neq 0$ only if  $s\in \supp  \wt{\psi}$. 
To obtain \eqref{goal4}, we make use of the following well known lemma.

\begin{lem}[{\cite[p. 377]{St93}}]
\label{nonzero-det}  Let  $a\in C^\infty_c(\R^d\times \R^d)$ and $\phi$ be a smooth function on $\supp a$. 
Let 
\[
T_R f(x) := \int e^{i R\phi(x,y)} a(x,y) f(y) dy, \quad R>0.
\]
Suppose that $\det{\partial_x\partial_y^\intercal\phi} \neq 0$ on $\supp{a}$.  
Then, $
\|T_R \|_{2\to 2}\le CR^{-\frac d2}
$
for any $R>0$ and  the constant $C$ is stable under small smooth perturbation of $\phi$ and $a$. 
\end{lem}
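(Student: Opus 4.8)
The plan is to run the classical $TT^\ast$ argument together with Schur's test. Set $S_R=T_RT_R^\ast$, whose kernel is
\[
K_R(x,x')=\int e^{iR(\phi(x,y)-\phi(x',y))}\,a(x,y)\,\overline{a(x',y)}\,dy.
\]
Since $\|T_R\|_{2\to2}^2=\|S_R\|_{2\to2}$, it is enough to prove that for every $N$,
\[
|K_R(x,x')|\le C_N\,(1+R|x-x'|)^{-N},\qquad R\ge1,
\]
with $C_N$ depending only on $N$ and on finitely many derivatives of $\phi$ and $a$; once this is known, Schur's test with $\int_{\R^d}(1+R|z|)^{-d-1}\,dz\sim R^{-d}$ gives $\|S_R\|_{2\to2}\lesssim R^{-d}$, hence $\|T_R\|_{2\to2}\lesssim R^{-d/2}$.

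First I would reduce to the case that $\supp a$ lies in a ball of small radius $\varrho$, by inserting a smooth partition of unity $a=\sum_\iota a_\iota$ with a bounded number of pieces (the number depending only on $\phi$ and $a$) and applying the estimate to each $T_R[a_\iota]$. On such a small ball the mean value theorem gives
\[
\nabla_y\phi(x,y)-\nabla_y\phi(x',y)=M(x,x',y)\,(x-x'),\qquad M(x,x',y):=\int_0^1(\partial_x\partial_y^\intercal\phi)(x'+t(x-x'),y)\,dt,
\]
and since $\det(\partial_x\partial_y^\intercal\phi)$ is continuous and bounded away from $0$ on the compact set $\supp a$, choosing $\varrho$ small forces $M$ to be a small perturbation of an invertible matrix; hence $\|M^{-1}\|$ is uniformly bounded and $|\nabla_y(\phi(x,y)-\phi(x',y))|\gtrsim|x-x'|$ throughout the region of integration.

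With this lower bound the phase is non-stationary in $y$, so repeated integration by parts against the transport operator $|\nabla_y\Phi|^{-2}\,\nabla_y\Phi\cdot\nabla_y$, where $\Phi:=\phi(x,\cdot)-\phi(x',\cdot)$, yields $|K_R(x,x')|\lesssim_N R^{-N}|x-x'|^{-N}$; combined with the trivial bound $|K_R(x,x')|\lesssim1$ from the compact support, this gives the decay $|K_R(x,x')|\lesssim_N(1+R|x-x'|)^{-N}$ claimed above. Finally, stability under small $C^\infty$ perturbations is immediate: every constant produced depends only on upper bounds for finitely many derivatives of $\phi$ and $a$ and a positive lower bound for $|\det(\partial_x\partial_y^\intercal\phi)|$ on $\supp a$, all of which persist under sufficiently small perturbations. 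The only point that needs care — and where smallness of the support is actually used — is the uniform invertibility of the averaged Hessian $M$; this is the mild technical obstacle, and it is dispatched precisely by the preliminary partition of unity.
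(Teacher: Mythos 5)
Your argument is correct and is essentially the standard $TT^\ast$ proof of H\"ormander's oscillatory integral bound that the paper simply cites from Stein \cite[p. 377]{St93}: localize so that the averaged mixed Hessian is invertible, get $|\nabla_y(\phi(x,\cdot)-\phi(x',\cdot))|\gtrsim |x-x'|$, integrate by parts to obtain the kernel bound $(1+R|x-x'|)^{-N}$, and finish with Schur's test. The only point to state explicitly is that ``smooth on $\supp a$'' should be read as smooth on a neighborhood of the (compact) support, so that the segment argument and the continuity/compactness lower bound for $|\det\partial_x\partial_y^\intercal\phi|$ make sense; with that reading, your proof, including the stability of the constant under small perturbations, is complete.
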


To apply the lemma, we  verify that the mixed Hessian of $\Phi(\cdot,\cdot,s)$ is invertible for any $s$, that is, $\det{\partial_x\partial_y^\intercal\Phi(x,y,s)} \neq 0$.

\begin{lem}\label{detPhi}
Let $\lambda^{-\frac 23}\le\mu\le 1/4$ and $2^{-l}\le (\vepc\mu)^{1/2}$. If $\vepc>0$ is sufficiently small, then $\det{\partial_x\partial_y^\intercal\Phi(x,y,s)} \sim \mu^{-d/2}$ for any $(x,y,s)\in \tilde Q\times \tilde Q'\times\supp \psi(|\cdot|)$.
\end{lem}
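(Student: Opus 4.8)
The plan is to compute the mixed Hessian $\partial_x\partial_y^\intercal\Phi$ explicitly, split it into a ``main part'' that is a rank $\le 2$ perturbation of a scalar multiple of the identity, and show the remainder is entrywise $O(\vepc^{1/2}\mu^{-1/2})$. Write $S_c^l = S_c^l(x,y,s) = 2^{-l}s + S_c(x,y)$, so that $\nabla_{x,y}S_c^l = \nabla_{x,y}S_c$, and recall $\phi_1(x,y,t) = \tfrac t2 + \tfrac{|x|^2+|y|^2}2\cot t - \langle x,y\rangle\csc t$, whence $\partial_{x_j}\partial_{y_k}\phi_1 = -\delta_{jk}\csc t$, $\partial_t\partial_{x_j}\phi_1 = \csc^2 t\,(y_j\cos t - x_j)$ and $\partial_t\partial_{y_k}\phi_1 = \csc^2 t\,(x_k\cos t - y_k)$. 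Differentiating $\Phi(x,y,s)=\phi_1(x,y,S_c^l)$ twice and collecting terms gives
\[
\partial_x\partial_y^\intercal\Phi = \mathbf M + \mathbf E,\qquad
\mathbf M := -\csc(S_c^l)\,I + \csc^2(S_c^l)\big(\mathbf a\,(\nabla_y S_c)^\intercal + (\nabla_x S_c)\,\mathbf b^\intercal\big),
\]
where $\mathbf a = y\cos S_c^l - x$, $\mathbf b = x\cos S_c^l - y$, and $\mathbf E = \partial_s^2\phi_1(x,y,S_c^l)\,(\nabla_x S_c)(\nabla_y S_c)^\intercal + \partial_s\phi_1(x,y,S_c^l)\,\partial_x\partial_y^\intercal S_c$. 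I will first dispose of $\mathbf E$: by \eqref{2ndbd}, $|\partial_s^2\phi_1(x,y,S_c^l)|\sim 2^{-l}\le(\vepc\mu)^{1/2}$; by \eqref{ph-d}, \eqref{q-def}, \eqref{distscxy}, \eqref{small-det} together with $2^{-l}\le(\vepc\mu)^{1/2}$ and $\cos S_c^l-\cos S_c=O(2^{-l}\mu^{1/2})$, $\sin S_c^l\sim\mu^{1/2}$, one gets $|\partial_s\phi_1(x,y,S_c^l)|=|\mathcal Q(x,y,\cos S_c^l)|/(2\sin^2 S_c^l)\lesssim\vepc\mu$; combining these with \eqref{partialest} shows every entry of $\mathbf E$ is $O(\vepc^{1/2}\mu^{-1/2})$, while the same bounds give that every entry of $\mathbf M$ is $O(\mu^{-1/2})$.

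The second step is to evaluate $\det\mathbf M$. Since the second summand of $\mathbf M$ has rank $\le2$, the matrix determinant lemma gives
\[
\det\mathbf M = (-1)^d(\sin S_c^l)^{-d}\,\Delta,\qquad
\Delta := \det\!\begin{pmatrix}
1-(\sin S_c^l)^{-1}\langle\nabla_y S_c,\mathbf a\rangle & -(\sin S_c^l)^{-1}\langle\nabla_y S_c,\nabla_x S_c\rangle\\[2pt]
-(\sin S_c^l)^{-1}\langle\mathbf a,\mathbf b\rangle & 1-(\sin S_c^l)^{-1}\langle\nabla_x S_c,\mathbf b\rangle
\end{pmatrix}.
\]
Differentiating \eqref{sc} yields, with $u=|x-y|$ and $v=|x+y|$, the closed forms $\cos S_c=\tfrac{v-u}{u+v}$, $\sin S_c=\tfrac{2\sqrt{uv}}{u+v}$, and expressions for $\nabla_xS_c$, $\nabla_yS_c$ and for $\mathbf a,\mathbf b$ at $t=S_c$ as explicit linear combinations of $x-y$ and $x+y$. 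Feeding these into $\Delta$ and using only $|x-y|^2=u^2$, $|x+y|^2=v^2$, $\langle x-y,x+y\rangle=|y|^2-|x|^2=:\varepsilon$, a direct computation gives $\langle\mathbf a,\mathbf b\rangle|_{t=S_c}=0$, $(\sin S_c)^{-1}\langle\nabla_yS_c,\mathbf a\rangle|_{t=S_c}=\tfrac12\!\left(1-\tfrac{\varepsilon}{uv}\right)$, $(\sin S_c)^{-1}\langle\nabla_xS_c,\mathbf b\rangle|_{t=S_c}=\tfrac12\!\left(1+\tfrac{\varepsilon}{uv}\right)$, and $\langle\nabla_yS_c,\nabla_xS_c\rangle=-\tfrac{v-u}{(u+v)uv}$. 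Replacing $S_c^l$ by $S_c$ changes $\Delta$ by only $O(\vepc^{1/2})$, since $\cos S_c^l-\cos S_c=O(\vepc^{1/2}\mu)$, $\sin S_c^l/\sin S_c=1+O(\vepc^{1/2})$, and the product of the two off‑diagonal entries equals $(\sin S_c^l)^{-2}\langle\nabla_yS_c,\nabla_xS_c\rangle\langle\mathbf a,\mathbf b\rangle=O(\mu^{-2})\,O(\vepc^{1/2}\mu^2)=O(\vepc^{1/2})$ (here $\langle\mathbf a,\mathbf b\rangle$ vanishes to second order in $2^{-l}$). Hence
\[
\Delta=\tfrac14\Big(1-\frac{\varepsilon^2}{(uv)^2}\Big)+O(\vepc^{1/2})=\frac{|x|^2|y|^2\sin^2\theta(x,y)}{(uv)^2}+O(\vepc^{1/2}),
\]
the last equality being the Lagrange identity $(uv)^2-\varepsilon^2=4\big(|x|^2|y|^2-\langle x,y\rangle^2\big)=4|x|^2|y|^2\sin^2\theta(x,y)$. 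By \eqref{angle2}, \eqref{x-y} and $|x+y|\sim1$ the main term is $\sim1$; taking $\vepc$ small and recalling $\sin S_c^l\sim\mu^{1/2}$ gives $|\det\mathbf M|\sim\mu^{-d/2}$.

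Finally, $\mu^{1/2}\mathbf M$ has entries $O(1)$ and $|\det(\mu^{1/2}\mathbf M)|=(\mu^{1/2}/\sin S_c^l)^d|\Delta|\sim1$, while $\mu^{1/2}\mathbf E$ has entries $O(\vepc^{1/2})$; since the determinant is a polynomial in the entries, for $\vepc$ small enough $|\det(\mu^{1/2}\partial_x\partial_y^\intercal\Phi)|=|\det(\mu^{1/2}\mathbf M+\mu^{1/2}\mathbf E)|\sim1$, i.e.\ $\det\partial_x\partial_y^\intercal\Phi\sim\mu^{-d/2}$ (with sign $(-1)^d$). The step I expect to be the real work is the evaluation of the $2\times2$ determinant $\Delta$: one must notice the exact cancellation $\langle\mathbf a,\mathbf b\rangle=0$ at $t=S_c$ and the collapse of the diagonal entries to $\tfrac12(1\mp\varepsilon/(uv))$, which is precisely what turns $\Delta$ into the manifestly nonnegative quantity $|x|^2|y|^2\sin^2\theta/(uv)^2$ that is then bounded below via \eqref{angle2}. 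The control of $\mathbf E$ and of the $S_c^l$‑versus‑$S_c$ discrepancy is routine once $\vepc$ is taken small.
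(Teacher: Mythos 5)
Your proposal is correct, and its first half (the chain--rule expansion, the identification of the main term, and the bounds $O(\vepc^{1/2}\mu^{-1/2})$ on the error terms via \eqref{2ndbd}, \eqref{distscxy}, \eqref{small-det}, \eqref{partialest} and the smallness of $\partial_s\phi_1(x,y,S_c^l)$) is essentially identical to the paper's argument leading to \eqref{hho}. Where you genuinely diverge is in evaluating the determinant of the main term. You keep $S_c^l$ inside the rank--two update, apply the matrix determinant (Sylvester) identity to reduce $\det \mathbf M$ to a $2\times 2$ determinant $\Delta$, and then compute all entries in closed form from \eqref{sc} in the variables $|x-y|$, $|x+y|$, identifying $\Delta$ at $t=S_c$ with $|x|^2|y|^2\sin^2\theta(x,y)/(|x-y|^2|x+y|^2)$ via the Lagrange identity and bounding it below by \eqref{angle2}, \eqref{x-y}; the replacement $S_c^l\to S_c$ is handled at the level of $\Delta$, using that the off-diagonal product is $O(\vepc^{1/2})$ because $\langle \mathbf a,\mathbf b\rangle=\mathcal R(x,y,\cos S_c^l)=\inp xy(\cos S_c^l-\cos S_c)(\cos S_c^l-\tau^+)$ is small. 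The paper instead first replaces $S_c^l$ by $S_c$ entrywise (using \eqref{parxy}), derives $\partial_xS_c,\partial_y^\intercal S_c$ by implicit differentiation of the orthogonality relation $v^\intercal w=0$ (your $\langle\mathbf a,\mathbf b\rangle|_{t=S_c}=0$ is exactly \eqref{orthogonal}), rewrites the main term as $-(\sin S_c)^{-1}\mathbf L$ with $\mathbf L=\mathbf I-(|v|^2+|w|^2)^{-1}(vv^\intercal+ww^\intercal-2\cos S_c\,vw^\intercal)$, and computes $\det\mathbf L=|v|^2|w|^2(|v|^2+|w|^2)^{-2}\sim 1$ via the invariant subspace ${\rm span}(v,w)$ and $|v|,|w|\sim\mu$. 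Both hinge on the same two structural facts (the vanishing of $\mathcal R(x,y,\cos S_c)$ and the transversality \eqref{angle2}); your route buys an explicit closed-form value of the determinant (with its sign $(-1)^d$) directly in terms of $|x\pm y|$ and $\sin\theta(x,y)$, at the cost of a more computational derivation of $\nabla_x S_c$, $\nabla_y S_c$, whereas the paper's route stays coordinate-free and produces a cleaner structural formula.

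Two harmless slips: $\langle x-y,x+y\rangle=|x|^2-|y|^2$, not $|y|^2-|x|^2$ (this only swaps the two diagonal entries of $\Delta$ and does not affect the determinant, which depends on $\varepsilon^2$ only); and $\langle\mathbf a,\mathbf b\rangle$ vanishes to \emph{first} order in $2^{-l}$, namely $O(2^{-l}\mu^{3/2})$, which is exactly what yields your stated bound $O(\vepc^{1/2}\mu^{2})$. Neither affects the validity of the argument.
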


 Assuming this for the moment, we prove \eqref{goal4}.

\begin{proof}[Proof of \eqref{goal4}]
Recall that $\mathbf c_Q$ denotes the center of a cube $Q$. 
We denote $   l_\mu(x,y) = (\mu x,\mu y) + (\mathbf c_Q, \mathbf c_{Q'})$ and 
 set
\begin{align*}
    \tilde{\Phi}(x,y,s) &= \mu^{-\frac32}\Phi(l_\mu(x,y),s),
    \\
     \tilde{A}(x,y,s) & = A(l_\mu(x,y),s). 
\end{align*}
Then, changing variables $(x,y)\to l_\mu(x,y)$, we have
\[
\|I_s^\lambda(\Phi,A)\|_{2\to 2} = \mu^{d}\|I_s^{\lambda \mu^{3/2}} (\tilde{\Phi},\tilde{A})\|_{2\to 2}.
\]

We apply Lemma \ref{nonzero-det}  to  $I_s^{\lambda \mu^{3/2}} (\tilde{\Phi},\tilde{A})$.  Using \eqref{Aest} and \eqref{phiest}, one can easily see
\[
|\partial_x^\alpha\partial_y^\beta \tilde{\Phi}(x,y,s)|\lesssim 1, \quad |\partial_x^\alpha\partial_y^\beta \tilde{A}(x,y,s)|\lesssim 1, \quad \forall (x,y,s)\in \supp \tilde A.
\]
By Lemma \ref{detPhi}  it follows that $\det \partial_x\partial_y^\intercal \tilde \Phi\sim 1$. 
Applying Lemma \ref{nonzero-det} gives  the estimate $\| I_s^{\lambda \mu^{3/2}} (\tilde{\Phi},\tilde{A})\|_{2\to 2}\lesssim \lambda^{-d/2}\mu^{- 3d/4}$.
Therefore, we  get \eqref{goal4} as desired.
\end{proof}

\begin{proof}[Proof of Lemma \ref{detPhi}]  
We begin with claiming that
\Be
\label{hho}
\partial_x\partial_y^{\intercal}   \Phi(x,y,s) = \mathbf H +O( \vepc^{1/2}\mu^{-1/2}) 
\Ee
for  $(x,y,s)\in \tilde Q\times \tilde Q'\times\supp \psi(|\cdot|)$ where 
\begin{equation}
\label{hhh}
\mathbf H = -(\sin S_c)^{-1}\mathbf I +  
\partial_x S_c \partial_y^{\intercal}  \partial_s \phi_1(x,y, S_c)  + \partial_x\partial_s\phi_1(x,y,S_c) 
 \partial_y^{\intercal}  S_c.
 \end{equation}
 Here $\mathbf I$ denotes the $d\times d$ identity matrix.   Using the chain rule, we write
\begin{equation}
\label{partialxy}
\begin{aligned}
\partial_x\partial_y^{\intercal} &  \Phi(x,y,s) =  \partial_x \partial_y^{\intercal} \phi_1(x,y, \sxyl)
+
\partial_x S_c \partial_y^{\intercal}  \partial_s \phi_1(x,y,  \sxyl) 
\, + 
\\[3pt]
\quad \partial_x&\partial_s\phi_1(x,y,  \sxyl) \partial_y^{\intercal}  S_c 
 +\partial_s^2\phi_1(x,y,  \sxyl) 
\partial_x S_c  \partial_y^{\intercal}  S_c +  \partial_s\phi_1(x,y,  \sxyl) 
 \partial_x\partial_y^{\intercal}  S_c.
\end{aligned}
\end{equation}

 By \eqref{2ndbd} and \eqref{partialest}, we see  $\partial_s^2\phi_1(x,y,  \sxyl)\partial_x S_c  \partial_y^{\intercal}  S_c =
O(2^{-l}\mu^{-1}) = O( \vepc^{1/2}\mu^{-1/2})$. Since   $2^{-l}\lesssim (\vepc\mu)^{1/2}$ and $|\cos S_c^l - \cos S_c|\sim 2^{-l}\mu^{1/2}$,  by \eqref{distscxy} 
it follows that $|\cos \sxyl -\inp xy|=O(\vepc^{1/2}\mu)$.  Thus, by \eqref{ph-d} we see $\partial_s\phi_1(x,y,S_c^l) = O(\vepc\mu)$ because  $|\cD|\ll\vepc\mu^2$. 
Combining this and \eqref{partialest}, we have $\partial_s\phi_1(x,y,  \sxyl) 
 \partial_x\partial_y^{\intercal}  S_c=O(\vepc\mu^{-1/2})$.  Therefore, we need only to consider the other terms in the right hand side of \eqref{partialxy}. 
  Therefore, to show \eqref{hho}, we note  $\partial_x\partial_y^\intercal\phi_1(x,y,S_c) = -(\sin S_c)^{-1}\mathbf I$, and
\Be\label{parxy}
\partial_x\partial_s\phi_1(x,y,S_c) = \frac{y\cos S_c - x}{\sin^2 S_c},\quad \partial_y\partial_s\phi_1(x,y,S_c) = \frac{x\cos S_c - y}{\sin^2 S_c}.
\Ee
Since $|\cos S_c^l - \cos S_c|\sim 2^{-l}\mu^{1/2}$ and $|\sin S_c^l - \sin S_c|\sim 2^{-l}$, from the above identities we see that $S_c^l$ appearing in the first to third terms on the right hand side of \eqref{partialxy} can be replaced by $S_c$  allowing an error of  $O(\varepsilon_1\mu^{-1/2})$. Thus, we get \eqref{hho}.

Thanks to \eqref{hho},  the matter reduces to showing that $\det\mathbf H \sim \mu^{-d/2}$. To this end, we need to obtain precise expressions for $\partial_x S_c$, $\partial_y^\intercal S_c$. 
We set
\Be\label{vw}
v(x,y)=y\cos S_c - x,\quad  w(x,y)=x\cos S_c - y. 
\Ee
From  \eqref{sc}, \eqref{x-y}, and \eqref{angleab},   we note that   the two vectors $(\cos S_c - 1)x$ and $x-y$ are of size $\sim \mu$ and are separated by $\sim \mu$.  Thus, writing $ w = (\cos S_c - 1)x + (x-y)$, we see  $|w|\sim \mu$. In the same manner, it follows that  $|v|\sim \mu$. So, we have 
\Be 
\label{size-vw} 
|v(x,y)|\sim \mu,\quad   |w(x,y)|\sim \mu. 
\Ee 

We now observe that  $v^\intercal w = \mathcal R(x,y,  \cos S_c)$. This gives
\Be\label{orthogonal}
 v^\intercal w= 0.
\Ee
Differentiating in $x$, we have $ \partial_x(v^\intercal w) = -\sin S_c \partial_x S_c(x^\intercal v+y^\intercal w) + \cos S_c v - w$. Combining this and \eqref{orthogonal} yields  
\begin{align*}
    \partial_x S_c = -(\sin S_c)^{-1}(x^\intercal v + y^\intercal w)^{-1} (w - \cos S_c v).
\end{align*}
Similarly, we have $\partial_y^\intercal S_c = -(\sin S_c)^{-1}(x^\intercal v+y^\intercal w)^{-1}(v^\intercal-\cos S_c w^\intercal)$, which can also be shown using 
symmetry of $S_c$ and $v(y,x)=w(x,y)$.  From \eqref{vw}, we also note that $\partial_x\partial_s\phi_1(x,y,S_c) = (\sin S_c)^{-2} v$ and $\partial_y\partial_s\phi_1(x,y,S_c) = (\sin S_c)^{-2} w$.
Applying these identities to \eqref{hhh}, we obtain 
\[
\mathbf H = -\frac{\sin^2 S_c (x^\intercal v+y^\intercal w)\mathbf I + vv^\intercal + ww^\intercal -2\cos S_c vw^\intercal}{\sin^3 S_c \,(x^\intercal v+y^\intercal w)}.
\]
By \eqref{orthogonal} and \eqref{vw}, it follows that $
x^\intercal v\cos S_c = y^\intercal v$ and $y^\intercal w\cos S_c = x^\intercal w$. Using those and $\sin^2 S_c=1-\cos^2 S_c$, we see
\begin{align*}
    \sin^2 S_c(x^\intercal v+y^\intercal w) & = x^\intercal v+y^\intercal w - \cos S_c (y^\intercal v+x^\intercal w) = -|v|^2-|w|^2.
\end{align*}
For the last equality we use \eqref{vw} again.  Thus, we obtain 
 \[ \mathbf H = -\frac1{\sin S_c}\,\mathbf L,\]
  where
\[
\mathbf L =\mathbf I - \frac1{ (|v|^2+|w|^2)} (vv^\intercal + ww^\intercal- 2\cos S_c vw^\intercal).
\]
 To complete the proof, it remains to show that $\det\mathbf L \sim 1$
since $\sin S_c\sim \mu^{1/2}$. 

Let $\mathrm V=\text{span}(v,w)$. Then, it is easy to see that $\mathrm V$ is an invariant subspace of $\mathbf L$.    Let $\mathbf M$ denote the matrix of the  linear map $\mathbf L|_{\mathrm V}$ (restricted to $\mathrm V$) with respect to bases  $v, w$.   
  Using \eqref{orthogonal},  we see  $\mathbf Lv=  (|v|^2+|w|^2)^{-1} |w|^2 v$ and $\mathbf Lw= (|v|^2+|w|^2)^{-1} (  |w|^2 2\cos S_c  v +  |v|^2 w).$ 
  Thus, we obtain 
  \[
\mathbf M= \frac{1}{|v|^2+|w|^2} 
\begin{pmatrix}
\,|w|^2 &  2|w|^2 \cos S_c 
\\
0 & |v|^2
\end{pmatrix}.
\]
Note  that $\rm V^\perp$ is also an invariant subspace and $\mathbf L u=u$ for all $u\in \mathrm V^\perp$. So,  $\det \mathbf L=\det \mathbf M$ and $\det \mathbf L=  (|v|^2+|w|^2)^{-2}  |v|^2|w|^2$. 
By \eqref{size-vw}  we conclude $\det\mathbf L \sim 1$. 
\end{proof}

\begin{rem}\label{frmk}
When $\mu\sim 1$,  to prove \eqref{weak2} and \eqref{scaled5/6}  is much simpler. Especially, we don't need the sectorial decomposition. One may just cover $A_\mu\times A_\mu$ with disjoint cubes $\{Q\times Q'\}$  of small side length $c\vepc$ so that either   \eqref{large-d} or \eqref{small-det}
holds. We only need to consider the latter case since the first can be handled easily  as before.  Since \eqref{small-det} holds and $
\mu\le 1/4$, taking $c,\vepc$ small enough, we have 
\[ |\inp xy|\ge c_0, \quad  (x,y)\in Q\times Q'\] 
for  a constant $c_0>0$.   So, the estimates for $\chi_Q \fP_\lambda[\psi_j]\chi_{Q'}$ are easy to show since 
$\mathcal Q(x,y, \cos s)\sim 1$ for $(x,y,s)\in Q\times Q'\times \supp \psi_j$ when $2^{-j}\ll 1$. Thus, we may assume $2^{-j}\sim 1$.  
We now note that \eqref{angle2} and \eqref{x-y} holds with $\mu\sim 1$.  Thus, the previous proofs of \eqref{goal3_2} and \eqref{goal3_3}  work without modification. 
\end{rem}

\section{ Lower bounds on $\|\chi_{\lambda,\mu} \Pi_\lambda\chi_{\lambda,\mu}\|_{p\to q}$ 
}\label{sub_counter}
 In this section, we show that  the bound \eqref{est-ann} can not be improved, that is to say, there is a constant $C>0$ such that 
\Be \label{lowerbd}  \|\chi_{\lambda,\mu} \Pi_\lambda\chi_{\lambda,\mu}\|_{p\to q}\ge C\lambda^{\beta(p,q)}\mu^{\gamma(p,q)}. \Ee
For the purpose,  it is sufficient to show the  lower bounds \eqref{ann1}--\eqref{ann3} in Proposition \ref{lower-mu} below. 
Comparing those lower bounds immediately  yields  \eqref{lowerbd}. 

\begin{prop}
\label{lower-mu}
Let $d\ge 2$, $\lambda\gg 1$, and $\lambda^{-2/3}\le\mu\le 2^{-1}$. Then, we have 
\begin{align}\label{ann1}
    &\|\chi_{\lambda,\mu} \Pi_\lambda \chi_{\lambda,\mu} \|_{p\to q}\gtrsim(\lambda\mu)^{-1+\frac d2(\frac1p-\frac1q)}, 
   \\
    \label{ann2}
    &\|\chi_{\lambda,\mu} \Pi_\lambda \chi_{\lambda,\mu} \|_{p\to q}\gtrsim\lambda^{-\frac12(\frac1p-\frac1q)}\mu^{\frac12-\frac{d+3}{4}(\frac1p-\frac1q)}, 
  \\
    \label{ann3}
    &\|\chi_{\lambda,\mu} \Pi_\lambda \chi_{\lambda,\mu} \|_{p\to q}\gtrsim\lambda^{-\frac12+\frac d2(1-\frac1p-\frac1q)}\mu^{-\frac14+d(\frac34-\frac1p-\frac{1}{2q})}.
\end{align}
\end{prop}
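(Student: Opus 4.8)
The plan is to exhibit three families of test functions, one for each lower bound, each adapted to a distinct mechanism by which $\chi_{\lambda,\mu}\Pi_\lambda\chi_{\lambda,\mu}$ can fail to decay. Throughout I would pass to the rescaled picture via \eqref{eq:norm-scaled}, so that $A_{\lambda,\mu}$ becomes the fixed annulus $A_\mu=\{x:1-|x|\in[\mu/2,\mu]\}$ and the kernel is $\fP_\lambda(x,y)=\frac1{2\pi}\int \mathfrak a(t)e^{i\lambda\phi_1(x,y,t)}\,dt$, and estimate the kernel on appropriate regions of $A_\mu\times A_\mu$ using the stationary-phase analysis already carried out in Lemma \ref{oscillatory} and the decomposition around $S_c$. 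For \eqref{ann1} I would use the "diagonal blow-up'' of the kernel: near a pair $(x,y)$ with $\cD(x,y)\sim\mu^2$ one has from part $(c)$ of Lemma \ref{oscillatory} that $|\fP_\lambda[\psi_j](x,y)|\sim 2^{(d-1)j/2}\lambda^{-1/2}\mu^{-1/2}$ for $2^{-j}\sim\mu^{1/2}$, hence summing the relevant $j$ (equivalently using the full kernel near the self-intersection of the stationary set) gives $|\fP_\lambda(x,y)|\gtrsim (\lambda\mu)^{(d-1)/2}\cdot(\lambda\mu)^{-1}\cdot$ (volume normalization) on a product of $\mu^{1/2}\times\cdots$-type boxes; testing against the characteristic function of such a box of measure $\sim\mu^{(d+1)/2}\lambda^{-1}\cdot(\ldots)$ and computing $\|\fP_\lambda\chi_{Q'}\|_q/\|\chi_{Q'}\|_p$ produces $(\lambda\mu)^{-1+\frac d2\delta(p,q)}$ after undoing \eqref{eq:norm-scaled}. (I would double-check the box dimensions against the Euclidean heuristic $\wp_k$ on a $k^{-1}$-cap, which is exactly where this bound comes from.)

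For \eqref{ann2}, the Knapp mechanism: choose $f$ to be (a smooth bump adapted to) a tube of dimensions $\sim (\lambda\mu)^{-1/2}$ in $d-1$ directions and $\sim(\lambda\mu^3)^{-1/2}\cdot\mu^{?}$ in the radial-type direction — more precisely I would take $f=e^{i\lambda(\cdots)}$ times a bump on a slab of the annulus $A_\mu$ of transverse width $(\lambda\mu)^{-1/2}$, chosen so that the phase $\lambda\phi_1(x,y,t)$ is essentially constant over the support, forcing constructive interference and yielding $|\Pi_\lambda f(x)|\gtrsim$ (its trivial size) on a comparable slab. Tracking the measures of the input and output slabs, $\|f\|_p$, $\|\Pi_\lambda f\|_q$, and using the normalization $\|\chi_{\lambda,\mu}\Pi_\lambda\chi_{\lambda,\mu}\|_{2\to2}\sim 1$ as a sanity check at $p=q=2$, should reproduce $\lambda^{-\frac12\delta(p,q)}\mu^{\frac14-\frac{d+3}{4}\delta(p,q)}$; this is the $\mathcal R_1$-regime exponent from \eqref{kt-nu}, so the computation is essentially the dual/reorganized form of Koch--Tataru's sharpness example and I would cite it for the arithmetic.

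For \eqref{ann3}, which governs the $\mathcal R_2$ (and by duality $\mathcal R_2'$) behavior, I would use a single highly-concentrated bump — a ball of radius $\sim(\lambda\mu)^{-1}$ centered in $A_\mu$ — for the input, so that $\Pi_\lambda f\approx f(0)\cdot\fP_\lambda(\cdot,y_0)\cdot|\text{ball}|$, and then lower-bound $\|\fP_\lambda(\cdot,y_0)\|_q$ over $A_\mu$ using the pointwise kernel asymptotics: on the set where $\cD(x,y_0)\sim\mu^2$ (a set of positive measure in $A_\mu$) the kernel is of size $(\lambda\mu)^{(d-1)/2}\lambda^{-1/2}\mu^{-1/2}$, while closer to the stationary locus it is as large as $\sim(\lambda\mu)^{(d-1)/2}$ before the oscillation kicks in. Integrating $|\fP_\lambda(\cdot,y_0)|^q$ over the relevant subannular region (of measure $\sim\mu^d$ or $\sim\mu^{(d+1)/2}\lambda^{-1/2}$ depending on which piece dominates) and balancing against $\|f\|_p\sim(\lambda\mu)^{-d/p}$, then rescaling back, should give the exponent $\lambda^{-\frac12+\frac d2(1-\frac1p-\frac1q)}\mu^{-\frac14+d(\frac34-\frac1p-\frac1{2q})}$.

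The main obstacle I expect is \eqref{ann3}: unlike the other two, which are clean Knapp/blow-up examples with one scale, here one must identify the correct region of $A_\mu$ on which $|\fP_\lambda(\cdot,y_0)|$ is genuinely of maximal size \emph{and} has controlled (non-cancelling) phase, i.e. pin down where in the $(x,\cD)$-configuration space the kernel estimates of Lemma \ref{oscillatory} are saturated with a definite lower bound rather than just an upper bound. This requires extracting a matching lower bound from the van der Corput / stationary-phase analysis (a leading-term asymptotic, not merely $|\cdot|\lesssim$), and keeping careful track of two competing contributions — the "$\cD\sim\mu^2$'' piece and the near-stationary piece $2^{-j}\gg\mu^{1/2}$ — to see which one wins in each $(p,q)$-subregion. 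Once the sharp kernel lower bound on the right region is in hand, the remaining volume/Hölder bookkeeping is routine.
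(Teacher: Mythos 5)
Your plan for \eqref{ann1} and \eqref{ann2} differs from the paper's: the paper constructs no test functions for these two bounds, but instead quotes the sharpness of the Koch--Tataru estimate \eqref{kt-nu} on the dual line (giving \eqref{est:ann1,2}) and upgrades it to general $(p,q)$ by a duality-plus-interpolation contradiction argument, which works precisely because the exponents in \eqref{ann1}, \eqref{ann2} depend only on $\delta(p,q)$. Your direct Knapp/diagonal constructions could in principle give the same conclusion, but as written they are only schematic (box dimensions left as ``?'', ``$(\ldots)$''), and your \eqref{ann1} sketch treats Lemma \ref{oscillatory} as a two-sided bound, whereas it only provides upper bounds; so even here you would have to supply the stationary-phase lower bounds that the paper deliberately avoids.

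The genuine gap is \eqref{ann3}, and it is not only the missing kernel asymptotics you yourself flag: the reduction you propose cannot work. Testing on a point-like bump reduces the problem to a lower bound for $\|\Pi_\lambda(\cdot,y_0)\|_{L^q(A_{\lambda,\mu})}$, i.e.\ to the size of the kernel alone, and that is too small. Concretely, at $(1/p,1/q)=(1/2,0)$ the bound \eqref{ann3} is the sharp $L^2\to L^\infty$ bound $\lambda^{\frac{d-2}{4}}\mu^{\frac{d-1}{4}}$, while a bump of radius $\rho$ yields at best $\rho^{d/2}\sup_{x,y}|\Pi_\lambda(x,y)|\lesssim\rho^{d/2}(\lambda\mu)^{\frac d2-1}$, which for admissible $\rho$ (at most the coherence scale) falls short, e.g.\ by a factor $\lambda^{1/2}$ when $\mu\sim 1$; more generally the $p$-dependence $(\lambda^{1/2}\mu)^{-d/p}$ in \eqref{ann3} corresponds to an input whose $L^p$ profile is spread over the whole annulus $A_{\lambda,\mu}$ (measure $\sim\lambda^{d/2}\mu^d$), not over a ball of radius $(\lambda\mu)^{-1}$. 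The paper's mechanism is different and avoids any pointwise lower bound on the oscillatory kernel: it takes $f=\Pi_\lambda(x_0,\cdot)\chi_{A_{\lambda,\mu}}$ for a carefully chosen $x_0$, so that $\Pi_\lambda f(x_0)=\|\Pi_\lambda(x_0,\cdot)\|_{L^2(A_{\lambda,\mu})}^2$, and then needs only three ingredients: (i) the $L^2$ lower bound \eqref{eq:x0x0}, proved not by stationary phase but by an explicit tensor construction with one-dimensional Hermite asymptotics (the index set $J$, the box $Q_\ell$, the signs $c_\alpha$, and exponential smallness of the off-diagonal coefficients $a_{u,v}$ via a Wronskian-type identity); (ii) the $L^p$ upper bound for $\|f\|_p$ in Lemma \ref{lem:annn3}, which does use the kernel estimates of Lemma \ref{oscillatory} but only as upper bounds; and (iii) the derivative bound \eqref{eq:deriv} to propagate the pointwise lower bound at $x_0$ to a ball of radius $c(\lambda\mu)^{-1/2}$, producing the $L^q$ lower bound. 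Unless you replace your point-mass input by such a spread-out extremizer (essentially the kernel itself) and supply a substitute for \eqref{eq:x0x0}, your argument for \eqref{ann3} cannot be completed.
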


In particular,   the lower bounds in Proposition \ref{lower-mu} with $\mu=2^{-1}$   also shows  sharpness of the estimate \eqref{est-loc0}.

The  lower bounds $\eqref{ann1}$ and $\eqref{ann2}$ which yield \eqref{lowerbd} for $\ppq\in \mathcal{R}_1\cup \overline{\mathcal{R}}_3$ can be shown 
by using the known lower bound on $\|\chi_{\lambda,\mu} \Pi_\lambda \chi_{\lambda,\mu} \|_{q'\to q}$. We recall \eqref{kt-nu} of which sharpness was shown in \cite{T05}. By the  $TT^*$ argument we have 
\begin{align}\label{est:ann1,2}
\|\chi_{\lambda,\mu} \Pi_\lambda \chi_{\lambda,\mu} \|_{p\to p'}\ge C\lambda^{\beta(p,p')}\mu^{\gamma(p,p')}
\end{align}
 for $1\le p\le 2$ with  $C>0$ depending only  on $d$. Suppose that  \eqref{lowerbd} fails for some $(1/p_0, 1/q_0)\in \mathcal{R}_1\cup \overline{\mathcal{R}}_3$, $p_0'\neq q_0$,  that is to say,  there are sequences $\lambda_k, \mu_k$ such that 
\[  C_k:= \lambda_k^{-\beta(p_0,q_0)}\mu_k^{-\gamma(p_0,q_0)}{\|\chi_{\lambda_k,\mu_k} \Pi_\lambda \chi_{\lambda_k,\mu_k} \|_{p_0\to q_0}}\to 0\] 
as $k\to \infty$. 
By duality  we have $ \|\chi_{\lambda_k,\mu_k} \Pi_\lambda \chi_{\lambda_k,\mu_k} \|_{q'_0\to p'_0}\le C_k\lambda_k^{\beta(p_0,q_0)}\mu_k^{\gamma(p_0, q_0)}$.
Since $p_0'\neq q_0$,  interpolation with the estimate  ${\|\chi_{\lambda_k,\mu_k} \Pi_\lambda \chi_{\lambda_k,\mu_k} \|_{p_0\to q_0}}\le C_k \lambda_k^{\beta(p_0,q_0)}\mu_k^{\gamma(p_0,q_0)}$  
gives  $\|\chi_{\lambda_k,\mu_k} \Pi_{\lambda_k} \chi_{\lambda_k,\mu_k} \|_{p_\ast \to p_\ast'}\le C_k \lambda_k^{\beta(p_\ast ,p_\ast')}\mu_k^{\gamma(p_\ast ,p_\ast')}$ where $1/p_\ast-1/p_\ast'=1/p_0-1/q_0$ because $\beta(p_0, q_0)=\beta(p_\ast ,p_\ast')$ and $\gamma(p_0, q_0)=\gamma(p_\ast ,p_\ast')$. 
This contradicts to the lower bound  \eqref{est:ann1,2} if we let $k\to \infty$.

\subsection*{Proof of \eqref{ann3}} 
It remains to prove  \eqref{ann3}.  To do so, we mainly rely on  asymptotic properties of the Hermite functions. Let $h_k(t)$ denote the $L^2$-normalized $k$-th Hermite function  of which eigenvalue is  $2k+1$. We make use of the following lemma from \cite{T05}.   Also see \cite{AAR} and \cite{F93}.

\begin{lem}\cite[Lemma 5.1]{T05}\label{S4-asym} Let $\nu=\sqrt{2k+1}$. We set 
\[
    s^-_\nu(t)=\int_0^t\sqrt{|\tau^2-\nu^2|}d\tau\quad \text{and}\quad s^+_\nu(t)=\int^t_\nu\sqrt{|\tau^2-\nu^2|}d\tau. 
\]
Then,  the following hold\,$: $
\[  \  \  \  h_{2k}(t)
                =\begin{cases}
                       a_{2k}^-(\nu^2-t^2)^{-\frac14} \big(\!\cos s^-_\nu(t)+\mathcal E\big),     & \ \qquad\qquad\quad \ |t|<\nu-\nu^{-\frac13}, 
                        \\
                    \qquad \qquad O(\nu^{-\frac16}),                                           & \ \ \, \nu-\nu^{-\frac13}<|t|<\nu+\nu^{-\frac13}, 
                     \\
                      a^+_{2k}e^{-s^+_\nu(|t|)}(t^2-\nu^2)^{-\frac14}\big(\!1+\mathcal E\big),     &\ \,  \ \nu+\nu^{-\frac13}<  |t|,
                \end{cases}
\]
\vspace{4pt}
\[    h_{2k+1}(t)
              =\begin{cases}
               a_{2k+1}^-(\nu^2-t^2)^{-\frac14}\big(\! \sin\,s^-_\nu(t)+\,\mathcal E\big),   & \qquad\qquad \  \, \ |t|<\nu-\nu^{-\frac13},  
               \\
              \qquad \qquad  O(\nu^{-\frac16}),      & \nu-\nu^{-\frac13}<|t|<\nu+\nu^{-\frac13},  
              \\
             a^+_{2k+1}e^{-s^+_\nu(|t|)}(t^2-\nu^2)^{-\frac14}\big(\!1+\mathcal E\big),    &\nu+\nu^{-\frac13}<|t|,
             \end{cases}
             \]
where
$    |a^{\pm}_k|\sim 1$  and $ \mathcal E=O\big(|t^2-\nu^2|^{-\frac12}\left||t|-\nu\right|^{-1}\big) .$
\end{lem}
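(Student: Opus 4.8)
The statement is the classical Plancherel--Rotach type asymptotics for the Hermite functions, and I would obtain it by a Liouville--Green (WKB) analysis of the Hermite equation
\[
h_k''(t) + (\nu^2 - t^2)\, h_k(t) = 0, \qquad \nu^2 = 2k+1,
\]
whose turning points are $t = \pm\nu$. Writing $Q(t) = \nu^2 - t^2$, the interval $|t| < \nu$ is the oscillatory (classically allowed) region and $|t| > \nu$ the exponential (forbidden) region; the three cases in the lemma correspond exactly to these two regions and to a transition layer $\big|\,|t|-\nu\,\big| \lesssim \nu^{-1/3}$ around each turning point. Throughout I would use the parity of the Hermite functions ($h_{2k}$ even, $h_{2k+1}$ odd) to reduce to $t \ge 0$.

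In the forbidden region $t > \nu + \nu^{-1/3}$ I would apply the Liouville--Green substitution adapted to $-Q = t^2 - \nu^2 > 0$: the two formal solutions are $(t^2-\nu^2)^{-1/4} e^{\pm s_\nu^+(t)}$ with $s_\nu^+(t) = \int_\nu^t \sqrt{\tau^2-\nu^2}\,d\tau$, and the rigorous solution differs from a linear combination of these by an error controlled by the total variation of the Olver error-control function $\int \big| |Q|^{-1/4} \tfrac{d^2}{dt^2}|Q|^{-1/4}\big|\,dt$. A direct estimate of this integral from $t$ out to $\infty$, using $t^2-\nu^2 \sim 2\nu(t-\nu)$ near the turning point, yields precisely the relative error $\mathcal E = O\big((t^2-\nu^2)^{-1/2}(t-\nu)^{-1}\big)$. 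Since $h_k \in L^2$, the growing exponential must be absent, leaving $a_k^+ e^{-s_\nu^+(t)}(t^2-\nu^2)^{-1/4}(1+\mathcal E)$; the bound $|a_k^+| \sim 1$ then follows from the known $L^2$ mass and normalization.

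In the oscillatory region $0 \le t < \nu - \nu^{-1/3}$ the same method with $Q = \nu^2 - t^2 > 0$ produces the two formal solutions $(\nu^2-t^2)^{-1/4} e^{\pm i s_\nu^-(t)}$ with $s_\nu^-(t) = \int_0^t \sqrt{\nu^2-\tau^2}\,d\tau$, and the analogous error-control estimate gives the same $\mathcal E$. Because $s_\nu^-(0) = 0$ while $\tfrac{d}{dt}s_\nu^-(0)=\nu\neq 0$, the value/derivative of $h_k$ at $t=0$ together with parity forces $h_{2k}$ to be proportional to $\cos s_\nu^-(t)$ and $h_{2k+1}$ to be proportional to $\sin s_\nu^-(t)$; again $|a_k^-| \sim 1$ by normalization, since the $L^2$ mass $\int_{-\nu}^{\nu}(\nu^2-t^2)^{-1/2}\,dt=\pi$ concentrates in this region.

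Finally, in the transition layer I would rescale $t = \nu + \nu^{-1/3} u$; the Hermite equation then becomes a lower-order perturbation of the Airy equation $w_{uu} = 2u\, w$, so the solution is $\sim \nu^{-1/6}$ times an Airy function, which is bounded, giving the stated $O(\nu^{-1/6})$ (consistent with $(\nu^2-t^2)^{-1/4}\sim\nu^{-1/6}$ at the edges of the oscillatory region). Matching the Airy asymptotics against the WKB forms on either side of the turning point supplies the $-\pi/4$ phase shift and, crucially, is consistent with the parity normalization above precisely because of the Bohr--Sommerfeld identity $\int_{-\nu}^{\nu}\sqrt{\nu^2-\tau^2}\,d\tau = (k+\tfrac12)\pi$. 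I expect the main difficulties to be (i) making the Liouville--Green error estimates uniform right up to the edges $\big|\,|t|-\nu\,\big|\sim \nu^{-1/3}$, where $\mathcal E$ degenerates, and (ii) carrying out the turning-point matching carefully enough to see that the constants $a_k^\pm$ in the two regions are tied together as claimed; both are handled by the standard connection-formula machinery (e.g.\ Olver's treatment, or the Plancherel--Rotach analysis in \cite{AAR}).
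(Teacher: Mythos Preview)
The paper does not supply its own proof of this lemma: it is quoted verbatim from Koch--Tataru \cite{T05} (their Lemma~5.1), with the remark ``Also see \cite{AAR} and \cite{F93}'' pointing the reader to standard sources. So there is no in-paper argument to compare against.

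Your WKB/Liouville--Green plan is exactly the route those references take---in particular Fedoryuk \cite{F93} is a treatment of precisely this turning-point analysis for second-order linear ODEs, and the Plancherel--Rotach asymptotics in \cite{AAR} are derived the same way. The ingredients you identify (LG approximation away from the turning point with Olver-type error control, Airy rescaling in the $\nu^{-1/3}$-layer, parity to pin down $\cos$ vs.\ $\sin$, and the Bohr--Sommerfeld identity $\int_{-\nu}^{\nu}\sqrt{\nu^2-\tau^2}\,d\tau=(k+\tfrac12)\pi$ to make the connection formula consistent with that choice) are correct and complete in outline. One small caution on notation: as stated in the paper, $\nu=\sqrt{2k+1}$ is the turning point for $h_k$, and the two displayed formulas should be read as the even/odd parity cases of a single statement about $h_k$; your writing ``$\nu^2=2k+1$'' for the equation $h_k''+(\nu^2-t^2)h_k=0$ is consistent with that, so just be careful not to conflate the $k$ in $\nu$ with the subscript $2k$ when you write it up.
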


We also need the following lemma. 

\begin{lem}\label{lem:annn3}  Let $1\le p\le 2$ and  $x_0\in A_{\lambda,\mu}$. Then, if $\lambda^{-2/3}\lesssim \mu\le1/2$,  we have
\[
\|\Pi_\lambda(x_0,\cdot)\|_{L^p(A_{\lambda,\mu})}\lesssim
\lambda^{-\frac12+\frac{d}{2p}}\mu^{-\frac14+d(\frac1p-\frac14)}.
\]
\end{lem}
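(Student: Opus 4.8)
The plan is to deduce the lemma from the sectorial‑decomposition machinery of Section \ref{sec:amu}. First, by rescaling $x_0=\sqrt\lambda\,\tilde x_0$, $y=\sqrt\lambda\,\tilde y$ and \eqref{eq:norm-scaled}, the claimed estimate is equivalent to
\[
\big\|\mathfrak P_\lambda(\tilde x_0,\cdot)\big\|_{L^p(A_\mu)}\lesssim\lambda^{-\frac12}\mu^{-\frac14+d(\frac1p-\frac14)},\qquad\tilde x_0\in A_\mu .
\]
Since the kernel of $\mathfrak P_\lambda[\eta]$ is symmetric, duality gives $\sup_{\tilde x_0\in A_\mu}\|\mathfrak P_\lambda[\eta](\tilde x_0,\cdot)\|_{L^p(A_\mu)}=\|\chi_\mu\mathfrak P_\lambda[\eta]\chi_\mu\|_{L^{p'}\to L^\infty}$, so it suffices to prove
\[
\|\chi_\mu\mathfrak P_\lambda\chi_\mu\|_{L^{p'}\to L^\infty}\lesssim\lambda^{-\frac12}\mu^{\frac{3d-1}{4}-\frac d{p'}},\qquad 1\le p\le 2 .
\]
This is an estimate of the same nature as \eqref{est-ann}, sitting on the edge $1/q=0$ just outside the range $\sq$ treated in Theorem \ref{thm-annest}; I will establish it by the method developed there, i.e.\ through the integral representation of $\mathfrak P_\lambda$, the dyadic decomposition in $t$, and the sectorial decomposition of $A_\mu\times A_\mu$.

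Using $\mathfrak P_\lambda=\sum_{j\ge0}\mathfrak P_\lambda[\psi_j]+\sum_{\kappa}\sum_{j\ge0}\mathfrak P_\lambda[\psi_j^\kappa]$ from Section \ref{sec:tt-st}, the invariance of $A_\mu$ under $\tilde y\mapsto-\tilde y$ together with the identities $\phi_1(x,y,-t)=-\phi_1(x,y,t)$ and $\phi_1(x,y,\pi\pm t)=\pi/2\pm\phi_1(x,-y,t)$ reduces the matter (uniformly in $\tilde x_0$) to the contributions of $\mathfrak P_\lambda[\psi_j]$ with $0\le j\le j_\circ$, where $2^{j_\circ}\sim\lambda\mu$. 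The remaining piece $\sum_{j>j_\circ}\mathfrak P_\lambda[\psi_j]=\mathfrak P_\lambda[\eta]$ has $\eta\in\mathcal C^{j_\circ}$ and will be handled by the analysis used for the marginal case $2^j\ge\lambda\mu$ in Section \ref{sec:amu} (cf.\ Lemma \ref{imply1}), combining the $L^2\to L^2$ bound \eqref{easy-l2}, the $L^2\to L^\infty$ bound \eqref{201}, and an $L^1\to L^\infty$ bound obtained from the same sectorial decomposition applied to this single remaining scale $2^{-j_\circ}\sim(\lambda\mu)^{-1}$.

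For each $j$ with $2^j<\lambda\mu$ I will apply \eqref{s-decom}, $\chi_\mu\mathfrak P_\lambda[\psi_j]\chi_\mu=\sum_{\mu\lesssim2^{-\nu}\lesssim1}\sum_{k\sim_\nu k'}\chi_k^\nu\mathfrak P_\lambda[\psi_j]\chi_{k'}^\nu$. By \eqref{angle}, on the $\nu$‑th sector one has $-\mathcal D(x,y)\sim2^{-2\nu}$ when $2^{-\nu}\gg\mu$ and $|\mathcal D(x,y)|\lesssim\mu^2$ when $2^{-\nu}\sim\mu$ (the region $1+\langle x,y\rangle<10^{-2}$ contributing $O((\lambda2^j)^{-N})$ by the last part of Lemma \ref{oscillatory}). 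On each sector I will combine an $L^1\to L^\infty$ bound coming from the pointwise estimates of Lemma \ref{oscillatory}(a),(b),(c)—chosen according to the size and sign of $\mathcal D$—with the $L^2\to L^\infty$ bound \eqref{201} and the trivial $L^2\to L^2$ bound \eqref{easy-l2}, interpolate these to $L^{p'}\to L^\infty$, and use Lemma \ref{kkpsum} together with Hölder to bring in the measure $\sim\mu\,2^{-\nu(d-1)}$ of the sector. For the sectors with $2^{-\nu}\gg\mu$ the rapidly decaying factors $(\lambda2^j2^{-2\nu}+1)^{-N}$ and $(\lambda2^{-3j}+1)^{-N}$ make the double sum over $\nu$ and $j$ geometric, and the exponents are read off directly. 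In the critical sector $2^{-\nu}\sim\mu$, where $\mathcal D$ nearly vanishes and the pointwise bounds alone are too weak, I will invoke the finer decomposition of Section \ref{sec:amu}: cover the sector by cubes $Q\times Q'$ of side $\sim\vepc\mu$, insert the cutoff $\psic$ and decompose $\fP^\ast_l$ away from $S_c$ as in Proposition \ref{prop-goal2}, and sum the estimates \eqref{goal3_1}, \eqref{goal3_2}, \eqref{goal3_3} over $Q$, $Q'$, $l$ (with Lemma \ref{s-trick}) exactly as in the proof of \eqref{weak2}--\eqref{scaled5/6}.

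The main obstacle is precisely the critical‑angle regime $\sin\theta(\tilde x_0,\tilde y)\sim\mu$: there $\mathfrak P_\lambda[\psi_j]$ is largest, and for the scales $2^j$ close to $\lambda\mu$ the bound from Lemma \ref{oscillatory}(c) is not sharp, so one cannot estimate $\|\mathfrak P_\lambda(\tilde x_0,\cdot)\|_{L^p(A_\mu)}$ by (kernel sup)$\times|A_\mu|^{1/p}$; the resolution is to trade pointwise size for orthogonality at those scales via \eqref{201} and \eqref{goal3_2}, and then to verify that after the summations over $\nu$, $j$ (and over the auxiliary cube and $l$ indices) the power of $\lambda$ collapses to $\lambda^{-1/2}$ while the power of $\mu$ comes out exactly as $-\frac14+d(\frac1p-\frac14)$. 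No ideas beyond those of Section \ref{sec:amu} are needed; the point is only that the estimates proved there, read through the duality above, control the kernel of $\mathfrak P_\lambda$ tested against a single point mass, and that the bookkeeping must now be carried out over the whole range $1\le p\le2$ rather than only over $\sq$.
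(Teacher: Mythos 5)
Your reduction by rescaling and duality is fine (the exponent arithmetic matches: the lemma is exactly the conjectured bound extended to the edge $q=\infty$, $p'\ge 2$), and the overall plan of re-running the Section \ref{sec:amu} machinery for $L^{p'}\to L^\infty$ is not circular. But there is a concrete gap in your treatment of the bundled marginal scales $2^{j}\gtrsim \lambda\mu$. For the piece $\Pi_\lambda[\eta]$, $\eta\in\mathcal C^{j_\circ}$, you propose to use only the global bounds \eqref{easy-l2}, \eqref{201} and a sup-of-kernel bound in the spirit of Lemma \ref{imply1}; none of these detects any decay of $\Pi_\lambda[\eta](x_0,y)$ in $y$ away from $\pm x_0$, and with H\"older taken over all of $A_{\lambda,\mu}$ they do not suffice at the exponents you now need. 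Concretely, at $p=1$ the best you get this way is $\|\Pi_\lambda[\eta](x_0,\cdot)\|_{L^1(A_{\lambda,\mu})}\le |A_{\lambda,\mu}|^{1/2}\|\Pi_\lambda[\eta](x_0,\cdot)\|_2\lesssim (\lambda^{d/2}\mu)^{1/2}(\lambda\mu)^{(d-4)/4}=\lambda^{(d-2)/2}\mu^{(d-2)/4}$, which exceeds the target $\lambda^{(d-1)/2}\mu^{(3d-1)/4}$ by the factor $\lambda^{-1/2}\mu^{-(2d+1)/4}$, i.e.\ by $\lambda^{(d-1)/3}$ when $\mu\sim\lambda^{-2/3}$ (the kernel-sup bound $(\lambda\mu)^{d/2-1}$ times $|A_{\lambda,\mu}|$ is worse still). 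Inside $\sq$ the crude bound $(\lambda\mu)^{-1+\frac d2\delta(p,q)}$ of Lemma \ref{imply1} happens to be dominated by $\lambda^{\beta}\mu^{\gamma}$, but on the edge you are working on it is not, so ``cf.\ Lemma \ref{imply1}'' does not carry over. What is missing is a spatial localization of the marginal bundle around $\pm x_0$ at scale $\sqrt{\lambda}\mu$: Cauchy--Schwarz with \eqref{201} only over a set of measure $\sim\lambda^{d/2}\mu^{d}$ (then the loss is $(\lambda\mu^{3/2})^{-1/2}\le 1$), together with a proof that the bundled kernel decays off that set (which does follow by summing Lemma \ref{oscillatory}$(a)$ over $j>j_\circ$, since there $-\mathcal D\gtrsim$ the squared angular separation). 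Your closing remark about trading pointwise size for orthogonality via \eqref{201} and \eqref{goal3_2} addresses only the scales $2^j<\lambda\mu$, and the decisive $\nu,j,l$ bookkeeping for general $p$ is asserted rather than carried out; also note that you cannot literally ``interpolate'' $(1,\infty)$, $(2,\infty)$, $(2,2)$ bounds to $L^{p'}\to L^\infty$ with $p'>2$ --- the H\"older step over the sector/cube support is doing that work and should be made explicit.

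It is worth comparing with the paper's proof, which consists of exactly the localization idea above and nothing heavier: split $A_{\lambda,\mu}$ into $S_0=\big(B(x_0,C\sqrt{\lambda}\mu)\cup B(-x_0,C\sqrt{\lambda}\mu)\big)\cap A_{\lambda,\mu}$ and dyadic shells $S_l$. On $S_0$, H\"older and the known bound $\|\chi_{\lambda,\mu}\Pi_\lambda\chi_{\lambda,\mu}\|_{2\to\infty}\lesssim\lambda^{(d-2)/4}\mu^{(d-1)/4}$ (from \eqref{kt-nu}) already give exactly $\lambda^{-\frac12+\frac d{2p}}\mu^{-\frac14+d(\frac1p-\frac14)}$ because $|S_0|\sim\lambda^{d/2}\mu^{d}$; on $S_l$, after the symmetry reduction, the sectorial decomposition at the single scale $2^{-\nu}\sim\mu 2^l$ and Lemma \ref{oscillatory}$(a)$ give rapid decay $(\lambda(\mu2^l)^{3/2})^{-N}$ summable over all $j\ge0$ and over $l$ (this also disposes of the marginal scales away from $\pm x_0$, so no separate bundle analysis is needed). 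In particular none of the hard apparatus of Proposition \ref{prop-goal2} (the cubes $Q\times Q'$, $\psi_{\mathbf c}$, $S_c$, \eqref{goal3_1}--\eqref{goal3_3}) is required. If you want to keep your operator-norm framework, the efficient fix is to import this localization for the marginal bundle, prove the $p=1$ endpoint, and obtain $1<p<2$ by interpolation with the known $p=2$ case instead of redoing the bookkeeping for each $p$.
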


\begin{proof}
If $\mu\sim 1$, the estimate follows by H\"older's inequality and the bound $\|\Pi_\lambda\|_{1\to \infty}\lesssim \lambda^{(d-2)/2}$.
Indeed, since $|A_{\lambda,\mu}|\sim \lambda^{d/2}$ and $\|\Pi_\lambda(x_0, \cdot)\|_2^2=\Pi_\lambda(x_0,x_0)$, we see
\[
\|\Pi_\lambda(x_0,\cdot)\|_{L^p(A_{\lambda,\mu})}
\le  
|A_{\lambda,\mu}|^{\frac1p-\frac12}|\Pi_\lambda(x_0,x_0)|^{\frac12}\lesssim \lambda^{-\frac12+\frac{d}{2p}}.
\]

Thus, we may assume $\mu\ll 1$. Let $S_{0}=  \big( B(x_0,  C \lambda^{1/2}\mu)\cup B(-x_0,  C \lambda^{1/2}\mu) \big)\cap  A_{\lambda,\mu}$ for a large constant $C>0$. 
We note $\|\Pi_\lambda(x_0,\cdot)\|_{L^2(S_0)}\le \|\chi_{\lambda,\mu} \Pi_\lambda\chi_{\lambda,\mu}\|_{2\to \infty}$ from \eqref{2-infty}. Recalling  
 \eqref{est-ann} for $p=2$ and $q=\infty$, we have $\|\Pi_\lambda(x_0,\cdot)\|_{L^2(S_0)}\le  C\lambda^{(d-2)/4}\mu^{(d-1)/4}$.  Thus, by H\"older's inequality  we have 
\[ \|\Pi_\lambda(x_0,\cdot)\|_{L^p(S_0)}  \le |S_0|^{\frac1p-\frac12} \|\Pi_\lambda(x_0,\cdot)\|_{L^2(S_0)} \lesssim    \lambda^{-\frac12+\frac{d}{2p}}\mu^{-\frac14+d(\frac1p-\frac14)}.  \]
So, it is sufficient to show 
\[ \|\Pi_\lambda(x_0,\cdot)\|_{L^p(A_{\lambda, \mu}\setminus S_0)}  \lesssim   \lambda^{-\frac12+\frac{d}{2p}}\mu^{-\frac14+d(\frac1p-\frac14)}.    \]
By the symmetric property of the kernels of $\Pi_\lambda[\psi_j]$, $\Pi_\lambda[\psi_j^\kappa]$, $\kappa=-, \pm \pi$ (\cite[p. 5]{JLR_endpoint}), it follows that 
$\|\sum_{j\ge 0}  \Pi_\lambda[\psi_j] (x_0,\cdot)\|_{L^p(A_{\lambda, \mu}\setminus S_0)}=\|\sum_{j\ge 0}  \Pi_\lambda[\psi_j^\kappa] (x_0,\cdot)\|_{L^p(A_{\lambda, \mu}\setminus S_0)}$, $\kappa=-, \pm \pi$. 
For the desired estimate,  by \eqref{k-decom} we need only to show that 
\Be \label{fin} \tx \| \sum_{j\ge 0}  \Pi_\lambda[\psi_j] (x_0,\cdot)\|_{L^p(A_{\lambda,\mu}\setminus S_0)}\lesssim
\lambda^{-\frac12+\frac{d}{2p}}\mu^{-\frac14+d(\frac1p-\frac14)}.\Ee

For $ l\ge 1$, set $S_l =\{x\in A_{\lambda,\mu} : |x-x_0|\in  C\lambda^{\frac12}\mu [ 2^{l-1}, 2^{l})\},$ 
To prove the above estimate,   it is enough to show 
\begin{align*}
  \tx  \|\sum_{j\ge 0}  \Pi_\lambda[\psi_j] (x_0,\cdot)\|_{L^p(S_l)}
    \lesssim
    (\mu 2^l) ^{-\frac{d-2}{4}}
    (\lambda^{\frac d2}\mu^d 2^{dl})^\frac1p (\lambda(\mu 2^l)^{\frac32})^{-N},
    \quad l \ge 1.
\end{align*}
Summation over $l$ gives the estimate \eqref{fin}  because $\lambda\mu^{3/2}\gtrsim 1$.  Using  \eqref{s-decom} and scaling,  we observe 
\[\tx\|\sum_{j\ge 0}  \Pi_\lambda[\psi_j] (x_0,\cdot)\|_{L^p(S_{l})}\lesssim \sum_{j\ge 0} \sup_{k\sim_{\nu}k'}\|\chi_k^{\nu}\mathfrak P_\lambda[\psi_j] \chi_{k'}^{\nu}\|_{L_{x,y}^\infty}\,|S_{l}|^{1/p},\]
where  $\nu$ satisfies $2^{-\nu}\sim C\mu 2^{l}$.  Note that $\cD\sim -(\mu2^l)^2$ if $(x,y)\in \supp{\chi_k^{\nu}}\times \supp{\chi_{k'}^{\nu}}$, $k\sim_{\nu}k'$. 
Thus, using  $(a)$ in Lemma \ref{oscillatory},
 we obtain
\[\tx\sum_{j\ge 0} \sup_{k\sim_{\nu}k'}\|\chi_k^{\nu}\mathfrak P_\lambda[\psi_j] \chi_{k'}^{\nu}\|_{L_{x,y}^\infty}\lesssim (\mu 2^l)^{-\frac{d-2}{4}}(\lambda(\mu 2^l)^{\frac32})^{-N},\]
for any  $N\in\N$. Therefore, we get the desired estimate.
\end{proof}

To show \eqref{ann3}, we first claim that  there is a point  $x_0\in A_{\lambda,\mu}$ such that   
\Be
\label{eq:x0x0}
\int_{A_{\lambda,\mu}} \Pi_\lambda(x_0,y)^2 \,dy
\gtrsim  \mu^{\frac12}(\lambda\mu)^{\frac{d-2}{2}}.
\Ee
Combined with \eqref{2-infty}, this shows sharpness of the bound \eqref{est-ann} for $p=2$ and $q=\infty$. 
 Assuming \eqref{eq:x0x0} for the moment we prove \eqref{ann3}.  
Let us set 
\[f(x)=\Pi_\lambda(x_0,x)\chi_{A_{\lambda,\mu}}(x).\] 
By \eqref{eq:x0x0} we have $\Pi_\lambda f(x_0)\gtrsim \mu^{\frac12}(\lambda\mu)^{\frac{d-2}{2}}$.  
We now recall the following lemma.   

\begin{lem}\cite[Lemma 4.5]{LR}
    Let $\lambda\in 2\N_0+d$ and $\mu\in[\lambda^{-\frac23},\frac12]$. 
    Suppose that $h\in \mathcal S(\mathbb R^d)$ is an  eigenfunction of $H$ with eigenvalue $\lambda$, {\it i.e.,} $H h(x) = \lambda h(x)$. If  
    $
    y_0\in A_{\lambda,\mu}, 
    $
    then  for any $\alpha\in \N_0^d$ we have 
    \Be
    \label{eq:deriv} |\partial_y^\alpha h(y_0)|\le C (\lambda\mu)^{\frac{|\alpha|}{2}}\|h\|_{L^\infty(B(y_0,2(\lambda\mu)^{-\frac12}))}\Ee 
    with $C$ independent of $\lambda$, $\mu$ and $h$.
\end{lem}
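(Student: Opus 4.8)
The plan is to prove \eqref{eq:deriv} by rescaling the ball $B(y_0,2(\lambda\mu)^{-1/2})$ to unit size and invoking interior elliptic regularity for a Schr\"odinger-type equation whose potential, after rescaling, is smooth and bounded by an absolute constant. First I would rewrite the eigenfunction equation as $\Delta h=(|x|^2-\lambda)h$. Setting $R=(\lambda\mu)^{1/2}$, introduce $g(z)=h(y_0+z/R)$ for $z\in B(0,2)$; a direct computation shows $\Delta_z g=Vg$ on $B(0,2)$ with
\[
V(z)=(\lambda\mu)^{-1}\big(|y_0+z/R|^2-\lambda\big)=(\lambda\mu)^{-1}(|y_0|^2-\lambda)+2(\lambda\mu)^{-3/2}\langle y_0,z\rangle+(\lambda\mu)^{-2}|z|^2 .
\]

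The next step is to check that $\|V\|_{C^k(B(0,2))}\le C_k$ for constants $C_k$ depending only on $k$ and $d$. For $y_0\in A_{\lambda,\mu}$ one has $|y_0|=\sqrt\lambda(1-\theta)$ with $\theta\in[\mu/2,\mu]$, so $\big||y_0|^2-\lambda\big|=\lambda\theta(2-\theta)\sim\lambda\mu$ while $|y_0|\sim\sqrt\lambda$. Hence the constant term of $V$ is $O(1)$, the linear coefficient has size $\sim\lambda^{-1}\mu^{-3/2}$, and the quadratic coefficient is $(\lambda\mu)^{-2}$; the hypothesis $\mu\ge\lambda^{-2/3}$, i.e.\ $\lambda\mu^{3/2}\ge1$, is exactly what forces all three to be $\le1$, and derivatives of order $\ge3$ vanish. (Equivalently: on $B(y_0,2R^{-1})$ the oscillation of $|x|^2-\lambda$ is $O(|y_0|R^{-1}+R^{-2})=O(\mu^{-1/2})$, which is dominated by its size $\sim\lambda\mu$ there, again by $\mu\ge\lambda^{-2/3}$, so $|V|\lesssim1$.)

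Then I would apply interior elliptic estimates to the smooth solution $g$ of $\Delta g=Vg$ on $B(0,2)$. The standard $W^{2,p}$ interior estimate gives $\|g\|_{W^{2,p}(B(0,3/2))}\lesssim\|Vg\|_{L^p(B(0,2))}+\|g\|_{L^p(B(0,2))}\lesssim\|g\|_{L^\infty(B(0,2))}$ for any fixed $p\in(1,\infty)$. Differentiating the equation, $\Delta(\partial^\gamma g)=\sum_{\beta\le\gamma}\binom{\gamma}{\beta}(\partial^{\gamma-\beta}V)(\partial^\beta g)$, and using the bounds on the derivatives of $V$, a routine induction on $|\gamma|$ together with Sobolev embedding yields $\|g\|_{C^m(B(0,1))}\le C_m\|g\|_{L^\infty(B(0,2))}$ for every $m$, with $C_m$ depending only on $m$ and $d$. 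Evaluating at $z=0$ and undoing the scaling --- $(\partial_y^\alpha h)(y_0)=R^{|\alpha|}(\partial_z^\alpha g)(0)$ and $\|g\|_{L^\infty(B(0,2))}=\|h\|_{L^\infty(B(y_0,2R^{-1}))}$ with $R=(\lambda\mu)^{1/2}$ --- gives precisely \eqref{eq:deriv}.

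The only genuinely delicate point is the one in the second step: confirming that the rescaled potential $V$ is $O(1)$, with all derivatives $O(1)$, on the full ball $B(0,2)$. This is where the lower constraint $\mu\gtrsim\lambda^{-2/3}$ is essential --- it guarantees that the size $\sim\lambda\mu$ of $|x|^2-\lambda$ on $B(y_0,2(\lambda\mu)^{-1/2})$ dominates its oscillation $O(\mu^{-1/2})$ over that ball. Once $V$ is under control, the rest is the classical elliptic bootstrap, which is manifestly stable under the smooth perturbation $V$, as required.
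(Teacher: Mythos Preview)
Your argument is correct. Note, however, that the paper does not supply its own proof of this lemma; it is quoted verbatim from \cite[Lemma~4.5]{LR} and used as a black box. Your proof by rescaling $x\mapsto y_0+(\lambda\mu)^{-1/2}z$ and invoking interior elliptic regularity for $\Delta g=Vg$ with $\|V\|_{C^k(B(0,2))}=O_k(1)$ is the natural route and almost certainly coincides with the argument in the cited source; the verification that the linear coefficient $2(\lambda\mu)^{-3/2}y_0$ is $O(1)$ is indeed exactly the place where $\mu\ge\lambda^{-2/3}$ enters, as you point out.
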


By this lemma we also have 
$\|\nabla\,\Pi_\lambda f\|_{L^\infty(A_{\lambda,\mu})}\lesssim \mu^{1/2}(\lambda\mu)^{(d-1)/{2}}$. 
By the mean value theorem  we see that   $\Pi_\lambda f(x)\gtrsim  \mu^{1/2}(\lambda\mu)^{(d-2)/{2}}$ if $x\in B(x_0,c(\lambda\mu)^{-1/2})$ for a constant $c>0$ small enough.  Thus, 
we have
\[
\|\chi_{\lambda,\mu}\Pi_\lambda\chi_{\lambda,\mu} f\|_q\ge
\|\Pi_\lambda f\|_{L^q(B(x_0,c(\lambda\mu)^{-1/2})\cap A_{\lambda, \mu})}
\gtrsim \mu^{\frac12}(\lambda\mu)^{\frac{d-2}{2}-\frac{d}{2q}}.
\]
Combining this and the estimate $\|f\|_p\lesssim \lambda^{-\frac12+\frac{d}{2p}}\mu^{-\frac14+d(\frac1p-\frac14)}$, which follows from Lemma \ref{lem:annn3}, we obtain \eqref{ann3}.
 It remains to show \eqref{eq:x0x0}.  

\begin{proof}[Proof of  \eqref{eq:x0x0}] 
Let $\lambda=2N +d$. We set
$$ J=\big\{\alpha\in\mathbb N^d \,: \,|\alpha|=N,\; {N\mu}/(2^{10} d) \le\alpha_j\le {N\mu}/(2^9 d),  \ \  2\le j\le d\big\},$$
$\ell={(2\sqrt{d})^{-1}}{\sqrt{\lambda\mu}}$, and $Q_{\ell}=[ \sqrt{\lambda}(1-2\mu),\sqrt{\lambda}(1-3\mu/2)]\times  [-\ell,\ell\,]^{d-1}$.
Noting that $Q_{\ell}\subset A_{\lambda,\mu}$ and $|J|\sim (\lambda\mu)^{d-1}$,  we have
$$\sum_{\alpha\in J} \int_{B_{d-1}(0,(\lambda\mu)^{-\frac12})}\int_{\sqrt{\lambda}(1-2\mu)}^{\sqrt{\lambda}(1-3\mu/2)} \left|\Phi_\alpha (x_1,\bar x)\right|\, dx_1d{\bar x}\gtrsim \lambda^{\frac d4}\mu^{\frac{d+2}{4}}.$$
This is an easy consequence of Lemma \ref{S4-asym}. 
Thus, there exists $x_0\in [\sqrt{\lambda}(1-2\mu),\sqrt{\lambda}(1-3\mu/2)]\times B_{d-1}(0,(\lambda\mu)^{-\frac12})$ such that $\sum_{\alpha\in J}|\Phi_\alpha(x_0)|\gtrsim (\lambda\mu)^{\frac{3}{4}d-1}$.
We consider 
\[g(x)=\chi_{Q_{\ell}}(x)\sum_{\alpha\in J}\,c_\alpha \Phi_\alpha(x),\]
where $c_\alpha\in \{-1,1\}$ such that $c_\alpha\Phi_\alpha(x_0)=|\Phi_\alpha(x_0)|$.
By  \eqref{kt-nu} with $q=2$,  we have 
\[
\textstyle  \|g\|_2\le\|\sum_{\alpha\in J}c_\alpha \Phi_\alpha(x)\|_{L^2(A_{\lambda,\mu})}\lesssim\mu^{\frac14}\|\sum_{\alpha\in J}c_\alpha \Phi_\alpha(x)\|_2\lesssim \mu^{\frac14}(\lambda\mu)^{\frac{d-1}{2}}.
\]

We now set 
\begin{align*}
 a_{u,v}:=\int_{-\ell}^{\ell}h_u(t)h_v(t)dt, \quad a^\ast_{u,v}:=\int_{\sqrt{\lambda}(1-2\mu)}^{\sqrt{\lambda}(1-3\mu/2)}h_u(t)h_v(t)dt,
\end{align*} 
and $A_{\alpha,\beta}=\int_{Q_\ell}\Phi_{\alpha}(x)\Phi_{\beta}(x)dx$.
Note that $A_{\alpha,\beta} = a^\ast_{\alpha_{1},\beta_{1}}\prod_{i=2}^d a_{\alpha_{i},\beta_{i}}$
and  $\Pi_\lambda g(x)     = \sum_{\alpha\in J}  \sum_{\beta: |\beta|=N} c_\alpha A_{\alpha, \beta} \Phi_\beta(x)$. Thus, we  write
\Be
\label{g-lambda}
    \Pi_{\lambda}g(x) 
      ={\mathrm I} (x)+\mathrm {I\!I}(x),
  \Ee
where 
\[  \textstyle   {\mathrm I} (x)=  \sum_{\alpha\in J}c_{\alpha}A_{\alpha, \alpha}\Phi_{\alpha}(x), \quad  \mathrm {I\!I}(x)= \sum_{\alpha\in J} \sum_{\beta: |\beta|=N, \alpha\not= \beta} c_{\alpha} A_{\alpha, \beta} \Phi_{\beta}(x) .\]

Using Lemma~\ref{S4-asym}, it is easy to see that $a_{u,u}\sim 1$ and $a^\ast_{u,u}\sim \mu^{1/2}$ if $u\sim N$.  Consequently, it follows that 
$A_{\alpha, \alpha}\sim \mu^{1/2} \ \text{ if } \ \alpha\in J. $
However, if $u\neq v$, $a_{u,v}$  is exponentially decaying, thus we  may regard $ \mathrm {I\!I}$ as a minor error.  
More precisely,  
\Be\label{auv-final}
A_{\alpha, \beta}\lesssim e^{-c\lambda\mu}
\Ee 
if $\alpha\in J$, $|\beta|=N$, and $\alpha\neq \beta$.  Assuming this for the moment,  we  prove  \eqref{eq:x0x0}. 

By \eqref{auv-final} it follows that $\mathrm{I\!I}(x)=O\big((\lambda\mu)^a e^{-b\lambda\mu}\big)$ for some $a,b>0$.
Hence, our choices of $c_\alpha$ and $x_0$ ensures that there exists $C>0$ such that ${\mathrm I}(x_0)=\sum_{\alpha\in J}c_{\alpha}A_{\alpha, \alpha}\Phi_{\alpha}(x_0)\ge C\mu^{1/2}(\lambda\mu)^{3d/4-1} $. 
Since $Q_{\ell}\subset A_{\lambda, \mu}$ and $x_0\in A_{\lambda, \mu}$,  recalling \eqref{g-lambda}, we obtain 
\begin{align*}
\inp{\chi_{\lambda,\mu}(x_0)\Pi_\lambda(x_0,\cdot) \chi_{\lambda,\mu}}{g}=\Pi_\lambda g(x_0)\ge
C\mu^{\frac12}(\lambda\mu)^{\frac34 d-1}-O((\lambda\mu)^a e^{-b\lambda\mu}).
\end{align*}
Since $\|g\|_2\lesssim \mu^{1/4}(\lambda\mu)^{(d-1)/2}$, by duality we get \eqref{eq:x0x0} as desired.

We now show \eqref{auv-final}. Recalling the identity $2(u-v)h_uh_v =h_uh_v''-h_u''h_v$  (for example, see \cite[p. 2]{Th93}),  we have 
\[a_{u,v}
                          =\frac{1}{2(u-v)}\int_{-\ell}^{\ell} h_u(s)h^{''}_v(s)-h_u^{''}(s)h_v(s)ds.\]
                          Thus,  integration by parts gives
\begin{align*}
    a_{u,v}
                =\frac{1}{2(u-v)} \Big(h_u(\ell)h^{'}_v(\ell)-h_u(-\ell)h^{'}_v(-\ell)-h_u^{'}(\ell)h_v(\ell)+h_u^{'}(-\ell)h_v(-\ell)\Big)
\end{align*}
 if $u\neq v  $. Note that $h_u$ is odd if $u$ is odd and $h_u$ is even otherwise. So, $h_u h^{'}_v$ is even if $u+v$ is odd and $h_u h^{'}_v$ is odd otherwise. 
Hence, $a_{u,v}=0$ if $u+v$ is odd.  Using the identity $h_u'(s)=sh_u(s)-\sqrt{2u+2}\,h_{u+1}(s) $(\cite[p. 5]{Th93}), we obtain  
\Be\label{auv}
   a_{u,v}
 =\frac{1+(-1)^{u+v}}{\sqrt{2}(u-v)}\big(\sqrt{u+1}\,h_{u+1}(\ell)h_v(\ell)-\sqrt{v+1}\,h_u(\ell)h_{v+1}(\ell)\big).
\Ee
Note that  $\ell-\sqrt{2u+1}\gtrsim \sqrt{\lambda\mu}\sim \ell  $ for $2^{-10}N\mu/d\le u\le 2^{-9}N\mu/d$. Thus,  we have $s^+_{\sqrt{2u+1}}(\ell)\gtrsim \lambda\mu$ as long as $2^{-10}N\mu/d\le u\le 2^{-9}N\mu/d$. By Lemma \ref{S4-asym}  it now follows that 
\[|h_u(\ell)|\lesssim e^{-s^+_{\sqrt{2u+1}}(\ell)} \lesssim e^{-c\lambda\mu} \]
for some $c>0$ if $2^{-10}N\mu/d\le u\le  2^{-9}N\mu/d$.
Combining this with \eqref{auv} and \cite[Lemma 1.5.2]{Th93},  we have $
    |a_{u,v}| \lesssim e^{-c\lambda\mu} $
 for $2^{-10}N\mu/d\le u\le  2^{-9}N\mu/d$ and $v\le N$ if $u\neq v$.  Note that  $ |a_{u,v}| \lesssim 1$ and $|a^\ast_{u,v}| \lesssim \mu^{1/2}$ for any $u,v$. 
 We also note that there is at least one $j\in \{2, \dots, d\}$ such that  $\alpha_j\neq \beta_j$ if $\alpha\neq \beta$, $\alpha\in J$, and $|\beta|=N$. Therefore,   we 
get \eqref{auv-final} because $A_{\alpha,\beta} = a^\ast_{\alpha_{1},\beta_{1}}\prod_{i=2}^d a_{\alpha_{i},\beta_{i}}$. 
\end{proof}

\subsection*{Acknowledgements}
This work was supported by research funds for newly appointed professors of Jeonbuk National University in 2021 and the NRF (Republic of Korea) grants no. 2020R1F1A1A01048520 (E. Jeong), 2022R1A4A1018904 (S. Lee), and a KIAS Individual Grant (MG087001) at Korea Institute for Advanced Study (J. Ryu).

\end{document}